\newcommand{\Ric}{\text{Ric}}
\newcommand{\Vol}{\text{Vol}}
\newcommand{\NN}{\mathds{N}}
\newcommand{\RR}{\mathds{R}}
\newcommand{\Sn}{\mathds{S}}
\newcommand{\Hess}{\text{Hess}}
\newcommand{\Cl}{\text{Cl}}
\newcommand{\cS}{\mathcal{S}}
\newcommand{\cR}{\mathcal{R}}
\newcommand{\cH}{\mathcal{H}}
\begin{document}

\newtheorem{theorem}{Theorem}[section]

\newtheorem{proposition}[theorem]{Proposition}

\newtheorem{lemma}[theorem]{Lemma}

\newtheorem{corollary}[theorem]{Corollary}

\theoremstyle{definition}
\newtheorem{definition}[theorem]{Definition}

\theoremstyle{remark}
\newtheorem{remark}{Remark}[section]

\theoremstyle{remark}
\newtheorem{example}{Example}[section]

\theoremstyle{remark}
\newtheorem{note}{Note}[section]

\theoremstyle{remark}
\newtheorem{question}{Question}[section]

\theoremstyle{remark}
\newtheorem{conjecture}{Conjecture}[section]

\title{Sharp H\"older continuity of tangent cones for spaces with a lower Ricci curvature bound and applications}

\author{Tobias Holck Colding and Aaron Naber\thanks{Department of Mathematics, Massachusetts Institute of Technology, Cambridge, MA 02139.  Emails: colding@math.mit.edu and anaber@math.mit.edu.
         The  first author
was partially supported by NSF Grant DMS  0606629  and NSF FRG grant DMS
 0854774 and the second author by an NSF Postdoctoral Fellowship.}}


\date{\today}
\maketitle
\begin{abstract}
We prove a new kind of estimate that holds on any manifold with a lower Ricci bound.   It relates the geometry of two small balls with the same radius, potentially far apart, but centered in the interior of a common minimizing geodesic.   It reveals new, previously unknown, properties that all generalized spaces with a lower Ricci curvature bound must have and it has a number of applications.

This new kind of estimate asserts that the geometry of small balls along any minimizing geodesic changes in a H\"older continuous way with a constant depending on  the lower bound for the Ricci curvature, the dimension of the manifold, and the distance to the end points of the geodesic.  We give examples that show that the H\"older exponent, along with essentially all the other consequences that we show follow from this estimate, are sharp.  The unified theme for all of these applications is convexity.

Among the applications is that the regular set is convex for any non-collapsed limit of Einstein metrics.  In the general case of potentially collapsed limits of manifolds with just a lower Ricci curvature bound we show that the regular set is weakly convex and $a.e.$ convex, that is almost every pair of points can be connected by a minimizing geodesic whose interior is contained in the regular set.  We also show two conjectures of Cheeger-Colding.   One of these asserts that the isometry group of any, even collapsed, limit of manifolds with a uniform lower Ricci curvature bound is a Lie group; the key point for this is to rule out small subgroups.  The other asserts that the dimension of any limit space is the same everywhere.    Finally, we show a Reifenberg type property holds for collapsed limits and discuss why this indicates further regularity of manifolds and spaces with Ricci curvature bounds.
\end{abstract}

\section{Introduction}

We begin by giving two almost equivalent versions of the main H\"older continuity result for tangent cones.   After having done this we discuss and prove various of the almost immediate consequences.   In the final subsection of this introduction we describe the examples that show that almost all of these results are sharp, including the H\"older behavior in the main theorem.

\subsection{H\"older continuity of tangent cones}

Let $M$ be a complete $n$-dimensional manifold with
\begin{equation}
\Ric_M\geq -(n-1)\, ,  \label{e:elowerRicb}
\end{equation}
and suppose that $\gamma:[0,\ell]\to M$ is a (unit speed) minimizing geodesic.  Our main theorem is the following result that has two essentially equivalent formulations (the second formulation concerns limit spaces and will be given shortly):

\begin{theorem}   \label{t:holder}
(H\"older continuity of geometry of small balls with same radius).
There exists $\alpha(n)$, $C(n)$ and $r_0(n)>0$ such that given any $\delta>0$ with $0<r<r_0\delta\ell$ and $\delta\ell<s<t<\ell-\delta\ell$, then
\begin{equation}
d_{GH}(B_r(\gamma (s)),B_r(\gamma (t)))<\frac{C}{\delta\ell}\, r\,|s-t|^{\alpha(n)}\, .
\end{equation}
\end{theorem}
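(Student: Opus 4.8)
The plan is to produce, for parameters $\delta\ell<s<t<\ell-\delta\ell$ and $0<r<r_{0}\delta\ell$, an explicit Gromov--Hausdorff approximation $\Phi_{s,t}\colon B_{r}(\gamma(s))\to B_{r}(\gamma(t))$ whose additive distortion is at most $C(n)(\delta\ell)^{-1}r\,|s-t|^{\alpha(n)}$. After rescaling the metric by $\ell^{-2}$ we may assume $\ell=1$, so that the curvature hypothesis $\Ric\geq-(n-1)\ell^{2}$ becomes weak and $r$ is the genuinely small parameter. Introduce the renormalized distance functions $\mathbf{b}^{-}=\dist(\cdot,\gamma(0))-s$ and $\mathbf{b}^{+}=\dist(\cdot,\gamma(\ell))-(\ell-s)$, which both vanish on $\gamma$, together with the excess $e=\mathbf{b}^{+}+\mathbf{b}^{-}\geq 0$; by the Abresch--Gromoll inequality $e$ is small on a definite neighborhood of the interior of $\gamma$, with quantitative control in terms of the distances to $\gamma$ and to the endpoints (this is the source of the $(\delta\ell)^{-1}$ factor). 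The function $\mathbf{b}:=\mathbf{b}^{-}$ serves as the ``linear coordinate'' along $\gamma$: its gradient is unit along $\gamma$ and points in the direction of $\gamma'$, so the time-$(t-s)$ gradient flow of $\nabla\mathbf{b}$ carries $\gamma(s)$ to $\gamma(t)$. I would take $\Phi_{s,t}$ to be (essentially) this flow, so the entire problem becomes showing that it is an approximate isometry to the stated accuracy.

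The heart of the argument is an integral Hessian estimate for $\mathbf{b}$ --- or rather for its heat-flow regularization $\mathbf{b}_{\eta}$, since $\mathbf{b}$ has a cut locus and is only Lipschitz --- on a neighborhood of $\gamma$. Applying the Bochner formula (in the weak/barrier sense for $\mathbf{b}$, genuinely for $\mathbf{b}_{\eta}$) and integrating, one bounds the average of $|\Hess\mathbf{b}|^{2}$ over a ball by averages of $\big|\,|\nabla\mathbf{b}|^{2}-1\,\big|$, of $(\Delta\mathbf{b})^{2}$, and of the negative part of the Ricci term. Each of these is controlled: $|\nabla\mathbf{b}|\leq 1$ always and is close to $1$ precisely where $e$ is small; $\Delta\mathbf{b}$ is controlled on average by the Laplacian comparison theorem; and the Ricci term is controlled by the lower curvature bound. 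This produces a bound on the ball-average of $\int_{s}^{t}|\Hess\mathbf{b}_{\tau}|^{2}\,d\tau$ that is small after rescaling by $r$. I would then invoke the segment inequality of Cheeger--Colding to upgrade this ball-average bound into a bound on $\int\|\Hess\mathbf{b}\|$ \emph{along the gradient flow lines} for a definite fraction of starting points in $B_{r}(\gamma(s))$; on that ``good'' set the flow $\Phi_{s,t}$ distorts distances by at most a constant times $r\int\|\Hess\mathbf{b}\|$, hence by the required amount, since a vanishing Hessian of $\mathbf{b}$ would make $\nabla\mathbf{b}$ parallel and the flow a local isometry.

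To obtain the \emph{sharp} exponent one cannot apply the previous paragraph in a single step over all of $[s,t]$: that would formally give a Lipschitz bound in $|s-t|$, which the examples in the final section show is false. Instead I would iterate over dyadic scales, estimating $d_{GH}(B_{r}(\gamma(s)),B_{r}(\gamma(t)))$ at scale $r$ and separation $\sim 2^{-k}$ in terms of the analogous quantity at scale $\sim 2^{-k}r$ and separation of order one. The geometric series of the resulting step-errors, combined with the tradeoff between scale and separation built into the Hessian estimate, sums to $C\,|s-t|^{\alpha(n)}$ with an explicit $\alpha(n)<1$ determined by the constants above. Finally, since the ``bad'' set where the segment/Hessian control fails has small measure, volume doubling lets one extend the resulting $\epsilon$-isometry from the good set to all of $B_{r}(\gamma(s))$ at a comparable cost, and a symmetric construction gives an approximate inverse; together these yield the claimed Gromov--Hausdorff estimate, which is Theorem~\ref{t:holder}.

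The step I expect to be the main obstacle is the integral Hessian estimate itself, with the correct $(\delta\ell)^{-1}$ dependence and a genuinely small right-hand side. The Laplacian comparison only controls $\Delta\mathbf{b}$ one-sidedly, and the cut locus of $\mathbf{b}$ forces one to run the Bochner argument for the heat regularization $\mathbf{b}_{\eta}$ and to localize carefully to the region near $\gamma$ where $e$ is small, tracking exactly how every constant degenerates as $s,t$ approach the endpoints. A second essential point, though more routine, is that each estimate must be stated in scale-invariant, averaged form so that it survives passage to (possibly collapsed) limit spaces --- this is precisely what makes the limit-space reformulation of the theorem equivalent --- and so that the dyadic iteration closes with constants depending only on $n$.
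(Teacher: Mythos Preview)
Your overall architecture---gradient flow of $\nabla d_p$ as the Gromov--Hausdorff map, parabolic (heat-flow) regularization of the distance function, Bochner formula for the Hessian estimate, segment inequality to pass from ball-averages to flow lines, good set/bad set dichotomy---matches the paper's. But two points are genuinely off.

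First, the mechanism for the H\"older exponent is not what you describe. The paper's central estimate (Theorem \ref{t:mainregthm1}) is
\[
\int_{\delta}^{1-\delta}\fint_{B_{r}(\gamma(\tau))}|\Hess_{h}|^{2}\,d\tau\leq C(n,\delta)\, ,
\]
an $L^{2}$ bound \emph{integrated along the geodesic} with moving center. Cauchy--Schwarz then gives $\int_{s}^{t}\fint_{B_{r}(\gamma(\tau))}|\Hess_{h}|\leq C\sqrt{t-s}$, and this $\sqrt{t-s}$ is the source of the H\"older behavior. A single direct application of the flow from $s$ to $t$ therefore already gives H\"older, not Lipschitz; your reasoning that a one-step argument would yield Lipschitz, and that a dyadic iteration over scales is needed to degrade it, is backwards. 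The paper explicitly discusses in the introduction why the naive iteration (chaining $|t-s|/r$ steps of the almost-splitting theorem) fails, and the remedy is the stronger integrated Hessian estimate, not an iteration over scales. The final exponent $\alpha(n)=\tfrac{1}{2(1+2n)}$ is worse than $\tfrac12$ only because of the tradeoff between the density of the good set $T^r_\eta$ and the maximal flow time for which the estimates close.

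Second, you underestimate the main technical obstacle. You propose to extend the $\epsilon$-isometry from the good set to the whole ball by volume doubling, but before that you must know that the gradient flow even carries $B_{r}(\gamma(t))$ \emph{near} $B_{r}(\gamma(s))$---i.e., that for most $x\in B_r(\gamma(t))$ the minimizing geodesic $\gamma_{p,x}$ stays within $2r$ of $\gamma$. This is Proposition \ref{p:volratio1} in the paper, and it is not a density argument: it is a bootstrap in which one first assumes control of volume ratios along an interval, uses the integrated Hessian bound plus the segment inequality to keep the flow confined, and then closes the assumption via an open--closed argument that also invokes the infinitesimal Jacobi estimate of Lemma \ref{l:hessianbound}. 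Relatedly, Abresch--Gromoll alone is not sharp enough; the paper proves and uses the improved $L^1$ excess bound $\fint_{B_r(\gamma(t))}e\leq C r^2$ (Theorem \ref{t:L1_excess}), without which the gradient and Hessian estimates for the parabolic approximation do not close at the required strength.
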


We will see from the proof that $\alpha(n)$ is \textit{effectively} $\frac{1}{2}$, which is to say that the Gromov-Hausdorff approximation (or map)  is in fact $\frac{1}{2}$-H\"{o}lder continuous on sets of arbitrarily full measure.  Note that $\alpha$, $C$ and $r_0$ in this Theorem do not depend on $\gamma$ or even $M$.

\vskip2mm
In fact this theorem, as everything else in this paper, holds for possibly singular limits of manifolds.  To state the Theorem for singular limits let us consider a sequence $M_i^n$ of $n$-dimensional manifolds (possibly collapsing) each satisfying (\ref{e:elowerRicb}) and let $M_{\infty}$ be a Gromov-Hausdorff limit of $M_i$.    (So $M_{\infty}$ may have lower Hausdorff dimension.)  We say a geodesic
$$\gamma:[0,\ell]\rightarrow M_\infty$$
is a limit geodesic if there exists geodesics $\gamma_i:[0,\ell_i]\rightarrow M_i$ with $\ell_i\rightarrow\ell$ such that
$$\gamma_i\rightarrow\gamma\, ,$$
pointwise.  Though it is not clear a geodesic on $M_\infty$ is always a limit geodesic, what will be most important for us is that given any two points $x,y\in M_\infty$ there always exists a limit geodesic connecting them.  In fact we will see that the collection of limit geodesics on $M_\infty$ are in abundance and have all the measure theoretic properties one would hope for (see Section \ref{ss:Convexity1}, Section \ref{ss:Convexity2} and Appendix \ref{s:extendgeodesic}).  The main estimate for singular spaces is then:

\begin{theorem}\label{t:holderlimit}
Theorem \ref{t:holder} holds for limit geodesics in $M_{\infty}$.
\end{theorem}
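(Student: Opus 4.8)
The plan is to deduce the limit statement from Theorem \ref{t:holder} by a standard Gromov-Hausdorff compactness and continuity argument, being careful that all constants $\alpha(n)$, $C(n)$, $r_0(n)$ produced in the smooth case depend only on $n$ (and not on $M$), which is exactly the uniformity emphasized after the statement of Theorem \ref{t:holder}. Let $\gamma:[0,\ell]\to M_\infty$ be a limit geodesic, so there are manifolds $M_i^n$ satisfying (\ref{e:elowerRicb}), minimizing geodesics $\gamma_i:[0,\ell_i]\to M_i$ with $\ell_i\to\ell$, and $\gamma_i\to\gamma$ pointwise under the convergence $M_i\to M_\infty$. Fix $\delta>0$, $0<r<r_0\delta\ell$ and $\delta\ell<s<t<\ell-\delta\ell$. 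For $i$ large we may choose $s_i\to s$, $t_i\to t$ with $\delta\ell_i<s_i<t_i<\ell_i-\delta\ell_i$ (shrinking $\delta$ slightly to a fixed $\delta'$ if necessary, which only changes $C$), and $0<r<r_0\delta'\ell_i$. Applying Theorem \ref{t:holder} on each $M_i$ gives
\begin{equation}
d_{GH}\bigl(B_r(\gamma_i(s_i)),B_r(\gamma_i(t_i))\bigr)<\frac{C}{\delta'\ell_i}\,r\,|s_i-t_i|^{\alpha(n)}\, .
\end{equation}

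The second step is to pass to the limit in this inequality. Since $\gamma_i\to\gamma$ pointwise and $s_i\to s$, $t_i\to t$, the points $\gamma_i(s_i)\to\gamma(s)$ and $\gamma_i(t_i)\to\gamma(t)$ in the Gromov-Hausdorff sense, and hence the metric balls converge: $B_r(\gamma_i(s_i))\to B_r(\gamma(s))$ and $B_r(\gamma_i(t_i))\to B_r(\gamma(t))$ in the pointed Gromov-Hausdorff topology. (One minor technical point, handled routinely in this theory, is the distinction between open and closed balls and the possible jump of volume/metric structure at the radius $r$; this is dealt with by choosing $r$ outside a measure-zero set of "bad radii," or equivalently by using closed balls and noting the estimate is insensitive to an arbitrarily small change of $r$.) Because $d_{GH}$ is continuous under Gromov-Hausdorff convergence of the arguments, taking $i\to\infty$ yields
\begin{equation}
d_{GH}\bigl(B_r(\gamma(s)),B_r(\gamma(t))\bigr)\le \frac{C}{\delta'\ell}\,r\,|s-t|^{\alpha(n)}\, ,
\end{equation}
and after relabeling constants and using $\delta'$ comparable to $\delta$ this is the desired bound. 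Replacing the strict inequality by a non-strict one upon passing to the limit is harmless since $\delta$ (hence the constant) can be taken slightly smaller.

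The main obstacle is purely bookkeeping: ensuring that the geodesic endpoint constraints $\delta\ell_i<s_i<t_i<\ell_i-\delta\ell_i$ and $r<r_0\delta\ell_i$ survive the approximation uniformly in $i$, and that the Gromov-Hausdorff convergence of the ambient spaces genuinely upgrades to convergence of the relevant metric balls centered at the moving points $\gamma_i(s_i),\gamma_i(t_i)$. Both are standard consequences of the definition of pointed Gromov-Hausdorff convergence together with $\ell_i\to\ell$ and $\gamma_i\to\gamma$; the only subtlety worth isolating is the choice of good radius $r$, and since the estimate is to be proved for a given (hence fixed) $r<r_0\delta\ell$ one may, without loss of generality, perturb $r$ within the allowed range to avoid the countably many bad values. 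Since nothing in the argument used smoothness of $M_\infty$ beyond the smooth estimate on the approximants $M_i$, the same proof shows that Theorem \ref{t:holder} — and, as asserted in the introduction, every subsequent result of the paper — holds verbatim for limit geodesics in $M_\infty$.
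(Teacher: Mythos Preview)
Your argument is correct and is exactly the (implicit) approach of the paper: the paper does not write out a separate proof of Theorem \ref{t:holderlimit}, treating it as an immediate consequence of Theorem \ref{t:holder} together with the emphasized fact that $\alpha$, $C$, and $r_0$ depend only on $n$ and not on $M$ or $\gamma$, so the estimate passes to Gromov--Hausdorff limits along the approximating geodesics $\gamma_i\to\gamma$. Your careful bookkeeping about the endpoint constraints, the perturbation of $\delta$, and the choice of a good radius $r$ simply makes explicit the routine details that the paper leaves to the reader.
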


If $(M_{\infty},d_{\infty})$ is a Gromov-Hausdorff limit, $x\in M_{\infty}$ and $s_j\to 0$, then a subsequence of the blows up $(M_{\infty},s_j^{-1}\,d_{\infty},x)$ converges
in the Gromov-Hausdorff topology to a metric space called a
tangent cone at $x$.  If the sequence $M_i$ is non-collapsing then it was shown in \cite{ChC2} that tangent cones are metric cones; however, this is not necessarily the case in the collapsing case, see example 8.95 of \cite{ChC2}, and in fact tangent cones may not even be polar spaces in the collapsed case, see \cite{M4}.  Even for general non-collapsed limits tangent cones can be non-unique, see
\cite{ChC2} for examples and cf. \cite{P2}\footnote{In the Einstein setting
uniqueness of tangent cones is unknown.}.

\vskip2mm
Theorem \ref{t:holder} implies that tangent cones change in a H\"older continuous way even for collapsed limits.    Note that this has to be understood in an appropriate way because of the non-uniqueness of the tangent cones.  To do this suppose that $x,y\in M_\infty$ are points in $M_\infty$ with $Y_x$, $Y_y$ tangent cones in $M_\infty$ centered at $x$ and $y$, respectively.  Then we say $Y_x$ and $Y_y$ \textit{come from the same sequence of rescalings} if there exists a sequence $s_j\to 0$ such that
\begin{align}
(M_{\infty},s^{-1}_jd_{\infty},x)\to Y_x\, ,\notag\\
(M_{\infty},s^{-1}_jd_{\infty},y)\to Y_y\, .
\end{align}

Now given a limit geodesic $\gamma:[0,\ell]\rightarrow M_\infty$ we state the main estimate for tangent cones along $\gamma$.

\begin{theorem}\label{t:tangentholder}
(H\"older continuity of tangent cones).
There exists $\alpha(n)$ and $C(n)>0$ such that given any $\delta>0$ with $\delta\ell<s<t<\ell-\delta\ell$ and $Y_{\gamma(s)}$, $Y_{\gamma(t)}$ tangent cones from the same sequence of rescalings, then we have
\begin{equation}
d_{GH}(B_1Y_{\gamma (s)},B_1Y_{\gamma (t)})<\frac{C}{\delta\ell}\,|s-t|^{\alpha(n)}\, .
\end{equation}
Here $B_1Y_s$ and $B_1Y_t$ are the unit balls around the `cone' tips.
\end{theorem}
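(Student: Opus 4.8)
The plan is to deduce Theorem~\ref{t:tangentholder} directly from Theorem~\ref{t:holderlimit}, by applying the latter at precisely the scales that define the two tangent cones and then rescaling and passing to a limit. Fix $\delta>0$ and $s,t$ with $\delta\ell<s<t<\ell-\delta\ell$, and let $s_j\to 0$ be the common sequence of rescalings along which
$$(M_\infty,s_j^{-1}d_\infty,\gamma(s))\to Y_{\gamma(s)}\qquad\text{and}\qquad (M_\infty,s_j^{-1}d_\infty,\gamma(t))\to Y_{\gamma(t)}\, .$$
It is exactly the hypothesis that $Y_{\gamma(s)}$ and $Y_{\gamma(t)}$ come from the \emph{same} sequence of rescalings that makes the argument work: the estimate we use couples the two balls, so the two blow-downs must be taken simultaneously.

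Since $s_j\to 0$, for all $j$ large enough we have $0<s_j<r_0(n)\,\delta\ell$, so Theorem~\ref{t:holderlimit} applies with $r=s_j$ and gives
$$d_{GH}\bigl(B_{s_j}(\gamma(s)),\,B_{s_j}(\gamma(t))\bigr)<\frac{C}{\delta\ell}\,s_j\,|s-t|^{\alpha(n)}\, .$$
Now rescale the metric on $M_\infty$ by $s_j^{-1}$. Gromov--Hausdorff distance is homogeneous of degree one under scaling, $d_{GH}(\lambda X,\lambda X')=\lambda\,d_{GH}(X,X')$, and under this rescaling the ball $B_{s_j}(\gamma(s))$ becomes the unit ball about $\gamma(s)$ in $(M_\infty,s_j^{-1}d_\infty)$, which I denote $B_1^{(j)}(\gamma(s))$, and likewise at $t$. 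Hence the factor $s_j$ cancels, and for all large $j$
$$d_{GH}\bigl(B_1^{(j)}(\gamma(s)),\,B_1^{(j)}(\gamma(t))\bigr)<\frac{C}{\delta\ell}\,|s-t|^{\alpha(n)}\, .$$

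It remains to let $j\to\infty$. Since the rescaled pointed spaces converge to $Y_{\gamma(s)}$ and $Y_{\gamma(t)}$ along the same sequence, the unit balls $B_1^{(j)}(\gamma(s))$ and $B_1^{(j)}(\gamma(t))$ converge in the Gromov--Hausdorff sense to the unit balls $B_1Y_{\gamma(s)}$ and $B_1Y_{\gamma(t)}$ about the respective cone tips; then continuity of $d_{GH}$ under Gromov--Hausdorff convergence passes the last displayed inequality to the limit and yields $d_{GH}(B_1Y_{\gamma(s)},B_1Y_{\gamma(t)})\le\frac{C}{\delta\ell}\,|s-t|^{\alpha(n)}$. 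Since $s<t$ the right-hand side is positive, so the strict inequality of the statement follows by a harmless enlargement of $C$. The one point that needs genuine care here is the convergence of the unit balls, because balls of a \emph{fixed} radius can fail to be stable under pointed Gromov--Hausdorff convergence due to boundary effects; I would handle this in the standard way, applying Theorem~\ref{t:holderlimit} instead at the radii $r=(1\pm\varepsilon)s_j$, using the inclusions $B_{1-\varepsilon}\subseteq\liminf_j B_1^{(j)}\subseteq\limsup_j B_1^{(j)}\subseteq B_{1+\varepsilon}$ (valid since all the spaces in sight are length spaces), obtaining the estimate for $B_{1\pm\varepsilon}Y_{\gamma(s)}$ and $B_{1\pm\varepsilon}Y_{\gamma(t)}$ with an error controlled by $\varepsilon$, and finally letting $\varepsilon\to 0$. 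Apart from this bookkeeping, Theorem~\ref{t:tangentholder} is simply the scale-invariant form of Theorem~\ref{t:holderlimit}.
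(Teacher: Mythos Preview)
Your proposal is correct and is precisely the argument the paper has in mind: Theorem~\ref{t:tangentholder} is presented as an immediate consequence of Theorem~\ref{t:holderlimit}, obtained by applying the latter at the scales $r=s_j$, rescaling, and passing to the limit along the common sequence. The paper does not spell this out, so your write-up, including the care about boundary effects for balls under pointed Gromov--Hausdorff convergence, is if anything more detailed than what the authors provide.
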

\begin{remark}
An immediately corollary is that tangent cones from the same sequence of rescalings on $\gamma$ change continuously, see also Example \ref{ex:Generalized_Trumpet}.
\end{remark}

This Theorem and Examples \ref{ss:Example4}, \ref{ss:Example5} should be contrasted to a result of Petrunin, \cite{Pn}, who showed a conjecture of Yu. Burago asserting that for Alexandrov spaces the tangent cones remains the same
along the interior of a geodesic.  Since the regular set of an Alexandrov space is the collection of points whose cone is Euclidean space it follows easily from Petrunin's result that for an Alexandrov space the regular set is convex.

 A useful consequence of the H\"older continuity of the tangent cones that we will use several times is that the set of interior points of a geodesic where the tangent cone is unique and equal to a given metric space is closed relative to the interior.    This is the following:

\begin{corollary}\label{c:closed}
If $\gamma:[0,\ell]\to M_{\infty}$ is a limit geodesic and $(Y,0)$ is a fixed pointed metric space, then
\begin{enumerate}
\item The set of interior points on $\gamma$ where the tangent cone is unique and equal to $Y$ is closed relative to the interior.    \label{e:closed1}
\item If the set is also dense in the interior, then it is all of the interior.
\end{enumerate}
\end{corollary}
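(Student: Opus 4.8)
The plan is to deduce Corollary \ref{c:closed} from the H\"older continuity of tangent cones, Theorem \ref{t:tangentholder} (equivalently, from the ball estimate of Theorem \ref{t:holderlimit}). Write $S\subset(0,\ell)$ for the set of interior parameters $s$ at which the tangent cone of $M_\infty$ at $\gamma(s)$ is unique and equal to $(Y,0)$. Statement (\ref{e:closed1}) is exactly the assertion that $S$ is closed in $(0,\ell)$, and the second statement is then immediate: a subset of $(0,\ell)$ that is both closed in $(0,\ell)$ and dense in it must equal $(0,\ell)$.

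To prove (\ref{e:closed1}), I would fix a sequence $s_j\in S$ with $s_j\to s_\infty\in(0,\ell)$ and let $(Y',0)$ be an arbitrary tangent cone at $\gamma(s_\infty)$, realized along some scales $r_k\to 0$, so that $(M_\infty,r_k^{-1}d_\infty,\gamma(s_\infty))\to(Y',0)$ in the pointed Gromov-Hausdorff sense. The key observation is that, since the tangent cone at each $\gamma(s_j)$ is \emph{unique}, these same rescalings automatically satisfy $(M_\infty,r_k^{-1}d_\infty,\gamma(s_j))\to(Y,0)$; hence $Y'$ and $Y$ come from the same sequence of rescalings in the sense of the discussion preceding Theorem \ref{t:tangentholder}. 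Choose $\delta=\delta(s_\infty)>0$ so small that $\delta\ell<s_j,s_\infty<\ell-\delta\ell$ for all large $j$. Then for any fixed $R>0$ and all $k$ large enough that $Rr_k<r_0\delta\ell$, Theorem \ref{t:holderlimit} gives
\begin{equation*}
d_{GH}\big(B_{Rr_k}(\gamma(s_j)),\,B_{Rr_k}(\gamma(s_\infty))\big)<\frac{C}{\delta\ell}\,(Rr_k)\,|s_j-s_\infty|^{\alpha(n)}.
\end{equation*}
Passing to the $r_k^{-1}$-rescaled spaces, under which $d_{GH}$ is multiplied by $r_k^{-1}$ while $B_{Rr_k}$ becomes the ball of radius $R$ about the basepoint, and then letting $k\to\infty$ -- using $(M_\infty,r_k^{-1}d_\infty,\gamma(s_j))\to(Y,0)$ and $(M_\infty,r_k^{-1}d_\infty,\gamma(s_\infty))\to(Y',0)$ together with the standard fact that balls of all but a countable set of radii pass to the Gromov-Hausdorff limit -- I obtain
\begin{equation*}
d_{GH}\big(B_RY,\,B_RY'\big)\le\frac{CR}{\delta\ell}\,|s_j-s_\infty|^{\alpha(n)}
\end{equation*}
for every $R$ outside a fixed countable set. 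Letting finally $j\to\infty$ forces $B_RY\cong B_RY'$ for all such $R$, hence for arbitrarily large $R$, so that $(Y',0)\cong(Y,0)$ as pointed metric spaces. Since $Y'$ was an arbitrary tangent cone at $\gamma(s_\infty)$, the tangent cone there is unique and equal to $Y$, i.e. $s_\infty\in S$; thus $S$ is closed in $(0,\ell)$, which is statement (\ref{e:closed1}). The second statement follows as noted above.

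I expect the only genuinely delicate point to be the one isolated above: the H\"older estimate must be applied along a \emph{single} rescaling sequence common to $\gamma(s_\infty)$ and all the $\gamma(s_j)$, and it is precisely uniqueness of the tangent cones at the points $\gamma(s_j)$ that makes this possible; everything else is routine bookkeeping with Gromov-Hausdorff limits (the scaling behavior of $d_{GH}$ and the convergence of balls of almost every radius). One can instead run the argument with unit balls only and cite Theorem \ref{t:tangentholder} directly, at the cost of first noting that a unique tangent cone is self-similar under all rescalings, so that its unit ball already determines it; the version above sidesteps this and is essentially the same argument.
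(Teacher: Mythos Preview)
Your proof is correct and follows precisely the approach the paper intends: the paper states Corollary \ref{c:closed} without proof, treating it as an immediate consequence of Theorem \ref{t:tangentholder} (see also the continuity remark following that theorem), and you have simply filled in the details carefully. Your use of Theorem \ref{t:holderlimit} for balls of all radii $R$ rather than Theorem \ref{t:tangentholder} for the unit ball is a clean way to avoid the self-similarity issue you flag at the end, since in the collapsed case tangent cones need not be metric cones.
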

\begin{remark}
In fact the assumption of uniqueness of the tangent cones is not necessary.  Relative to any sequence $r_j\rightarrow 0$ the collection of points of $\gamma$ whose tangent cone from this sequence of rescalings is equal to $Y$ is a closed set.
\end{remark}

\subsection{Convexity of the regular set for non-collapsed Einstein limits}
Let $M_{\infty}$ be a pointed limit of $n$-dimensional manifolds $M_i$ with $p_i\in M_i$ and
\begin{align}
\Vol (B_1(p_i))&\geq v>0\, ,\\
|\Ric_{M_i}|&\leq (n-1)\, .
\end{align}

As mentioned above for non-collapsing limits it was shown in \cite{ChC2} (see theorem 5.2 there) that tangent cones are metric cones of Hausdorff dimension $n$.  A metric cone $C(X)$ with
cross-section $X$ is a warped product $(0,\infty)\times_r X$ with
warping function $f(r)=r$, $r\in (0,\infty)$.  By theorem 5.2 of \cite{ChC2} $X$ is a length space
with diameter at most $\pi$ and dimension equal to $(n-1)$.  The regular set of
$M_{\infty}$ are points where a neighborhood is a smooth Einstein manifold;
the complement of the regular set is the singular set.  It can be
shown, see theorem 0.3 in \cite{C3} and cf. \cite{ChC2}, that a point is regular if and only if \underline{one} of its tangent cones is isometric to $\RR^n$; so in this case uniqueness follows.  As a first application of Theorem \ref{t:tangentholder} we have the following convexity result for the regular set.

\begin{theorem}  \label{t:Regbounded}
(Convexity of the regular set).
The regular set is convex for any non-collapsed limit of a sequence of $n$-manifolds with uniformly bounded Ricci curvatures.

That is, if $M_i$ are as above and $M_{\infty}$ is a limit of the $M_i$'s with $\gamma_{\infty}:[0,\ell]\to M_{\infty}$ a limit geodesic segment in $M_{\infty}$ such that one point on $\gamma_{\infty}$ is regular (possibly an endpoint), then every interior point of $\gamma_{\infty}$ is a regular point.
\end{theorem}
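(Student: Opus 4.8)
The plan is to run an open--closed connectedness argument on the open parameter interval. Write $\mathcal{R}\subseteq M_\infty$ for the regular set and put
\[
I=\{t\in(0,\ell):\gamma_\infty(t)\in\mathcal{R}\}\, .
\]
Since $(0,\ell)$ is connected it suffices to show that $I$ is nonempty, open in $(0,\ell)$, and closed in $(0,\ell)$; then $I=(0,\ell)$, which is exactly the assertion of Theorem \ref{t:Regbounded}. (If $\ell=0$ there is nothing to prove.)

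First, $I$ is open: by the very definition of the regular set, if $x$ has a neighborhood which is a smooth Einstein manifold then so does every point of that neighborhood, so $\mathcal{R}$ is open in $M_\infty$, and composing with the continuous map $\gamma_\infty$ shows $I=\gamma_\infty^{-1}(\mathcal{R})\cap(0,\ell)$ is open. Next, $I$ is nonempty: let $\gamma_\infty(t_0)$ be the given regular point. If $t_0\in(0,\ell)$ this is clear. If $t_0$ is an endpoint, say $t_0=0$ (the case $t_0=\ell$ is symmetric), pick $\rho>0$ with $B_\rho(\gamma_\infty(0))\subseteq\mathcal{R}$, which exists since $\mathcal{R}$ is open; since $\gamma_\infty$ is a limit geodesic it is parametrized by arclength, so $d_\infty(\gamma_\infty(0),\gamma_\infty(t))=t$, whence $(0,\min\{\rho,\ell\})\subseteq I$.

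The main point is closedness, and this is where the new estimate is used, via Corollary \ref{c:closed} (itself a consequence of Theorem \ref{t:tangentholder}). Two classical facts from the non-collapsed theory are invoked: tangent cones of $M_\infty$ are metric cones of Hausdorff dimension $n$, and a point $x\in M_\infty$ is regular if and only if \emph{one} of its tangent cones is isometric to $(\RR^n,0)$ --- in which case, as noted in the excerpt, the tangent cone at $x$ is unique (theorem 0.3 of \cite{C3}, cf. \cite{ChC2}). Hence $I$ is exactly the set of interior points of $\gamma_\infty$ at which the tangent cone is unique and equal to the fixed pointed space $Y=(\RR^n,0)$, and by the first part of Corollary \ref{c:closed} this set is closed in $(0,\ell)$. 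Combining the three properties, $I=(0,\ell)$.

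The step I expect to be the real obstacle lies not in this short deduction but in the ingredients it uses: one needs to pass from the \emph{quantitative closeness} of tangent cones furnished by H\"older continuity to the \emph{exact equality} with $\RR^n$ required to certify regularity. That upgrade is exactly the $\epsilon$-regularity and volume rigidity available only in the non-collapsed (here bounded Ricci) regime; it is what makes $\mathcal{R}$ open and what makes ``having a Euclidean tangent cone'' equivalent to being regular, and it is precisely the obstruction that forces one to settle for weak and $a.e.$ convexity in the collapsed case treated later. Granting Theorem \ref{t:tangentholder}, Corollary \ref{c:closed}, and these standard structural facts, the only place calling for a bit of care is the endpoint case of the nonemptiness step, dealt with above.
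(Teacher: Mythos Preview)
Your proof is correct and follows essentially the same open--closed connectedness argument as the paper: openness of $\mathcal{R}$ from \cite{C3}, closedness of $I$ from Corollary \ref{c:closed}, and connectedness of $(0,\ell)$. You are in fact more careful than the paper's proof in explicitly handling the nonemptiness/endpoint case, which the paper leaves implicit.
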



Note that in section 3 of \cite{ChC3} a much weaker statement is shown.  Namely, there it is shown (see, in particular, corollary 3.10 in \cite{ChC3}) that in a non-collapsed limit of spaces with a uniform lower Ricci curvature bound any pair of regular points can be connected by a curve consisting entirely of almost regular points.  See \cite{ChC3} for the precise statement.   See also the three last paragraphs on page 408 of \cite{ChC3} where it is discussed that one would like to know that $\cR$ is connected in the collapsed case.

\vskip6mm
As a consequence of \ref{e:closed1}) in Corollary \ref{c:closed} we get Theorem \ref{t:Regbounded}:

\begin{proof}
(of Theorem \ref{t:Regbounded}).
By \cite{C3}, see also section 7 of \cite{ChC2}, the regular set of a non-collapsed limit of spaces with uniformly bounded Ricci curvatures is an open set.  This follows since by \cite{C3} the following two are equivalent for such a limit:
\begin{enumerate}
\item A tangent cone at $x$ is $\RR^n$.
\item An open neighborhood of $x$ is a $C^{1,\beta}$ Riemannian manifold.
\end{enumerate}
By Corollary \ref{c:closed}.1 the regular points in $\gamma_\infty$ are also a closed set, hence the Theorem follows.
The theorem follows from this together with \ref{e:closed1}) of Corollary \ref{c:closed}.
\end{proof}

The following effective version of the regular set being convex, or rather that if one endpoint of a limit minimizing geodesic is a regular point, then the whole interior consists of regular points.  This is the following which is interesting even when $M_i$ and $M_{\infty}$ are all the same smooth Riemannian manifold:

\begin{theorem}  \label{t:blowup}
(Rate of blow-ups along geodesics). There exists $c(n),r_0(n)>0$ and $\alpha(n)>1$ such that if $M_{\infty}$ is as in Theorem \ref{t:Regbounded} with $\gamma_{\infty}:[0,2\ell]\to M_{\infty}$ a limit minimizing unit speed geodesic with $\ell\leq 1$, $r\leq r_0\ell$, and
\begin{equation}
d_{GH}(B_r(\gamma_{\infty}(\ell)),B_r(0))<\epsilon\,r\, ,
\end{equation}
then for all $0<s<1$
\begin{equation}  \label{e:rateofblowup}
d_{GH}(B_{cs^{\alpha}r}(\gamma_{\infty}(s\ell)),B_{cs^{\alpha}r}(0))<\epsilon\,c\,s^{\alpha}\,r\, .
\end{equation}
Here $B_t(0)\subset \RR^n$ is the Euclidean ball.

Moreover, if all $M_i$'s are Einstein, then the curvature $R$ blows-up at $\gamma (0)$ at most to the power $-2\alpha$.  That is, after choosing $c$ even smaller we get by combining (\ref{e:rateofblowup}) with \cite{C3} that all balls $B_{cs^{\alpha}r}(\gamma(s))$ are smooth and
\begin{equation}
\sup_{B_{cs^{\alpha}r}(\gamma(s))}|R|\leq s^{-2\alpha}\,r^{-2}\, .
\end{equation}
\end{theorem}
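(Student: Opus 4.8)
The plan is a dyadic iteration along $\gamma_\infty$ toward the endpoint $\gamma_\infty(0)$, at geometrically shrinking scales, feeding Theorem~\ref{t:holderlimit} into Colding's regularity theorem \cite{C3}. First two reductions. Let $\eta_0=\eta_0(n)$ be the threshold of \cite{C3}: $d_{GH}(B_\rho(x),B_\rho(0))<\eta_0\rho$ forces $B_{\rho/2}(x)$ to be a $C^{1,\beta}$ Riemannian manifold with harmonic radius $\ge c_0(n)\rho$, so that at scale $\sigma\le c_0\rho/2$ the deviation of $B_\sigma(x)$ from $\RR^n$ is $\lesssim\sigma/\rho$ (compare with the constant metric at the center). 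For $\epsilon\ge\eta_0$ I would prove the stronger conclusion with $\eta_0$ in place of $\epsilon$, so assume $\epsilon<\eta_0$. And for $s$ near $1$, $\gamma_\infty(s\ell)$ sits within a definite fraction of the midpoint, so one application of Theorem~\ref{t:holderlimit} at scale $\sim r$ (where its constant is $\lesssim 1/\ell$ and the displacement $\lesssim\ell$), together with Bishop--Gromov and the Cheeger--Colding volume rigidity estimate --- which promotes $GH$-closeness to $\RR^n$ at one scale to $GH$-closeness at all smaller scales at the same center, and is used throughout to change scales at a fixed point --- gives the conclusion; so it remains to treat $s$ bounded away from $1$.

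Then I would prove by induction on $k$ that $d_{GH}(B_{\rho_k}(\gamma_\infty(2^{-k}\ell)),B_{\rho_k}(0))<\epsilon\,\rho_k$ for $\rho_k:=c\,2^{-k\alpha}r$, and deduce the general $s\in(0,1)$ by running one more step with $s$ in place of $2^{-(k+1)}$, $2^{-(k+1)}<s\le 2^{-k}$. Here $\alpha>1$ is forced: at the $k$-th point the nearer endpoint is at distance $2^{-k}\ell$, so the radius constraint of Theorem~\ref{t:holderlimit} demands $\rho_k<r_0\,2^{-k}\ell$, which with $r\le r_0\ell$ gives $2^{-k\alpha}<2^{-k}$. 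The inductive step has two moves: (i) shrink the center-scale from $\rho_k$ to $\rho_{k+1}=2^{-\alpha}\rho_k$ at the same point $\gamma_\infty(2^{-k}\ell)$ using the $C^{1,\beta}$ structure from \cite{C3} --- the ball stays near-Euclidean, and since the $C^{1,\beta}$ estimate gives deviation $\lesssim\sigma/\rho_k$ at scale $\sigma$, the deviation picks up the geometric factor $2^{-\alpha}$; (ii) move the center from $\gamma_\infty(2^{-k}\ell)$ to $\gamma_\infty(2^{-(k+1)}\ell)$ at scale $\rho_{k+1}$ using Theorem~\ref{t:holderlimit}, but in the sharpened form in which the $GH$-displacement of the two equal-radius balls is bounded by the H\"older factor $\frac{C}{\delta\ell}|s-t|^{\alpha(n)}$ times the deviation of the starting ball from $\RR^n$ --- a form one extracts from the proof of Theorem~\ref{t:holderlimit} (the approximation being effectively $\frac{1}{2}$-H\"older), and which must hold since the plain estimate is vacuous when the starting geometry is exactly Euclidean. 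Balancing the factor $2^{-\alpha}$ and the \cite{C3} constant in (i) against the H\"older loss in (ii), and absorbing the geometric tail into $c$, closes the induction with $\epsilon$ preserved rather than amplified.

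The step I expect to be the main obstacle is exactly this error bookkeeping: arranging that the deviation \emph{contracts}, not merely stays bounded, as one alternates ``shrink scale by $2^{-\alpha}$ at the cost of the $C^{1,\beta}$ constant of \cite{C3}'' with ``move a dyadic step at the cost of $\frac{C}{\delta\ell}|s-t|^{\alpha(n)}$, regaining a factor proportional to the current deviation''. The constants then have to be fixed in the order $n\rightsquigarrow\alpha(n)$ (large, to beat the \cite{C3} constant and to satisfy the radius constraint) $\rightsquigarrow c(n),r_0(n)$ (small, to start the induction and kill the tail), so that everything depends only on $n$; this is where one genuinely uses the proof of Theorem~\ref{t:holderlimit}, not just its statement. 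Finally, the Einstein addendum: once $d_{GH}(B_{cs^\alpha r}(\gamma(s\ell)),B_{cs^\alpha r}(0))<\epsilon\,cs^\alpha r$ with $\epsilon<\eta_0$, \cite{C3} gives a $C^{1,\beta}$ structure on $B_{cs^\alpha r/2}(\gamma(s\ell))$; as the metric is Einstein it solves an elliptic system in harmonic coordinates, so elliptic regularity bootstraps the $C^{1,\beta}$ bound to $\sup|R|\lesssim(cs^\alpha r)^{-2}$ on a slightly smaller ball, which after shrinking $c$ once more reads $\sup_{B_{cs^\alpha r}(\gamma(s))}|R|\le s^{-2\alpha}r^{-2}$.
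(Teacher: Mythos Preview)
Your overall architecture---a dyadic iteration toward $\gamma_\infty(0)$, alternating ``shrink the scale via the $C^{1,\beta}$ regularity of \cite{C3}'' with ``move the center via the H\"older estimate''---is exactly the paper's. The paper packages each halving as a lemma (if $d_{GH}(B_r(\gamma(1)),B_r(0))<\epsilon r$ then $d_{GH}(B_{\delta r}(\gamma(1/2)),B_{\delta r}(0))<\epsilon\,\delta r$ for some $\epsilon(n),\delta(n)$) and iterates it; your $\alpha$ is then $-\log_2\delta$. The Einstein addendum via harmonic coordinates and elliptic bootstrapping is also the paper's argument.

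The gap is in your step (ii). You invoke a ``sharpened form'' of Theorem~\ref{t:holderlimit} in which the displacement $d_{GH}(B_r(\gamma(s)),B_r(\gamma(t)))$ is bounded by the H\"older factor \emph{times the deviation of $B_r(\gamma(s))$ from $\RR^n$}. No such form is stated or proved, and your heuristic---that the plain estimate is ``vacuous when the starting geometry is exactly Euclidean''---is simply wrong: if $B_r(\gamma(s))$ happens to be Euclidean but $B_r(\gamma(t))$ is not, the plain bound $\frac{C}{\delta\ell}r|s-t|^{\alpha(n)}$ is perfectly meaningful. Nothing in the proof of Theorem~\ref{t:holder} (the gradient-flow map, the hessian estimates) is sensitive to how close the starting ball is to $\RR^n$. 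Without this sharpening, rescale the segment $[0,2\cdot 2^{-k}\ell]$ to $[0,2]$ and apply Theorem~\ref{t:holder} directly: the move from $1$ to $1/2$ at scale $\rho_{k+1}$ contributes an \emph{additive} error $C'(n)\rho_{k+1}$ with $C'(n)=C(n,\tfrac14)\,2^{-\alpha(n)}$ a fixed dimensional constant. This is far larger than the $\epsilon$-regularity threshold of \cite{C3}, so the induction cannot close no matter how large you take $\alpha$.

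The paper avoids this precisely by \emph{not} jumping from $1$ to $1/2$ in a single application of Theorem~\ref{t:holder}. Inside each halving it takes many small steps: move from $1$ to $\nu$ with $\nu$ close enough to $1$ that $C|1-\nu|^{\alpha(n)}<\epsilon$ (so the error merely doubles to $2\epsilon$), then shrink scale via \cite{C3} to recover $\epsilon$, and repeat on the order of $\tfrac{1}{2(1-\nu)}$ times until reaching $1/2$. The accumulated scale-shrinkage over these sub-steps is the lemma's $\delta(n)$. Replace your single move in (ii) by this inner loop and your argument goes through; as written, the ``sharpened form'' is the missing idea.
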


Recall that a ball $B_r(p)$ in a manifold $M$ is said to have {\it bounded geometry} if the (sectional) curvature $R$ is bounded by $r^{-2}$ and the injectivity radius at $p$ is at least $r$.  The next corollary is for simplicity only stated for Ricci flat manifolds though holds with obvious changes for Einstein manifolds.   Roughly speaking it says that in an Einstein manifold regions with bounded geometry propagates throughout the manifold (as any pair of points in the manifold obviously can be joined by a minimizing geodesic and thus bounded geometry near one point mean by the corollary bounded geometry near the other).

\begin{corollary}      \label{c:ricciflatblowup}
Given an integer $n$, there exist constants $\alpha=\alpha (n)\geq 1$ and $C=C(n)>0$ such that the following holds:
Suppose that $M^n$ is a Ricci flat $n$-manifold and $\gamma:[0,2L]\to M$ is a unit speed minimizing geodesic, then the radius of balls centered at $\gamma (r)$ that have  bounded geometry decay at most like
\begin{equation}
C\,\left(\frac{r}{L}\right)^{-\alpha}
\end{equation}
from $\gamma (L)$ to $\gamma (0)$.
\end{corollary}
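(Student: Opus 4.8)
The plan is to deduce Corollary \ref{c:ricciflatblowup} from Theorem \ref{t:blowup}, applied with $M_i=M_\infty=M$ — a smooth Ricci flat manifold is its own Gromov--Hausdorff limit, and the non-collapsing and two-sided Ricci hypotheses of Theorem \ref{t:blowup} are automatic — supplemented by standard estimates that translate ``bounded geometry'' into the Gromov--Hausdorff and curvature language of that theorem and back. First I would normalize: since Ricci flatness and the ratio $r/L$ are preserved under the rescaling $g\mapsto L^{-2}g$ (under which distances, injectivity radii and bounded-geometry radii scale by $L^{-1}$, and $|R|$ by $L^{2}$), it suffices to take $L=1$. So let $\gamma:[0,2]\to M$ be a unit speed minimizing geodesic, write $\rho(r)$ for the supremum of radii $\rho$ such that $B_\rho(\gamma(r))$ has bounded geometry, and let the goal be to produce $\alpha(n)\geq 1$ and $C(n)$ with $\rho(r)\geq C^{-1}r^{\alpha}\rho(1)$ for all $r\in(0,1]$, equivalently $\rho(1)/\rho(r)\leq C\,r^{-\alpha}$.

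Set $\rho:=\rho(1)$, so $|R|\leq\rho^{-2}$ on $B_\rho(\gamma(1))$ and $\inj(\gamma(1))\geq\rho$, and let $\epsilon(n),r_0(n),c(n),\alpha(n)$ be the constants of Theorem \ref{t:blowup}. The first step is to feed a small ball around $\gamma(1)$ into Theorem \ref{t:blowup}: since a ball of bounded geometry in a Ricci flat manifold is $C^\infty$-close to a flat ball at a definite fraction of its radius (Anderson's harmonic-radius estimate, then elliptic bootstrapping off $\Ric\equiv 0$), there is a dimensional $\kappa(n)\in(0,1)$ so that, setting $r_1:=\min\{\kappa\rho,\,r_0\}$, one has $d_{GH}(B_{r_1}(\gamma(1)),B_{r_1}(0))<\epsilon\,r_1$. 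Then I would apply Theorem \ref{t:blowup} with $\ell=1$ and $r=r_1$ — the geodesic is defined on $[0,2]=[0,2\ell]$ and $r_1\leq r_0\ell$ — to conclude that for every $s\in(0,1)$ the ball $B_{cs^{\alpha}r_1}(\gamma(s))$ is smooth with $\sup|R|\leq s^{-2\alpha}r_1^{-2}$ on it and is $(\epsilon c s^{\alpha}r_1)$-Gromov--Hausdorff close to $B_{cs^{\alpha}r_1}(0)\subseteq\RR^n$. The Gromov--Hausdorff closeness keeps the volume of this ball almost Euclidean, so the local injectivity radius estimate of Cheeger--Gromov--Taylor (using also the curvature bound) yields $\inj(\gamma(s))\geq c'(n)\,s^{\alpha}r_1$; combined with the curvature bound this shows $B_{c'(n)s^{\alpha}r_1}(\gamma(s))$ has bounded geometry, i.e. $\rho(s)\geq c'(n)\,s^{\alpha}r_1$. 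Whenever $\kappa\rho\leq r_0$ this reads $\rho(s)\geq c'(n)\kappa(n)\,s^{\alpha}\rho$, the desired bound with $C=(c'(n)\kappa(n))^{-1}$.

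It remains to deal with the case $\rho>r_0/\kappa=:c_0(n)$, where the bounded-geometry radius at $\gamma(1)$ is already of definite size; here the estimate should be soft. Since $\gamma$ is minimizing, $d(\gamma(r),\gamma(1))=1-r$, so $B_{\rho-1+r}(\gamma(r))\subseteq B_\rho(\gamma(1))$ and $|R|\leq\rho^{-2}$ there; Bishop--Gromov comparison (with $\Ric\equiv 0$) together with $\inj(\gamma(1))\geq\rho$ and, again, the Cheeger--Gromov--Taylor estimate propagates the injectivity radius bound as $\inj(\gamma(r))\geq c(n)\min\{\rho,\rho-1+r\}$, so that $\rho(r)$ loses at worst a dimensional factor compared to $\rho(1)$, which is far better than $r^{-\alpha}$. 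Enlarging $C(n)$ to cover both cases, and using $r^{-\alpha}\geq 1$, completes the argument. I expect the only genuinely non-formal ingredient to be the two-way passage between ``bounded geometry'' and ``Gromov--Hausdorff close to Euclidean with bounded curvature'' — the harmonic-radius estimate in the first step and the Cheeger--Gromov--Taylor injectivity radius estimate in the last two; granted those, Corollary \ref{c:ricciflatblowup} is scaling and bookkeeping layered on Theorem \ref{t:blowup}.
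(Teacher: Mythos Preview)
Your proposal is correct and follows exactly the approach the paper intends: Corollary~\ref{c:ricciflatblowup} is stated in the paper as an immediate consequence of Theorem~\ref{t:blowup} without a separate proof, and your argument supplies precisely the standard conversions (Anderson's harmonic-radius estimate one way, Cheeger--Gromov--Taylor the other) needed to translate between bounded geometry and the Gromov--Hausdorff/curvature language of that theorem. One small bookkeeping point: your second case tacitly needs $\rho>1$ for the containment $B_{\rho-1+r}(\gamma(r))\subseteq B_\rho(\gamma(1))$ to be nonvacuous, but the intermediate range $r_0/\kappa<\rho\leq 1$ is already handled by your first argument (with $r_1=r_0$) since $\rho$ is then pinched between dimensional constants.
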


Theorem \ref{t:blowup} follows from iterating the following lemma:

\begin{lemma}
Given an integer $n$, there exists $\epsilon=\epsilon (n)$, $\delta=\delta (n)$, $r_0=r_0(n)>0$, so that if $M^n$ is a smooth manifold with $|\Ric|\leq \epsilon$ and $\gamma:[0,2]\to M$ is a unit speed minimizing geodesic with $r\leq r_0$ and
\begin{equation}
d_{GH}(B_r(\gamma_{\infty}(1)),B_r(0))<\epsilon\,r\, ,
\end{equation}
then
\begin{equation}
d_{GH}\left(B_{\delta r}\left(\gamma_{\infty}\left(1/2\right)\right),B_{\delta r}(0)\right)
<\epsilon\,\delta\,r\, .
\end{equation}
\end{lemma}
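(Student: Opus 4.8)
The plan is to reduce the statement, by rescaling, to one about a \emph{long} minimizing geodesic carrying tiny Ricci curvature, to transport the near-Euclidean structure from the midpoint down to $\gamma(1/2)$ using the H\"older continuity of Theorem~\ref{t:holder}, and then to zoom in at $\gamma(1/2)$ using the $\epsilon$-regularity theory for spaces with bounded Ricci curvature. The only ingredient not already classical will be the transport step, and that is exactly where Theorem~\ref{t:holder} is used.

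\textbf{Rescaling and transport.} First I would replace the metric on $M$ by $r^{-2}g$. Then $\gamma$ becomes a unit speed minimizing geodesic on $[0,L]$ with $L:=2/r$ (large, since $r\le r_0$ with $r_0=r_0(n)$ small), the curvature bound reads $|\Ric|\le\epsilon r^2$, the hypothesis becomes $d_{GH}(B_1(\gamma(L/2)),B_1(0))<\epsilon$, and it suffices to prove $d_{GH}(B_\delta(\gamma(L/4)),B_\delta(0))<\epsilon\,\delta$. Now apply Theorem~\ref{t:holder} to the unit balls at the two interior points $\gamma(L/4)$ and $\gamma(L/2)$ of $\gamma$ (legitimate once the constant $r_0$ we are constructing is small enough in terms of the constants of Theorem~\ref{t:holder}, since then $L=2/r$ is large and both points lie well inside $\gamma$). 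This bounds the comparison by a power of the inverse geodesic length:
\begin{equation*}
d_{GH}\big(B_1(\gamma(L/4)),B_1(\gamma(L/2))\big)\ <\ C'(n)\,L^{\alpha(n)-1}\ \le\ C'(n)\,r^{\,1-\alpha(n)}.
\end{equation*}
Since $\alpha(n)<1$ (indeed effectively $\tfrac12$), the right-hand side is a genuine, small power of $r$; writing $q:=\gamma(L/4)$ we obtain $d_{GH}(B_1(q),B_1(0))<\eta$ with $\eta:=\epsilon+C'(n)\,r^{1-\alpha(n)}$, which is as small as we wish once $\epsilon$ and $r_0$ are small.

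\textbf{Zooming in via $\epsilon$-regularity.} Since $B_1(q)$ is $\eta$-close to $B_1(0)\subset\RR^n$, volume convergence gives $\Vol B_1(q)\ge(1-\Psi(\eta|n))\,\omega_n$, so together with $|\Ric|\le\epsilon r^2$ the $\epsilon$-regularity theory for manifolds with bounded Ricci curvature (\cite{C3}, cf. \cite{ChC2}) applies once $\eta$ and $\epsilon$ are below a dimensional threshold: the harmonic radius at $q$ is $\ge c(n)>0$, and in suitably normalized harmonic coordinates at $q$ the metric is $C^{1,\beta}$-close to the flat one with norm $\le C(n)(\eta+\epsilon r^2)$. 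Consequently, for every $\delta\le c(n)$,
\begin{equation*}
d_{GH}\big(B_\delta(q),B_\delta(0)\big)\ \le\ C(n)\,\delta^{2}\,(\eta+\epsilon r^2),
\end{equation*}
i.e. the \emph{relative} Gromov--Hausdorff error has improved, at scale $\delta$, to $\le C(n)\,\delta\,(\eta+\epsilon r^2)$. I would then fix $\delta=\delta(n)$ with $\delta\le c(n)$ and $C(n)\delta\le\tfrac14$; fix $r_0=r_0(n)\le 1$ with $C'(n)r_0^{1-\alpha(n)}\le\epsilon$; and finally fix $\epsilon=\epsilon(n)$ small enough to meet the $\epsilon$-regularity thresholds above. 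With these choices $\eta+\epsilon r^2\le 3\epsilon$, hence $d_{GH}(B_\delta(q),B_\delta(0))<\epsilon\,\delta$, and undoing the rescaling (multiplying all distances by $r$) gives precisely $d_{GH}(B_{\delta r}(\gamma(1/2)),B_{\delta r}(0))<\epsilon\,\delta\,r$.

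\textbf{Main obstacle, and the iteration.} I expect the crux to be the transport step: $\epsilon$-regularity by itself says nothing relating a tiny ball near $\gamma(1)$ to one near $\gamma(1/2)$ — a \emph{fixed} distance $\tfrac12$ away — and Theorem~\ref{t:holder} is exactly what bridges this. The essential quantitative point is that, after rescaling so the geodesic has length $\sim 1/r$, this transport costs only the power $r^{1-\alpha(n)}$ of the scale, which is therefore absorbed into $\epsilon$ by choosing $r_0$ small; without the rescaling (for a geodesic of fixed length $2$) the same application of Theorem~\ref{t:holder} would only give a comparison with a fixed-constant relative error, which would be useless here. Granting the Lemma, Theorem~\ref{t:blowup} then follows by iteration: apply the Lemma to $\gamma|_{[0,2]}$, then to $\gamma|_{[0,1]}$, and so on, halving the geodesic parameter and contracting the scale by the fixed factor $\delta$ at each step, and interpolating over intermediate parameters with Theorem~\ref{t:holder}; this produces the blow-up estimate with exponent $\alpha=\log_2(1/\delta)>1$, the curvature bound in the Einstein case then coming from \cite{C3}.
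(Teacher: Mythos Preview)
Your proof is correct, and it uses the same two ingredients as the paper --- Theorem~\ref{t:holder} for the transport and the $C^{1,\beta}$ $\epsilon$-regularity of \cite{C3} for the zoom-in --- but you package them differently. The paper works on the original geodesic of length~$2$: it applies Theorem~\ref{t:holder} at scale~$r$ to move from $\gamma(1)$ only to a nearby point $\gamma(\nu)$ with $|1-\nu|\sim (\epsilon/C)^{1/\alpha}$ (this is all a single application at scale $r$ can afford on a length-$2$ geodesic), then uses $\epsilon$-regularity to drop the error from $2\epsilon$ back to $\epsilon$ at a smaller scale, and \emph{iterates} this pair of steps roughly $\tfrac{1}{2(1-\nu)}$ times to reach $\gamma(1/2)$. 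You instead rescale so that the geodesic has length $L=2/r$, and then a \emph{single} application of Theorem~\ref{t:holder} already carries you from $\gamma(L/2)$ to $\gamma(L/4)$ with error $\lesssim L^{\alpha-1}=O(r^{1-\alpha})$, after which one zoom-in suffices. Your observation that the $1/(\delta\ell)$ prefactor in Theorem~\ref{t:holder} turns a long geodesic into a small transport error is the point that distinguishes the two arguments; it makes your route shorter and arguably gives a better $\delta(n)$, while the paper's iterated version is closer in spirit to how Theorem~\ref{t:blowup} is subsequently proved.

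One minor imprecision: you assert that in harmonic coordinates $\|g-\delta_{ij}\|_{C^{1,\beta}}\le C(n)(\eta+\epsilon r^2)$ with a \emph{linear} dependence on $\eta$. The standard $\epsilon$-regularity statements give only $\|g-\delta_{ij}\|_{C^{1,\beta}}\le\Psi(\eta\,|\,n)$ with $\Psi\to 0$. This does not affect your argument: once $\eta$ is below the dimensional threshold you have $\|g-\delta_{ij}\|_{C^{1,\beta}}\le K(n)$, whence (normalizing $g(q)=\delta_{ij}$) $d_{GH}(B_\delta(q),B_\delta(0))\le C(n)K(n)\delta^2$, and choosing first $\epsilon=\epsilon(n)$ below the threshold and then $\delta=\delta(n)\le \epsilon/(C(n)K(n))$ gives exactly the conclusion. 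So your logical chain is fine; only the quantitative claim about the $C^{1,\beta}$ norm should be weakened.
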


\begin{proof}
Theorem \ref{t:holder} above gives the assertion except with $2\epsilon$ instead of $\epsilon$ and at $\nu$ instead of at $\frac{1}{2}$,  where $\nu$ is close to $1$ depending on $\epsilon$, but  independent of $r$.  By \cite{C3} (see also section 7 of \cite{ChC2}) the metric on the ball $B_{\frac{r}{2}}(\gamma (\nu))$ is $C^{1,\beta}$ with fixed small scale invariant $C^{1,\beta}$ norm of the difference of the metric and the flat $g_{i,j}=\delta_{i,j}$ Euclidean metric on the ball, provided $\epsilon$ is fixed small.  Hence, going to a smaller scale gives $\epsilon$ as opposed to $2\epsilon$.    Repeating this argument $\frac{1}{2\nu}$ many times yields the claim.
\end{proof}

We note that there exists a limit $M_{\infty}$ of a non-collapsing sequence of smooth $4$-manifolds $M_i^4$ with $\Ric_{M_i}\geq 0$ and a unit speed geodesic $\gamma_{\infty}:[0,1]\to M_{\infty}$ such that $\gamma_{\infty} (1)$ is a smooth point, but the curvature blows up faster than quadratically at $\gamma_{\infty}(0)$.     This follows from one of the examples in \cite{CN1} that show that there is such a limit where one tangent cone at $\gamma_{\infty}(0)$ is smooth and another not; cf. also section 8 of \cite{ChC2}, \cite{P2}, and the examples section later in this paper.  In the Ricci flat case the Eguchi-Hanson metrics, \cite{EH}, show that quadratic blow up of the curvature (and thus linear blow up of the  geometry) is the best that one can hope for.    This would correspond to that $\alpha$ can be chosen to be $1$ in Corollary  \ref{c:ricciflatblowup}.    In fact, we conjecture that this is the case:

\begin{conjecture}
$\alpha$ in Corollary \ref{c:ricciflatblowup} can be chosen to be $1$.  (A similarly statement should hold for general Einstein manifolds).
\end{conjecture}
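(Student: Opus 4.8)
The plan is to recast the Conjecture in blow-up language and then to improve, in the Einstein case, the Lemma that precedes it. By the $\epsilon$-regularity theorem for Einstein metrics (Anderson, Cheeger--Tian; see also \cite{C3}) there is a dimensional threshold $\epsilon_0=\epsilon_0(n)$ so that $d_{GH}(B_s(p),B_s(0^n))<\epsilon_0 s$ (with $0^n\in\RR^n$) forces $B_{s/2}(p)$ to have bounded geometry, and conversely bounded geometry at scale $s$ gives $d_{GH}(B_{cs}(p),B_{cs}(0^n))<\epsilon_0\,cs$ for a dimensional $c$; so ``bounded geometry radius'' and ``Euclidean regularity radius'' are comparable. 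Hence Corollary \ref{c:ricciflatblowup} with $\alpha=1$ would follow from (and is in fact essentially equivalent to) the following sharpening of the Lemma: \emph{there are $c(n),\epsilon_0(n),r_0(n)>0$ so that if $M^n$ is Ricci flat, $\gamma:[0,2L]\to M$ is a minimizing unit speed geodesic, $r\le r_0 L$, and $d_{GH}(B_r(\gamma(L)),B_r(0^n))<\epsilon_0 r$, then $d_{GH}(B_{c(t/L)r}(\gamma(t)),B_{c(t/L)r}(0^n))<\epsilon_0\,c(t/L)r$ for all $0<t<L$} --- equivalently, the Lemma with the constant $\delta(n)$ replaced by one arbitrarily close to $\tfrac12$, so that the scale shrinks \emph{in proportion} to the distance travelled along $\gamma$. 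Note that $\delta<\tfrac12$ in the Lemma is exactly what produces the exponent $\alpha=\log_2(1/\delta)>1$, while the Eguchi--Hanson metric \cite{EH} is self-similar and realizes $\delta=\tfrac12$; so the entire task is to remove the scale loss from the Lemma in the Einstein case.

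The guiding principle is that at the level of tangent cones the statement is forced by self-similarity: near the (possibly singular) endpoint $\gamma(0)$ every tangent cone of the non-collapsed Ricci flat limit is a metric cone $C(X)$ (\cite{ChC2}), the regular part of which is invariant under the cone dilations, so along any ray into $\cR(C(X))$ the bounded-geometry radius at $(\rho,x)$ is exactly $\rho\cdot r_{\rm bd}((1,x))$, i.e. linear in $\rho$. (That the blown-up ray along $\gamma$ does enter $\cR(C(X))$ is itself a point to be checked, using Theorem \ref{t:Regbounded} and the structure of the singular set of $C(X)$.) To turn this into a proof I would iterate a single improved step: show that for a fixed $\nu\in(0,1)$ close to $1$, the hypothesis ``$d_{GH}(B_s(\gamma(t)),B_s(0^n))<\epsilon_0 s$'' implies ``$d_{GH}(B_{\nu s}(\gamma(\nu t)),B_{\nu s}(0^n))<\epsilon_0\,\nu s$,'' so that parameter and scale contract by the \emph{same} factor. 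One direction of the comparison is Theorem \ref{t:tangentholder} (H\"older continuity of tangent cones along $\gamma$), which lets one control $B_{\nu s}(\gamma(\nu t))$ by the nearly self-similar cone picture anchored at $\gamma(t)$; the closeness lost in moving along the geodesic is then restored using the \emph{scale-invariant} curvature estimate of $\epsilon$-regularity followed by interior Schauder estimates for the Einstein equation in harmonic coordinates, and the point to be proven is that this restoration is compatible with only a \emph{proportional} loss of scale (rather than the strictly smaller factor $\mu(n,\nu)<\nu$ that the general smooth $C^{1,\beta}$-bootstrap incurs). Iterating $t\mapsto\nu t$ then compounds to the Lemma with $\delta=\tfrac12$ and hence $\alpha=1$; the Eguchi--Hanson example shows that the constant $c(n)<1$ cannot be dropped and that $\tfrac12$ is optimal.

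The main obstacle is precisely this ``no extra scale loss'' claim. The Colding--Naber estimate is honestly $\alpha(n)$-H\"older, effectively $\tfrac12$, and by the examples in this paper it is sharp among spaces with only $\Ric\ge-(n-1)$, so the exponent $1$ must be extracted entirely from the Einstein equation. The natural mechanism is a \L{}ojasiewicz--Simon type estimate along $\gamma$: even though tangent cones at $\gamma(0)$ need not be unique, the rescalings $(M,s^{-1}d,\gamma(0))$ should converge to the set of tangent cones at a definite polynomial rate, and the ray through $\cR(C(X))$ should see that rate uniformly. Establishing such a rate is closely tied to --- and plausibly of the same order of difficulty as --- the hard, open problem of uniqueness of tangent cones for Einstein metrics; I would therefore first try to prove the Conjecture under the extra hypothesis that the tangent cone at $\gamma(0)$ is unique, where that cone is an attracting fixed point of the rescaling flow and its linearized attraction rate directly governs the scale loss, and only afterwards attempt to remove the uniqueness assumption. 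A possibly cleaner alternative would be to find a monotone quantity for the rescaling flow --- an analogue of a frequency function --- whose total oscillation bounds the accumulated scale loss; identifying the correct such quantity for the Einstein equation in this setting would itself be the crux.
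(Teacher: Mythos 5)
The statement you were asked to prove is labeled as a \emph{Conjecture} in the paper, and the authors offer no proof of it; they state it, note that an affirmative answer would have further consequences (e.g.\ smoothness of all tangent cones at infinity of Euclidean-volume-growth Ricci flat manifolds once one is smooth), and leave it open. So there is no "paper's own proof" to compare against. Your write-up is, appropriately, a research program rather than a proof, and it correctly identifies both the reduction — improving the paper's Lemma preceding Theorem \ref{t:blowup} so that the scale contracts in \emph{proportion} to distance along $\gamma$, i.e.\ $\delta=\tfrac12$, rather than by the fixed smaller factor $\delta(n)$ that the general $C^{1,\beta}$ bootstrap gives — and the central obstruction, namely that within the $\Ric\geq-(n-1)$ class the $\tfrac12$-H\"older rate of Theorem \ref{t:tangentholder} is sharp (Example \ref{ss:Example5}), so any exponent-$1$ improvement must come entirely from the Einstein equation and not from the hessian estimates of this paper.

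The genuine gap, which you yourself flag, is in the step asserting that the Einstein equation restores closeness after a move along $\gamma$ with only proportional (not strictly smaller) scale loss. As you observe, this is essentially a \L{}ojasiewicz--Simon type decay estimate for the rescaling flow at the possibly singular endpoint $\gamma(0)$, and no such estimate is available in the paper or, to date, in the Einstein literature without uniqueness (or at least integrability) hypotheses on the tangent cone. Your fallback — prove the conjecture first under a uniqueness assumption on the tangent cone at $\gamma(0)$ — is sensible, but it is worth noting that even there one would need a quantitative rate, not mere uniqueness, and that rate is precisely what a linearized-stability analysis of the cone would be expected to provide, so this is not a free hypothesis. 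In short: your diagnosis is correct, the reduction is the right one, and the missing ingredient (a monotone or almost-monotone quantity controlling the rescaling flow for Einstein metrics at the endpoint of a geodesic) is the whole problem; the paper does not supply it, and neither does your sketch.
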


An affirmative answer to this conjecture would have various applications.  In particular, it would immediately give the following:

\begin{conjecture}
If $M^n$ is an open Ricci flat manifold with Euclidean volume growth and one tangent cone at infinity is smooth, then all tangent cones at infinity are smooth.  (A similarly statement should hold for local tangent cones of non-collapsed limits of Einstein manifolds.)
\end{conjecture}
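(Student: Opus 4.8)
We assume the preceding conjecture, namely that $\alpha$ in Corollary \ref{c:ricciflatblowup} may be taken to be $1$. For a point $x$ in a Ricci flat manifold write $\mathrm{br}(x)$ for the largest radius $\rho$ such that $B_\rho(x)$ has bounded geometry. Since the Ricci flat condition and $\mathrm{br}$ are both scale covariant, the case $\alpha=1$ of Corollary \ref{c:ricciflatblowup} says precisely that along any unit speed minimizing geodesic $\gamma\colon[0,2L]\to M$ one has $\mathrm{br}(\gamma(r))\ge C(n)^{-1}(r/L)\,\mathrm{br}(\gamma(L))$ for all $0<r\le L$. The plan is to combine this with the $\epsilon$-regularity and metric-cone structure theory of \cite{ChC2} and \cite{C3} to show that for \emph{every} tangent cone at infinity $C(X)$ of an open Ricci flat $M^n$ with Euclidean volume growth and \emph{every} $q\in C(X)$ with $\dist(0,q)=1$, a ball $B_c(q)$ of fixed radius $c>0$ is a smooth Ricci flat manifold; by the self-similarity of the cone this forces $X$ to be a smooth Riemannian manifold, which is the conjecture. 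Throughout, note that by Bishop--Gromov $\Vol(B_\rho(p))\ge v\rho^n$ for all $\rho>0$ and a fixed $v>0$, so every blow-down $(M,\lambda_j^{-1}d,p)$, $\lambda_j\to\infty$, is non-collapsed and its limits are metric cones of Hausdorff dimension $n$.

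\emph{Step 1.} Suppose $(M,r_j^{-1}d,p)\to C(X_1)$ with $X_1$ smooth. Then $C(X_1)\setminus\{0\}$ is a smooth Ricci flat manifold, and compactness of $X_1$ yields an $\epsilon_0=\epsilon_0(X_1)>0$ with $\mathrm{br}(q)\ge\epsilon_0$ at \emph{every} point $q$ of the unit sphere of $C(X_1)$; in particular the infinitesimal tangent cone of $C(X_1)$ at every such $q$ is $\RR^n$ at a uniform scale. Applying the $\epsilon$-regularity theorem for non-collapsed Ricci flat limits (\cite{C3}; cf. \cite{ChC2}) to the annuli $\{\tfrac12<\dist(p,\cdot)<2\}$ of the rescaled spaces $(M,r_j^{-1}d)$, I would conclude that there is $j_0$ so that for all $j\ge j_0$ \emph{every} $z\in M$ with $d(p,z)=r_j$ satisfies $\mathrm{br}(z)\ge\tfrac12\epsilon_0\, r_j$.

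\emph{Steps 2 and 3.} Let $C(X_2)$ be an \emph{arbitrary} tangent cone at infinity, say $(M,s_k^{-1}d,p)\to C(X_2)$, and let $q_2\in C(X_2)$ with $\dist(0,q_2)=1$; pick $x_k\in M$ with $s_k^{-1}x_k\to q_2$, so $d(p,x_k)/s_k\to 1$. Since $C(X_2)$ is a metric cone, the radial ray through $q_2$ extends to $[0,\infty)$, and by the structure theory of tangent cones at infinity (\cite{ChC2}) this radial ray is, after rescaling by $s_k^{-1}$, a limit of rays $\rho_k\colon[0,\infty)\to M$ based at $p$; in particular $d(\rho_k(s_k),x_k)=o(s_k)$. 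For each $k$ pick $j(k)\ge j_0$ with $r_{j(k)}\ge 2s_k$, possible since $r_j\to\infty$. As $\rho_k$ is globally minimizing, $d(p,\rho_k(r_{j(k)}))=r_{j(k)}$, so by Step 1, $\mathrm{br}(\rho_k(r_{j(k)}))\ge\tfrac12\epsilon_0\, r_{j(k)}$. Now apply Corollary \ref{c:ricciflatblowup} with $\alpha=1$ to $\rho_k|_{[0,2r_{j(k)}]}$ at the parameter $s_k$, obtaining $\mathrm{br}(\rho_k(s_k))\ge C(n)^{-1}\tfrac12\epsilon_0\, s_k=:2c\,s_k$. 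Rescaling by $s_k^{-1}$, the ball $B_{2c}(s_k^{-1}\rho_k(s_k))$ in $(M,s_k^{-1}d)$ has sectional curvature bounded by $(2c)^{-2}$ and injectivity radius at least $2c$ at its center; together with the non-collapsing volume bound this gives, by standard injectivity-radius estimates, uniform Cheeger--Gromov bounds on a fixed-size ball, and since $s_k^{-1}\rho_k(s_k)$ and $s_k^{-1}x_k$ both converge to $q_2$, the balls $B_c(x_k)$ in $(M,s_k^{-1}d)$ carry uniform such bounds. Hence they subconverge smoothly; as they Gromov--Hausdorff converge to $B_c(q_2)\subset C(X_2)$, that ball is a smooth Ricci flat manifold. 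As $q_2$ was arbitrary, $X_2$ is smooth. The local statement for non-collapsed limits of Einstein manifolds follows verbatim, using the Einstein analogue of the first conjecture together with the metric-cone structure of local tangent cones from \cite{ChC2}, \cite{C3}.

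\emph{The main obstacle.} Granting the first conjecture, what remains is the careful deployment of the two $\epsilon$-regularity/compactness inputs: in Step 1 that a smooth cross-section upgrades Gromov--Hausdorff closeness to \emph{uniform} smooth bounds over the whole unit sphere of the rescaled manifolds (so the bounded-geometry radius at \emph{every} far point is comparable to its distance from $p$), and in Step 3 that such bounds pass to the Gromov--Hausdorff limit; both are standard for non-collapsed Ricci flat limits but must be quoted with the right uniformity. The other delicate point is the identification, via the cone-structure theory of \cite{ChC2}, of the radial ray of $C(X_2)$ through $q_2$ as an honest limit of rays of $M$ based at $p$, so that $x_k$ genuinely lies $o(s_k)$-close to a globally minimizing geodesic along which Corollary \ref{c:ricciflatblowup} can be invoked. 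Everything else is bookkeeping, which is why this would be an ``immediate'' consequence of the first conjecture.
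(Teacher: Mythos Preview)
The statement you are addressing is a \emph{conjecture}; the paper does not prove it. The paper merely asserts that an affirmative answer to the preceding conjecture (that $\alpha$ in Corollary~\ref{c:ricciflatblowup} may be taken equal to $1$) ``would immediately give'' this one, and supplies no argument for the implication. So there is no proof in the paper to compare against.

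What you have written is a conditional derivation of this conjecture from the preceding one, and your outline is precisely the argument the authors have in mind when they say ``immediately''. The essential mechanism---that when $\alpha=1$ the bounded-geometry radius at $\gamma(s_k)$ is at least $C^{-1}(s_k/r_{j(k)})\cdot\mathrm{br}(\gamma(r_{j(k)}))\geq C^{-1}(s_k/r_{j(k)})\cdot\tfrac12\epsilon_0 r_{j(k)}=c\,s_k$, so that the ratio $r_{j(k)}/s_k$ drops out---is exactly the point. With $\alpha>1$ this cancellation fails and the argument collapses, which is why the first conjecture is needed.

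You have also correctly isolated the two places where genuine care is required beyond the first conjecture. The $\epsilon$-regularity input in Step~1 (smooth cross-section $\Rightarrow$ uniform curvature bounds on the whole metric annulus for large $j$) is standard for non-collapsed Ricci-flat limits via \cite{C3} and \cite{A1}. The more delicate point is the one you flag last: given an arbitrary $q_2$ on the unit cross-section of $C(X_2)$, one must produce minimizing segments $\rho_k$ from $p$ of length $2r_{j(k)}$ with $\rho_k(s_k)$ landing $o(s_k)$-close to $x_k$. This does not follow from abstract properties of cones alone; it uses that in a complete open manifold with $\Ric\geq 0$ every radial ray in a tangent cone at infinity arises as a limit of rays from $p$ (take minimizing segments from $p$ to points at distance $Ts_k$ approximating $Tq_2$, let $T\to\infty$ along a diagonal, and use Arzel\`a--Ascoli). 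This is known but worth stating explicitly; your citation of \cite{ChC2} for ``cone-structure theory'' is the right neighborhood, though the precise statement about rays is closer to the classical splitting/line arguments.

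In short: your proposal is the correct fleshing-out of an implication the paper asserts without proof, and you have identified the nontrivial steps accurately.
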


For later use we conclude this subsection by mentioning that there is a natural
stratification of the singular set of $M_{\infty}$ based on
tangent cones; see top of page 410 of \cite{ChC2}.  This is valid even in the case of non-collapsed limit of manifolds with a uniform lower Ricci curvature bound.  Namely,
\begin{align}
 \cS_k\equiv\{x:x\text{ is singular and no tangent cone at }x\text{ splits off a }\RR^{k+1}\text{ factor}\}\, .
\end{align}
That is, no tangent cone at $x$ is isometric to $\RR^{k+1}\times Y$ for some metric space $Y$.  Thus
$$\cS_0\subset \cS_1\subset \cdots\subset \cS_n\footnote{It was shown in \cite{ChC2} that $\cS=\cS_{n-2}$ and that $\dim \cS_k\leq k$.  Hence, the conclusion that non-collapsed limits of Einstein manifolds are smooth outside a closed subset of Hausdorff codimension $2$.  See \cite{C4}, \cite{Ch}, \cite{F2}, \cite{Ga}, \cite{W} for surveys of these results.}.$$

\subsection{Branching geodesics and local dimension}
A geodesic is said to be branching if there exists another geodesic that coincide with $\gamma$ on a open subset, but  that at some point the two curves depart (branch) from each other.  Precisely, there does not
exists a common extension of the two geodesics.  Obviously, for smooth or even $C^{1,\beta}$ manifolds branching cannot occur as geodesics are entirely determined
by their initial conditions (initial velocity).  Even for Alexandrov spaces it follows directly from the Toponogov
triangle comparison theorem that geodesics do not branch; see page 384 of \cite{GvPe} and \cite{BGP}.
However, for general limits of manifolds with lower Ricci curvature bounds it is unknown whether or not
geodesics can branch in the interior; cf. \cite{CN2}.  Some simple branching that potentially could come from one-dimensional
pieces have been rule out in section 5 of \cite{ChC3}; see the example below.  Moreover, we will recall below the known examples of limit spaces that have geodesics that start out tangent and then branch.    An immediate corollary of the results above is:

\begin{corollary}
If $M_{\infty}$ is a non-collapsing limit of a sequence of $n$-manifolds with uniformly bounded Ricci curvature, and
$\gamma_{\infty}$ is a branching limit
geodesic, then $\gamma_{\infty}$ is entirely contained in the singular set of
$M_{\infty}$.
\end{corollary}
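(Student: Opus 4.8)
The plan is to prove the contrapositive: if $\gamma_\infty$ meets the regular set of $M_\infty$ even once, then $\gamma_\infty$ cannot branch. So suppose $\gamma_\infty(t_0)$ is a regular point for some $t_0\in[0,\ell]$ (possibly an endpoint). Applying Theorem \ref{t:Regbounded} to the limit geodesic $\gamma_\infty$, every interior point of $\gamma_\infty$ is then a regular point, and by \cite{C3} (see also Section~7 of \cite{ChC2}) each such point has a neighborhood that is a $C^{1,\beta}$ Riemannian manifold whose induced Riemannian distance agrees with $d_\infty$ on nearby pairs.

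Next I would pin down the branch point. By definition of branching there is a geodesic $\tilde\gamma$ coinciding with $\gamma_\infty$ on a nonempty open subset of the parameter interval but admitting no common extension with $\gamma_\infty$; after replacing this open set by the maximal interval of agreement, let $b$ be an endpoint of that interval at which the two curves separate. The curves then coincide on an interval of positive length abutting $b$ while differing at parameters arbitrarily close to $b$ on the other side, so $b$ must be an interior parameter of $\gamma_\infty$ (it cannot be $0$ or $\ell$). Hence $\gamma_\infty(b)$ is an interior point, and by the previous step it is regular, so it has a $C^{1,\beta}$ neighborhood $V$.

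The contradiction then comes from uniqueness of geodesics. Inside $V$ both $\gamma_\infty$ and $\tilde\gamma$ are locally $d_\infty$-minimizing, hence honest geodesics of the $C^{1,\beta}$ metric on $V$; in particular each has continuous velocity. They pass through the common point $\gamma_\infty(b)=\tilde\gamma(b)$ with the same velocity there (they coincide on an interval abutting $b$ and velocities are continuous), so, as recalled in the paper, on a $C^{1,\beta}$ manifold geodesics are determined by their initial position and velocity and therefore $\gamma_\infty$ and $\tilde\gamma$ agree on a full two-sided neighborhood of $b$ in $V$. This contradicts the choice of $b$ as a separation point, so no point of $\gamma_\infty$ is regular, i.e. $\gamma_\infty$ lies entirely in the singular set of $M_\infty$.

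The step I expect to require the most care is the reduction of the branching hypothesis to a statement about an interior point carrying a $C^{1,\beta}$ neighborhood: one must check both that the separation parameter $b$ is genuinely interior to $\gamma_\infty$ (using only that agreement already holds, with positive length, on one side of $b$) and that, near a regular point, a locally $d_\infty$-minimizing curve of the metric space $M_\infty$ is a geodesic of the $C^{1,\beta}$ Riemannian structure so that uniqueness applies. Both are consequences of the definitions together with the regularity theory of \cite{C3} and \cite{ChC2}, but they should be spelled out; everything else follows directly from Theorem \ref{t:Regbounded}.
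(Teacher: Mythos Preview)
Your proof is correct and follows essentially the same approach as the paper, which states this as an ``immediate corollary of the results above'' without further detail: if some point of $\gamma_\infty$ is regular, Theorem~\ref{t:Regbounded} makes the whole interior regular, hence locally $C^{1,\beta}$, and the paper's earlier remark that geodesics on $C^{1,\beta}$ manifolds are determined by their initial conditions rules out branching. You have simply spelled out the details of this last step, including the localization of the branch point to the interior, which the paper leaves implicit.
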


In \cite{ChC2} and \cite{ChC3} the following two examples of metric spaces were discussed.  It was show there that one of them could in fact occur as a limit spaces whereas the other could not.

\begin{example}\label{ex:1}
(The horn and the trumpet; see example 8.77 of \cite{ChC2} and example 5.5 of \cite{ChC3}).
As shown in \cite{ChC2}, the metric horn $Y^5$, with metric
\begin{equation}
dr^2+\left(\frac{1}{2}r^{1+\epsilon}\right)^2\,g^{\Sn^4}\, ,
\end{equation}
arises as the limit of a collapsing sequence, $(M^8_i,g_i)$.
The trumpet is the space obtained by attaching at the origin, a line
segment, $[-j, 0]$, to the horn, $Y^5$. It follows from theorem 5.1 of \cite{ChC3} (see example 5.5 in \cite{ChC3}), that for
no $j > 0$ does the trumpet arise as the limit of a sequence of manifolds with uniform lower Ricci curvature bounds.
\end{example}

For the horn all points have a unique tangent cone.  At the tip the tangent cone is equal to the half-line $[0,\infty)$ and all other points have tangent cone equal to $\RR^5$.  The trumpet has also unique tangent cones, but on the entire closed line segment tangent cones are equal to $\RR$.  In particular, for the trumpet there are two open (non-empty) subsets so that on one the tangent cone is unique and equal to one Euclidean space and on the other tangent cones are also unique, but equal to a different Euclidean space.  It was conjectured in \cite{ChC3} that this should not happen for limits of manifolds with a uniform lower Ricci curvature bound.  The fact that the trumpet could not occur as a limit was given as support of this conjecture, however the tools to show even simple generalizations of the trumpet (e.g. the trumpet cross a torus) cannot arise as a limit have remained elusive.  We prove this conjecture in full in the next subsection, but first let us apply Theorem \ref{t:tangentholder} to see that these generalized trumpets can not occur as limit spaces.  The proof of the full conjecture is in the same spirit, if technically more involved.

\begin{example}\label{ex:Generalized_Trumpet}
Let $Z^5\equiv [-j,0]\cup Y^5$ be the trumpet constructed in Example \ref{ex:1} and let $X\equiv Z^5\times M^n$ be the trumpet cross an $n$-manifold.  We claim $X$ cannot arise as a Gromov-Hausdorff limit of manifolds with lower Ricci bounds.  To prove this let $x,y\in X$ be points in $X$ such that the tangent cones in a neighborhood of $x$ are unique and equal to $\RR^{n+1}$, while the tangent cones in a neighborhood of $y$ are unique and equal to $\RR^{n+5}$.  Let $\gamma$ be a limit geodesic connecting $x$ and $y$ and note that the tangent cones at each point of $\gamma$ are unique and are isometric to either $\RR^{n+1}$ or $\RR^{n+4}$, with both arising as tangent cones at some interior point.  However as in the remark following Theorem \ref{t:tangentholder} the tangent cones are changing continuously, which is not possible if the tangent cones acquire both of only two possible values.
\end{example}

Horns are examples of length spaces where geodesics that are initially tangent branch and trumpets are examples of length spaces where geodesic branch at some interior point.

In \cite{CN2} we will construct almost Euclidean limit spaces where geodesics that are initially tangent branch; just like in the example of the horn, but with the additional property that the space is almost maximal.

\subsection{Convexity of the regular set in general limits and constant local dimension}\label{ss:Convexity1}

In this subsection we will state and prove a convexity result for general limits that follows from our main H\"older continuity result.  Once we have that we are in a position to prove a conjecture of Cheeger-Colding.  They conjectured that the dimension of any limit is the same at almost every point, see Example \ref{ex:Generalized_Trumpet} where we use Theorem \ref{t:tangentholder} to prove this in a simplified setting.  To make the general results precise we need to recall the renormalized limit measures and the measured Gromov-Hausdorff convergence.  This summarizes some of the results in section 1 of \cite{ChC2}.  These measures were first constructed by Fukaya, \cite{F1}, who used a different argument than the one given in \cite{ChC2}.

In the non-collapsed case, the limit measure exists without the necessity
of passing to a subsequence, or of renormalizing the measure.
The unique limit measure is just the Hausdorff measure, $\cH^n$; see theorem
5.9 in \cite{ChC2}. (If, for the sake of consistency, one does renormalize the measure,
then one obtains a multiple of $\cH^n$, where as usual, the normalization
factor depends on the choice of base point.) However, in the collapsed
case the renormalized limit measure on the limit space can depend on
the particular choice of subsequence; see example 1.24 in \cite{ChC2}.

Let $M^n$ satisfy $\Ric_{M^n}\geq -(n-1)$.   Fix $p$ and define the renormalized volume function by
\begin{equation}
\underline{V}(x,r) =\underline{\Vol} (B_r(x))=\frac{\Vol (B_r(x))}{\Vol (B_1(p))}\, .
\end{equation}

Let $(M_i^n,p_i,\underline{V}_i)$ be a pointed sequence of $n$-dimensional manifolds with reference points $p_i$ and renormalized measures $\underline{V}_i$ defined with respect to these reference points.   Combining the proof of Gromov's compactness theorem with a  modification of the proof of the theorem of Arzela-Ascoli, one
obtains that a subsequence of these metric-measure spaces converges to a metric-measure space.  The limit will automatically satisfy a local doubling condition both metrically and measure-wise (in fact, it satisfies a Bishop-Gromov volume inequality); see section 1 of \cite{ChC2} for details.
The Radon measure $\nu$ uniquely defined from such a limit
$\underline{V}_i\to \nu$ is said to be a renormalized limit measure on the Gromov-Hausdorff limit $M_{\infty}$.

Suppose that $M_{\infty}$ is a measured Gromov-Hausdorff limit with renormalized measure $\nu$.  If $A\subset M_{\infty}$ has renormalized measure zero, ie, $\nu (A)=0$, then for any pair of balls in $M_{\infty}$ almost all limit minimal geodesics from one ball to the other intersect $A$ in a set of measure zero.  Precisely, we have the following (which is a direct consequence of the segment inequality theorem 2.6 in \cite{ChC4}, see also theorem 2.11 in \cite{ChC1} - we are below using the notation from theorem 2.11).   Let $A_1$, $A_2$ be open subsets of $M_{\infty}$ with $\gamma_{a_1,a_2}: [0,\overline{a_1,a_2}]\to M_\infty$ a limit minimal geodesic connecting $a_1$ and $a_2$ and let
\begin{equation}  \label{e:segment}
I_{a_1,a_2}(A)=\inf_{\gamma_{a_1,a_2}}|t\,:\, \gamma_{a_1,a_2} (t)\in A|
\end{equation}
where $|\cdot|$ denotes the measure of a set and the infimum in (\ref{e:segment}) is taking over all minimal limit geodesics connecting $a_1$ and $a_2$.  Equip the product $A_1\times A_2$ with the product measure $\nu\times \nu$.  Then we have the following:

\begin{lemma}  \label{l:segment}
If $\nu(A)=0$ then for $\nu\times\nu$ almost every $(a_1,a_2)\in A_1\times A_2$ we have
\begin{equation}
|I_{a_1,a_2}(A)|=0\, .
\end{equation}
\end{lemma}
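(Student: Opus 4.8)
The plan is to deduce Lemma \ref{l:segment} from the segment inequality of Cheeger–Colding. Recall that the segment inequality (theorem 2.6 in \cite{ChC4}, or the version in theorem 2.11 of \cite{ChC1}) says: for a nonnegative function $g$ on a ball, and for subsets $A_1, A_2$ contained in $B_R$, one has
\begin{equation}
\int_{A_1\times A_2}\mathcal{F}_g(a_1,a_2)\,d(a_1,a_2)\leq C(n,R)\,\bigl(\Vol(A_1)+\Vol(A_2)\bigr)\int_{B_{2R}}g\, ,
\end{equation}
where $\mathcal{F}_g(a_1,a_2)=\inf_{\gamma}\int_0^{\overline{a_1,a_2}}g(\gamma(t))\,dt$ and the infimum is over minimizing geodesics from $a_1$ to $a_2$. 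First I would pass this inequality to the limit: since the $M_i$ satisfy \eqref{e:elowerRicb} and converge in the measured Gromov–Hausdorff sense to $(M_\infty,\nu)$, and since limit minimal geodesics are by definition limits of minimal geodesics $\gamma_i$ on $M_i$, one obtains the same inequality on $M_\infty$ with $\nu$ in place of $\Vol$ and with the infimum in $\mathcal{F}_g$ taken over limit minimal geodesics. This limiting procedure is standard and already implicit in section 2 of \cite{ChC1} and \cite{ChC4}; I would cite it rather than redo it.

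Next I would apply the limit segment inequality with $g=\mathds{1}_A$, the indicator function of the measure-zero set $A$. For any limit minimal geodesic $\gamma_{a_1,a_2}:[0,\overline{a_1,a_2}]\to M_\infty$ we have $\int_0^{\overline{a_1,a_2}}\mathds{1}_A(\gamma_{a_1,a_2}(t))\,dt=|t:\gamma_{a_1,a_2}(t)\in A|$, so that $\mathcal{F}_{\mathds{1}_A}(a_1,a_2)=I_{a_1,a_2}(A)$ in the notation of \eqref{e:segment}. The right-hand side is then $C(n,R)(\nu(A_1)+\nu(A_2))\,\nu(B_{2R}(A)\cap A)$; but $\nu(A)=0$ forces $\int_{B_{2R}}\mathds{1}_A\,d\nu=0$. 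Hence
\begin{equation}
\int_{A_1\times A_2}I_{a_1,a_2}(A)\,d(\nu\times\nu)(a_1,a_2)=0\, .
\end{equation}
Since $I_{a_1,a_2}(A)\geq 0$, this integral vanishing immediately gives $I_{a_1,a_2}(A)=0$ for $\nu\times\nu$-a.e.\ $(a_1,a_2)\in A_1\times A_2$, which is the claim. (If $A_1,A_2$ are unbounded one exhausts them by bounded pieces and takes a countable union of the exceptional sets.)

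The only genuine subtlety — the ``main obstacle'' — is the passage to the limit of the segment inequality, and specifically making sure the infimum in $\mathcal{F}$ on $M_\infty$ really is over limit minimal geodesics (so that $I_{a_1,a_2}(A)$ as defined in \eqref{e:segment} is controlled, not some a priori smaller quantity). One has to be slightly careful that a measurable-set indicator is an admissible integrand: the clean way is to first prove the inequality for continuous nonnegative $g$ (where lower semicontinuity of $g\circ\gamma$ under $\gamma_i\to\gamma$ and a Fatou argument give the limit), then upgrade to $g=\mathds{1}_A$ by outer regularity of $\nu$ (approximate $A$ from outside by open sets, hence $\mathds{1}_A$ from above by lower semicontinuous functions, and use monotone/dominated convergence on both sides). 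Everything else is a one-line consequence of the nonnegativity of the integrand. I would present the argument in this order: (i) state the limit segment inequality for nonnegative measurable $g$, citing \cite{ChC4,ChC1} and noting the approximation step; (ii) specialize to $g=\mathds{1}_A$; (iii) conclude.
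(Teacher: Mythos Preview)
Your proposal is correct and takes essentially the same approach as the paper: the paper's proof is the single line ``Apply theorem 2.6 of \cite{ChC4} to the indicator function of $A$,'' and you have spelled out exactly that argument together with the (implicit) passage of the segment inequality to the limit space and the approximation step for the indicator integrand. There is nothing to correct.
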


\begin{proof}
Apply theorem 2.6 of \cite{ChC4} to the indicator function of $A$.
\end{proof}

We will also need the notion of a regular point in the collapsed case.  We say that $x\in M_{\infty}$ is a $k$-regular point if every tangent cone is equal to $\RR^k$.   There are examples, see \cite{M2},  where one, but not all, of the tangent cones at a point is $\RR^k$, so uniqueness is a non-trivial assumption.\footnote{In the non-collapsing case if one tangent cone is $\RR^n$, then all are; see theorem 0.3 in \cite{C3}.}  We will write
\begin{align}
 \cR_k\equiv\{x:x\text{ is k-regular}\}\, ,
\end{align}
for the set of $k$-regular points and
\begin{align}
 \cR\equiv \cup_k \cR_k\, ,
\end{align}
for the set of all regular points.  The singular set $\cS$ is the complement of the regular set, so
\begin{align}
\cS=M_{\infty}\setminus \cR \, .
\end{align}
In theorem 2.1 of \cite{ChC2} it is shown that $\nu (\cS)=0$ (and hence $\cR$ is dense in $M_{\infty}$).    From this together with Theorem \ref{t:holder} and the segment inequality in form of Lemma \ref{l:segment} we conclude:

\begin{lemma}   \label{l:collapsedconvex}
For $\nu\times\nu$ almost every point $(a_1,a_2)\in A_1\times A_2$ there exists a limit minimal geodesic from $a_1$ to $a_2$ whose interior lies entirely inside $\cR_k$ for some $k$.  That is, the entire interior consists of  $k$-regular for the same $k$.
\end{lemma}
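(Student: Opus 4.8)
The plan is to combine three facts: (i) the singular set $\cS$ has $\nu$-measure zero (theorem 2.1 of \cite{ChC2}); (ii) by Lemma \ref{l:segment} applied to $A=\cS$, for $\nu\times\nu$-a.e.\ pair $(a_1,a_2)\in A_1\times A_2$ there is a limit minimal geodesic $\gamma$ connecting them whose interior meets $\cS$ in a set of one-dimensional measure zero --- in particular its interior meets the \emph{dense} (in the interior) set $\cR$; (iii) Theorem \ref{t:holder} (in the limit form, Theorem \ref{t:holderlimit}), which forces the geometry of small balls along $\gamma$ to vary H\"older continuously, hence the tangent cone data to vary continuously along the interior of $\gamma$. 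The idea is that a regular point $\gamma(t_0)$ in the interior has a unique tangent cone $\RR^k$, and H\"older continuity of balls should propagate "$\RR^k$-ness" from a dense set to all of the interior, giving that every interior point is $k$-regular for the same $k$.

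First I would fix a pair $(a_1,a_2)$ for which Lemma \ref{l:segment} gives a limit minimal geodesic $\gamma:[0,\ell]\to M_\infty$ with $|\{t:\gamma(t)\in\cS\}|=0$; since $\cR=\cup_k\cR_k$, on a full-measure subset of $(0,\ell)$ the point $\gamma(t)$ is $k$-regular for some $k=k(t)$. Next I would use Corollary \ref{c:closed} together with Theorem \ref{t:tangentholder}: for a \emph{fixed} model $Y=\RR^k$, the set of interior points where the tangent cone is unique and equal to $\RR^k$ is closed relative to the interior; and the sets $\{t:k(t)=k\}$ over the (at most countably many, in fact $k\in\{0,\dots,n\}$) values of $k$ partition a full-measure subset of $(0,\ell)$. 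So at least one such set, say $\{t:\gamma(t)\in\cR_k\}$, is dense in a subinterval --- indeed I would argue that continuity of tangent cones (the remark after Theorem \ref{t:tangentholder}) prevents the "dimension" $k$ from taking two distinct values on the interior: if $k(t)$ took values $k_1<k_2$ on dense subsets, then along $\gamma$ the unit ball of the tangent cone would have to pass continuously between $B_1\RR^{k_1}$ and $B_1\RR^{k_2}$, which is impossible because these are isolated points in the relevant Gromov--Hausdorff space of possible limits (exactly the argument already used in Example \ref{ex:Generalized_Trumpet}). Hence $k(t)\equiv k$ on a dense subset of the interior, and then part (2) of Corollary \ref{c:closed} upgrades this to \emph{all} interior points being $k$-regular.

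The one point that needs care --- and which I expect to be the main obstacle --- is the passage from "the geometry of small balls is H\"older close" to "the tangent cone is exactly $\RR^k$, with uniqueness." This is where one must invoke that a ball which is Gromov--Hausdorff close (at all small scales, by the propagation in Theorem \ref{t:holder}) to a Euclidean ball forces, via the volume/almost-rigidity theory behind the definition of $\cR_k$ and the already-cited Corollary \ref{c:closed} (whose proof encodes precisely this), the point to lie in $\cR_k$; so really the work is to confirm that Corollary \ref{c:closed} and its remark apply verbatim with $Y=\RR^k$ and that "dense $+$ closed $\Rightarrow$ everything" is being applied on the correct (connected) interior interval. Granting that, the proof is just: apply Lemma \ref{l:segment} with $A=\cS$; restrict to a good pair $(a_1,a_2)$; observe the interior of the geodesic meets $\cR$ densely; use continuity of tangent cones to see the regular dimension is locally constant along the interior, hence globally constant $=k$ on a dense set; and conclude by Corollary \ref{c:closed}.(2) that the whole interior lies in $\cR_k$. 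Since the set of such good pairs has full $\nu\times\nu$ measure in $A_1\times A_2$, this is exactly the assertion.
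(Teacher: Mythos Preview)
Your approach is essentially the same as the paper's: apply Lemma \ref{l:segment} with $A=\cS$ to get, for a.e.\ pair, a limit minimizing geodesic whose interior meets $\cR$ on a set of full measure (hence densely), then invoke Corollary \ref{c:closed} to upgrade. The ingredients and the logical skeleton are correct.

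The one place where your exposition wobbles is the step where you argue that only a single $k$ can occur. Your appeal to ``continuity of tangent cones'' and ``isolated points in the relevant Gromov--Hausdorff space'' in the style of Example \ref{ex:Generalized_Trumpet} is not quite right as written: that argument requires knowing \emph{a priori} that the tangent cone at \emph{every} interior point (from a fixed rescaling sequence) lies in a finite set of models, whereas at this stage you only know this on the dense regular subset---at the remaining (possibly singular) interior points the tangent cone could be anything, so the ``isolated values of a continuous function'' picture does not directly apply. The paper avoids this detour with a purely topological argument: by Corollary \ref{c:closed} each set $\breve\gamma\cap\cR_k$ is closed in the open interval $\breve\gamma$; since $k\leq n$ there are only finitely many such sets; their union is closed and dense, hence equals all of $\breve\gamma$; and a connected interval cannot be written as a disjoint union of more than one nonempty closed set. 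This yields both that the entire interior is regular and that a single $k$ occurs, in one stroke, without needing the continuity heuristic. Your Corollary \ref{c:closed} references already contain everything needed---just assemble them this way and drop the Example \ref{ex:Generalized_Trumpet} analogy.
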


\begin{proof}
Applying Lemma \ref{l:segment} to $A=\cS$ and using that by theorem 2.1 of \cite{ChC2} $\nu (\cS)=0$ it follows that for a.e. $a_1$ and $a_2$ there is a limit minimal geodesic $\gamma:[0,\ell]\to M_{\infty}$ connecting them so that the intersection of $\gamma$ with the regular set $\cR$ has full measure (and hence is dense) in $[0,\ell]$.  For any such pair $(a_1,a_2)$ it follows now easily from Corollary \ref{c:closed} that if $0<s<t<\ell$ with $\gamma (s)\in \cR_{k_s}$, and $\gamma (t)\in \cR_{k_t}$, then $k_s=k_t$.

Namely, it follows from \ref{e:closed1}) of Corollary \ref{c:closed} that for each $k$ the intersection
\begin{align}
 \breve\gamma\cap\cR_k\, ,
\end{align}
of the interior of the geodesic is closed relative to the interior.  Since $k\leq n$ by \cite{C3} (see also \cite{ChC2}), there are at most finitely many $\cR_k$'s that are non-empty.  It follows that the intersection of $\cR=\cup_k\cR_k$ with the interior of $\gamma$ is closed relative to the interior and since it is also dense it follows that the regular set is all of the interior.  Since $\{\cR_k\}_k$ are all pairwise disjoint, the intersection of each $\cR_k$ with the interior of the geodesic is closed, and the union is all of the interior, it now follows that there is only one $k$ so that $\breve\gamma\cap\cR_k$ is non-empty.
\end{proof}

We have now that the dimension of the tangent cone is $\nu$ almost everywhere the same.  Results of this form were originally proved in the four dimensional Einstein case in the fundamental works of \cite{BKN} and \cite{Ti}.  In the general noncollapsed case the result was first proved in \cite{ChC2}, where the following collapsed version was conjectured to hold as well.

\begin{theorem}\label{t:uniquek}
There is a unique $k$ so that
\begin{equation}
\nu (\cR\setminus \cR_k)=0\, .
\end{equation}
Combining this with that $\nu (\cS)=0$ by theorem 2.1 of \cite{ChC2} it follows that $\nu (M_{\infty}\setminus \cR_k)=0$.  We call this $k$ the dimension of $M_\infty$.
\end{theorem}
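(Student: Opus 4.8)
The plan is to deduce the global statement from the local-geodesic statement in Lemma \ref{l:collapsedconvex} together with a connectedness/measure-theoretic argument. Since $\nu(\cS)=0$ and $\nu(\cR\setminus\cR_k)$ is what we must control, it suffices to show that the sets $\{\nu(\cR_k)>0\}$ occur for exactly one value of $k$. Suppose for contradiction that there exist $k_1<k_2$ with $\nu(\cR_{k_1})>0$ and $\nu(\cR_{k_2})>0$. Since $\cR=\bigcup_k\cR_k$ is dense in $M_\infty$ and $\cR_{k_1},\cR_{k_2}$ have positive measure, one can find small balls $A_1,A_2\subset M_\infty$ with $\nu(A_1\cap\cR_{k_1})>0$ and $\nu(A_2\cap\cR_{k_2})>0$; replacing $A_1$ by $A_1\cap\cR_{k_1}$ and $A_2$ by $A_2\cap\cR_{k_2}$ (as sets of positive measure, not open sets) we get $\nu\times\nu\big((A_1\cap\cR_{k_1})\times(A_2\cap\cR_{k_2})\big)>0$.

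Now apply Lemma \ref{l:collapsedconvex} with these $A_1,A_2$: for $\nu\times\nu$ a.e.\ $(a_1,a_2)\in A_1\times A_2$ there is a limit minimal geodesic from $a_1$ to $a_2$ whose entire interior lies in a single $\cR_k$. Pick such a pair $(a_1,a_2)$ that also lies in $(A_1\cap\cR_{k_1})\times(A_2\cap\cR_{k_2})$ — this is possible because the latter has positive $\nu\times\nu$ measure and the exceptional set from Lemma \ref{l:collapsedconvex} is null. For this pair, the interior of the connecting geodesic is contained in some $\cR_k$, and by Corollary \ref{c:closed} (or the argument already used in the proof of Lemma \ref{l:collapsedconvex}, since $\cR_k\cap\breve\gamma$ is closed relative to the interior and the regular points are dense along $\gamma$) the closure includes the endpoints, forcing $a_1,a_2\in\cR_k$, hence $k_1=k=k_2$, a contradiction.

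Finally, having shown there is a single $k$ with $\nu(\cR_k)>0$, every other $\cR_j$ ($j\neq k$) has $\nu(\cR_j)=0$, so $\nu(\cR\setminus\cR_k)=\sum_{j\neq k}\nu(\cR_j)=0$ (the sum being finite since $j\leq n$ by \cite{C3}). Combined with $\nu(\cS)=0$ from theorem 2.1 of \cite{ChC2}, we get $\nu(M_\infty\setminus\cR_k)=0$.

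The main obstacle is the step where we must select a \emph{single} pair $(a_1,a_2)$ lying simultaneously in the positive-measure set $(A_1\cap\cR_{k_1})\times(A_2\cap\cR_{k_2})$ and outside the null exceptional set of Lemma \ref{l:collapsedconvex}; this is immediate once one is careful that Lemma \ref{l:collapsedconvex} applies with $A_1,A_2$ taken to be the open balls (so the exceptional set is $\nu\times\nu$-null on $A_1\times A_2$) while the contradiction is extracted by intersecting with the measurable subsets $\cR_{k_i}$. The only subtlety worth spelling out is that the endpoint-closure argument from Corollary \ref{c:closed} must be invoked to push the regularity of the interior out to $a_1$ and $a_2$ themselves; everything else is a routine finiteness-of-strata bookkeeping.
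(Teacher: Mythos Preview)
Your argument has a genuine gap at the endpoint-closure step. Corollary \ref{c:closed} only asserts that the set of interior points of $\gamma$ with a given tangent cone is closed \emph{relative to the interior} of $\gamma$; it says nothing about the endpoints. Indeed, the H\"older estimate of Theorem \ref{t:holder} controls $d_{GH}(B_r(\gamma(s)),B_r(\gamma(t)))$ only for $s,t\in(\delta\ell,(1-\delta)\ell)$ and degenerates as one approaches the endpoints. So from the fact that the interior of the geodesic from $a_1$ to $a_2$ lies in a single $\cR_k$ you cannot conclude that $a_1$ or $a_2$ themselves lie in $\cR_k$. The horn (Example \ref{ex:1}) illustrates this: every minimizing geodesic from the tip has interior entirely in $\cR_5$, while the tip itself is not in $\cR_5$.

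This is precisely why the paper's proof invokes the additional technical input of Corollary \ref{c:extending}: for $\nu\times\nu$-a.e.\ pair $(x,y)$ there is a limit minimizing geodesic which extends \emph{past} both $x$ and $y$, so that $x$ and $y$ are \emph{interior} points of some limit geodesic. One then chooses such a pair with $x\in\cR_k$, $y\in\cR_l$, applies Lemma \ref{l:collapsedconvex} to the extended geodesic, and concludes $k=l$ because $x$ and $y$ now sit in the interior where the single-$\cR_k$ conclusion applies directly. Your proposal is missing exactly this extension step; without it the contradiction does not close.
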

\begin{remark}
It is not clear that the dimension of the regular set is equal to the Hausdorff dimension of the limit space.
\end{remark}
\begin{proof}
This is a consequence of Lemma \ref{l:collapsedconvex} together with the following technical result (see Corollary \ref{c:extending} below) that we prove in an appendix:
\begin{itemize}
\item a.e. pair $(x,y)\in M_{\infty}\times M_{\infty}$ is in the interior of a limit geodesic, that is, a.e. pair lie on a limit geodesic that can be extended as a limit geodesic on both sides.
\end{itemize}

To see the result now assume $\exists$ $k,l$ such that $\nu(\cR_k),\nu(\cR_l)>0$.  Then by the segment inequality and the above statement there exists a limit minimizing geodesic $\gamma_\infty$ which intersects both $\cR_k$ and $\cR_l$ in the interior while satisfying Lemma \ref{l:collapsedconvex}.  Hence $k=l$ as claimed.
\end{proof}

\subsection{$\cR_k$ is connected and weakly convex, and the isometry group of a limit is a Lie group}\label{ss:Convexity2}

We give two more applications of the H\"older continuity here, one pertaining the convexity structure of the regular set and the other to the isometry group of limit spaces.    Recall that it was conjectured in section 4 of \cite{ChC3} that the isometry group for any limit is a Lie group.  In fact, it was proven in section 4 of \cite{ChC3} that the isometry group is a Lie group provided that one could prove that the regular set is connected in a certain weak sense.  It was also shown in \cite{ChC3} that in the non-collapsed case the regular set is connected in this sense and, thus, in the non-collapsed case the isometry group is a Lie group.

Now we introduce two notions of convexity.  We call a $\nu$-measurable set $U$ a.e.-convex if for $\nu\times\nu$ a.e. pair $(x,y)\in U\times U$ it holds that there exists a minimizing geodesic $\gamma\subseteq U$ which connects the two.  For instance consider the example $\RR^n\setminus\{0\}$.

We also consider the notion of a weakly convex subset.  Given two points $x,y\in X$ of a length space $X$ we say that a curve $\gamma$ connecting them is an $\epsilon$-geodesic if
$$||\gamma|-d(x,y)|\leq\epsilon^2 d(x,y)\, .$$
We say that a subset $U\subseteq X$ is weakly convex if the induced length space distance on $U$ is the same as the restricted metric.  In other words if $x$, $y\in U$, then $U$ may not contain a minimizing geodesic connecting $x$ and $y$, but for each $\epsilon>0$ there is an $\epsilon$-geodesic connecting $x$ and $y$ which is contained completely inside $U$.  Again consider the example of $\RR^n\setminus\{0\}$.  Now we state our convexity Theorem.

\begin{theorem}\label{t:weakconvex}
The following hold.
\begin{enumerate}
 \item $\cR_k$ is a.e. convex.
 \item $\cR_k$ is weakly convex.
\end{enumerate}
\end{theorem}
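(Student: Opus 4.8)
The plan is to deduce both statements from Lemma \ref{l:collapsedconvex} together with the extension result (Corollary \ref{c:extending}) mentioned in the proof of Theorem \ref{t:uniquek}. First I would establish (1). By Theorem \ref{t:uniquek} there is a single $k$ with $\nu(M_\infty\setminus\cR_k)=0$, so it suffices to produce, for $\nu\times\nu$-a.e. pair $(x,y)\in\cR_k\times\cR_k$, a minimizing geodesic between them whose interior lies in $\cR_k$. Applying Lemma \ref{l:collapsedconvex} with $A_1=A_2=\cR_k$ gives exactly this for a.e. pair: there is a limit minimal geodesic from $x$ to $y$ whose entire interior consists of $j$-regular points for a single $j$; since $x,y\in\cR_k$ are themselves regular and the geodesic's interior limits onto the endpoints, continuity of tangent cones (Corollary \ref{c:closed}, applied at the endpoints as in Theorem \ref{t:Regbounded}) forces $j=k$. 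This proves $\cR_k$ is a.e. convex.

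Next I would prove (2), weak convexity, which requires an $\epsilon$-geodesic inside $\cR_k$ between \emph{every} pair $x,y\in\cR_k$, not merely a.e. pair. The strategy is to approximate: given $x,y\in\cR_k$ and $\epsilon>0$, use that $\cR_k$ is dense and, more importantly, that by part (1) the a.e.-convexity property holds; choose points $x'$ near $x$ and $y'$ near $y$ (within distance $\sim\epsilon^2 d(x,y)$, say) for which there is a minimizing geodesic $\sigma$ from $x'$ to $y'$ with interior in $\cR_k$. One then connects $x$ to $x'$ and $y$ to $y'$ by short curves; the issue is that these short connecting curves must themselves lie in $\cR_k$. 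To handle this I would iterate: since the good pairs $(x',y')$ form a full-measure set, one can choose $x'$ inside any prescribed neighborhood of $x$, and in fact choose a sequence $x_m\to x$ with $x_m\in\cR_k$ and consecutive pairs $(x_m,x_{m+1})$ good, so that concatenating the corresponding geodesic segments gives a curve in $\cR_k$ from $x$ (as a limit) into the "bulk"; the total length of the tail can be controlled geometrically. Concatenating the $x$-approximating curve, $\sigma$, and the $y$-approximating curve yields a curve in $\cR_k$ whose length exceeds $d(x,y)$ by at most a controlled multiple of $\epsilon^2 d(x,y)$, i.e. an $\epsilon$-geodesic after adjusting constants.

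A cleaner route for (2), which I would pursue in parallel, is to argue directly from the Hölder estimate: given $x,y\in\cR_k$, pick any limit geodesic $\gamma$ from $x$ to $y$; by Lemma \ref{l:segment} with $A=\cS$, for a.e. perturbation $(x',y')$ of the endpoints (in the product measure sense) the connecting limit geodesic meets $\cR$ in full measure, and then by the argument of Lemma \ref{l:collapsedconvex} its interior lies entirely in $\cR_k$; push $x'\to x$, $y'\to y$ and take a limiting curve, using that uniform Gromov-Hausdorff closeness of balls along the geodesics (Theorem \ref{t:holderlimit}) prevents the approximating geodesics from developing length defect or escaping $\cR_k$ in the limit on compact subsegments. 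The main obstacle in either approach is the endpoint behavior: a priori the approximating minimizing geodesics could converge to a geodesic whose interior only touches $\cR_k$ away from the endpoints, or the connecting curves near $x$ and $y$ could dip into $\cS$. Controlling this is precisely where Corollary \ref{c:closed} (the regular-value set is closed relative to the interior) and the effective Hölder rate do the work, ensuring the limiting concatenated curve stays in $\cR_k$ except possibly at the two endpoints $x,y$ themselves, which is exactly what weak convexity permits.
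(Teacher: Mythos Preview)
Your approach to part (2) --- iteratively choosing approximating sequences $x_m\to x$, $y_m\to y$ with consecutive pairs joined by minimizing geodesics in $\cR_k$, then concatenating --- is exactly what the paper does, and your informal description matches the paper's construction with $r_i=\epsilon^2 10^{-i}$. The ``cleaner route'' via limits of geodesics is vague and, as you yourself note, runs into endpoint trouble; the paper does not attempt it.

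There is a genuine gap in your argument for part (1). You obtain from Lemma \ref{l:collapsedconvex} a geodesic whose interior lies in $\cR_j$ for some $j$, and then claim $j=k$ because the endpoints $x,y$ lie in $\cR_k$ and ``continuity of tangent cones forces $j=k$''. But the H\"older estimate of Theorem \ref{t:tangentholder} and Corollary \ref{c:closed} are valid only in the \emph{interior} of the geodesic, with constants degenerating at the endpoints; nothing in the paper gives continuity of tangent cones up to the endpoints. The analogy with Theorem \ref{t:Regbounded} fails because that proof uses that the regular set is \emph{open} in the non-collapsed Einstein setting, which is not known for $\cR_k$ in the general collapsed case. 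The fix is immediate and is what the paper means by ``restatement'': since Theorem \ref{t:uniquek} gives $\nu(M_\infty\setminus\cR_k)=0$, apply Lemma \ref{l:segment} directly to $A=M_\infty\setminus\cR_k$ (rather than to $A=\cS$). For a.e.\ pair the resulting geodesic then has interior meeting $\cR_k$ in a dense set, and Corollary \ref{c:closed} with $Y=\RR^k$ gives interior entirely in $\cR_k$ --- no endpoint argument needed.
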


\begin{proof}
The first statement is just a restatement of Lemma \ref{l:collapsedconvex} and Theorem \ref{t:uniquek}. For the second statement note that the a.e-convexity of $\cR_k$ implies that a.e. $z_0\in \cR_k$ has the property that for a.e. $z_1\in\cR_k$ there exists a limit minimizing geodesic $\gamma_{z_0,z_1}$ connecting $z_0$ and $z_1$ such that $\bar\gamma_{z_0,z_1}\subseteq\cR_k$.  Let us denote
\begin{align}
 \cR_k^c\equiv\{z_0:\text{ for a.e. }z_1\in\cR_k \text{ there exists a minimizing }\gamma_{z_0,z_1}\text{ with }\bar\gamma\subseteq\cR_k\}\, ,
\end{align}
as the collection of such $z_0$'s and
\begin{align}
\cR_k^{z_0}\equiv\{z_1:\exists \gamma_{z_0,z_1}\text{ with }\bar\gamma_{z_0,z_1}\subseteq\cR_k\}\, ,
\end{align}
as the corresponding set of $z_1$'s.

Now let $x,y\in\cR_k$ be arbitrary and for $\epsilon>0$ fixed let us define $r_i\equiv \epsilon^2 10^{-i}$.  We will define $\{x_i\},\{y_i\}$ in the following manner.  Let
$$
x_1\in B_{r_1}(x)\cap \cR_k^c\, ,y_1\in B_{r_1}(y)\cap\cR_k^{x_1}\cap \cR_k^c\, ,
$$
with $\gamma_1\subseteq \cR_k$ a unit speed minimizing geodesic connecting them.  Now we define $x_{i+1}$ and $y_{i+1}$ inductively.  Given $x_i$ let
$$
x_{i+1}\in B_{r_{i+1}}(x)\cap\cR_k^{x_i}\cap\cR_k^c\, ,
$$
with
$$\gamma^x_{i+1}\subseteq\cR_k$$
a minimizing geodesic connecting $x_i$ to $x_{i+1}$, and similarly given $y_i$ let
$$
y_{i+1}\in B_{r_{i+1}}(y)\cap\cR_k^{y_i}\cap\cR_k^c\, ,
$$
with
$$
\gamma^y_{i+1}\subseteq\cR_k
$$
a minimizing geodesic connecting them.  Now we can let $\gamma$ be the unit speed curve which is the join of the curves $\{\gamma^x_{i}\}$, $\gamma_1$ and $\{\gamma^y_{i}\}$.  We have that $\gamma$ connects $x$ and $y$ with $\gamma\subseteq\cR_k$ and
\begin{align}
|\gamma|= \sum\left(|\gamma^x_i|+|\gamma^y_i|\right)+|\gamma_1|\leq d(x,y)+\epsilon^2 \, ,
\end{align}
as claimed.
\end{proof}

As a simple consequence of this we have that $\cR_k$ is connected.  This leads us to our next application, which follows from a mildly more uniform version of Lemma \ref{l:collapsedconvex} and Theorem \ref{t:weakconvex}.  In \cite{FY} it was shown that for Alexandrov spaces the isometry group is a Lie group and in \cite{ChC3} it was shown that for non-collapsed limits of manifold with a uniform lower Ricci curvature bound the isometry group is a Lie group.  In fact, in theorem 4.5 in \cite{ChC3} it was shown that for general locally compact metric spaces for which the regular set is dense and where each $\cR_k$ is connected in a weak sense, then the isometry group is a Lie group.  We use a mild generalization of theorem 4.5 in \cite{ChC3} as well as Appendices \ref{s:extendgeodesic} and \ref{s:ReifenbergCollapsed} to prove the following.

\begin{theorem}\label{t:isomliegroup}
The isometry group of a limit space $M_{\infty}$ is a Lie group.
\end{theorem}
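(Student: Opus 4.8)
The plan is to reduce the statement to the criterion from \cite{ChC3}, theorem 4.5, which asserts that a locally compact length space is acted on by a Lie group of isometries provided the regular set is dense and each stratum $\cR_k$ is connected in a suitable weak sense, and then to verify those hypotheses. Density of $\cR$ follows from $\nu(\cS)=0$ (theorem 2.1 of \cite{ChC2}), and connectedness of $\cR_k$ in the weak sense is exactly what Theorem \ref{t:weakconvex} provides: $\cR_k$ is weakly convex, hence path-connected through $\epsilon$-geodesics lying inside $\cR_k$ for every $\epsilon>0$. So at the level of the bare statement, Theorems \ref{t:weakconvex} and the earlier structure theory already hand us what we need, and the proof is essentially a citation plus bookkeeping.

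However, the subtlety — and the reason the theorem is stated with the phrase ``a mild generalization of theorem 4.5 in \cite{ChC3}'' and invokes Appendices \ref{s:extendgeodesic} and \ref{s:ReifenbergCollapsed} — is that ruling out small subgroups requires more than topological connectedness of $\cR_k$: one needs a quantitative, scale-invariant version. First I would isolate the place in the argument of \cite{ChC3} where connectedness of the regular set is used, namely in showing that an isometry fixing a regular point $x$ and acting trivially on a tangent cone at $x$ (which is $\RR^k$, so the action on it is through $O(k)$) must be the identity on a whole neighborhood, and hence that the isotropy representation is faithful. For this I would upgrade Lemma \ref{l:collapsedconvex} to a form that is uniform in the basepoint and in the scale: for a definite fraction of pairs of points in a small ball $B_r(x)$ with $x\in\cR_k$, there is a minimizing geodesic between them whose interior stays in $\cR_k$ and whose length is comparable to $r$. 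This is where the Reifenberg-type property for collapsed limits (Appendix \ref{s:ReifenbergCollapsed}) and the extendability of limit geodesics (Appendix \ref{s:extendgeodesic}) enter: the Reifenberg property gives bi-H\"older coordinates on $\cR_k$ near $x$ at a definite scale, which makes ``weak convexity'' effective, and the extendability statement (Corollary \ref{c:extending}) supplies the abundance of limit geodesics needed to run the segment-inequality argument at small scales.

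With the effective weak convexity in hand, the key steps are: (1) show the isotropy group $G_x$ at a regular point $x\in\cR_k$ injects into $O(k)$ via its action on the (now unique, since $x$ is $k$-regular) tangent cone $\RR^k$ — using effective weak convexity to propagate ``acts trivially on the tangent cone'' to ``acts trivially near $x$,'' exactly as in \cite{ChC3} but with the uniform geodesic estimates replacing the qualitative ones; (2) deduce that $G_x$ is a compact Lie group with no small subgroups; (3) use the transitive-enough structure of $M_\infty$ together with the slice theorem / Montgomery–Zippin machinery — as packaged in theorem 4.5 of \cite{ChC3} — to conclude that the full isometry group $\mathrm{Isom}(M_\infty)$, which is locally compact by Arzel\`a–Ascoli, has no small subgroups and is therefore a Lie group. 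The main obstacle is step (1) in the collapsed case: unlike the non-collapsed setting of \cite{ChC3}, one cannot appeal to $C^{1,\beta}$ regularity of a neighborhood of a regular point, so the only mechanism for propagating the triviality of the isometry off the tangent cone and onto $M_\infty$ itself is the geodesic-based convexity, and making that argument quantitative enough to forbid small subgroups — rather than merely yielding connectedness — is the crux. This is precisely why the two appendices are needed, and I would expect the bulk of the real work to be in assembling the uniform version of Lemma \ref{l:collapsedconvex} from the Reifenberg property and the geodesic-extension lemma, after which the Lie group conclusion follows from the cited generalization of \cite{ChC3}.
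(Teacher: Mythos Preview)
Your high-level plan is right: reduce to theorem~4.5 of \cite{ChC3}, and use Appendix~\ref{s:extendgeodesic} (extendability of limit geodesics) and Appendix~\ref{s:ReifenbergCollapsed} (collapsed Reifenberg) to verify its hypotheses. But you misidentify what the quantitative hypothesis actually is, and as a result you propose substantially more work than the paper does.

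The paper's proof is much shorter. The precise hypothesis of (the mild generalization of) theorem~4.5 of \cite{ChC3} is not ``effective weak convexity at small scales in $B_r(x)$'' or bi-H\"older charts. It is simply: there exists $x\in\cR_k$ such that for a.e.\ $y\in\cR_k$ and every $\epsilon>0$ there is a $\delta>0$ and a minimizing geodesic $\gamma_{x,y}\subseteq(\cR_k)_{\epsilon,\delta}$, where $(\cR_k)_{\epsilon,\delta}$ is the set of points whose $r$-balls are $\epsilon r$-close to $B^k_r(0)$ for all $r<\delta$. The ``mild generalization'' is only that one may take a.e.\ $y$ rather than every $y$. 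Once this is stated, the verification is immediate from the two appendices: Corollary~\ref{c:extending} says a.e.\ pair $(x,y)$ lies in the interior of a limit minimizing geodesic; the interior of such a geodesic lies in $\cR_k$ by the earlier convexity results; and Theorem~\ref{t:unireif} (the uniform Reifenberg property along a geodesic, which is just compactness plus the H\"older continuity Theorem~\ref{t:holder}) then gives a single $\delta$ working for the whole geodesic. That is the entire proof.

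So your proposed ``uniform version of Lemma~\ref{l:collapsedconvex} for pairs in a small ball,'' the bi-H\"older coordinate argument, and the explicit isotropy/NSS discussion are all unnecessary detours: those are already packaged inside the cited theorem~4.5, and the only thing to supply is the containment $\gamma_{x,y}\subseteq(\cR_k)_{\epsilon,\delta}$, which is exactly what Theorem~\ref{t:unireif} hands you. The role of Appendix~\ref{s:ReifenbergCollapsed} is not to produce local charts at a point but to give \emph{uniform} scale $\delta$ along an entire geodesic segment in $\cR_k$; this is the point you should sharpen.
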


\begin{proof}
Let $k$ be the dimension of $M_\infty$, see Theorem \ref{t:uniquek}, with $\cR_k$ the dense collection of $k$-regular points and
\begin{align}
(\cR_k)_{\epsilon,\delta}\subseteq \cR_k \, ,
\end{align}
the subset such that $x\in (\cR_k)_{\epsilon,\delta}$ iff $\forall$ $0<r<\delta$ we have that
$$d_{GH}(B_r(x),B^k_r(0))<\epsilon r\, ,$$
where $B^k_r(0)$ is the $r$-ball in $\RR^k$.  To apply theorem 4.5 of \cite{ChC3} it is enough to find a point $x\in\cR_k$ such that $a.e.$ $y\in \cR_k$ has the property that for every $\epsilon>0$ there exists a $\delta>0$ and a geodesic $\gamma_{x,y}$ connecting $x$ and $y$ such that $\gamma_{x,y}\subseteq(\cR_k)_{\epsilon,\delta}$ (in fact theorem 4.5 requires that such a connectedness property hold for \textit{every} $y\in\cR_k$, however it is easy to check that the proof goes through verbatim if it is only assumed to hold for $a.e.$ $y\in\cR_k$).

To find such an $x\in\cR_k$ we will actually show that $a.e.$  $x\in\cR_k$ has this property.  In fact by Corollary \ref{c:extending} we note that $a.e.$ $ x\in\cR_k$ has the property that for $a.e.$ $y\in\cR_k$ we have a limit minimizing geodesic $\gamma_{x,y}$ such that $x$ and $y$ are interior points of a limit minimizing geodesic $\gamma_{x,y}$.  So fix such an $x\in\cR_k$ and let $\cR_k^x$ be the collection of $y$'s such that there exists such a limit minimizing geodesic $\gamma_{x,y}$.  However, the fact that for every $\epsilon>0$ there exists a $\delta>0$ such that $\gamma_{x,y}\subseteq(\cR_k)_{\epsilon,\delta}$ now follows from Theorem \ref{t:unireif}.
\end{proof}

\subsection{Examples of non--collapsed limit spaces}

Both as a supplement to the main results and to show sharpness of the main results we construct various new examples of non-collapsed limits with lower Ricci bounds in Section \ref{s:ExamplesII}.

Although much more restrictive it is instructive when putting the Theorems and Examples of this paper into context to begin by looking at Alexandrov spaces.  For spaces with such lower sectional bounds it was conjectured by Burago, and proven by Petrunin \cite{Pn}, that tangent cones on the interior of a minimizing geodesic are isometric.  Of course it has been known since \cite{ChC1} for limit spaces with only lower Ricci bounds that tangent cones need not be unique.  In particular there is no hope that tangent cones on the interior of a minimizing geodesic need be unique for limit spaces with only lower Ricci bounds.

However Theorem \ref{t:holderlimit} does potentially suggest a version of Burago's conjecture for limit spaces with only lower Ricci bounds.  Namely, it is reasonable to ask if tangent cones which come from the same sequence of rescalings are unique on the interior of a minimizing geodesic.  Example \ref{ss:Example4} constructs a limit space which shows that this is not the case.  More specifically, Example \ref{ss:Example4} is a non-collapsed limit space $X$ with a minimizing limit geodesic $\gamma\subseteq X$ such that at each distinct point of $\gamma$ the tangent cone is unique but for any $s\neq t$ we have that the tangent cones at $\gamma(s)$ and $\gamma(t)$ are not isometric.  In particular, tangent cones from the same sequence of rescalings along $\gamma$ are not isometric and we see that a generalized Burago conjecture for limits with only lower Ricci bounds cannot hold.

Now although two tangent cones $X_{\gamma(s)}$, $X_{\gamma(t)}$ from the same sequence of rescalings on the interior of a minimizing geodesic $\gamma$ need not be isometric, it does follow from Theorem \ref{t:holderlimit} that they change at a continuous rate, in fact at a $C^{\alpha(n)}$-H\"older rate.  It even follows from the proof that the Gromov-Hausdorff map from $B_1X_{\gamma(s)}$ to $B_1X_{\gamma(t)}$ is $C^{\frac{1}{2}}$ bounded on sets of arbitrarily full measure of $B_1X_{\gamma(s)}$.  The natural question is whether this is a sharp result, and it might even be hoped that the Gromov-Hausdorff maps can be controlled in a Lipschitz fashion.  Example \ref{ss:Example5} shows however that this is not the case, and that in fact the $\frac{1}{2}$-H\"older exponent is sharp.  More specifically, for each $\delta>0$ Example \ref{ss:Example5} constructs a non-collapsed limit space $X_{\delta}$ with a limit minimizing geodesic $\gamma\subseteq X_{\delta}$ so that tangent cones from the same sequence of rescalings along $\gamma$ change at a $C^{1/2}$-H\"older rate but not at a $C^{1/2+\delta}$-H\"older rate.  Thus we will see that Theorem \ref{t:tangentholder} is sharp.  In fact this also has the consequence of showing that the estimates of Section \ref{s:parappxfun} are sharp, namely the estimate $\int_\gamma\fint_{B_r(\gamma(s))}|\Hess_h|^2\leq C$ cannot be replaced with the stronger estimate $\fint_{B_r(\gamma(s))}|\Hess_h|^2\leq C$, where $h$ is the parabolic approximation function.  If it could the techniques of Section \ref{s:GHmap} would show that the Gromov-Hausdorff maps are effectively Lipschitz, which does not hold by the above example.

\subsection{Outline of the Proof of Theorem \ref{t:holder}}

The bulk of the rest of the paper deals with the proof that tangent cones change in a H\"older way along a minimizing geodesic, that is, the proof of Theorem \ref{t:holder}.  

Before getting into the actual strategy of the proof let us try to explain why it might be true and in particular why it requires substantially new estimates.  Consider therefore a minimizing geodesic $\gamma:[0,1]\to M$ which is parametrized by unit speed in an $n$-dimensional manifold $M$ with $\Ric_M\geq -(n-1)$.  By the almost splitting theorem of \cite{ChC1} if $\delta>0$ is fixed and $\delta\leq s<\tau<t\leq 1-\delta$, then for $r_0=r_0(\delta,n)>0$ sufficiently small and $r$ with $r_0>r>0$ the ball $B_{\frac{3r}{2}}(\gamma (\tau))$ almost splits.  This implies that the balls $B_r(\gamma (\tau-\frac{r}{2}))$ and $B_r(\gamma (\tau+\frac{r}{2}))$ are Gromov-Hausdorff close to each other.  This allows one to compare balls with different centers, but with same radii, centered along a minimizing geodesic.  This is exactly what we would like to do.  The downside with this argument is that as the radii become smaller, the distance between the pair of centers of the balls that we can compare become smaller as well.   One may think that if we iterate this argument along the geodesic going from $\gamma (s)$ and to $\gamma (t)$, then perhaps we get the desired estimate.  The issue is that we would have to iterate this process roughly $\frac{t-s}{r}$ many times, while the error induced from each iteration is roughly $r^{\beta}$, where $\beta(n)$ is a small dimensional constant which comes from the Abresch-Gromoll inequality.  Thus, when we iterate this process $\frac{t-s}{r}$ times we get that the scale invariant Gromov-Haudorff distance between the first and last ball is roughly bounded by $\frac{t-s}{r}\,r^{\beta}=(t-s)\,r^{\beta-1}$, which converges to $\infty$ as $r\to 0$ and so in other words blow up.   It is therefore clear that this naive argument does not work, rather we need better estimates that we can integrate up from a neighborhood of $\gamma (s)$ to a neighborhood of $\gamma (t)$.    

To examine and explain the better estimates that we need, we will need to explain how the almost splitting in \cite{ChC1} is proven.  The overall strategy of the proof is to first approximate certain distance functions with functions with better properties\footnote{The basics of this overall strategy was already present in the earlier papers \cite{C1}--\cite{C3}.}.  The better property that was needed in \cite{ChC1} was an $L^2$ bound for the hessian of the approximating functions which distance functions themselves may not have; see remark 4.102 in \cite{ChC1}.  With the hessian bound one can integrate over geodesics and use the outcome in combination with the first variation formula to turn it into information about the Gromov-Hausdorff distance.

Before we explain in more detail the better estimates that we prove here, and how it give us the desired H\"older continuity, let us focus on the crucial $L^2$ bound for the hessian of the approximating functions.   To see what may be possible let us again examine a minimizing geodesic $\gamma :[0,1]\to M$.  Obviously, in the interior of the geodesic the distance to one of the endpoints is a smooth function $d$ and the Bochner formula applied to $d$ is simply the matrix Riccati equation.  Moreover, a simple argument, that we give in Section \ref{s:GHmap} and has in roots in an old paper of E. Calabi, \cite{Ca}, shows that
\begin{align}
\int_{\delta}^{1-\delta}|\Hess_d|^2\leq \frac{C}{\delta}\, .
\end{align}
The key improved hessian estimate that we show is a version of this estimate for the approximating function.    Moreover, it is easy to see that if one applies the Cauchy-Scwarz inequality to this hessian bound and integrates along the geodesic, then one get an infinitesimal version of the desired H\"older estimate.  Where the H\"older exponent $\frac{1}{2}$ comes from is the Cauchy-Schwarz inequality.

We will next be more specific about the proof of Theorem \ref{t:holder} and how it differs from the proofs of the almost maximal, almost metric cone, and almost splitting theorems in \cite{C1}--\cite{C3}, \cite{ChC1}.  

Section \ref{s:HessBounds} below contains the new functional estimates of the paper.  Section \ref{s:Harnack} is dedicated to a new mean value inequality and the proof of a new excess estimate.  Recall that given points $p,q\in M$ we have the excess function
\begin{align}
e(x)\equiv d(p,x)+d(x,q)-d(p,q)\, .
\end{align} 
Along a minimizing geodesic $\gamma$ connecting $p$ and $q$ we have that $e$ takes its minimum value $e|_\gamma\equiv 0$.  A simple estimate using the lipschitz nature of $e(x)$ then gives for $x\in B_r(\gamma(t))$, where $\gamma(t)$ is some interior point of the geodesic, that $e(x)\leq Cr$.  It is an important estimate by Abresch and Gromoll that this can be improved to the statement
\begin{align}
e(x)\leq C\,r^{1+\alpha(n)}\, ,
\end{align}
where $\alpha$ is a small dimensional constant and $x\in B_r(\gamma(t))$.  This estimate plays a key role in the estimates of \cite{ChC1}.  In the proof of Theorem \ref{t:holder} it is important to have an improvement of this estimate.  Namely, if we were on a smooth manifold with {\it bounded curvature}, then we would expect an estimate of the form $e(x)\leq C\,r^2$.  This is because $e(x)$ would be a smooth function which would obtain a minimum at $\gamma(t)$.  In the case of a lower Ricci bound taking $\alpha\equiv 1$ seems to be a strong statement, however Theorem \ref{t:L1_excess} says that if we only ask for this to hold at most points then this is possible.  More precisely it is proved that
\begin{align}
\fint_{B_r(\gamma(t))}e\leq C\,r^2\, .
\end{align}
In fact, combining this with $|\nabla e|\leq 2$ immediately gives a new proof of the Abresch-Gromoll estimate.

Section \ref{s:parappxfun} is then focused on building new approximation functions to the distance function $d_p(x)$. In \cite{ChC1} a key point is to approximate $d_p$ by a harmonic function $h$.  Although $d_p$ is clearly not smooth, they are able to prove useful estimates on this harmonic approximation.  Most importantly, with the help of the Abresch-Gromoll inequality, they prove an estimate on the hessian
\begin{align}
\fint_{B_r(\gamma(t))}|\Hess_h|^2\leq C\,r^{-2(1-\alpha)}\, .
\end{align}
Although this estimate blows up with $r$, it is better than the scale invariant estimate that was proven and used in \cite{C3}.  For our purposes this is not enough, see Section \ref{s:parappxfun} for a more detailed explanation.  It is important in our situation that we be able to take $\alpha\equiv 1$ in order to get the full $L^2$-bound on the hessian; cf. the discussion in the beginning of the subsection.  To make improvements in terms of getting a better hessian bound it will be important to consider a new class of approximating functions.  Instead of the harmonic approximation to $d_p$ we will consider parabolic approximation.  The idea is that if instead of approximating a distance function on a ball with the harmonic function with the same boundary values, then one ought to be doing better if one instead replace the distance function by the function where we flow it by the heat equation.  The harmonic approximation can then be thought of as the limit when one flow to $t\to \infty$.  By flowing for a relative short amount of time the approximation should resemble the original distance more and yet serve as a regularization.   Precisely, we will flow $d_p$ by the heat flow for roughly time $r^2$.  These functions will turn out to have much better properties than the harmonic approximations.  Even so,  the estimate with $\alpha\equiv 1$ may fail to hold for some ball $B_r(\gamma(t))$, see Example \ref{ss:Example5}.  What we will prove in Theorem \ref{t:mainregthm1} is that it holds for {\it most} balls $B_r(\gamma(t))$, that is
\begin{align}
\int_\gamma\fint_{B_r(\gamma(t))}|\Hess_h|^2\leq C\, .
\end{align}
See Theorem \ref{t:mainregthm1} for a more precise statement.

Section \ref{s:GHmap} is then dedicated to finishing the proof of Theorem \ref{t:holder}.  We begin in Section \ref{ss:jacobi} by proving an infinitesimal version of the estimate (mentioned earlier in this subsection), which is itself quite instructive and gives rise to a Jacobi field estimate.  To then extend this infinitesimal version and finish the proof of Theorem \ref{t:holder} we must actually construct mappings between the balls $B_r(\gamma(s))$ and $B_r(\gamma(t))$ and use the estimates of Section \ref{s:HessBounds} to prove the Gromov-Hausdorff properties of these maps.  The map itself will be the gradient flow induced by $-\nabla d_p$.  This mapping is of course only a measurable map, and in general controlling the gradient flow of a Lipschitz function without hessian estimates requires some technical work.  The results of Section \ref{ss:gradientflow} will show that to control this map it is enough to control {\it nearby} smooth functions.  One of the primary technical challenges is to show that {\it most} geodesics which begin and end near one another remain close.  Namely, if $x\in B_r(\gamma(t))$ and $\gamma_{p,x}$ is the geodesic connecting $p$ and $x$, then it is not at all clear that the geodesics $\gamma_{p,x}(u)$ and $\gamma(u)$ even remain near one another as $u$ varies.  In this case the gradient flow mapping map not even map $B_r(\gamma(t))$ near $B_r(\gamma(s))$, much less define a Gromov-Hausdorff approximation.  Section \ref{ss:volcomparison} deals with this issue, which requires essentially every tool developed in this paper.  Finally, Section \ref{ss:proofmaintheorem} finishes the proof of Theorem \ref{t:holder}.

\section{Hessian bounds for approximations of distance functions}\label{s:HessBounds}

Throughout this section $(M^{n},g)$ has $\Ric\geq -(n-1)$ and $p$, $q$ are points in $M$ with $d(p,q)=d_{p,q}\leq 1$.  Obviously, one can consider points further apart by applying the estimates of this section recursively.  We will also assume, for simplicity, that $M$ is complete, though these estimates are purely local and this is much stronger than what is needed.  We will be dealing often with the functions
\begin{itemize}
\item $d^{-}(x)\equiv d(p,x)$,
\item $d^{+}(x)\equiv d(p,q)-d(q,x)$,
\item and the excess function $e(x)\equiv d(p,x)+d(x,q)-d(p,q)\equiv d^-(x)-d^+(x)$.
\end{itemize}

In the proof of the local almost splitting theorem in section 6 of \cite{ChC1} (see also \cite{C3}) a key point is to approximate the partial Busemann functions $d^+$ and $d^-$ by harmonic functions $b^{\pm}$ on a ball $B_{r}$ centered at a point of small excess.  The key observation for those results is that in $B_{r}$ the hessian of the functions $b^{\pm}$ is bounded in a fashion which is better than scale invariant, at least in an $L^2$ sense.  In other words, a scale invariant bound would be that for some constant $C(n)$ one has
$$\fint_{B_{r}}|\Hess_b|^2\leq C\,r^{-2}\, ,$$
for all $0<r<1$\footnote{The scale invariant bound was used in \cite{C3}.}, but in \cite{ChC1} the better estimate
$$\fint_{B_{r}}|\Hess_b|^2\leq C\,r^{-2+\alpha}\, ,$$
is proved for some dimensional $\alpha>0$.  This estimate is enough to prove a local splitting theorem on $B_{r}$.  This key $L^2$ bound on the hessian is then, in those papers, integrated over all geodesic segments within the $B_{r}$ ball, the resulting integral is integrated once more and used in combination with the first variation formula and turned into estimates on distances.

On the other hand one can instead takes two balls far apart relative to $r$.  In particular if one takes balls $B_{r}(x)$ and $B_{r}(y)$ with say $d(x,y)\geq \delta\, d_{p,q}$ and tries to compare these, then the hessian being bounded in a way that is better than scale invariant on each $r$-ball is not enough.  In particular it is not enough to iterate the local splitting theorem.  Using this type of argument and the estimates of \cite{ChC1} the version of Theorem \ref{t:holder} one would obtain is in fact that
$$d_{GH}(B_r(\gamma(s)),B_r (\gamma(t)))\leq C|t-s| \,r^{\frac{\alpha}{2}}\, ,$$
for some small $\alpha>0$.  For any $\alpha<2$ this cannot be used to compare tangent cones.

We need therefore a sharper hessian bound that does not degenerate with $r$.  The sharpest one could hope for in this context is an actual $L^{2}$ bound, namely $\fint_{B_{r}}|\Hess_b|^2\leq C$.  Unfortunately a first difficulty is that Example \ref{ss:Example5} tells us that such a bound can indeed fail under a lower Ricci curvature bound.  What will turn out to be true, and is in fact a sharp estimate in its precise form, is that this sort of bound holds \textit{for most} balls $B_{r}$ along the geodesic.  More precisely,\footnote{Theorem \ref{t:mainregthm1} is actually a bit more general.} if $\sigma$ is a minimizing geodesic between $x$ and $y$, then we have an estimate of the form
$$\int^{(1-\delta) d_{p,q}}_{\delta d_{x,y}}\fint_{B_{r}(\sigma(t))}|\Hess_b|^2\leq C\, .$$
Actually, this estimate only holds for a different comparison function.  In fact, instead of considering the harmonic approximations of $d^\pm$ we will consider parabolic approximations by flowing $d^\pm$ by the heat equation some chosen amount of time\footnote{The chosen time is $r^2$ which is the scale invariant amount of time corresponding to balls of radius $r$.}.  For technical reasons this allows for certain pointwise estimates that may fail for the harmonic approximation.

The improvement that allowed one to go from the scale invariant bound for the hessian of $b$ in \cite{C3} to a better than scale invariant bound in \cite{ChC1}, with a dimensional exponent $\alpha$, came from bringing in the Abresch-Gromoll inequality, \cite{AbGl} and getting a better average gradient bound than that of \cite{C3}.  Recall that the Abresch-Gromoll inequality is a bound for the excess of thin triangles.  The Abresch-Gromoll inequality was used in \cite{ChC1} in combination with the Laplacian comparison theorem to get an improved bound for the average of the difference between the norm of the gradient of the approximation to the distance function and the norm of the gradient of distance function itself (which is of course $1$).  To prove our better hessian bound we begin with a mean value inequality that will allow us to get better bound for the norm of the gradient of the approximation.  It will also give us a better excess bound.  In the second subsection that follows we apply this mean value inequality to get the desired hessian bound.

\subsection{Mean value and integral excess inequalities}         \label{s:Harnack}

In this subsection we will record a direct consequence of the mean value inequality for almost sub-solutions of the heat equation on a Riemannian manifold $(M^{n},g)$ with a lower Ricci curvature bound $\Ric\geq -(n-1)$.  As an application we get an integral inequality for the excess which is sharp.  This excess bound, as well as the mean value inequalities, is used in the next subsection when we prove the estimates for the hessian of the approximations of distance functions and in Section \ref{s:GHmap}.  The next lemma will also be applied in the next subsection to get a good bound for the average of $||\nabla h^{\pm}_t|^2-1|$, where $h^{\pm}_t$ are approximations to the distance functions.  This good bound is one of the keys to get the desired hessian bound.

We will below assume $M$ is complete, however, the estimates are purely local and this is not needed.

\begin{lemma}\label{l:L1harnack_p}
If $u:M\times[0,r^2]\to \RR$ is a non-negative continuous function $u(x,t)=u_t(x)$ with compact support for each fixed time, $0<r<R$, and
$(\partial_t-\Delta)\,u\geq -c_0$ in the distribution sense, then
\begin{equation}
\fint_{B_r(x)}u_0\leq c(n,R)\,\left[u_{r^2} (x)+ c_0\, r^2\right]\, .
\end{equation}
\end{lemma}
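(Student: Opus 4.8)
The plan is to derive this bound from the standard parabolic mean value inequality (heat kernel / Moser iteration form) available on manifolds with $\Ric \geq -(n-1)$, applied to an auxiliary function that absorbs the inhomogeneity $c_0$. First I would reduce to the homogeneous case: set $\tilde u(x,t) \equiv u(x,t) + c_0 t$, so that $(\partial_t - \Delta)\tilde u = (\partial_t - \Delta) u + c_0 \geq 0$ in the distribution sense, i.e. $\tilde u$ is a genuine (nonnegative, compactly supported in space) subsolution of the heat equation on $M \times [0, r^2]$. It then suffices to prove $\fint_{B_r(x)} \tilde u_0 \leq c(n,R)\, \tilde u_{r^2}(x)$, since $\tilde u_0 = u_0$ and $\tilde u_{r^2}(x) = u_{r^2}(x) + c_0 r^2$.

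For the homogeneous subsolution $\tilde u$, I would invoke the parabolic mean value inequality for subsolutions of the heat equation under a lower Ricci bound (the Li–Yau / Saloff-Coste / Moser-iteration estimate; it holds here because $\Ric \geq -(n-1)$ gives the volume doubling and Poincaré inequalities on balls of radius $\leq R$, with constants depending only on $n$ and $R$). Such an estimate controls the value of a subsolution at the top of a parabolic cylinder \emph{from below} by an average over a lower time-slab — equivalently, running time backwards, it says
\[
\fint_{B_r(x)} \tilde u_0 \; \leq \; c(n,R)\, \tilde u_{r^2}(x)\,,
\]
because $\tilde u$ being a subsolution means the heat flow is (distributionally) nondecreasing in the sense that $\tilde u_{r^2}(x) \geq (P_{r^2}\tilde u_0)(x)$, and then a lower heat-kernel bound $p_{r^2}(x,y) \geq c(n,R)^{-1} \Vol(B_r(x))^{-1}$ for $y \in B_r(x)$ (valid since $\Ric \geq -(n-1)$, e.g. the Li–Yau Gaussian lower bound) gives $(P_{r^2}\tilde u_0)(x) \geq c(n,R)^{-1}\fint_{B_r(x)} \tilde u_0$. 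Combining these two inequalities yields the claim. One must be slightly careful that the distributional subsolution inequality, together with the compact spatial support and continuity, legitimately gives $\tilde u_{r^2}(x) \geq (P_t \tilde u_0)(x)$; this is a standard approximation/Duhamel argument (mollify in space, use the maximum principle or integrate against the heat kernel, then pass to the limit).

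The main obstacle — or rather the only point requiring care — is the regularity bookkeeping: $u$ is only assumed continuous and the differential inequality only holds distributionally, so one cannot differentiate directly. I expect to handle this by the mollification argument just sketched, or alternatively by quoting the mean value inequality for distributional subsolutions directly (it is standard that Moser iteration applies to weak subsolutions). The dependence of $c$ on $R$ rather than being purely dimensional comes from the non-scale-invariant nature of the Ricci lower bound $\Ric \geq -(n-1)$: the doubling and Poincaré constants on balls of radius up to $R$ degrade as $R$ grows, which is exactly the source of the $R$-dependence in the statement; as $r \to 0$ at fixed $R$ one recovers effectively Euclidean constants. No new geometric input beyond the classical heat-kernel estimates under a lower Ricci bound is needed.
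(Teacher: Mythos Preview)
Your proposal is correct and follows essentially the same route as the paper: test against the heat kernel (equivalently, compare $\tilde u$ with the heat flow $P_t\tilde u_0$) and then invoke a lower heat-kernel bound $H_{r^2}(x,y)\geq c(n,R)^{-1}\Vol(B_r(x))^{-1}$ for $y\in B_r(x)$ to convert into the desired average. The paper carries out exactly this Duhamel computation, differentiating $s\mapsto \int u(\cdot,s)\,H(x,\cdot,r^2-s)$ and integrating in $s$; your device of absorbing the inhomogeneity via $\tilde u = u+c_0 t$ is a cosmetic repackaging of the same estimate. One terminological slip worth fixing: with $(\partial_t-\Delta)\tilde u\geq 0$ the function $\tilde u$ is a \emph{supersolution}, not a subsolution, and it is the comparison/minimum principle for supersolutions (not a Moser mean-value inequality for subsolutions) that yields $\tilde u_{r^2}\geq P_{r^2}\tilde u_0$; fortunately all the inequalities you actually wrote down point in the correct direction.
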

\begin{remark}
The above more generally gives that $\fint_{B_r(x)}u_0\leq c(n,R)\,\left[\text{inf}_{y\in B_r(x)}u_{r^2} (y)+ c_0\, r^2\right]$, hence an $L^1$-Harnack inequality.
\end{remark}

The proof relies on the following heat kernel estimate.  The estimate is similar in nature to estimates proved by Li-Yau in \cite{LY}, however the nature of the estimate, which is a little more general than those in \cite{LY}, is such that we are required to use different techniques for the proof.  Namely the existence of a good cutoff functions as in \cite{ChC1} are required.

\begin{lemma}\label{l:kernelestimate}
Let $ H_t(x,y)$ be the heat kernel with $0<r\leq R$ and $t\leq R^2$.  Then we have
\begin{enumerate}
\item if $y\in B_{10\sqrt t}(x)$ then $\frac{c^{-1}(n,R)}{Vol(B_{10\sqrt t}(x))}\leq H_t(x,y)\leq \frac{c(n,R)}{Vol(B_{10\sqrt t}(x))}$.
\item $\int_{M\setminus B_r(x)} H_t(x,y)\,dv_g(y)\leq c(n,R)\,r^{-2}\,t$\, .
\end{enumerate}
\end{lemma}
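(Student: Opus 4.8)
The plan is to assemble three standard consequences of $\Ric\ge -(n-1)$, all valid on balls of radius $\le R$ with constants of the form $c(n,R)$: the Bishop--Gromov volume doubling $\Vol(B_{2\rho}(z))\le c(n,R)\Vol(B_\rho(z))$; Buser's Neumann--Poincar\'e inequality on such balls; and the good cutoff functions of \cite{ChC1}, i.e.\ for each $z$ and $\rho\le R$ a function $\psi$ with $\psi\equiv 1$ on $B_{\rho/2}(z)$, $\text{supp}\,\psi\subseteq B_\rho(z)$ and $\rho^2|\Delta\psi|+\rho|\nabla\psi|\le c(n,R)$. I will also use $\int_M H_t(x,\cdot)\le 1$, with equality (stochastic completeness) since $M$ is complete with a lower Ricci bound.

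I would prove (2) directly, and this is where the cutoffs enter. Fix $\psi$ as above for $B_{r/2}(x)\subseteq B_r(x)$, so $\|\Delta\psi\|_\infty\le c(n,R)r^{-2}$, and set $f(t)=(e^{t\Delta}\psi)(x)=\int_M H_t(x,y)\psi(y)\,dv_g(y)$. Since $\psi$ is smooth with compact support, $f$ is $C^1$ on $(0,\infty)$, $f(0^+)=\psi(x)=1$, and $f'(t)=\int_M H_t(x,y)\Delta\psi(y)\,dv_g(y)$, hence $|f'(t)|\le\|\Delta\psi\|_\infty\int_M H_t(x,\cdot)\le c(n,R)r^{-2}$ and so $f(t)\ge 1-c(n,R)r^{-2}t$. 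Since $0\le\psi\le 1$ and $1-\psi\equiv 1$ on $M\setminus B_r(x)$,
\[
\int_{M\setminus B_r(x)}H_t(x,y)\,dv_g(y)\ \le\ \int_M H_t(x,y)\,(1-\psi(y))\,dv_g(y)\ \le\ 1-f(t)\ \le\ c(n,R)\,r^{-2}\,t\,,
\]
which is (2). Note that only the bound on $|\Delta\psi|$ is used; a merely Lipschitz cutoff would not do, which is the point of invoking \cite{ChC1}.

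For (1), with $y\in B_{10\sqrt t}(x)$ and $t\le R^2$, the two-sided bound is the Li--Yau heat kernel estimate \cite{LY} for $\Ric\ge -(n-1)$ specialized to this regime: there the exponential factors present in the general statement are all pinched between $c^{-1}(n,R)$ and $c(n,R)$ once $d(x,y)^2/t\le 100$ and $t\le R^2$, and $\Vol(B_{\sqrt t}(x))$ and $\Vol(B_{10\sqrt t}(x))$ are comparable by doubling. If one prefers a self-contained argument, one runs the De Giorgi--Nash--Moser iteration instead: volume doubling together with Buser's Poincar\'e inequality yield the parabolic Harnack inequality at scales $\le R$; this gives the on-diagonal upper bound $H_t(x,x)\le c(n,R)\Vol(B_{\sqrt t}(x))^{-1}$ and, via the integrated maximum principle, the Gaussian upper bound, whose tail gives $1-\int_{B_{C\sqrt t}(x)}H_t(x,\cdot)\le\tfrac{1}{2}$ for $C=C(n,R)$; then the identity $H_{2t}(x,x)=\int_M H_t(x,y)^2\,dv_g(y)$ with Cauchy--Schwarz gives $H_t(x,x)\ge c^{-1}(n,R)\Vol(B_{\sqrt t}(x))^{-1}$; and a bounded chain of parabolic Harnack inequalities (the number depending only on the constant $10$), advancing from time $t/2$ to time $t$, spreads this to $H_t(x,y)\ge c^{-1}(n,R)\Vol(B_{\sqrt t}(x))^{-1}\ge c^{-1}(n,R)\Vol(B_{10\sqrt t}(x))^{-1}$ for all $y\in B_{10\sqrt t}(x)$.

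The genuinely substantive input is the De Giorgi--Nash--Moser/Li--Yau theory behind (1), carried out in the presence of only a lower Ricci curvature bound; its two classical ingredients, Bishop--Gromov doubling and Buser's Poincar\'e inequality, are standard, but running the Caccioppoli/Moser estimates in the most elementary form again benefits from the cutoffs of \cite{ChC1}. I expect this---rather than (2), which is essentially immediate given those cutoffs---to be the main obstacle. The rest is bookkeeping: keeping track that the exponential corrections $e^{c(n)(n-1)t}$ remain $\le c(n,R)$ for $t\le R^2$ and that the doubling constants accumulated in passing between the scales $\sqrt t$ and $10\sqrt t$ stay of the form $c(n,R)$.
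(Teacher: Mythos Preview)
Your proof is correct; let me compare it to the paper's.

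For part (2) your argument is essentially identical to the paper's: both take a Cheeger--Colding cutoff $\psi$ supported in $B_r(x)$ with $\psi\equiv 1$ on $B_{r/2}(x)$, flow it by the heat equation, use $|\Delta\psi|\le c\,r^{-2}$ to bound the time derivative, and read off the tail bound from $1-(e^{t\Delta}\psi)(x)$.

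For part (1) the approaches genuinely differ. You either invoke the full Li--Yau Gaussian two-sided bound, or set up De Giorgi--Nash--Moser (doubling plus Buser's Poincar\'e $\Rightarrow$ parabolic Harnack $\Rightarrow$ on-diagonal bounds $\Rightarrow$ spread by Harnack). The paper instead reuses the \emph{same} cutoff trick: take $\psi^r$ with $\psi^r\equiv 1$ on $B_{10r}(x)$ and supported in $B_{20r}(x)$, and as in part (2) obtain $|\psi^r_t(x)-1|\le c\,r^{-2}t$; at $t_r=\frac{1}{2c}r^2$ this gives
\[
\tfrac{1}{2}\ \le\ \int_{B_{20r}(x)} H_{t_r}(x,z)\,dv_g(z)\ \le\ 1\,,
\]
so some point $z\in B_{20r}(x)$ has $H_{t_r}(x,z)$ comparable to $\Vol(B_{20r}(x))^{-1}$, and a single application of the Li--Yau \emph{Harnack inequality} (not the Gaussian bounds) spreads this to all of $B_{10r}(x)$. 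This is lighter than your route (b) and more self-contained than your route (a), since it uses only the Harnack inequality rather than the full Li--Yau kernel estimates; it also clarifies why the paper says the Cheeger--Colding cutoffs are needed for the lemma, including for part (1). Your remark that part (1) is ``the main obstacle'' is thus a bit off: in the paper's presentation both parts fall out of the same short computation.
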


\begin{proof}
Let $\psi^r$ be a cutoff function on $B_{20r}(x)$ as in \cite{ChC1}, hence $\psi^r(y)=1$ on $B_{10r}(x)$, $\psi^r(y)=0$ outside $B_{20r}(x)$ and we have the estimates $r\,|\nabla\psi^r|$, $r^2\,|\Delta\psi^r|\leq c(n,R)$.  Let
$$
\psi^r_t(y)\equiv \int H_t(y,z)\,\psi^r(z)\, ,
$$
denote the solution to the heat equation, then we have that
\begin{align}
|\Delta \psi^r_t |(y) &= \left| \int \Delta_y  H_t (y,z)\,\psi^r(z) \right|\\
&= \left| \int \Delta_z H_t(y,z)\psi^r(z) \right| =
      \left| \int H_t(y,z)\,\Delta \psi^r(z) \right| \leq cr^{-2}\, ,\notag
\end{align}
where we can interchange the laplacians because $\psi^r$ has compact support.  Thus we get that
\begin{align}
 |\psi^r_t(y)-\psi^r(y)|\leq\int_0^t|\Delta\psi^r_s|ds\leq c\,r^{-2}t\, .
\end{align}
In particular, if we then take $t_r=\frac{1}{2c}r^2$ we get that $|\psi^r_{t_r}(x)-1|\leq \frac{1}{2}$, and hence we get the two equations:
\begin{align}
\frac{1}{2}\leq &\int H_{t_r}(x,z)\,\psi^r(z) \leq \int_{B_{20r}(x)} H_{t_r}(x,z)\, ,\\
&\int_{B_{20r}(x)} H_{t_r}(x,z)\leq \int H_{t_r}(x,z) \leq 1\, .
\end{align}

In particular we find that there must be at least one point $z\in B_{20r}(x)$ such that $\frac{1}{2Vol(B_{10\sqrt t}(x))}\leq H_{t_r}(x,z)\leq \frac{2}{Vol(B_{10\sqrt t}(x))}$.  However, a straight forward application of the Li-Yau Harnack inequality, \cite{LY}, now proves the first statement for $t=r^2$ and any $y\in B_{10r}(x)$.

To prove the second statement the setup is similar.  In this case let $\phi(y)=1-\psi^r(y)$ where $\psi^r$ is now a cutoff function like above with $\psi^r(y)=1$ in $B_{r/2}(x)$ and $\psi^r(y)=0$ outside $B_r(x)$.  If we let $\phi_t$ denote the solution of the heat equation then the same argument as above gives that
$$
\phi_t(x)\leq c(n,R)\,r^{-2}t\, .
$$
Finally this gives us that
\begin{equation}
\int_{M\setminus B_r(x)} H_t(x,y)\,dv_g(y)\leq \int H_t(x,y)\phi(y)dv_g(y) = \phi_t(x)\leq c\,r^{-2}t\, ,
\end{equation}
as claimed.
\end{proof}

Now we can finish Lemma \ref{l:L1harnack_p}

\begin{proof}[Proof of Lemma \ref{l:L1harnack_p}:]
Differentiating, using the heat equation, in particular, that $H$ is a fundamental solution, and integrating by parts yields
\begin{align}
\frac{d}{ds}\left(\int u(y,s)\,H(x,y,r^2-s)\,dy\right)&=\int \partial_tu\,H-\int u\,\partial_tH\\
&=\int \partial_tu\,H-\int u\,\Delta H=\int H\,(\partial_t-\Delta)\,u\geq -c_0\int H=-c_0\, .\notag
\end{align}
Since $u(x,r^2)=\lim_{s\to r^2}\int u(y,s)\,H(x,y,r^2-s)\,dy$ the claim follows by integration provided
\begin{equation}
\int u(y,0)\,H(x,y,r^2)\,dy\geq c\,\fint_{B_r(x)} u(y,0)\,dy
\end{equation}
This however follows by Lemma \ref{l:kernelestimate} since $u\geq 0$ and
\begin{equation}
\inf_{B_r(x)}H(x,\cdot,r^2)\geq \frac{c}{\Vol (B_{r}(x))}\, .
\end{equation}
\end{proof}

Applying Lemma \ref{l:L1harnack_p} to a function that is constant in time gives (cf. theorem 9.22 in \cite{GiTr}):

\begin{corollary}\label{c:L1harnack_e}
If $u\in C_c(M)$ is a non-negative function with $\Delta u \leq c_0$ in the distributional sense, then for each $x\in M$ and $0<r\leq R$
\begin{equation}
\fint_{B_r(x)}u\leq c(n,R)\,\left[u(x)+c_0\, r^2\right]\, .
\end{equation}
\end{corollary}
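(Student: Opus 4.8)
The plan is to obtain this as an immediate consequence of Lemma \ref{l:L1harnack_p} by viewing $u$ as a function on $M\times[0,r^2]$ that is constant in time. Set $\tilde u(x,t)\equiv u(x)$ for $t\in[0,r^2]$. Since $u\in C_c(M)$ is non-negative and continuous, $\tilde u$ is non-negative, continuous, and compactly supported for each fixed $t$, so all the standing hypotheses of Lemma \ref{l:L1harnack_p} hold provided we verify the parabolic differential inequality.

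First I would check that $(\partial_t-\Delta)\tilde u\geq -c_0$ in the distribution sense. Testing against a non-negative $\phi\in C_c^\infty(M\times(0,r^2))$, the $\partial_t$ contribution is $\int \tilde u\,\partial_t\phi=0$ because $\tilde u$ does not depend on $t$, while Fubini together with the distributional hypothesis $\Delta u\leq c_0$ on $M$ (applied to the non-negative function $x\mapsto \int_0^{r^2}\phi(x,t)\,dt$) gives $\int \tilde u\,\Delta\phi\leq c_0\int\phi$. Combining these, $\int \tilde u\,(-\partial_t\phi-\Delta\phi)\geq -c_0\int\phi$, which is exactly the asserted inequality.

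Now Lemma \ref{l:L1harnack_p} applied to $\tilde u$ (with the same $R$, so that the constant $c(n,R)$ below is the one from that lemma) yields
\begin{equation*}
\fint_{B_r(x)}\tilde u_0\leq c(n,R)\,\big[\tilde u_{r^2}(x)+c_0\,r^2\big]\, .
\end{equation*}
Since $\tilde u_0=\tilde u_{r^2}=u$, this reads $\fint_{B_r(x)}u\leq c(n,R)\,[u(x)+c_0\,r^2]$, which is the claim; specializing $c_0$ one recovers the elliptic mean value inequality in the form of theorem 9.22 of \cite{GiTr}. There is no genuine obstacle here: the only point requiring (routine) care is the translation between the elliptic and parabolic notions of a distributional inequality carried out in the previous paragraph, and the constant is inherited verbatim from Lemma \ref{l:L1harnack_p}.
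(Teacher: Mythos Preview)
Your proof is correct and follows exactly the approach of the paper, which simply states that the corollary is obtained by applying Lemma~\ref{l:L1harnack_p} to a function that is constant in time. In fact, your argument is more detailed than the paper's one-line remark, since you spell out the verification of the distributional inequality $(\partial_t-\Delta)\tilde u\geq -c_0$.
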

\begin{remark}
The above more generally gives that $\fint_{B_r(x)}u_0\leq c(n,R)\,\left[\text{inf}_{y\in B_r(x)}u_{r^2} (y)+ c_0\, r^2\right]$, hence an $L^1$-Harnack inequality.
\end{remark}

To use the above to prove the integral excess inequality we need good cutoff functions, which follows from \cite{ChC1} and an standard covering argument.
For a closed subset $C\subseteq M$ and $0<r_0<r_1$ we define the annulus
$A_{r_0,r_1}(C)\equiv T_{r_1}(C)\setminus T_{r_0}(C)$, where $T_{r}(C)$ is the $r$-tubular neighborhood of $C$.

\begin{lemma}\label{l:testfun}
For every $0<r_0< 10\,r_1 \leq R$, there exists a function
$\psi:A_{r_0,r_1}(C)\to \RR$ such that
\begin{enumerate}
\item $\psi\geq 0$ with $\psi(x)=1$ for $x\in A_{3r_0,r_1/3}(C)$
and $\psi(x)=0$ for $x\not\in A_{2r_0,r_1/2}(C)$.
\item $|\nabla \psi |\leq c(n,R)\,r_0^{-1}$ and
$|\Delta\psi|\leq c(n,R)\,r_0^{-2}$ in $A_{2r_0,3r_0}(C)$.
\item $|\nabla \psi |\leq c(n,R)\,r_1^{-1}$ and
$|\Delta\psi|\leq c(n,R)\,r_1^{-2}$ in $A_{r_1/3,r_1/2}(C)$.
\end{enumerate}
\end{lemma}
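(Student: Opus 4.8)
The plan is to reduce everything to the ball-centered good cutoff functions of \cite{ChC1} --- the same ones already invoked in the proof of Lemma \ref{l:kernelestimate} --- by gluing them along nets in the tubular neighborhoods of $C$, working at scale $\sim r_1$ near the outer boundary and at scale $\sim r_0$ near the inner boundary. Recall that \cite{ChC1} provides, for each $z\in M$ and $0<s\leq R$, a function $\phi_{z,s}$ with $0\leq\phi_{z,s}\leq 1$, $\phi_{z,s}\equiv 1$ on $B_{s}(z)$, $\phi_{z,s}\equiv 0$ off $B_{2s}(z)$, and $s\,|\nabla\phi_{z,s}|+s^2\,|\Delta\phi_{z,s}|\leq c(n,R)$. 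I would also fix once and for all a smooth nondecreasing $\eta:\RR\to[0,1]$ with $\eta\equiv 0$ on $(-\infty,0]$, $\eta\equiv 1$ on $[1,\infty)$, and $|\eta'|+|\eta''|\leq c$.

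To build the outer cutoff, choose a maximal $\tfrac{r_1}{20}$-separated set $\{z_i\}\subseteq\overline{T_{r_1/3}(C)}$ and set $G_1\equiv\sum_i\phi_{z_i,\,r_1/20}$. Since the $z_i$ are $\tfrac{r_1}{20}$-separated, the Bishop--Gromov inequality (for $\Ric\geq -(n-1)$ and radii $\leq R$) bounds by $N(n,R)$ the number of the balls $B_{r_1/10}(z_i)$ that contain any fixed point; hence the sum is locally finite with multiplicity $\leq N$, so $r_1\,|\nabla G_1|+r_1^2\,|\Delta G_1|\leq c(n,R)$, while $G_1\geq 1$ on $T_{r_1/3}(C)$ (the net covers it) and $G_1\equiv 0$ off $T_{r_1/3+r_1/10}(C)\subseteq T_{r_1/2}(C)$. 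Put $\psi_1\equiv\eta\circ G_1$; then $\psi_1\equiv 1$ on $T_{r_1/3}(C)$, $\psi_1\equiv 0$ off $T_{r_1/2}(C)$, $|\nabla\psi_1|\leq c(n,R)\,r_1^{-1}$, and, from $\Delta\psi_1=\eta'(G_1)\,\Delta G_1+\eta''(G_1)\,|\nabla G_1|^2$, also $|\Delta\psi_1|\leq c(n,R)\,r_1^{-2}$. An identical construction at scale $r_0$, using a maximal $\tfrac{r_0}{20}$-net of $\overline{T_{2r_0}(C)}$ to form $G_0$ with $G_0\geq 1$ on $T_{2r_0}(C)$ and $G_0\equiv 0$ off $T_{3r_0}(C)$ and setting $\psi_0\equiv 1-\eta\circ G_0$, produces $\psi_0\equiv 0$ on $T_{2r_0}(C)$, $\psi_0\equiv 1$ off $T_{3r_0}(C)$, with $|\nabla\psi_0|\leq c(n,R)\,r_0^{-1}$ and $|\Delta\psi_0|\leq c(n,R)\,r_0^{-2}$.

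Finally I would take $\psi\equiv\psi_0\,\psi_1$ (it already vanishes off $A_{2r_0,r_1/2}(C)$, hence restricts to $A_{r_0,r_1}(C)$ as required). The two transition shells $\overline{T_{3r_0}(C)}\setminus T_{2r_0}(C)$, where only $\psi_0$ is non-constant, and $\overline{T_{r_1/2}(C)}\setminus T_{r_1/3}(C)$, where only $\psi_1$ is non-constant, are disjoint once $3r_0<r_1/3$ (the regime in which the statement has content), so at each point at most one factor of $\psi$ is non-constant and the cross term in $\Delta(\psi_0\psi_1)$ vanishes. One then reads off: $\psi\equiv 1$ on $A_{3r_0,r_1/3}(C)$ (both factors $\equiv 1$), $\psi\equiv 0$ off $A_{2r_0,r_1/2}(C)$ (one factor $\equiv 0$); on $A_{2r_0,3r_0}(C)$ one has $\psi=\psi_0$ since $\psi_1\equiv 1$ and $\nabla\psi_1\equiv 0$ there, giving the $r_0^{-1}$, $r_0^{-2}$ bounds; and on $A_{r_1/3,r_1/2}(C)$ one has $\psi=\psi_1$, giving the $r_1^{-1}$, $r_1^{-2}$ bounds. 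Restricting $\psi$ to $A_{r_0,r_1}(C)$ completes the construction.

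The only genuinely substantive ingredient is the existence and scale-invariant derivative bounds of the ball-centered cutoffs $\phi_{z,s}$, which is the cutoff construction of \cite{ChC1}; the rest is the bounded-overlap covering estimate (Bishop--Gromov) together with the elementary observation that composing with the fixed function $\eta$ and multiplying two functions whose ``active'' regions are separated preserves the scale-invariant $C^0$, $C^1$, and Laplacian bounds. The one thing to watch --- and the only place the relation between $r_0$ and $r_1$ is used --- is keeping the two working scales well separated, so that the product $\psi_0\psi_1$ does not concentrate curvature at some intermediate scale.
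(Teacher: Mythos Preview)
Your proof is correct. The paper's own argument takes a slightly different route: instead of building two fixed-scale cutoffs (one at scale $r_0$ near the inner boundary, one at scale $r_1$ near the outer) and taking their product, it chooses a single Whitney-type cover of $A_{3r_0,r_1/3}(C)$ by balls $B_{r_i/4}(x_i)$ with \emph{variable} radii $r_i=d(x_i,C)$, sums the associated \cite{ChC1} cutoffs into one function $\bar\psi$, and composes with a fixed smooth $f:[0,\infty)\to[0,1]$. The two essential ingredients --- the ball cutoff of \cite{ChC1} and the Bishop--Gromov bounded-overlap count --- are the same in both arguments. Your approach is a shade more elementary in that it sidesteps the Whitney-cover bookkeeping (the step ``if two balls meet then their radii are comparable''); the paper's single-cover approach, in turn, produces $\psi$ in one pass and does not rely on the observation that the two transition shells are disjoint.
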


\begin{proof}
Let $\{x_i\}\in A_{3r_0,r_1/3}(C)$ be some maximal subset of points such that $B_{r_i/16}(x_i)$ are disjoint, where $r_i=d(x_i,C)$.  By maximality $B_{r_i/4}(x_i)$ cover $A_{3r_0,r_1/3}(C)$.  Also if two balls at $x_i$, $x_j$ overlap, then the ratio of $r_i$ and $r_j$ is bounded by $4$, hence the usual volume comparison arguments tell us that the collection $\{B_{r_i/2}(x_i)\}$ overlap at most $c(n,R)$ times at any point.

It follows from theorem 6.33 of \cite{ChC1} that we can construct non-negative functions
$\psi_i:M\to \RR$ with compact support and with $\psi_i = 1$ on $B_{r_i/4}(x_i)$, $\psi_i = 0$ outside $B_{r_i/2}(x_i)$ and $r_i\,|\nabla \psi_i|$, $r_i^2\,|\Delta \psi_i|\leq c(n,R)$.

Consider first the function
$$
\bar\psi(x)=\sum \psi_i(x)\, .
$$
We have that $\bar\psi(x)$ vanishes for $x\not\in A_{2r_0,r_1/2}(C)$ and satisfies the sought after bounds as the supports of each $\psi_i$ intersect each point at most $c(n,R)$ times.  Further, since $\{B_{r_i/2}(x_i)\}$ cover $A_{3r_0,r_1/3}(C)$ we have for $x\in A_{3r_0,r_1/3}(C)$ that $1\leq \bar\psi(x)\leq c(n,R)$, and so if we let $f:[0,\infty)\to \RR$ be a fixed smooth function such that $f(s)=0$ for $s$ near zero and $f(s)=1$ for $s\geq 1$, then $\psi(x)=f(\bar\psi(x))$ is our desired function.
\end{proof}

In the next section we will use Lemma \ref{l:L1harnack_p}
in combination with the following lemma:

\begin{lemma}  \label{l:toapply}
If $h$ solve the heat equation, $\phi\geq 0$ has compact support and is time independent,
$|\phi |$,
$|\nabla \phi |$, $|\Delta \phi|\leq K_1$, and $|\nabla h|\leq K_2$ on $\{\phi>0\}$, then
$(\partial_t-\Delta)\,[|\nabla h|^2\,\phi^2]\leq c=c(n,K_1,K_2)$.
\end{lemma}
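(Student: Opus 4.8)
The plan is to compute $(\partial_t - \Delta)$ applied to the product $|\nabla h|^2 \phi^2$ directly, using the product rule for the heat operator together with the Bochner formula. First I would recall the Bochner formula for a solution $h$ of the heat equation: since $\partial_t h = \Delta h$, a standard computation gives
\begin{equation}
(\partial_t - \Delta)\,|\nabla h|^2 = -2\,|\Hess_h|^2 - 2\,\Ric(\nabla h, \nabla h)\, .
\end{equation}
Using $\Ric \geq -(n-1)$ and $|\nabla h| \leq K_2$ on the support of $\phi$, the curvature term is bounded below by $-2(n-1)K_2^2$, so on $\{\phi > 0\}$ we get $(\partial_t - \Delta)\,|\nabla h|^2 \leq -2|\Hess_h|^2 + 2(n-1)K_2^2$. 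The crucial sign is that the bad term $-2|\Hess_h|^2$ is \emph{negative}, so it can only help us (it will in fact be used to absorb a cross term below).

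Next I would expand the heat operator on the product. Writing $f = |\nabla h|^2$ for brevity (but not introducing it as a macro in the final text), the identity $(\partial_t - \Delta)(f\phi^2) = \phi^2 (\partial_t - \Delta) f + f\,(\partial_t - \Delta)(\phi^2) - 2\langle \nabla f, \nabla \phi^2\rangle$ holds, and since $\phi$ is time-independent, $(\partial_t - \Delta)(\phi^2) = -\Delta(\phi^2) = -2\phi\,\Delta\phi - 2|\nabla\phi|^2$, which is bounded by $c(K_1)$. The term $f\,(\partial_t-\Delta)(\phi^2)$ is then bounded by $c(n, K_1, K_2)$ using $|\nabla h| \leq K_2$ on $\{\phi > 0\}$. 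Likewise $\phi^2(\partial_t - \Delta)f \leq -2\phi^2|\Hess_h|^2 + c(n,K_2)$ by the Bochner step.

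The main obstacle — and the only place where real care is needed — is the cross term $-2\langle \nabla f, \nabla \phi^2\rangle = -4\phi\,\langle \nabla |\nabla h|^2, \nabla \phi\rangle$. We have the pointwise bound $|\nabla |\nabla h|^2| = 2|\Hess_h(\nabla h, \cdot)| \leq 2|\Hess_h|\,|\nabla h| \leq 2K_2\,|\Hess_h|$, so this cross term is at most $8K_2\,\phi\,|\Hess_h|\,|\nabla\phi| \leq 8K_1 K_2\,\phi\,|\Hess_h|$ on $\{\phi > 0\}$. Now I would apply the absorbing (Peter–Paul) inequality $8K_1K_2\,\phi\,|\Hess_h| \leq 2\phi^2|\Hess_h|^2 + 8K_1^2 K_2^2$, which is exactly matched to the leftover $-2\phi^2|\Hess_h|^2$ from the Bochner term. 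Adding everything up, the $|\Hess_h|^2$ contributions cancel (or leave a nonpositive remainder) and one is left with $(\partial_t - \Delta)(|\nabla h|^2\phi^2) \leq c(n, K_1, K_2)$, as claimed. The computation is entirely elementary; the only point to flag is that one should work on $\{\phi > 0\}$ throughout and note that outside this set the left-hand side involves $\phi = |\nabla\phi| = \Delta\phi = 0$ so the inequality holds trivially there (and the whole expression $|\nabla h|^2\phi^2$ is a legitimate test object since $\phi$ has compact support).
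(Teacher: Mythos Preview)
Your proposal is correct and takes essentially the same approach as the paper: both use the Bochner formula to produce a $-2\phi^2|\Hess_h|^2$ term, then absorb the cross term $4\phi\langle\nabla|\nabla h|^2,\nabla\phi\rangle$ into it via Young/Peter--Paul, leaving only bounded constants. The paper expands $\Delta[|\nabla h|^2\phi^2]$ first and then invokes $\partial_t|\nabla h|^2 = 2\langle\nabla\Delta h,\nabla h\rangle$, whereas you apply the parabolic Bochner identity and the product rule for $(\partial_t-\Delta)$ directly; the computations are the same in content.

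One minor imprecision: on the boundary of $\{\phi>0\}$ you do not in general have $\Delta\phi=0$, but this does not matter---since $\phi\geq 0$ attains its minimum there one has $\phi=0$ and $\nabla\phi=0$, so in the expansion every term either carries a factor of $\phi$ or of $|\nabla\phi|^2$ and hence vanishes anyway.
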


\begin{proof}
By the elementary inequality $2ab\leq a^2+b^2$ and since $|\nabla |\nabla h|^2|^2\leq 4\,|\Hess_h|^2\,|\nabla h|^2$ we have
\begin{equation}
4\phi\,|\langle \nabla |\nabla h|^2,\nabla \phi\rangle|\leq 2\epsilon\,\phi^2\,|\nabla |\nabla h|^2|^2
+\frac{2}{\epsilon}\,|\nabla \phi|^2
\leq 8\,\epsilon\,\phi^2\,|\nabla h|^2\,|\Hess_h|^2+\frac{2}{\epsilon}\,|\nabla \phi|^2\, .
\end{equation}
Choose $\epsilon>0$ so small so that $8\,K^2_2\,\epsilon\leq 2$, then by the Botcher formula we have
\begin{align}
\Delta\,[|\nabla h|^2\,\phi^2]&=\phi^2\,\Delta |\nabla h|^2+2\,\langle \nabla |\nabla h|^2,\nabla \phi^2\rangle+|\nabla h|^2\, \Delta \phi^2\notag\\
&\geq 2\phi^2\,|\Hess_h|^2+2\phi^2\,\langle \nabla \Delta h,\nabla h\rangle-2(n-1)\,\phi^2\,|\nabla h|^2+4\phi\,\langle \nabla |\nabla h|^2,\nabla \phi\rangle+|\nabla h|^2\, \Delta \phi^2\\
&\geq 2\phi^2\,\langle \nabla \Delta h,\nabla h\rangle-c\, .\notag
\end{align}
Using that $\Delta h=\partial_t h$, so $\partial_t|\nabla h|^2=2\langle \nabla \Delta h,\nabla h\rangle$, and that $\phi$ is independent of time gives the claim.
\end{proof}

We finish by stating and proving the integral inequality for the excess, which is one of the main results of this Section.  Recall that the excess of $p,q\in M$ is the function
\begin{align}
e_{p,q}(x)\equiv d(p,x)+d(x,q)-d(p,q)\geq 0\, .
\end{align}
If $\gamma(t)$ is a minimizing geodesic connecting $p$ and $q$ then $e$ attains its minimum value $e|_{\gamma}\equiv 0$ on $\gamma$.  If $M$ had uniform estimates on its curvature and injectivity radius, then $e$ would be a smooth function near the interior of $\gamma$, and one would expect for $x\in B_r(\gamma(t))$ the estimate $e(x)\leq Cr^2$.  In the case of only a lower Ricci curvature bound on $M$ this is a lot to ask for, however an important estimate by Abresh and Gromoll gives
\begin{align}
e(x)\leq Cr^{1+\alpha(n)}\, ,
\end{align}
where $\alpha(n)$ is a small dimensional constant and $x\in B_r(\gamma(t))$.  The next Theorem is an improvement of this statement, where we show that even if we can't take $\alpha\equiv 1$, that in fact we can at {\it most} points.  More precisely:

\begin{theorem}\label{t:L1_excess}
Let $p,q\in M$ with $d_{p,q}\equiv d(p,q)\leq 1$ and $0<\epsilon<1$.  If $x\in A_{\epsilon d_{p,q},2d_{p,q}}(\{p,q\})$ satisfies $e(x)\leq r^2\,d_{p,q}\leq \bar r^2(n,\epsilon)\,d_{p,q}$, then
$$
\fint_{B_{r d_{p,q}}(x)} e\leq c(n,\epsilon)\,r^2\,d_{p,q}\, .
$$
\end{theorem}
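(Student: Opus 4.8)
\emph{Proof proposal.} The plan is to apply the mean value / $L^1$-Harnack inequality of Corollary~\ref{c:L1harnack_e} to a suitable cutoff of the excess function $e$, the point being that the Laplacian comparison theorem controls $\Delta e$ from above \emph{away from} $p$ and $q$. Throughout write $\rho\equiv r\,d_{p,q}$, so that $B_{\rho}(x)$ is the ball in question. Since $|\nabla e|\le 2$ and $e(x)\le r^2 d_{p,q}$, the excess is of size $O(d_{p,q})$ on any ball of radius comparable to $d_{p,q}$ around $x$; and since $x\in A_{\epsilon d_{p,q},2d_{p,q}}(\{p,q\})$, taking $\bar r(n,\epsilon)$ small enough forces $B_{\rho}(x)$ to lie well inside the annular region $\{\,\tfrac{\epsilon}{3}d_{p,q}\le \dist(\cdot,\{p,q\})\le 5 d_{p,q}\,\}$, which is where we will have good control of $\Delta e$.

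First I would record that, by the Laplacian comparison theorem for $\Ric\ge -(n-1)$, one has $\Delta e\le (n-1)\big(\coth d(p,\cdot)+\coth d(q,\cdot)\big)$ in the distributional (barrier) sense. On the annular region above both $d(p,\cdot)$ and $d(q,\cdot)$ are $\ge\tfrac{\epsilon}{3}d_{p,q}$, so — using $\coth t\le 2/t$ for $0<t\le 1$ and $d_{p,q}\le 1$ — this gives $\Delta e\le C(n)(\epsilon d_{p,q})^{-1}$ there. Next, using Lemma~\ref{l:testfun} with $C=\{p,q\}$ and inner/outer radii comparable to $\epsilon d_{p,q}$ and $d_{p,q}$ respectively, choose $0\le\psi\le 1$ with $\psi\equiv 1$ on a neighborhood of $B_{\rho}(x)$, $\psi\equiv 0$ outside the annular region, and $|\nabla\psi|+d_{p,q}|\Delta\psi|\le C(n)(\epsilon d_{p,q})^{-1}$ on the inner transition region, $\le C(n)d_{p,q}^{-1}$ on the outer one. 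Put $w\equiv\psi\,e\ge 0$; then $w\in C_c(M)$, $w=e$ on $B_{\rho}(x)$, and $w(x)=e(x)$. The distributional product rule $\Delta w=\psi\Delta e+2\langle\nabla\psi,\nabla e\rangle+e\,\Delta\psi$, together with the bounds just listed, $|\nabla e|\le 2$, and $e\le C(n)d_{p,q}$ on $\text{supp}\,\psi$, yields $\Delta w\le c_0$ distributionally on $M$ with $c_0\equiv C(n)(\epsilon^2 d_{p,q})^{-1}$.

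Finally I would apply Corollary~\ref{c:L1harnack_e} to $w$ on $B_{\rho}(x)$ (with a universal $R$, legitimate since $\rho\le d_{p,q}\le 1$):
\[
\fint_{B_{\rho}(x)}e=\fint_{B_{\rho}(x)}w\le c(n)\big(w(x)+c_0\rho^2\big)=c(n)\Big(e(x)+\tfrac{C(n)}{\epsilon^2 d_{p,q}}\,r^2 d_{p,q}^2\Big)\le c(n,\epsilon)\,r^2 d_{p,q},
\]
using $e(x)\le r^2 d_{p,q}$ and $\tfrac{C(n)}{\epsilon^2 d_{p,q}}r^2 d_{p,q}^2=\tfrac{C(n)}{\epsilon^2}r^2 d_{p,q}$. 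This is the claimed estimate (and, combined with $|\nabla e|\le 2$, it reproves the Abresch--Gromoll bound $e\le C\,r^{1+\alpha}$).

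The step needing real care — rather than routine computation — is the \emph{scale} of the cutoff in the second paragraph. Cutting $e$ off at the scale $\rho$ of the ball would make the cutoff contribute error terms $|\nabla\psi||\nabla e|+e|\Delta\psi|\sim\rho^{-1}$, so $c_0\rho^2$ would only be $O(\rho)=O(r\,d_{p,q})$, losing a factor of $r$ and giving a statement no better than Abresch--Gromoll. Cutting off at the larger scale $\epsilon d_{p,q}$ is what makes $c_0\rho^2$ genuinely quadratic in $r$: on that annulus $e$ is only $O(d_{p,q})$ while $|\Delta\psi|$ is $O((\epsilon d_{p,q})^{-2})$, so $c_0\sim(\epsilon^2 d_{p,q})^{-1}$ and $c_0\rho^2\sim\epsilon^{-2}r^2 d_{p,q}$. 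One should also note that $e$ fails to be almost-superharmonic precisely near $p$ and $q$, where $\Delta e\sim d(p,\cdot)^{-1}$ blows up — which is exactly why the cutoff must vanish there and why the annular $\psi$ of Lemma~\ref{l:testfun} is the right tool.
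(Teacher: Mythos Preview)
Your proposal is correct and follows the same approach as the paper: apply Lemma~\ref{l:testfun} to build an annular cutoff $\psi$ around $\{p,q\}$, set $\bar e=\psi e$, bound $\Delta\bar e\le c(n,\epsilon)/d_{p,q}$ distributionally, and invoke Corollary~\ref{c:L1harnack_e}. One small remark: on the inner transition region you can sharpen $e\le C d_{p,q}$ to $e\le C\epsilon d_{p,q}$ (since $\min\{d(p,\cdot),d(q,\cdot)\}\sim\epsilon d_{p,q}$ there), which is the ``$e$ small where $\Delta\psi$ is large'' point the paper alludes to and gives $c_0\sim (\epsilon d_{p,q})^{-1}$ rather than your $(\epsilon^2 d_{p,q})^{-1}$ --- but this only affects the $\epsilon$-dependence of the final constant, not the argument.
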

\begin{remark}
Let us observe that when combined with the estimate $|\nabla e|\leq 2$ that we recover the original Abresch-Gromoll estimate.
\end{remark}
\begin{proof}
Let $\psi$ be given by the previous lemma where $C\equiv\{p,q\}$.  Set $\bar e\equiv \psi e$ and note that
\begin{align}
\Delta \bar e = \Delta\psi\, e + 2\langle\nabla\psi,\nabla e\rangle+\psi \Delta e \leq \frac{c(n,\epsilon)}{d_{p,q}} \, .
\end{align}
This estimate depends on $e$ being appropriately small where $\Delta\psi$ is large and vice versa.  We can therefore apply Corollary \ref{c:L1harnack_e} to $\bar e$ to get the result.
\end{proof}

\subsection{Parabolic approximation}  \label{s:parappxfun}

In this subsection we will show the desired hessian bound $\int^{(1-\delta) d_{p,q}}_{\delta d_{x,y}}\fint_{B_{\epsilon}(\sigma(t))}|\Hess|^2\leq C$ for the parabolic approximation to the distance function.  However, before proving that we will need several lemmas.

Let $\psi^{\pm}:M\rightarrow\RR$ be the cutoff functions given by Lemma \ref{l:testfun} such that for some fixed $\delta>0$ we have
\begin{align}
\psi^{-} = \left\{ \begin{array}{rl}
 1&\mbox{ on }A_{\frac{\delta}{4} d_{p,q},8 d_{p,q}}(p) \notag\\
  0 &\mbox{ on }M\setminus A_{\frac{\delta}{16} d_{p,q},16 d_{p,q}}(p)
       \end{array} \right. ,\,
\psi^{+} = \left\{ \begin{array}{rl}
 1&\mbox{ on }A_{\frac{\delta}{4} d_{p,q},8 d_{p,q}}(q) \notag\\
  0 &\mbox{ on }M\setminus A_{\frac{\delta}{16} d_{p,q},16 d_{p,q}}(q)
       \end{array} \right. \, ,\notag
\end{align}
and let $\psi=\psi^+\,\psi^-$.
Set
\begin{align}
M_{r,s}\equiv A_{ r d_{p,q},s d_{p,q}}(p)\cap A_{r  d_{p,q},s d_{p,q}}(q) \, ,
\end{align}
and let $h^{\pm}_t$ and $e_t$ be solutions to the heat equation $(\partial_t-\Delta) = 0$ on $M$ with  $h^{\pm}_0=\psi\, d^{\pm}$, $e_0=\psi\, e$.  In particular,
$h^{\pm}_0=d^{\pm}$, $e_0=e$ on $M_{\delta/4,8}$, and by uniqueness $e_t=h^-_t-h^+_t$.

We start with the following lemma:

\begin{lemma}                      \label{l:lapcomp}
There exists $c(n,\delta)$ such that
\begin{equation}
\Delta h^-_t\, , \Delta e_t\, , -\Delta h^+_t\leq \frac{c(n,\delta)}{d_{p,q}}\, .
\end{equation}
\end{lemma}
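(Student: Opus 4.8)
The plan is to prove the three inequalities by applying the Laplacian comparison theorem to the distance functions $d^\pm$, controlling the cutoff error, and then propagating the bound forward in time by the maximum principle (heat flow preserves the sign of $(\partial_t-\Delta)$ estimates). First I would recall that on $M_{\delta/16,16}$, where $\psi\equiv 1$ near $p$ away from $p$ itself, the Laplacian comparison theorem for $\Ric\geq-(n-1)$ gives $\Delta d^-\leq (n-1)\coth(d^-)\leq c(n,\delta)/d_{p,q}$ in the support sense (using $d^-\geq(\delta/16)d_{p,q}$ on this annular region), and symmetrically $-\Delta d^+=\Delta d(q,\cdot)\leq c(n,\delta)/d_{p,q}$. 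Subtracting, $\Delta e\leq c(n,\delta)/d_{p,q}$ as well, since $e=d^--d^+$. This handles the time-zero data on the region where $\psi=1$.

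Next I would deal with the transition region where $\psi$ is not identically $1$. As in the proof of Theorem \ref{t:L1_excess}, write $h^-_0=\psi\,d^-$ and compute $\Delta(\psi\,d^-)=\Delta\psi\,d^-+2\langle\nabla\psi,\nabla d^-\rangle+\psi\,\Delta d^-$; the last term is bounded by $c(n,\delta)/d_{p,q}$ by the comparison estimate above (and is nonpositive far out), the middle term is bounded using $|\nabla d^-|=1$ and the gradient bound $|\nabla\psi|\leq c(n,\delta)/d_{p,q}$ from Lemma \ref{l:testfun} (the cutoff is at scale comparable to $\delta d_{p,q}$ or $d_{p,q}$), and the first term needs the observation that $\Delta\psi$ is large only where $d^-$ is comparably small or large, so that $\Delta\psi\cdot d^-$ is again $O(1/d_{p,q})$; more precisely on $A_{2r_0,3r_0}(p)$ with $r_0\sim\delta d_{p,q}$ one has $|\Delta\psi|\leq c/r_0^2$ and $d^-\leq 3r_0$, giving $|\Delta\psi\cdot d^-|\leq c/r_0\leq c(n,\delta)/d_{p,q}$, and similarly on the outer annulus at scale $r_1\sim d_{p,q}$. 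Hence $\Delta h^-_0\leq c(n,\delta)/d_{p,q}$ everywhere, and the same argument gives $-\Delta h^+_0\leq c(n,\delta)/d_{p,q}$ and $\Delta e_0\leq c(n,\delta)/d_{p,q}$.

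Finally I would propagate these to positive time. Since $h^-_t$ solves the heat equation, the function $v_t\equiv \Delta h^-_t-c(n,\delta)/d_{p,q}$ also solves the heat equation and has $v_0\leq 0$; by the maximum principle (in the appropriate local-to-global sense, valid here because the initial data has compact support and $M$ is complete, cf. the arguments already used for $\psi^r_t$ in the proof of Lemma \ref{l:kernelestimate}), $v_t\leq 0$ for all $t>0$, i.e.\ $\Delta h^-_t\leq c(n,\delta)/d_{p,q}$. The same applies to $-\Delta h^+_t$ and to $\Delta e_t=\Delta h^-_t-\Delta h^+_t$. The main obstacle I anticipate is the control of the $\Delta\psi\cdot d^\pm$ term in the transition region: one must use the precise structure of the cutoff from Lemma \ref{l:testfun} (that the inner cutoff lives at scale $\sim\delta d_{p,q}$ where $d^\pm$ is of that same order, and the outer cutoff at scale $\sim d_{p,q}$) rather than a crude bound, since a naive estimate $|\Delta\psi|\cdot d^-\leq (c/r_0^2)\cdot d_{p,q}$ would be off by a factor of $1/\delta^2$. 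A secondary technical point is justifying the maximum principle for $\Delta h^\pm_t$ on a complete (possibly noncompact) manifold, but this is standard given compact support of the data and the $\Ric\geq-(n-1)$ bound.
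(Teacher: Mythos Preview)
Your proposal is correct and follows essentially the same route as the paper: establish $\Delta h^-_0,\,\Delta e_0,\,-\Delta h^+_0\leq c(n,\delta)/d_{p,q}$ by expanding $\Delta(\psi\, f)$ and using Laplacian comparison plus the scale-matched cutoff bounds from Lemma~\ref{l:testfun}, then push the inequality forward in time. The only cosmetic difference is in the propagation step: the paper writes $\Delta e_t(x)=\int\Delta_x H_t(x,y)\,e_0(y)\,dy=\int\Delta_y H_t(x,y)\,e_0(y)\,dy$ and integrates by parts against the compactly supported $e_0$, thereby reducing directly to the distributional bound on $\Delta e_0$; you phrase the same thing as a maximum principle for $v_t=\Delta h^-_t-c/d_{p,q}$. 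The heat-kernel formulation is marginally cleaner here because $\Delta h^-_0$ is only a distribution (since $d^-$ is merely Lipschitz), so your ``$v_0\leq 0$ and $v_t$ solves the heat equation'' needs exactly the heat-kernel representation to be made rigorous---but that is the paper's argument. Finally, your flagged obstacle about a stray $1/\delta^2$ in the $\Delta\psi\cdot d^-$ term near the inner annulus around $q$ is a non-issue: the constant is allowed to depend on $\delta$, so the crude bound already suffices.
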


\begin{proof}
We prove it for $e_t$, the proof can be repeated verbatim for the others.  Note first that
\begin{align}\label{e:e_0est}
\Delta e_0 = e\,\Delta\psi+2\langle\nabla\,\psi,\nabla e\rangle+\psi\,\Delta e \leq \frac{c(n,\delta)}{d_{p,q}}\, .
\end{align}
This inequality holds distributionally, and because $e_0$ has compact support it holds distributionally on functions which themselves may not have compact support.  Now if $H_t(x,y)=H(x,y,t)$ is the heat kernel, then $e_t(x)= \int_{M_{\delta/16,16}} H_t(x,y)\,\psi(y)\,e(y)\,dv_g(y)$.  Thus we get
\begin{align}
\Delta e_t(x) &= \int_{M_{\delta/16,16}}\Delta_x\, H_t(x,y)\,\psi(y)\,e(y)\,dy
 = \int_{M_{\delta/16,16}}\Delta_y\, H_t(x,y)\,\psi(y)\,e(y)\,dy\leq \frac{c(n,\delta)}{d_{p,q}}\, .
\end{align}
\end{proof}

From this we get:

\begin{lemma}                       \label{l:Eest}
There exists $c(n,\delta)$ such that for each $\epsilon\leq\bar\epsilon(n,\delta)$ and $x\in M_{\delta/2,4}$ the following holds for each $y\in B_{10d_{\epsilon}}(x)$, where $d_{\epsilon}=\epsilon\, d_{p,q}$:
\begin{enumerate}
\item $|e_{d_{\epsilon}^2}(y)|\leq c\left(\epsilon^2 d_{p,q}+e(x)\right)$.
\item $|\nabla e_{d_{\epsilon}^2}|(y)\leq c\left(\epsilon +\frac{\epsilon^{-1}e(x)}{d_{p,q}}\right)$.
\item $|\frac{d}{dt}e_{d_{\epsilon}^2}|(y)$,  $|\Delta e_{d_{\epsilon}^2}|(y)\leq c\left(\frac{1}{d_{p,q}}+\frac{\epsilon^{-2}e(x)}{d^2_{p,q}}\right)$.
\item $\fint_{B_{d_{\epsilon}}(y)}|\Hess_{e_{d_{\epsilon}^2}}|^2 \leq c\left(\frac{1}{d_{p,q}}+\frac{\epsilon^{-2}e(x)}{d_{p,q}^2}\right)^2$.
\end{enumerate}
\end{lemma}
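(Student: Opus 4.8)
The plan is to derive all four statements from the mean value inequality (Lemma \ref{l:L1harnack_p}), the heat kernel bounds (Lemma \ref{l:kernelestimate}), the integral excess estimate (Theorem \ref{t:L1_excess}), and the Laplacian bound (Lemma \ref{l:lapcomp}), treating time derivatives via $\partial_t e_t = \Delta e_t$ and the heat equation throughout. The underlying geometric fact to keep in mind is that if $x \in M_{\delta/2,4}$ and $\epsilon \le \bar\epsilon(n,\delta)$, then $B_{10 d_\epsilon}(x) \subseteq M_{\delta/4,8}$, so on this ball $e_0 = e$, $\psi \equiv 1$, and the comparison functions genuinely agree with the distance functions; moreover $e(x) \le e(x')$ is comparable to $e$ at nearby points up to additive $c\,\epsilon\, d_{p,q}$ because $|\nabla e| \le 2$.

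First I would prove (1). Apply Lemma \ref{l:L1harnack_p} to the non-negative function $u_t = e_{t}$ (valid after cutting off in space, using that $e_t \ge 0$ and $(\partial_t - \Delta) e_t = 0 \ge -c_0$ with $c_0 = 0$, but one must restrict to the region where $e_t$ is controlled — here one uses the spatial cutoff and the fact that $e_t$ solves the heat equation with compactly supported initial data). This gives $\fint_{B_{d_\epsilon}(y)} e_0 \le c\, e_{d_\epsilon^2}(y)$ type control in one direction; for the other direction one runs the mean value inequality the other way, bounding $e_{d_\epsilon^2}(y)$ above by the heat kernel integral of $e_0$ against $H_{d_\epsilon^2}(y,\cdot)$. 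Split this integral into the part over $B_{C d_\epsilon}(y)$, where Lemma \ref{l:kernelestimate}(1) gives $H \le c/\Vol(B_{C d_\epsilon}(y))$ so the contribution is $\le c\,\fint_{B_{C d_\epsilon}(y)} e \le c(\epsilon^2 d_{p,q} + e(x))$ by Theorem \ref{t:L1_excess} (whose hypothesis $e(x) \le r^2 d_{p,q}$ is arranged by taking $\epsilon$ small and, if $e(x)$ is large, the estimate is vacuous), and the part outside, where Lemma \ref{l:kernelestimate}(2) gives a gain of $r^{-2} t \sim 1$ that is absorbed using the linear growth $e \le c\, d_{p,q}$ on the support of $\psi$. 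That yields (1).

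For (2), (3), (4) the strategy is the standard interior-parabolic-regularity one: estimate derivatives of $e_t$ at time $d_\epsilon^2$ by the size of $e_t$ at a slightly earlier time, on a slightly larger ball. Concretely, for (3) I would use $\partial_t e_t = \Delta e_t$ and bound $\Delta e_{d_\epsilon^2}(y)$: one representation is $\Delta e_{d_\epsilon^2}(y) = \int \Delta_z H_{d_\epsilon^2/2}(y,z)\, e_{d_\epsilon^2/2}(z)\,dz$, and using the heat kernel gradient/Laplacian bounds (which scale like $(d_\epsilon^2)^{-1}$ and integrate to a factor $d_\epsilon^{-2}$) together with the $L^\infty$ bound on $e_{d_\epsilon^2/2}$ from (1) gives $|\Delta e_{d_\epsilon^2}|(y) \le c\, d_\epsilon^{-2}(\epsilon^2 d_{p,q} + e(x)) = c(d_{p,q}^{-1} + \epsilon^{-2} e(x) d_{p,q}^{-2})$. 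The same computation with one derivative on $H$ and a factor $d_\epsilon^{-1}$ gives (2): $|\nabla e_{d_\epsilon^2}|(y) \le c\, d_\epsilon^{-1}(\epsilon^2 d_{p,q} + e(x)) = c(\epsilon + \epsilon^{-1} e(x) d_{p,q}^{-1})$. For (4), the Hessian bound in $L^2$-average, I would integrate the Bochner-type identity: multiply $\Delta|\nabla e_t|^2 \ge 2|\Hess_{e_t}|^2 + 2\langle \nabla \Delta e_t, \nabla e_t\rangle - 2(n-1)|\nabla e_t|^2$ by a cutoff $\phi^2$ supported in $B_{2d_\epsilon}(y)$ and equal to $1$ on $B_{d_\epsilon}(y)$ with $|\nabla\phi| \le c\,d_\epsilon^{-1}$, $|\Delta\phi| \le c\,d_\epsilon^{-2}$, integrate over $M$, and move derivatives onto $\phi$. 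The boundary/cutoff terms are controlled by $\fint_{B_{2d_\epsilon}} |\nabla e_t|^2 \cdot d_\epsilon^{-2}$ from (2) and the $\langle\nabla\Delta e_t,\nabla e_t\rangle$ term by $\|\Delta e_t\|_\infty \cdot d_\epsilon^{-1} \cdot \|\nabla e_t\|_\infty$-type products using (2) and (3); collecting powers of $d_\epsilon$ exactly produces $\fint_{B_{d_\epsilon}(y)}|\Hess_{e_{d_\epsilon^2}}|^2 \le c(d_{p,q}^{-1} + \epsilon^{-2} e(x) d_{p,q}^{-2})^2$.

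The main obstacle I anticipate is the bookkeeping in step (1): one must be careful that the mean value inequality and the heat kernel estimates are only being applied where the comparison functions actually coincide with the (non-negative, geometrically meaningful) excess, and that the "outside $B_r(x)$" error terms in Lemma \ref{l:kernelestimate}(2) are absorbed cleanly against the at-worst-linear growth of $e$ rather than spoiling the quadratic-in-$\epsilon$ gain; getting the threshold $\bar\epsilon(n,\delta)$ right so that $B_{10d_\epsilon}(x) \subseteq M_{\delta/4,8}$ and Theorem \ref{t:L1_excess} applies is where the dimensional and $\delta$-dependence of the constants is really pinned down. Once (1) is in hand, steps (2)--(4) are routine parabolic interior estimates and the only care needed is tracking the scaling factors $d_\epsilon^{-1}$, $d_\epsilon^{-2}$ so that the final bounds come out homogeneous of the stated degree.
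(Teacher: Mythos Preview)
Your approach to (4) via the Bochner formula matches the paper, but your route to (1)--(3) is both more complicated than what the paper does and, as written, has a genuine gap.

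\textbf{The gap in (1).} You propose to write $e_{d_\epsilon^2}(y)=\int H_{d_\epsilon^2}(y,z)\,e_0(z)\,dz$ and split into a near part (controlled by Theorem~\ref{t:L1_excess}) and a far part controlled by Lemma~\ref{l:kernelestimate}(2) together with the linear growth $e\le c\,d_{p,q}$. But Lemma~\ref{l:kernelestimate}(2) only gives $\int_{M\setminus B_r}H_t\le c\,r^{-2}t$, which at $r\sim d_\epsilon$, $t=d_\epsilon^2$ is of order one; multiplied by $\sup e_0\sim d_{p,q}$ you get a far--field contribution of order $d_{p,q}$, not $\epsilon^2 d_{p,q}$. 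Even summing over dyadic annuli and using only the Lipschitz bound $|\nabla e|\le 2$ yields at best $c\,\epsilon\,d_{p,q}$, still one power of $\epsilon$ short. To rescue this you would need the full Li--Yau Gaussian upper bound on $H_t$ together with Theorem~\ref{t:L1_excess} on each annulus, which is doable but considerably more work than you indicate.

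\textbf{What the paper actually does.} The paper's argument for (1) avoids the heat kernel splitting entirely and does not invoke Theorem~\ref{t:L1_excess} at all. It uses the one--sided bound $\Delta e_t\le c/d_{p,q}$ from Lemma~\ref{l:lapcomp} and simply integrates in time:
\[
e_t(x)=e_0(x)+\int_0^t\Delta e_s\,ds\le e(x)+\frac{c\,t}{d_{p,q}}\, ,
\]
so $e_{d_\epsilon^2}(x)\le e(x)+c\,\epsilon^2 d_{p,q}$. Then the Li--Yau Harnack inequality transfers this bound from $x$ to any $y\in B_{10d_\epsilon}(x)=B_{10\sqrt t}(x)$. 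This is the key idea you are missing. For (3), the upper bound $\Delta e_{d_\epsilon^2}\le c/d_{p,q}$ is again Lemma~\ref{l:lapcomp}, and the lower bound comes from the Li--Yau gradient inequality (which bounds $-\Delta u/u$ by $c/t$) combined with (1). Statement (2) then follows from the Li--Yau gradient inequality together with (1) and (3). Your plan to bound $\nabla e_t$ and $\Delta e_t$ via derivatives of the heat kernel would require $L^1$ or pointwise estimates on $\nabla_y H_t$ and $\Delta_y H_t$ that are not established in the paper; the Li--Yau route bypasses this entirely.
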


\begin{remark}
In particular, if we have the pointwise estimate $e(x)\leq \epsilon^2 d_{p,q}$, then on $B_{\epsilon d_{p,q}}(x)$ we have the inequalities
$|e_{d_{\epsilon}^2}|\leq c\epsilon^2 d_{p,q}$, $|\nabla e_{d_{\epsilon}^2}|\leq c\epsilon$, and $|\frac{d}{dt}e_{d_{\epsilon}^2}|$, $|\Delta e_{d_{\epsilon}^2}|$, $\fint_{B_{d_{\epsilon}}}|\Hess_{e_{d_{\epsilon}^2}}|^2 \leq \frac{c}{d_{p,q}}$.
\end{remark}

\begin{proof}
It follows from the previous lemma that
\begin{align}
\Delta e_t(x)\leq \frac{c}{d_{p,q}} \, .
\end{align}
By the definition of the heat equation this means that
\begin{align}
 e_t(x)=e_0(x)+\int_0^t(\Delta e_s)\,ds\leq e(x)+\frac{c}{d_{p,q}} t\, .
\end{align}
Hence, for all $t\in [d_{\epsilon}^2/4,4d_{\epsilon}^2]$ we get that
\begin{align}
e_t(x)\leq e(x)+c\epsilon^2 d_{p,q}\, .
\end{align}
In particular, we can apply the Li-Yau Harnack inequality, \cite{LY}, for $t=d_{\epsilon}^2$ and $y\in B_{10d_{\epsilon}}(x)=B_{10\sqrt t}(x)$ to get that there is a constant $c(n,\delta)$ such that $e_{d_{\epsilon}^2}(y)\leq c\left(\epsilon^2 d_{p,q}+e(x)\right)$, which proves the first statement.  To see the third statement observe first that the two claims it consists of are equivalent since $e_t$ satisfies the heat equation.  Using this the third statement follows from the Li-Yau gradient estimate combined with the previous lemma and the first statement.  The second statement follows from the first statement together with the equivalence of the inequalities in the third statement,  the previous lemma, and the Li-Yau gradient estimate, \cite{LY}.  The final statement is proved using a Bochner formula as in \cite{ChC1} (see pages 217--218 and 228 there),  \cite{C1}--\cite{C3}, and also Theorem \ref{t:mainregthm1}.
\end{proof}

We can now begin to estimate the approximation functions $h^{\pm}$ themselves:

\begin{lemma}\label{l:he_est}
There exists $c(n,\delta)$ such that for every $\epsilon\leq\bar\epsilon(n,\delta)$ and $x\in M_{\delta/2,4}$ we have
\begin{equation}
|h^{\pm}_{d_{\epsilon}^2}-d^{\pm}|(x)\leq c\left(\epsilon^2\, d_{p,q}+e(x)\right)\, .
\end{equation}
\end{lemma}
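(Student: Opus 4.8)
The plan is to estimate $|h^{\pm}_{d_\epsilon^2} - d^{\pm}|$ by decomposing it into two pieces: the difference caused by the cutoff $\psi$ (which vanishes on the relevant annulus), and the difference between $d^{\pm}$ and its own heat evolution, which we control through the excess. First I would observe that on $M_{\delta/4,8}$ we have $h^{\pm}_0 = d^{\pm}$, so it suffices to bound how far the heat flow moves $h^{\pm}_0$ in time $d_\epsilon^2$. The key point is that $h^-_t$ and $h^+_t$ are not independent: by uniqueness of the heat flow, $e_t = h^-_t - h^+_t$ and $e_0 = e$, so the already-established estimates in Lemma \ref{l:Eest} (in particular $|e_{d_\epsilon^2}(x)| \le c(\epsilon^2 d_{p,q} + e(x))$) pin down the \emph{difference} of the two evolved functions.

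Next I would use the one-sided Laplacian bounds from Lemma \ref{l:lapcomp}, namely $\Delta h^-_t \le c/d_{p,q}$ and $-\Delta h^+_t \le c/d_{p,q}$, which via $\partial_t h^{\pm}_t = \Delta h^{\pm}_t$ give the \emph{one-sided} control
\[
h^-_{d_\epsilon^2}(x) - d^-(x) = \int_0^{d_\epsilon^2} \Delta h^-_s(x)\, ds \le \frac{c}{d_{p,q}}\, d_\epsilon^2 = c\,\epsilon^2 d_{p,q},
\]
and similarly $h^+_{d_\epsilon^2}(x) - d^+(x) \ge -c\,\epsilon^2 d_{p,q}$. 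Subtracting these two one-sided inequalities, and using $e_{d_\epsilon^2} = h^-_{d_\epsilon^2} - h^+_{d_\epsilon^2}$ together with $d^- - d^+ = e$, yields
\[
h^-_{d_\epsilon^2}(x) - d^-(x) \;=\; \bigl(h^+_{d_\epsilon^2}(x) - d^+(x)\bigr) + \bigl(e_{d_\epsilon^2}(x) - e(x)\bigr) \;\ge\; -c\,\epsilon^2 d_{p,q} - \bigl|e_{d_\epsilon^2}(x)\bigr| - e(x),
\]
and by Lemma \ref{l:Eest}(1) this is $\ge -c(\epsilon^2 d_{p,q} + e(x))$. Combined with the upper bound above, this gives $|h^-_{d_\epsilon^2} - d^-|(x) \le c(\epsilon^2 d_{p,q} + e(x))$, and the estimate for $h^+$ follows by the symmetric argument (or again by subtracting off $e$).

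The main obstacle — and the reason the excess term $e(x)$ must appear — is that the Laplacian comparison only gives a one-sided bound on each of $\Delta h^{\pm}_t$, so naively one only gets the inequality in one direction for each function. The trick that makes it work is precisely that the \emph{pair} $(h^-_t, h^+_t)$ is coupled through their difference $e_t$, whose two-sided smallness is already in hand; this lets the one-sided bound on $h^+$ substitute for the missing lower bound on $h^-$ (and vice versa). One should be slightly careful that $x \in M_{\delta/2,4}$ together with $\epsilon \le \bar\epsilon(n,\delta)$ guarantees that the whole relevant region stays inside $M_{\delta/4,8}$ where $\psi \equiv 1$, so that the cutoff genuinely drops out and $h^{\pm}_0 = d^{\pm}$ there; this is the same bookkeeping already used in Lemma \ref{l:Eest} and needs no new idea.
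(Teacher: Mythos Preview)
Your proposal is correct and follows essentially the same route as the paper: one-sided bounds on $h^\pm_{d_\epsilon^2}-d^\pm$ from the Laplacian comparison of Lemma~\ref{l:lapcomp}, then the missing direction via the identity $h^-_{d_\epsilon^2}-d^- = (h^+_{d_\epsilon^2}-d^+)+(e_{d_\epsilon^2}-e)$ together with the bound on $|e_{d_\epsilon^2}|$ from Lemma~\ref{l:Eest}. The only cosmetic difference is that the paper states the two one-sided inequalities first and then invokes the coupling in one line, whereas you spell out the lower bound for $h^-$ explicitly; the content is identical.
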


\begin{remark}
A first important difference between the parabolic approximations $h^{\pm}$ and the harmonic approximations of \cite{ChC1}, \cite{C3} is that the error term above is controlled \textit{pointwise} by $e$, as opposed to the $L^{\infty}$ norm of $e$ on $B_{\epsilon d_{p,q}}(x)$.
\end{remark}

\begin{proof}
As in the proof of the last lemma we see because
\begin{align}
 \Delta h^{-}\leq \frac{c}{d_{p,q}}\, ,
\end{align}
and
\begin{align}
 -\frac{c}{d_{p,q}}\leq \Delta h^+\, ,
\end{align}
that from Lemma \ref{l:lapcomp} we immediately get for every $x\in M_{\delta/2,4}$ that
\begin{align}
 h^{-}_{d_{\epsilon}^2}(x)\leq d^-(x)+c\,\epsilon^2\, d_{p,q}\, ,
\end{align}
and
\begin{align}
d^+(x)-c\,\epsilon^2\, d_{p,q}\leq h^+_{d_{\epsilon}^2}(x) \, .
\end{align}
These are equivalent to $h^{-}_{d_{\epsilon}^2}(x)- d^-(x)\leq c\,\epsilon^2\, d_{p,q}$ and $-(h^+_{d_{\epsilon}^2}(x)-d^+(x))\leq c\,\epsilon^2\, d_{p,q}$.  The reverse inequalities follow from
\begin{align}
h^{-}_{d_{\epsilon}^2}(x)- d^-(x)=h^{+}_{d_{\epsilon}^2}(x)- d^+(x)+e_{d^2_{\epsilon}}(x)-e(x) \, ,
\end{align}
since by the last lemma
\begin{align}
 |e_{d_{\epsilon}^2}|\leq c\left(\epsilon^2\, d_{p,q}+e(x)\right)\, .
\end{align}
\end{proof}

An obvious, but important, corollary of the last lemma is that $h^{\pm}_{d_{\epsilon}^2}$ and $d^{\pm}$ are automatically close at $x$ when the excess $e(x)$ is small.  More generally, we would like to prove smallness results along $\epsilon$-geodesics.  Recall that an $\epsilon$-geodesic between $p$ and $q$ is simply a unit speed curve $\sigma$ such that $||\sigma|-d(p,q)|\leq\epsilon^2 d_{p,q}$.  The following obvious lemma tells us the basics of what we need to know about $\epsilon$-geodesics:

\begin{lemma}                       \label{l:eps_geo}
The following statements hold:
\begin{enumerate}
\item Let $\sigma$ be an $\epsilon$-geodesic connecting $p$ and $q$ and let $z\in\sigma$, then $e(z)\leq\epsilon^2\, d_{p,q}$.
\item Let $x\in M$ such that $e(x)\leq \epsilon^2\, d_{p,q}$, then there exists an
$\epsilon$-geodesic $\sigma$ such that $x\in\sigma$.
\end{enumerate}
\end{lemma}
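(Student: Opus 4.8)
The statement to prove is Lemma~\ref{l:eps_geo}, which has two parts. For part (1), let $\sigma:[0,L]\to M$ be the $\epsilon$-geodesic connecting $p$ and $q$, parametrized by unit speed, so $L=|\sigma|$ and $|L-d_{p,q}|\leq\epsilon^2 d_{p,q}$. The plan is to bound the excess at a point $z=\sigma(s)$ directly from the triangle inequality. We have $d(p,z)\leq s$ and $d(z,q)\leq L-s$ since $\sigma$ restricted to $[0,s]$ and to $[s,L]$ are curves of those lengths joining the respective endpoints. Adding these gives $d(p,z)+d(z,q)\leq L\leq d_{p,q}+\epsilon^2 d_{p,q}$, hence $e(z)=d(p,z)+d(z,q)-d_{p,q}\leq\epsilon^2 d_{p,q}$, which is exactly the claim. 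This is immediate and requires no further input.

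For part (2), given $x$ with $e(x)\leq\epsilon^2 d_{p,q}$, the plan is to concatenate a minimizing geodesic $\gamma_1$ from $p$ to $x$ with a minimizing geodesic $\gamma_2$ from $x$ to $q$, and reparametrize the concatenation by unit speed to obtain a curve $\sigma$ passing through $x$. Its length is $|\sigma|=d(p,x)+d(x,q)=d_{p,q}+e(x)$, and the hypothesis $e(x)\leq\epsilon^2 d_{p,q}$ gives $||\sigma|-d_{p,q}|=e(x)\leq\epsilon^2 d_{p,q}$, so $\sigma$ is by definition an $\epsilon$-geodesic. (One should note $|\sigma|\geq d_{p,q}$ automatically, so the absolute value is harmless.) This is again essentially immediate once one unwinds the definitions.

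Since the paper labels this lemma "obvious," I expect there to be no genuine obstacle; the only mild subtlety is bookkeeping about parametrizations and making sure the concatenated curve is a legitimate competitor (piecewise smooth, unit speed after reparametrization), but this is routine. If anything could be called the main point, it is simply recognizing that the excess $e$ measures precisely the defect $|\sigma|-d_{p,q}$ for the natural curve through $x$, so that the two conditions — being an $\epsilon$-geodesic and having excess at most $\epsilon^2 d_{p,q}$ — are two sides of the same inequality. I would present the two parts in the order above, keeping each to a couple of lines.
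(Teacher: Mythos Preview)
Your proof is correct; the paper itself simply labels the lemma ``obvious'' and provides no proof, and the argument you give---bounding $d(p,z)+d(z,q)$ by the length of $\sigma$ for part (1), and concatenating minimizing geodesics for part (2)---is exactly the intended one.
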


We can now prove the promised corollary:

\begin{corollary}                       \label{c:|h-d|gamma}
There exists $c(n,\delta)$ so that for every $\epsilon$-geodesic $\sigma$ connecting $p$, $q$, and any $x\in \sigma\cap M_{\delta/2,4}$
\begin{equation}
|h^{\pm}_{d_{\epsilon}^2}-d^{\pm}|\leq c\,\epsilon^2\, d_{p,q}\, .
\end{equation}
This can equivalently be stated that for each $t$ with $\frac{\delta}{2}d_{p,q}<t<(1-\frac{\delta}{2})\,d(p,q)$ we have that
\begin{equation}
|h^{\pm}_{d_{\epsilon}^2}(\sigma(t))-t|\leq c\,\epsilon^2 d_{p,q}\, .
\end{equation}
\end{corollary}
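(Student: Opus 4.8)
The plan is to read this off directly from Lemma~\ref{l:he_est} and the first part of Lemma~\ref{l:eps_geo}. Lemma~\ref{l:he_est} already supplies, for every $x\in M_{\delta/2,4}$ and every $\epsilon\le\bar\epsilon(n,\delta)$, the pointwise bound $|h^{\pm}_{d_{\epsilon}^2}-d^{\pm}|(x)\le c(n,\delta)\bigl(\epsilon^2 d_{p,q}+e(x)\bigr)$; note that, in contrast with the harmonic approximations of \cite{ChC1}, \cite{C3}, the error here is genuinely \emph{pointwise} in the excess $e$, so no passage to $L^\infty$ norms on balls is needed. Hence the whole statement reduces to bounding $e(x)$ at points $x$ of an $\epsilon$-geodesic, and this is exactly Lemma~\ref{l:eps_geo}.1: if $\sigma$ joins $p$ to $q$ and is an $\epsilon$-geodesic and $x\in\sigma$, then $e(x)\le\epsilon^2 d_{p,q}$. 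Substituting this into the bound above gives $|h^{\pm}_{d_{\epsilon}^2}-d^{\pm}|(x)\le 2c(n,\delta)\,\epsilon^2 d_{p,q}$ for every $x\in\sigma\cap M_{\delta/2,4}$, which is the first displayed inequality of the corollary after relabelling the constant.

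For the reformulation in terms of the parameter $t$, I would first record the elementary estimate $|d^{\pm}(\sigma(t))-t|\le\epsilon^2 d_{p,q}$, valid along any unit-speed $\epsilon$-geodesic $\sigma$: the sub-arc of $\sigma$ from $p$ to $\sigma(t)$ has length $t$, so $d^-(\sigma(t))=d(p,\sigma(t))\le t$, while $d(p,\sigma(t))\ge d_{p,q}-d(\sigma(t),q)\ge d_{p,q}-(|\sigma|-t)\ge t-\epsilon^2 d_{p,q}$ using $|\sigma|\le d_{p,q}+\epsilon^2 d_{p,q}$; the symmetric computation from the $q$-end gives the same bound for $d^+$. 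Combining $|d^{\pm}(\sigma(t))-t|\le\epsilon^2 d_{p,q}$ with the first part of the corollary through the triangle inequality then yields $|h^{\pm}_{d_{\epsilon}^2}(\sigma(t))-t|\le c(n,\delta)\,\epsilon^2 d_{p,q}$ for the stated range of $t$.

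There is no real obstacle here, the substantive work having been done in Lemmas~\ref{l:he_est} and~\ref{l:eps_geo}; the only step requiring a little care is the bookkeeping that places $\sigma(t)$ inside the region where Lemma~\ref{l:he_est} applies. Using $|d^{\pm}(\sigma(t))-t|\le\epsilon^2 d_{p,q}$ one checks that for $\tfrac{\delta}{2}d_{p,q}<t<(1-\tfrac{\delta}{2})d(p,q)$ and $\epsilon\le\bar\epsilon(n,\delta)$ the point $\sigma(t)$ lies in $M_{\delta/2,4}$ --- or, at worst, in the mildly larger region on which the argument of Lemma~\ref{l:he_est} still goes through, since the cutoffs $\psi^{\pm}$ are identically $1$ on $M_{\delta/4,8}\supsetneq M_{\delta/2,4}$ --- and it is here that the smallness hypothesis on $\epsilon$ (and, if one insists on razor-sharp constants, a harmless shrinking of $\delta$) is used.
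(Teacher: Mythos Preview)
Your proposal is correct and follows essentially the same approach as the paper: both invoke Lemma~\ref{l:he_est} for the pointwise bound $|h^{\pm}_{d_{\epsilon}^2}-d^{\pm}|(x)\le c(\epsilon^2 d_{p,q}+e(x))$ and then Lemma~\ref{l:eps_geo}.1 to control the excess along $\sigma$. Your write-up is somewhat more detailed than the paper's terse version, in particular your explicit verification of $|d^{\pm}(\sigma(t))-t|\le\epsilon^2 d_{p,q}$ and the bookkeeping for $\sigma(t)\in M_{\delta/2,4}$, but these are natural elaborations rather than a different line of argument.
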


\begin{proof}
By Lemma \ref{l:he_est}, and because $\sigma$ is unit speed, the statements hold for each $z$ with $e(z)\leq \epsilon^2 d_{p,q}$ and $z\in M_{\delta/2,4}$.  However, by Lemma \ref{l:eps_geo},  the excess estimate does, in fact, hold for each $z\in\sigma$ as claimed.
\end{proof}

The next lemma gives a sharp upper bound for the gradient of $h^{\pm}$:

\begin{lemma}                              \label{l:sharp|gradh|}
There exists $c(n,\delta)$ such that for all $x\in M_{\delta/2,4}$ and $\epsilon\leq\bar\epsilon(n,\delta)$ we have that
\begin{equation}  \label{e:sharp|gradh|}
|\nabla h_{d_{\epsilon}^2}^\pm|\leq 1+c\,d_{\epsilon}^2\, .
\end{equation}
\end{lemma}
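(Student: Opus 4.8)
\textbf{Proof plan for Lemma \ref{l:sharp|gradh|}.}
The plan is to derive the sharp gradient bound for the parabolic approximation $h^\pm_{d_\epsilon^2}$ by combining the Bochner formula with the mean value inequality of Lemma \ref{l:L1harnack_p}, exploiting the fact that the initial data $\psi\,d^\pm$ has $|\nabla(\psi d^\pm)|\le 1$ on the region where $\psi\equiv 1$. First I would set up the quantity $w_t\equiv |\nabla h^-_t|^2$ (the argument for $h^+$ being verbatim by symmetry) and note that since $h^-$ solves the heat equation, the Bochner formula together with $\Ric\ge -(n-1)$ gives
\begin{equation}
(\partial_t-\Delta)\,w_t \le 2(n-1)\,w_t\, .
\end{equation}
The factor $w_t$ on the right hand side is not yet of the form needed for Lemma \ref{l:L1harnack_p}, so I would first obtain a crude a priori bound $w_t\le C(n,\delta)$ on the relevant region for $t\le d_\epsilon^2$ — this follows from the Li–Yau gradient estimate, or from a cruder maximum-principle argument using that $|\nabla h^-_0|$ is bounded by a dimensional constant times $(\delta d_{p,q})^{-1}$ times the tubular radius, hence bounded, on the support of $\psi$. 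With this crude bound in hand, the differential inequality becomes $(\partial_t-\Delta)\,w_t \le c(n,\delta)$, i.e. $w_t$ is an almost-subsolution of the heat equation with controlled defect.

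The heart of the argument is then to run the mean value/Harnack inequality \emph{backwards in time off the sharp initial value}. Concretely, I would apply Lemma \ref{l:L1harnack_p} (in the stronger Harnack form from the Remark) to the function $u_t\equiv \phi^2\,w_t$, where $\phi$ is a cutoff supported in $M_{\delta/4,8}$ and equal to $1$ on $M_{\delta/2,4}$ — here Lemma \ref{l:toapply} guarantees that $u$ is an almost-subsolution with defect $c(n,\delta)$ after we absorb the cutoff terms, again using the crude bound on $w$ and on $|\Hess_{h^-}|^2$ in an integrated sense. This yields, for $x\in M_{\delta/2,4}$ and with $r\sim d_\epsilon$,
\begin{equation}
w_{d_\epsilon^2}(x)\le \fint_{B_{d_\epsilon}(x)} w_0 + c(n,\delta)\,d_\epsilon^2\, .
\end{equation}
The point is now that on $B_{d_\epsilon}(x)\subseteq M_{\delta/4,8}$ we have $h^-_0=d^-$, so $w_0=|\nabla d^-|^2=1$ almost everywhere, whence $\fint_{B_{d_\epsilon}(x)}w_0=1$. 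Taking square roots and using $\sqrt{1+ c\,d_\epsilon^2}\le 1+c\,d_\epsilon^2$ gives exactly \eqref{e:sharp|gradh|}.

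The main obstacle I anticipate is the bookkeeping required to legitimately apply Lemma \ref{l:L1harnack_p} to $\phi^2 w_t$: one must verify that the cutoff-induced error terms $\langle\nabla w_t,\nabla\phi^2\rangle$ and $w_t\,\Delta\phi^2$ are genuinely bounded by a constant $c(n,\delta)$, which requires not just the crude pointwise bound on $w_t=|\nabla h^-_t|^2$ but also some control on $|\Hess_{h^-_t}|$ (entering through $|\nabla w_t|^2\le 4|\Hess|^2 w$). This is handled exactly as in Lemma \ref{l:toapply} by the Cauchy–Schwarz trick absorbing $\epsilon\,\phi^2|\Hess|^2 w$ into the good $\Delta(\phi^2 w)$ term via Bochner; the slightly delicate part is ensuring the constant in the end depends only on $n$ and $\delta$ (through the cutoff bounds of Lemma \ref{l:testfun} and the scale $\delta\,d_{p,q}$ on which $\phi$ varies) and not on $\epsilon$ or on finer geometry. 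A secondary, minor point is that the crude a priori bound on $w_t$ and on the integrated Hessian should be established first — for the Hessian this is precisely the content already used in the proof of Lemma \ref{l:Eest}(4), applied to $h^-$ rather than $e$ — so that the argument is not circular.
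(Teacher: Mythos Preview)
Your proposal has a direction error that breaks the argument. Lemma \ref{l:L1harnack_p} applies to almost-\emph{super}solutions $(\partial_t-\Delta)u\ge -c_0$ and bounds the \emph{initial} average $\fint_{B_r(x)}u_0$ from above by the \emph{later} pointwise value $u_{r^2}(x)$. You have (via Lemma \ref{l:toapply}) that $\phi^2 w_t$ is an almost-\emph{sub}solution, $(\partial_t-\Delta)(\phi^2 w)\le c$, and you want the reverse inequality $w_{d_\epsilon^2}(x)\le\fint_{B_{d_\epsilon}(x)} w_0 + c\, d_\epsilon^2$. This does not follow from Lemma \ref{l:L1harnack_p}, and time-reversal does not convert subsolutions of the heat operator into supersolutions. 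Even setting the direction aside, the multiplicative constant $c(n,R)$ in Lemma \ref{l:L1harnack_p} is strictly greater than $1$, which would already destroy the sharp leading coefficient in \eqref{e:sharp|gradh|}.

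The paper's argument is different and avoids Lemma \ref{l:L1harnack_p} entirely. One first observes that the exponential weight makes $e^{-ct}|\nabla h_t^\pm|^2$ a \emph{genuine} subsolution of the heat equation (Bochner plus $\Ric\ge -(n-1)$), with no defect term and no cutoff needed. The parabolic maximum principle then gives the heat-kernel comparison
\[
e^{-ct}\,|\nabla h_t^\pm|^2(x)\ \le\ \int H_t(x,y)\,|\nabla h_0^\pm|^2(y)\,dy\, .
\]
The sharp constant $1$ comes from splitting this integral: on $M_{\delta/4,8}$ one has $|\nabla h_0^\pm|=1$ exactly, and $\int H_t(x,\cdot)\le 1$ bounds that piece by $1$; on $M_{\delta/16,16}\setminus M_{\delta/4,8}$ one only knows $|\nabla h_0^\pm|\le c(n)$, but this region lies at distance $\gtrsim \delta\, d_{p,q}$ from any $x\in M_{\delta/2,4}$, so the off-diagonal heat-kernel decay of Lemma \ref{l:kernelestimate}(2) bounds that piece by $c(n,\delta)\,t$. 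The ingredients missing from your plan are precisely the use of $\int H_t\le 1$ to secure the sharp leading constant, together with Lemma \ref{l:kernelestimate}(2) to handle the far tail; the localized mean-value machinery you invoke cannot supply either.
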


\begin{proof}
Note first that
\begin{align}
&|\nabla h^{\pm}_0| \leq |\nabla\psi\,|\,d^{\pm}+\psi\,|\nabla d^{\pm}|  \leq c(n)\, ,\notag\\
&|\nabla h^\pm_0|=1 \, \text{ on }\,  M_{\delta/4,8}\, ,\\
&|\nabla h^\pm_0|=0 \, \text{ outside }\, M_{\delta/16,16}\notag\, .
\end{align}
By the Bochner formula we can choose $c(n)$ so that
\begin{align}
(\partial_t-\Delta)[\text{e}^{-ct}\,|\nabla h^\pm_t|^2]\leq 0 \, .
\end{align}
Thus by the parabolic maximum principle and the Li-Yau, \cite{LY}, upper bound for the heat kernel for all $x\in M_{\delta/2,4}$ and $t\leq 4\,d_{\epsilon}^2$
\begin{align}
\text{e}^{-ct}|\nabla h^\pm_t|^2(x)\leq \int_{M_{\delta/16,16}} H_{t}(x,y)\,|\nabla h^\pm_0|^2(y)
\leq \int_{M_{\delta/4,8}} H_{t}(x,y) + c\int_{M_{\delta/16,16}\setminus M_{\delta/4,8}} H_{t}(x,y) \leq 1+c\,t\, .
\end{align}
This implies (\ref{e:sharp|gradh|}) as claimed.
\end{proof}

We will next combine the above with Corollary \ref{c:|h-d|gamma} and Lemma \ref{l:L1harnack_p} to get:

\begin{theorem} \label{t:L1|gradh|}
There exists a constant $c(n,\delta)$ such that for all $\epsilon\leq\bar\epsilon(n,\delta)$ we have
\begin{enumerate}
\item If $x\in M_{\delta,2}$ with $e(x)\leq\epsilon^2\, d_{p,q}$, then
$\fint_{B_{10d_{\epsilon}}(x)}||\nabla h^\pm_{d_{\epsilon}^2}|^2-1|\leq c\,\epsilon $.\label{e:item1}
\item If $\sigma$ is an $\epsilon$-geodesic connecting $p$ and $q$,
then $\fint_{\delta d_{p,q}}^{(1-\delta)d_{p,q}}\fint_{B_{10d_{\epsilon}}(\sigma(s))}||\nabla h^\pm_{d_{\epsilon}^2}|^2-1|\leq c\,\epsilon^2 $.\label{e:item2}
\end{enumerate}
\end{theorem}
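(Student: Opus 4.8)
The plan is to control the two signs of $|\nabla h^{\pm}_{d_{\epsilon}^2}|^2-1$ separately, using Lemma \ref{l:sharp|gradh|} for the upper side and the mean value inequality Lemma \ref{l:L1harnack_p} together with a telescoping estimate coming from Corollary \ref{c:|h-d|gamma} for the averaged lower side. By Lemma \ref{l:sharp|gradh|} --- whose proof, a parabolic maximum principle estimate combined with the Li-Yau kernel bound at the fixed scale $\delta d_{p,q}$, goes through verbatim for all $t\le 200\,d_{\epsilon}^2$ provided $\epsilon\le\bar\epsilon(n,\delta)$ --- we have $|\nabla h^{\pm}_t|^2\le 1+c_1 d_{\epsilon}^2$ on $M_{\delta/2,4}$ with $c_1=c_1(n,\delta)$; since $d_{\epsilon}=\epsilon d_{p,q}\le\epsilon$ this gives both $(|\nabla h^{\pm}_t|^2-1)_+\le c\epsilon^2$ and $1-|\nabla h^{\pm}_t|^2\ge -c\epsilon^2$ there. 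Using the identities $|f-1|=(f-1)+2(1-f)_+$ and $(1-f)_+=(1-f)+(f-1)_+$ one reduces both assertions to proving, respectively,
\begin{equation}
\fint_{B_{10d_{\epsilon}}(x)}\big(1-|\nabla h^{\pm}_{d_{\epsilon}^2}|^2\big)\le c\,\epsilon
\qquad\text{and}\qquad
\fint_{\delta d_{p,q}}^{(1-\delta)d_{p,q}}\fint_{B_{10d_{\epsilon}}(\sigma(s))}\big(1-|\nabla h^{\pm}_{d_{\epsilon}^2}|^2\big)\,ds\le c\,\epsilon^2 .
\end{equation}

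Next I would pass to slightly later times. Fix a cutoff $\phi$ \emph{at the scale $d_{p,q}$} with $\phi\equiv1$ on $M_{3\delta/4,3}$, $\phi\equiv0$ off $M_{\delta/2,4}$, and $|\phi|,|\nabla\phi|,|\Delta\phi|\le c(n,\delta)$. The function $w_s\equiv\big(1+c_1 d_{\epsilon}^2-|\nabla h^{\pm}_{d_{\epsilon}^2+s}|^2\big)\phi^2$ is non-negative for $0\le s\le 100\,d_{\epsilon}^2$, and by Lemma \ref{l:toapply} (with $K_1=c(n,\delta)$ and $K_2=2$, using Lemma \ref{l:sharp|gradh|} on $\{\phi>0\}$) together with the boundedness of $\Delta\phi^2$ it satisfies $(\partial_s-\Delta)w_s\ge -c_0$ with $c_0=c(n,\delta)$. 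Applying Lemma \ref{l:L1harnack_p} with radius $10d_{\epsilon}$, in the $\inf$-form of its Remark, and using that $\phi\equiv1$ on $B_{10d_{\epsilon}}(x)$, gives, with $t'\equiv d_{\epsilon}^2+(10d_{\epsilon})^2$ and for every $x$ with $B_{10d_{\epsilon}}(x)\subseteq\{\phi\equiv1\}$,
\begin{equation}
\fint_{B_{10d_{\epsilon}}(x)}\big(1-|\nabla h^{\pm}_{d_{\epsilon}^2}|^2\big)\le c(n,\delta)\inf_{y\in B_{10d_{\epsilon}}(x)}\big(1-|\nabla h^{\pm}_{t'}|^2(y)\big)+c\,\epsilon^2 .
\end{equation}
It is precisely because $\phi$ lives at the fixed scale $d_{p,q}$ that $c_0$ stays bounded and the unavoidable error $c_0(10d_{\epsilon})^2$ is of size $\epsilon^2$ rather than $O(1)$.

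It remains to estimate $1-|\nabla h^{\pm}_{t'}|^2$, and here I use a telescoping identity along an $\epsilon$-geodesic. For an $\epsilon$-geodesic $\sigma$ from $p$ to $q$ and an interval $[a,b]\subseteq(\tfrac\delta2 d_{p,q},(1-\tfrac\delta2)d_{p,q})$, Corollary \ref{c:|h-d|gamma} (its proof is unaffected on replacing $d_{\epsilon}^2$ by $t'\le 200\,d_{\epsilon}^2$) gives $|h^{\pm}_{t'}(\sigma(s))-s|\le c\epsilon^2 d_{p,q}$, so, since $\langle\nabla h^{\pm}_{t'},\sigma'\rangle=\tfrac{d}{ds}[h^{\pm}_{t'}(\sigma(s))]$,
\begin{equation}
\Big|\int_a^b\big(1-\langle\nabla h^{\pm}_{t'},\sigma'\rangle\big)\,ds\Big|=\big|(b-a)-[h^{\pm}_{t'}(\sigma(b))-h^{\pm}_{t'}(\sigma(a))]\big|\le c\,\epsilon^2 d_{p,q};
\end{equation}
together with $1-\langle\nabla h^{\pm}_{t'},\sigma'\rangle\ge -c\epsilon^2$ this also gives $\int_a^b\big|1-\langle\nabla h^{\pm}_{t'},\sigma'\rangle\big|\,ds\le c\,\epsilon^2 d_{p,q}$. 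For statement (\ref{e:item1}): given $x\in M_{\delta,2}$ with $e(x)\le\epsilon^2 d_{p,q}$, use Lemma \ref{l:eps_geo} to get an $\epsilon$-geodesic $\sigma$ with $x=\sigma(s_0)$ and apply the above on $[s_0-d_{\epsilon},s_0+d_{\epsilon}]$; the average there of $1-\langle\nabla h^{\pm}_{t'},\sigma'\rangle$ is $\le c\epsilon$, hence --- the integrand being $\ge -c\epsilon^2$ --- there is $s'$ in that interval with $\langle\nabla h^{\pm}_{t'},\sigma'\rangle(\sigma(s'))\ge 1-c\epsilon$, so by Cauchy-Schwarz $|\nabla h^{\pm}_{t'}|^2(\sigma(s'))\ge 1-c\epsilon$ with $\sigma(s')\in B_{d_{\epsilon}}(x)\subseteq B_{10d_{\epsilon}}(x)$; feeding this into the infimum above yields the first displayed bound. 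For statement (\ref{e:item2}): apply the mean value estimate at each center $x=\sigma(s)$ (legitimate since $e(\sigma(s))\le\epsilon^2 d_{p,q}$ by Lemma \ref{l:eps_geo}), bound the infimum by the value at $\sigma(s)$, and integrate in $s$; since $|\nabla h^{\pm}_{t'}|^2\ge\langle\nabla h^{\pm}_{t'},\sigma'\rangle^2$ one has $1-|\nabla h^{\pm}_{t'}|^2\le 2\big|1-\langle\nabla h^{\pm}_{t'},\sigma'\rangle\big|$, so the $s$-integral of $\big|1-|\nabla h^{\pm}_{t'}|^2(\sigma(s))\big|$ over $[\delta d_{p,q},(1-\delta)d_{p,q}]$ is at most $c\epsilon^2 d_{p,q}$ by the telescoping identity on the full segment, and dividing by the length $(1-2\delta)d_{p,q}$ gives the second displayed bound.

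The main obstacle is the averaged lower bound on $|\nabla h^{\pm}_{d_{\epsilon}^2}|$: the upper bound is immediate from Lemma \ref{l:sharp|gradh|}, but any lower bound has to come from the $C^0$-closeness of $h^{\pm}$ to $d^{\pm}$ along $\epsilon$-geodesics, and the subtlety is that this closeness lives on the one-dimensional set $\sigma$ whereas the claim concerns the full balls $B_{10d_{\epsilon}}(\sigma(s))$; Lemma \ref{l:L1harnack_p} together with the almost-subsolution property of Lemma \ref{l:toapply} is exactly what bridges the two, but only when the cutoff is at the fixed scale $d_{p,q}$. The extra power of $\epsilon$ in statement (\ref{e:item2}) compared with (\ref{e:item1}) comes from squaring $|\nabla h^{\pm}_{t'}|\ge\langle\nabla h^{\pm}_{t'},\sigma'\rangle$ and telescoping over a segment of fixed length, so that the boundary error $c\epsilon^2 d_{p,q}$ is divided by $d_{p,q}$ rather than by $d_{\epsilon}$. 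What remains is the routine bookkeeping: shrinking the various instances of $\delta$ so the relevant balls and portions of $\sigma$ lie inside the regions where Lemma \ref{l:sharp|gradh|} and Corollary \ref{c:|h-d|gamma} apply on the enlarged time interval $[0,t']$, and absorbing the harmless terms of size $d_{\epsilon}^2=\epsilon^2 d_{p,q}^2$.
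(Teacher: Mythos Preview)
Your proposal is correct and follows essentially the same route as the paper: define the nonnegative function $w=(1+c\epsilon^2-|\nabla h^\pm|^2)\phi^2$ with a fixed-scale cutoff, use Lemma \ref{l:toapply} to get $(\partial_t-\Delta)w\ge -c$, apply the $L^1$-Harnack Lemma \ref{l:L1harnack_p} to pass to a later time, and then use the telescoping bound from Corollary \ref{c:|h-d|gamma} along an $\epsilon$-geodesic to locate a good point (for \ref{e:item1}) or to integrate directly (for \ref{e:item2}). The only cosmetic differences are your choice of $t'=101\,d_\epsilon^2$ versus the paper's $2d_\epsilon^2$, and your use of $1-|\nabla h|^2\le (1+\langle\nabla h,\sigma'\rangle)|1-\langle\nabla h,\sigma'\rangle|$ in place of the paper's Cauchy--Schwarz step; both are equivalent at the level of the estimates.
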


\begin{proof}
We will prove the claims for $h^-$, the argument for $h^+$ is the same with obvious changes.  Set
$$
w_t=1+ct-|\nabla h^-_t|^2\, ,
$$
where $c(n,\delta)$ is chosen from the last lemma so that $w_t\geq 0$ on $M_{\delta/2,4}$.  It follows from Lemma \ref{l:toapply} that
\begin{align}
(\partial_t-\Delta)\,[\phi^2\,w_t]\geq -c \, ,
\end{align}
where $\phi=\phi^+\,\phi^-$ and $\phi^{\pm}$ are given by Lemma \ref{l:testfun} similarly to $\psi^{\pm}$ except $\phi=1$ on $M_{\delta,2}$ and $\phi=0$ outside $M_{\delta/2,4}$.  By Lemma \ref{l:L1harnack_p} for all $y\in M_{\delta/2,4}$
\begin{align}
\fint_{B_{10\sqrt t}(y)}w_t\leq c\,\left(\inf_{B_{10\sqrt t}(y)} w_{2t}+t\right) \, .
\end{align}
Set $t=d_{\epsilon}^2$.  To complete the proof we need to show there is a point in $B_{10d_{\epsilon}}(y)$ where $w_{2t}$ is small.  To do this let $\sigma$ be an $\epsilon$-geodesic connecting $p$ and $q$.  In \ref{e:item1}) assume $\sigma$ is the piecewise geodesic passing through $x$ as in Lemma \ref{l:eps_geo}.  To prove \ref{e:item1}) note that by Corollary \ref{c:|h-d|gamma}
\begin{align}
&|h^-_{2d_{\epsilon}^2}(x)-h^-_{2d_{\epsilon}^2}(\sigma(d_{p,x}-10\,d_{\epsilon}))-10\,d_{\epsilon}|\, ,\notag\\
&\leq |d^-(x)-d^-(\sigma(d_{p,x}-10\,d_{\epsilon}))-10\,d_{\epsilon}|+c\,\epsilon^2\, d_{p,q} = c\,\epsilon^2\, d_{p,q}\, .
\end{align}
Combining this with the Cauchy-Schwarz inequality and the fundamental theorem of calculus gives
\begin{align}
\int_{d_{p,x}-10d_{\epsilon}}^{d_{p,x}}w_{2d_{\epsilon}^2} &=\int_{d_{p,x}-10d_{\epsilon}}^{d_{p,x}}\left(1+c\,d_{\epsilon}^2-|\nabla h^-_{2d_{\epsilon}^2}|^2\right)\,(\sigma(s))ds\leq 10\,d_{\epsilon} + cd_{\epsilon}^3 - \frac{1}{10d_{\epsilon}}\,\left(\int_{d_{p,x}-10\,d_{\epsilon}}^{d_{p,x}}\nabla_{\dot\sigma}\,h^-_{2d_{\epsilon}^2} ds\right)^2\notag\\
&=10\,d_{\epsilon} +cd_{\epsilon}^3 - \frac{1}{10d_{\epsilon}}\,\left(h^-_{2d_{\epsilon}^2}(\sigma(d_{p,x}))-h^-_{2d_{\epsilon}^2}\,(\sigma(d_{p,x}-10\,d_{\epsilon}))\right)^2\\
&\leq 10\,d_{\epsilon} -10\,d_{\epsilon} + 2c\,\epsilon^2\, d_{p,q}+c^2\epsilon^4\,d_{p,q}^2 =
2c\,\epsilon^2\, d_{p,q}+c^2\epsilon^4\,d_{p,q}^2\, .\notag
\end{align}
In particular, there is some point of $\sigma (d_{p,x}-10d_{\epsilon},d_{p,x})$ with  $w_{2d_{\epsilon}^2}\leq c\epsilon$. From this the first statement follows.

The argument for  \ref{e:item2}) is similar.  As before, by Corollary \ref{c:|h-d|gamma} for all
$s$ with $\delta\, d_{p,q}<s<(1-\delta)\,d_{p,q}$
\begin{align}
\left|\left(h^-_{2d_{\epsilon}^2}(\sigma(s))-h^-_{2d_{\epsilon}^2}(\sigma(\delta d_{p,q}))\right)-\left(s-\delta d_{p,q}\right)\right|\leq c\,\epsilon^2\, d_{p,q}\, .
\end{align}
Arguing as before we get that
\begin{equation}
\int_{\delta d_{p,q}}^{(1-\delta)d_{p,q}}w_{2d_{\epsilon}^2}(\sigma(s))\,ds
\leq c\,\epsilon^2\, d_{p,q}\, ,
\end{equation}
and
\begin{align}
&\int^{(1-\delta)d_{p,q}}_{\delta d_{p,q}}\fint_{B_{10d_{\epsilon}}(\sigma(s))}||\nabla h_{d_{\epsilon}^2}^-|^2-1|\leq \int^{(1-\delta)d_{p,q}}_{\delta d_{p,q}}\fint_{B_{10d_{\epsilon}}(\sigma(s))}w_{d_{\epsilon}^2} +c\,\epsilon^2d_{p,q}\\
&\leq c\int^{(1-\delta)d_{p,q}}_{\delta d_{p,q}}
\inf _{B_{10d_{\epsilon}}(\sigma(s))} w_{2d_{\epsilon}^2}
+c\,\epsilon^2d_{p,q}\leq c\int^{(1-\delta)d_{p,q}}_{\delta d_{p,q}}w_{d_{\epsilon}^2}(\sigma(s))+c\,\epsilon^2d_{p,q}\leq c\,\epsilon^2d_{p,q}\, .\notag
\end{align}
\end{proof}

We will next use the above estimates to prove the following main estimate, for convenience we repeat a few previous estimates to have them collected under one theorem (given the estimates above the proof follows very similar arguments in \cite{C1}--\cite{C3}, \cite{ChC1}):

\begin{theorem}                          \label{t:mainregthm1}
There exists a constant $c(n,\delta)$ such that for all $\epsilon\leq\bar\epsilon(n,\delta)$, any $x\in M_{\frac{\delta}{2},4}$ with $e(x)\leq \epsilon^2\, d_{p,q}$ and any $\epsilon$-geodesic $\sigma$ connecting $p$ and $q$ there exists $r\in [\frac{1}{2},2]$ with
\begin{enumerate}
\item $|h^\pm_{rd_{\epsilon}^2}-d^\pm|(x)\leq c\,\epsilon^2\, d_{p,q}$.
\label{e:emain1}
\item $\fint_{B_{d_{\epsilon}}(x)}||\nabla h^\pm_{rd_{\epsilon}^2}|^2-1| \leq c\,\epsilon$.\label{e:emain2}
\item $\int^{(1-\delta)d_{p,q}}_{\delta d_{p,q}}\fint_{B_{d_{\epsilon}}(x)}||\nabla h^\pm_{rd_{\epsilon}^2}|^2-1| \leq c\,\epsilon^2$.
\label{e:emain3}
\item $\int^{(1-\delta)d_{p,q}}_{\delta d_{p,q}}\fint_{B_{d_{\epsilon}}(\sigma(s))}|\Hess_{h^\pm_{rd_{\epsilon}^2}}|^2\leq \frac{c}{d^2_{p,q}}$.
\label{e:emain4}
\end{enumerate}
\end{theorem}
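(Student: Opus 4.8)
The estimates (\ref{e:emain1})--(\ref{e:emain3}) are not new: after the routine rewriting $rd_\epsilon^2=d_{\epsilon'}^2$ with $\epsilon'=\sqrt r\,\epsilon\in[\epsilon/\sqrt2,\sqrt2\,\epsilon]$ (absorbing the bounded change of $\epsilon'$ and of ball radii into the constants via Bishop--Gromov volume doubling), they are precisely Corollary \ref{c:|h-d|gamma} and \ref{e:item1})--\ref{e:item2}) of Theorem \ref{t:L1|gradh|}, and they hold for \emph{every} $r\in[\tfrac12,2]$. So the plan is to choose the single $r$ claimed in the statement so as to make the Hessian estimate (\ref{e:emain4}) hold; (\ref{e:emain1})--(\ref{e:emain3}) then come for free. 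For (\ref{e:emain4}) I would run the Bochner-and-integrate argument of \cite{C1}--\cite{C3}, \cite{ChC1}, the one genuinely new input being the \emph{integrated} gradient bound \ref{e:item2}) of Theorem \ref{t:L1|gradh|}; it is exactly because that bound is integrated along $\sigma$ --- rather than a pointwise ball estimate, which by Example \ref{ss:Example5} is false --- that the Hessian bound one gets does not degenerate as $\epsilon\to 0$.

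First I would fix $s$ with $\delta d_{p,q}<s<(1-\delta)d_{p,q}$ and choose a good cutoff $\phi_s$ (as in \cite{ChC1}) with $\phi_s=1$ on $B_{d_\epsilon}(\sigma(s))$, $\phi_s=0$ off $B_{2d_\epsilon}(\sigma(s))$, and $d_\epsilon|\nabla\phi_s|+d_\epsilon^2|\Delta\phi_s|\le c(n)$; for $\epsilon\le\bar\epsilon(n,\delta)$ one has $B_{2d_\epsilon}(\sigma(s))\subset M_{\delta/2,4}$, so Lemma \ref{l:sharp|gradh|} gives $|\nabla h^\pm_\tau|\le 2$ on $\mathrm{supp}\,\phi_s$ for $\tau\le 4d_\epsilon^2$. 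Writing $h=h^\pm_\tau$ and using $\Delta h=\partial_\tau h$, the Bochner formula with $\Ric\ge-(n-1)$ gives pointwise $|\Hess_h|^2\le\tfrac12\Delta|\nabla h|^2-\tfrac12\partial_\tau|\nabla h|^2+(n-1)|\nabla h|^2$. Multiplying by $\phi_s^2$, integrating over $M$, integrating the Laplacian term by parts twice, and using $\int_M\Delta(\phi_s^2)=0$ to replace $|\nabla h|^2$ by $|\nabla h|^2-1$ in that term, I arrive at $\int_M\phi_s^2|\Hess_{h^\pm_\tau}|^2\le\tfrac{c}{d_\epsilon^2}\int_{B_{2d_\epsilon}(\sigma(s))}||\nabla h^\pm_\tau|^2-1|+c\,\Vol(B_{2d_\epsilon}(\sigma(s)))-\tfrac12\partial_\tau\int_M\phi_s^2|\nabla h^\pm_\tau|^2$.

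Next I would divide by $\Vol(B_{d_\epsilon}(\sigma(s)))$, use volume doubling $\Vol(B_{2d_\epsilon}(\sigma(s)))\le c(n)\Vol(B_{d_\epsilon}(\sigma(s)))$ (valid in the collapsed case since $d_\epsilon\le1$) so that $\frac{1}{\Vol(B_{d_\epsilon}(\sigma(s)))}\int_{B_{2d_\epsilon}(\sigma(s))}||\nabla h^\pm_\tau|^2-1|\le c\fint_{B_{2d_\epsilon}(\sigma(s))}||\nabla h^\pm_\tau|^2-1|$, and integrate over $s\in(\delta d_{p,q},(1-\delta)d_{p,q})$. By \ref{e:item2}) of Theorem \ref{t:L1|gradh|} the resulting first term is at most $\tfrac{c}{d_\epsilon^2}\cdot c\epsilon^2 d_{p,q}=\tfrac{c}{d_{p,q}}$, the second is at most $c\,d_{p,q}$, and the last becomes $-\tfrac12 G'(\tau)$ where $G(\tau)\equiv\int_{\delta d_{p,q}}^{(1-\delta)d_{p,q}}\frac{1}{\Vol(B_{d_\epsilon}(\sigma(s)))}\int_M\phi_s^2|\nabla h^\pm_\tau|^2\,ds$. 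Thus $\int_{\delta d_{p,q}}^{(1-\delta)d_{p,q}}\fint_{B_{d_\epsilon}(\sigma(s))}|\Hess_{h^\pm_\tau}|^2\le\tfrac{c}{d_{p,q}}+c\,d_{p,q}-\tfrac12 G'(\tau)$ for every $\tau\in[\tfrac12 d_\epsilon^2,2d_\epsilon^2]$.

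It remains to pick $\tau_0=rd_\epsilon^2$, $r\in[\tfrac12,2]$, for which $G'(\tau_0)$ is not too negative, and this is the heart of the matter. Writing $|\nabla h^\pm_\tau|^2=1+(|\nabla h^\pm_\tau|^2-1)$ splits $G(\tau)=A+\rho(\tau)$, where $A=\int_{\delta d_{p,q}}^{(1-\delta)d_{p,q}}\frac{1}{\Vol(B_{d_\epsilon}(\sigma(s)))}\int_M\phi_s^2\,ds$ is \emph{independent of} $\tau$ and, again by \ref{e:item2}) of Theorem \ref{t:L1|gradh|} and volume doubling, $|\rho(\tau)|\le c\,\epsilon^2 d_{p,q}$ throughout the window. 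The mean value theorem then produces $\tau_0\in[\tfrac12 d_\epsilon^2,2d_\epsilon^2]$ with $|G'(\tau_0)|=|\rho'(\tau_0)|\le \tfrac{2c\,\epsilon^2 d_{p,q}}{(3/2)\,d_\epsilon^2}=\tfrac{c}{d_{p,q}}$ (running the $+$ and $-$ cases together and adding the two inequalities before applying the mean value theorem, so that one $r$ serves both signs), and taking $r=\tau_0/d_\epsilon^2$ and using $d_{p,q}\le1$ gives $\int_{\delta d_{p,q}}^{(1-\delta)d_{p,q}}\fint_{B_{d_\epsilon}(\sigma(s))}|\Hess_{h^\pm_{rd_\epsilon^2}}|^2\le c(n,\delta)/d_{p,q}^2$, which is (\ref{e:emain4}); and (\ref{e:emain1})--(\ref{e:emain3}) hold for this $r$ by the first paragraph. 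I expect the only real obstacle to be exactly this last maneuver together with having \ref{e:item2}) of Theorem \ref{t:L1|gradh|} available: the pointwise-in-$s$ Bochner bound carries an uncontrolled $\epsilon$-loss that, by Example \ref{ss:Example5}, cannot be removed, and it is only because $|\nabla h^\pm_\tau|^2$ is $L^1$-close to $1$ \emph{on average along} $\sigma$ that $G(\tau)$ sits within $O(\epsilon^2 d_{p,q})$ of a $\tau$-independent constant, so that a good $r$ can be extracted with no $\epsilon$-loss and the estimate survives with a scale-invariant constant.
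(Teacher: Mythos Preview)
Your proposal is correct and follows essentially the same route as the paper: items (\ref{e:emain1})--(\ref{e:emain3}) are pulled directly from Corollary \ref{c:|h-d|gamma} and Theorem \ref{t:L1|gradh|}, and (\ref{e:emain4}) is obtained by multiplying the Bochner identity by a spatial cutoff, integrating, dividing by volume, integrating along $\sigma$, and invoking the integrated gradient bound \ref{e:item2}) of Theorem \ref{t:L1|gradh|} to kill the $d_\epsilon^{-2}$ factor. The only difference is in how the time derivative is disposed of. The paper introduces a temporal cutoff $a(t)$ supported in $[\tfrac14 d_\epsilon^2,4d_\epsilon^2]$, integrates in $t$, and integrates by parts so that $\partial_t|\nabla h|^2$ becomes $a'(t)\cdot(|\nabla h|^2-1)$, which is again controlled by Theorem \ref{t:L1|gradh|}; the mean value theorem is then applied to the $t$-integrated Hessian bound to extract a good $r$. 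You instead keep the derivative as $-\tfrac12 G'(\tau)$, observe $G(\tau)=A+\rho(\tau)$ with $A$ constant and $|\rho|\le c\epsilon^2 d_{p,q}$, and apply the mean value theorem to $\rho$ to find $\tau_0$ with $|G'(\tau_0)|\le c/d_{p,q}$. These are two equivalent bookkeeping devices for the same phenomenon (that $|\nabla h_\tau|^2$ stays $L^1$-close to $1$ along $\sigma$ uniformly over the time window), and your trick of summing the $\pm$ inequalities before applying the mean value theorem, then using nonnegativity of each Hessian integral, correctly secures a single $r$ for both signs.
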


\begin{proof}
\ref{e:emain1}) is Corollary \ref{c:|h-d|gamma} while \ref{e:emain2}), \ref{e:emain3}) are contained in Theorem \ref{t:L1|gradh|}. The proof of \ref{e:emain4}) uses the Bochner formula as in \cite{ChC1} (see also \cite{C1}--\cite{C3} for a very similar argument).  To begin with for any $\sigma(s)$ it follows from theorem 6.33 of \cite{ChC1} that we can construct a cutoff function $\phi$ such that $\phi(y)=1$ on $B_{d_{\epsilon}}(\sigma(s))$ and vanishes outside $B_{3d_{\epsilon}}(\sigma(s))$ while satisfying the estimates $d_{\epsilon}\,|\nabla\phi|$, $d_{\epsilon}^2\,|\Delta \phi |\leq c(n)$.  Further let $a(t)$ be a smooth function in time with $0\leq a\leq 1$ and $a(t)=1$ for $t\in [\frac{1}{2}d_{\epsilon}^2,2d_{\epsilon}^2]$, vanishing for $t\not\in[\frac{1}{4}d_{\epsilon}^2,4d_{\epsilon}^2]$ and satisfying $|a'|\leq 10d_{\epsilon}^{-2}$.  By the Bochner formula and since $(\partial_t-\Delta)\,h^{\pm}=0$ we have
\begin{equation}
-\frac{1}{2}(\partial_t-\Delta)\,\left(|\nabla h^{\pm}|^2-1\right)=
-\frac{1}{2}(\partial_t-\Delta)\,|\nabla h^{\pm}|^2=|\Hess_{h^{\pm}}|^2
+\Ric (\nabla h^{\pm},\nabla h^{\pm})\, .
\end{equation}
Multiplying by $2\,a(t)\,\phi(y)$ and integrating
we see that for each $t$
\begin{align}
&2\int_{M} a(t)\,\phi\,|\Hess_{h_t^{\pm}}|^2
= \int_{M} a(t)\,\phi\,\Delta \left( |\nabla h_t^{\pm}|^2-1\right)
- 2\int_{M} a(t)\,\phi\,\Ric(\nabla h_t^{\pm},\nabla h_t^{\pm}) -\int_{M} a(t)\,\phi\,\partial_t \left(|\nabla h_t^{\pm}|^2-1\right)\notag\\
&= \int_{M} a(t)\, \left(|\nabla h_t^{\pm}|^2-1\right)\,\Delta \phi
- 2\int_{M} a(t)\,\phi\,\Ric(\nabla h_t^{\pm},\nabla h_t^{\pm}) -\int_{M} a(t)\,\phi\,\partial_t \left(|\nabla h_t^{\pm}|^2-1\right)
\, .
\end{align}
For the last equality we integrated by parts (in space).   It follows that
\begin{align}
2\int_{B_{d_{\epsilon}}(\sigma(s))} &a(t)\,|\Hess_{h_t^{\pm}}|^2 \\
&\leq \frac{c}{d_{\epsilon}^2}\int_{B_{3d_{\epsilon}}(\sigma(s))}
\left| |\nabla h_t^{\pm}|^2-1\right|
+2(n-1)\int_{B_{3d_{\epsilon}}(\sigma(s))} |\nabla h_t^{\pm}|^2
-\int_{B_{3d_{\epsilon}}(\sigma(s))} a(t)\,\phi\,\partial_t \left(|\nabla h_t^{\pm}|^2-1\right)
\, .\notag
\end{align}
Integrating over time, integrating by parts (in time), and using the Bishop-Gromov volume comparison theorem to bound the volume of the ball $B_{3d_{\epsilon}}(\sigma(s))$ by the volume of the concentric ball $B_{d_{\epsilon}}(\sigma(s))$ yields
\begin{align}
\int_{\frac{1}{2}d_{\epsilon}^2}^{2d_{\epsilon}^2}\left(\fint_{B_{d_{\epsilon}}(\sigma(s))} |\Hess_{h_t^{\pm}}|^2\right) \,dt\leq c\,d_{\epsilon}^{-2}\int_{\frac{1}{4}d_{\epsilon}^2}^{4d_{\epsilon}^2}\left(\fint_{B_{3d_{\epsilon}}(\sigma(s))} ||\nabla h_t^{\pm}|^2-1|+c\,d_{\epsilon}^2\right)\,dt\, .
\end{align}
Now this inequality holds for each $s\in[\delta d_{p,q},(1-\delta)d_{p,q}]$ and hence if we integrate over this interval we get
\begin{align}
\int_{\frac{1}{2}d_{\epsilon}^2}^{2d_{\epsilon}^2}\left(\int^{(1-\delta)d_{p,q}}_{\delta d_{p,q}}\fint_{B_{d_{\epsilon}}(\sigma(s))} |\Hess_{h_t^{\pm}}|^2\right)\,dt
\leq cd_{\epsilon}^{-2}\int_{\frac{1}{4}d_{\epsilon}^2}^{4d_{\epsilon}^2}\left(\int^{(1-\delta)d_{p,q}}_{\delta d_{p,q}}\fint_{B_{3d_{\epsilon}}(\sigma(s))} ||\nabla h_t^{\pm}|^2-1|+cd_{\epsilon}^2\right)\,dt\, .
\end{align}
Hence, for some $r\in[\frac{1}{2},2]$ the claim holds for $t=r\,d_{\epsilon}^2$.
\end{proof}

We conclude this section with some estimates, which will be useful in Section \ref{s:GHmap}.

\begin{lemma}\label{l:geo_gradh}
Let $x\in M_{\delta,2}$ with $\sigma_x$ a unit speed minimizing geodesic from $p$ to $x$.  Then for any $\delta\leq s<t\leq d_{p,x}$ the following estimates hold:
\begin{enumerate}
 \item $\int_\delta^{d_{p,x}}||\nabla h_{r^2}^{-}|^2-1|\leq \frac{c(n,\delta)}{d_{p,q}}\,(e(x)+r^2)$.
 \label{e:egeo1}
 \item $\int_\delta^{d_{p,x}}|\langle\nabla h_{r^2}^{-},\nabla d^{-}\rangle-1|\leq \frac{c(n,\delta)}{d_{p,q}}\,(e(x)+r^2)$.
 \label{e:egeo2}
 \item $\int_{s}^{t}|\nabla h_{r^2}^{-}-\nabla d^{-}|\leq \frac{c(n,\delta)\sqrt{t-s}}{\sqrt{d_{p,q}}}\,
 (\sqrt{e(x)}+r)$.
 \label{e:egeo3}
\end{enumerate}
\end{lemma}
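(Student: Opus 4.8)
The plan is to read all three integrals as integrals in arc length $u$ along $\sigma_x$, to prove \ref{e:egeo2}) first by a first--variation argument, to deduce \ref{e:egeo1}) from it, and finally to obtain \ref{e:egeo3}) from \ref{e:egeo1}) and \ref{e:egeo2}) by Cauchy--Schwarz. Throughout I would take $r$ small, say $r\leq\bar\epsilon(n,\delta)\,d_{p,q}$, so that with $\epsilon=r/d_{p,q}$ the estimates of Lemma \ref{l:sharp|gradh|} and Lemma \ref{l:he_est} are available: $|\nabla h^{-}_{r^2}|\leq 1+c\,r^2$ and $|h^{-}_{r^2}-d^{-}|\leq c\bigl(\tfrac{r^2}{d_{p,q}}+e\bigr)$ on $M_{\delta/2,4}$. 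Before starting I would record two elementary facts along $\sigma_x$. Since $\sigma_x$ is a unit--speed minimizing geodesic from $p$ to $x$, for each $u$ one has $d(p,\sigma_x(u))=u$, $\nabla d^{-}(\sigma_x(u))=\dot\sigma_x(u)$ on the interior, and, comparing $d(q,\cdot)$ to $p$ and to $x$ by the triangle inequality, $d(q,\sigma_x(u))\geq d(q,x)-\tfrac12 e(x)$ together with $e(\sigma_x(u))\leq e(x)$. Hence, once $e(x)$ is small relative to $\delta\,d_{p,q}$, the arc $\{\sigma_x(u):\delta\leq u\leq d_{p,x}\}$ lies in $M_{\delta/2,4}$, so the two pointwise estimates above may be applied at every point of integration, with $e$ there dominated by $e(x)$.

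For \ref{e:egeo2}) I would set $g(u)\equiv\tfrac{d}{du}[h^{-}_{r^2}(\sigma_x(u))]=\langle\nabla h^{-}_{r^2},\dot\sigma_x\rangle$, which on the interior of $\sigma_x$ equals $\langle\nabla h^{-}_{r^2},\nabla d^{-}\rangle(\sigma_x(u))$, and $w(u)\equiv 1+c\,r^2-g(u)$. By the gradient bound $g(u)\leq|\nabla h^{-}_{r^2}|\leq 1+c\,r^2$, so $w\geq 0$, and by the fundamental theorem of calculus $\int_\delta^{d_{p,x}}w\,du=(1+c\,r^2)(d_{p,x}-\delta)-\bigl(h^{-}_{r^2}(x)-h^{-}_{r^2}(\sigma_x(\delta))\bigr)$. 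Using Lemma \ref{l:he_est} and $e(\sigma_x(\delta))\leq e(x)$ to bound $h^{-}_{r^2}(x)\geq d_{p,x}-c(\tfrac{r^2}{d_{p,q}}+e(x))$ and $h^{-}_{r^2}(\sigma_x(\delta))\leq\delta+c(\tfrac{r^2}{d_{p,q}}+e(x))$, the leading term $(d_{p,x}-\delta)$ cancels and leaves $\int_\delta^{d_{p,x}}w\,du\leq c\bigl(r^2 d_{p,q}+\tfrac{r^2}{d_{p,q}}+e(x)\bigr)\leq\tfrac{c}{d_{p,q}}(r^2+e(x))$, using $d_{p,q}\leq 1$. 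Since $|g(u)-1|\leq w(u)+c\,r^2$ and $d_{p,x}-\delta\leq 2$, integrating gives \ref{e:egeo2}).

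For \ref{e:egeo1}) I would split $||\nabla h^{-}_{r^2}|^2-1|$ into its positive and negative parts: where $|\nabla h^{-}_{r^2}|^2\geq 1$ it is $\leq c\,r^2$ by the gradient bound; where $|\nabla h^{-}_{r^2}|^2<1$, using $|\nabla h^{-}_{r^2}|^2\geq\langle\nabla h^{-}_{r^2},\dot\sigma_x\rangle^2$ and the elementary inequality $(1-a^2)^+\leq 2|1-a|$, it is $\leq 2|\langle\nabla h^{-}_{r^2},\nabla d^{-}\rangle-1|$; integrating and invoking \ref{e:egeo2}) yields \ref{e:egeo1}). Finally, for \ref{e:egeo3}) I would expand $|\nabla h^{-}_{r^2}-\nabla d^{-}|^2=(|\nabla h^{-}_{r^2}|^2-1)+2(1-\langle\nabla h^{-}_{r^2},\nabla d^{-}\rangle)$, so that by \ref{e:egeo1}) and \ref{e:egeo2}) (enlarging the interval to $[\delta,d_{p,x}]$, which only increases the nonnegative integrand) $\int_s^t|\nabla h^{-}_{r^2}-\nabla d^{-}|^2\,du\leq\tfrac{c}{d_{p,q}}(e(x)+r^2)$; then Cauchy--Schwarz, $\int_s^t|\nabla h^{-}_{r^2}-\nabla d^{-}|\leq\sqrt{t-s}\,\bigl(\int_s^t|\nabla h^{-}_{r^2}-\nabla d^{-}|^2\bigr)^{1/2}$, together with $\sqrt{e(x)+r^2}\leq\sqrt{e(x)}+r$, gives \ref{e:egeo3}).

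I do not expect a deep obstacle: the lemma is essentially a direct consequence of the pointwise gradient and closeness estimates already in hand, combined with the fundamental theorem of calculus and Cauchy--Schwarz, entirely in the spirit of the proof of Theorem \ref{t:L1|gradh|}. The one point that genuinely requires care is the preliminary observation that the relevant portion of $\sigma_x$ stays inside $M_{\delta/2,4}$, since otherwise Lemma \ref{l:he_est} and Lemma \ref{l:sharp|gradh|} cannot be invoked along it; this is exactly where the hypothesis $x\in M_{\delta,2}$ with $e(x)$ small is used, via the lower bound $d(q,\sigma_x(u))\geq d(q,x)-\tfrac12 e(x)$.
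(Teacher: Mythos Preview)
Your argument is correct and follows essentially the same approach as the paper, which simply says that items (1) and (2) are ``contained in the proof of Theorem~\ref{t:L1|gradh|}'' and then derives (3) exactly as you do via the expansion $|\nabla h-\nabla d^-|^2=(|\nabla h|^2-1)+2(1-\langle\nabla h,\nabla d^-\rangle)$ together with Cauchy--Schwarz. The only organizational difference is that the paper's proof of Theorem~\ref{t:L1|gradh|} goes after $w=1+cr^2-|\nabla h|^2$ directly (hence item (1) first), using $\int|\nabla h|^2\geq\frac{1}{L}\bigl(\int\langle\nabla h,\dot\sigma\rangle\bigr)^2$ and the $C^0$ estimate, whereas you prove (2) first by integrating $g=\langle\nabla h,\dot\sigma_x\rangle$ directly and then obtain (1) from (2) by the pointwise inequality $(1-|\nabla h|^2)^+\leq(1-g^2)^+\leq 2|1-g|$. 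Both routes rest on the same three ingredients: Lemma~\ref{l:sharp|gradh|}, Lemma~\ref{l:he_est}, and the fundamental theorem of calculus along $\sigma_x$.

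One small correction in your preliminary paragraph: the claimed inequality $d(q,\sigma_x(u))\geq d(q,x)-\tfrac12 e(x)$ is not true as written (for $u$ near $\delta$ the left side is close to $d_{p,q}$, which can be much larger than $d(q,x)$, but the issue is that for $u$ between these extremes the triangle inequality against $x$ alone can give something smaller than $d(q,x)$). What you actually need, and what does hold, is $d(q,\sigma_x(u))\geq(\delta/2)d_{p,q}$ for all $u\in[\delta,d_{p,x}]$: for $u\leq(1-\delta/2)d_{p,q}$ use $d(q,\sigma_x(u))\geq d_{p,q}-u$, and for larger $u$ use $d(q,\sigma_x(u))\geq d(q,x)-(d_{p,x}-u)$ together with $d(q,x)\geq\delta d_{p,q}$ and $e(x)$ small. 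This is precisely the content of your sentence ``once $e(x)$ is small relative to $\delta d_{p,q}$, the arc lies in $M_{\delta/2,4}$'', so the conclusion stands; only the displayed inequality should be replaced.
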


\begin{proof}
\ref{e:egeo1}) and \ref{e:egeo2}) are contained in the proof of Theorem \ref{t:L1|gradh|} above.  For \ref{e:egeo3}) note that
\begin{align}
|\nabla h^{-}- \nabla d^{-}|^2 = |\nabla h^{-}|^2+1-2\langle\nabla h^{-},\nabla d^{-}\rangle\leq ||\nabla h^-|^2-1|+2\,|\langle \nabla h^-,\nabla d^-\rangle -1|\, .
\end{align}
Combining this with \ref{e:egeo1}), \ref{e:egeo2}), and the Cauchy-Schwarz inequality gives \ref{e:egeo3}).
\end{proof}

\section{Gromov--Hausdorff approximations}           \label{s:GHmap}

This section is dedicated to completing the proof of Theorem \ref{t:holder}.  Throughout this section $(M^n,g)$ satisfies $\Ric\geq -(n-1)$ and $\gamma:[0,1]\rightarrow M$ is a unit speed minimizing geodesic with $\gamma(0)=p$ and $\gamma(1)=q$.  For points $\gamma(s)$, $\gamma(t)\in \gamma([\delta,1-\delta])$, in order to prove Theorem \ref{t:holder} we will need to construct a Gromov-Hausdorff map between the balls $B_r(\gamma(s))$ and $B_r(\gamma(t))$.  To construct this map we will flow by the gradient of the distance function $-\nabla d_p$.  Of course, this gradient flow is not well defined at every point, and the distance function is far from a smooth function, both of which cause certain technical difficulties.  These difficulties will be addressed in Section \ref{ss:gradientflow}.  Even if these basic difficulties were to be ignored, the most troublesome issue is that if $z\in B_{r}(\gamma(t))$ and $\gamma_{p,z}$ is a minimizing geodesic connecting $p$ and $z$, then there is no reason at all $\gamma_{p,z}(u)$ needs to remain near $\gamma(u)$ for $u$ not near $t$.  In this case the gradient flow map doesn't even map $B_r(\gamma(t))$ near $B_r(\gamma(s))$, much less construct for us a Gromov-Hausdorff map.  We will show in Section \ref{ss:volcomparison} that for a set of large measure in $B_r(\gamma(t))$ that the mentioned geodesics $\gamma_{p,z}$ in fact will remain near $\gamma$.  Finally in Section \ref{ss:proofmaintheorem} we will finish the proof of Theorem \ref{t:holder}.

Let us begin with a couple definitions that will be used throughout this section.  First let
\begin{align}
 \psi_s:M\rightarrow M\, ,
\end{align}
be the gradient flow defined by $-\nabla d_p$.  It is understood that $\psi_s$ is a measurable map which is defined only on a set of full measure.  A main technical issue to be dealt with is knowing that \textit{most} points near $\gamma$ remain near $\gamma$ under this flow.  For this reason we are interested in the following sets:

\begin{definition}
 For $0<s<t<1$ define the set $\mathcal{A}^t_s(r)\equiv \{z\in B_r(\gamma(t)):\psi_u(z)\in B_{2r}(\gamma(t-u))\,\text{  }\, \forall 0\leq u\leq s\}$.
\end{definition}

So $\mathcal{A}^t_s(r)$ defines the set of points in $B_r(\gamma(t))$ which remain a distance of $2r$ from $\gamma$ through the gradient flow, at least up until time $s$.  We will show the volume of $\mathcal{A}^t_s(r)$ is a H\"older function of $s$.

\subsection{Hessian bound along a geodesic and consequences}  \label{ss:jacobi}

In this short subsection we give an $L^2$ bound for the hessian of the distance function to the end point $p=\gamma (0)$ of a minimizing geodesic segment $\gamma:[0,1]\to M$.  This $L^2$ bound holds in a manifold with $\Ric\geq -(n-1)$ and it is a infinitesimal version of the $L^2$ that we obtained in the previous section (it should be compared with theorem 2 of \cite{Ca} and, in particular, its proof, see, for instance, page 674 there).  As a direct consequence of this we get a bound for the distortion of distances along the geodesic that is the infinitesimal version of the desired H\"older bound.  The problem of course is that the bound is infinitesimal and sufficiently small here may depend on the manifold and geodesic in question, which is not terribly useful.  The estimates of Theorem \ref{t:mainregthm1} may be viewed as a non-local version of this, and in a sense the entire purpose of these estimates and the constructions of this section are about taking the following basic infinitesimal estimate and making it less local in nature.  Nonetheless, this estimate will be directly used in the proof of Proposition \ref{p:volratio1}.

\begin{lemma}\label{l:hessianbound}
Let $\gamma:[0,1]\to M$ be a minimizing geodesic as above, $p=\gamma (0)$ and $q=\gamma (1)$, then
\begin{align}
 \int_\delta^{1-\delta}|\Hess_{d_p}|^2\leq \frac{c(n)}{\delta}\, .
\end{align}
\end{lemma}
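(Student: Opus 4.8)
The plan is to work along the geodesic $\gamma:[0,1]\to M$, where in the interior $d_p$ is smooth and the Bochner formula becomes the Riccati (matrix) comparison along $\gamma$. Writing $u(t)=\Delta d_p(\gamma(t))$, the Bochner formula restricted to $\gamma$ gives the scalar inequality
\begin{equation}
u'(t) + |\Hess_{d_p}|^2(\gamma(t)) + \Ric(\dot\gamma,\dot\gamma) \le 0\, ,
\end{equation}
since $\partial_t u = \langle\nabla\Delta d_p,\dot\gamma\rangle$ and $\tfrac12\Delta|\nabla d_p|^2 = 0$ (as $|\nabla d_p|\equiv 1$), so that $|\Hess_{d_p}|^2 = -\langle\nabla\Delta d_p,\nabla d_p\rangle - \Ric(\nabla d_p,\nabla d_p) = -u' - \Ric(\dot\gamma,\dot\gamma)$ along $\gamma$. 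Using $\Ric\ge -(n-1)$ this yields $|\Hess_{d_p}|^2(\gamma(t)) \le -u'(t) + (n-1)$.

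Next I would integrate this from $\delta$ to $1-\delta$:
\begin{equation}
\int_\delta^{1-\delta}|\Hess_{d_p}|^2 \le u(\delta) - u(1-\delta) + (n-1)(1-2\delta)\, .
\end{equation}
So the whole problem reduces to an upper bound for $u(\delta)=\Delta d_p(\gamma(\delta))$ and a lower bound for $u(1-\delta)=\Delta d_p(\gamma(1-\delta))$. The upper bound is exactly the Laplacian comparison theorem: since $\Ric\ge -(n-1)$, one has $\Delta d_p \le (n-1)\coth(d_p)$, which at distance $d_p=\delta$ (we may take $\delta$ small, and for $\delta$ not small the bound is even easier) is at most $c(n)/\delta$. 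For the lower bound on $u(1-\delta)$, I would use the symmetric function $d_q$: at the point $\gamma(1-\delta)$ we have $d_q(\gamma(1-\delta))=\delta$, and since $\gamma$ is minimizing, $d_p$ and $d_q$ satisfy $d_p + d_q = 1$ in a neighborhood of $\gamma$, hence $\Delta d_p = -\Delta d_q \ge -(n-1)\coth(\delta) \ge -c(n)/\delta$ at $\gamma(1-\delta)$, again by Laplacian comparison applied to $d_q$. Combining, $u(\delta) - u(1-\delta) \le 2c(n)/\delta$, and together with the bounded term $(n-1)(1-2\delta)$ this gives $\int_\delta^{1-\delta}|\Hess_{d_p}|^2 \le c(n)/\delta$, as claimed.

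The one point that needs a little care — and which I expect to be the main (if minor) obstacle — is the regularity of $d_p$ and $d_q$ on the interior of $\gamma$: a priori $d_p$ is only Lipschitz, and the identity $d_p+d_q=1$ together with smoothness holds only on the open portion of $\gamma$ where both are smooth, i.e. away from cut loci. Since $\gamma$ is minimizing on $[0,1]$, no interior point $\gamma(t)$, $0<t<1$, is in the cut locus of $p$ (nor of $q$), so $d_p$ is smooth on a neighborhood of $\gamma((0,1))$, and the Bochner/Riccati computation above is valid there; the integration by parts in $t$ is then just the fundamental theorem of calculus for the $C^1$ function $t\mapsto u(t)$. Alternatively one can avoid even this by working with the smooth functions on small balls and noting all quantities are continuous up to $\gamma(\delta)$ and $\gamma(1-\delta)$. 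This is precisely the "old argument of E. Calabi" referenced before the statement, and no new estimate beyond Laplacian comparison is needed.
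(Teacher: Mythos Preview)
Your approach is essentially identical to the paper's: both use the Bochner/Riccati identity along $\gamma$ to reduce the hessian integral to boundary values of $\Delta d_p$, bound $\Delta d_p(\gamma(\delta))$ from above by Laplacian comparison, and bound $\Delta d_p(\gamma(1-\delta))$ from below by playing it off against $\Delta d_q$.

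There is, however, one genuinely incorrect step in your write-up. You assert that ``$d_p+d_q=1$ in a neighborhood of $\gamma$, hence $\Delta d_p=-\Delta d_q$.'' This is false: by the triangle inequality $d_p+d_q\geq d(p,q)=1$ everywhere, with equality \emph{only} on minimizing geodesics from $p$ to $q$, i.e.\ on $\gamma$ itself, not on an open neighborhood. What you actually have (and what the paper uses) is that the smooth function $d_p+d_q$ attains its minimum along $\gamma$, so at any interior point of $\gamma$ one gets the inequality $\Delta(d_p+d_q)\geq 0$, hence $\Delta d_p\geq -\Delta d_q$. Combined with Laplacian comparison for $d_q$ at distance $\delta$ this gives $\Delta d_p(\gamma(1-\delta))\geq -(n-1)\coth\delta$, which is exactly the lower bound you need. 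So the fix is immediate and the rest of your argument goes through unchanged.
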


\begin{proof}
If $d_p(x)$ is the distance function to $p$ then on $\gamma([\delta,1-\delta])$ we have the estimate
$$
|\Delta d_p|\leq \frac{c(n)}{\delta}\, .
$$
The upper bound is the usual comparison principle while the lower bound follows because $d_p(x)+d_q(x)$ obtains a smooth minimum on $\gamma$, hence
$$\Delta d_p \geq -\Delta d_q \geq -\frac{c(n)}{\delta}$$
on $\gamma([\delta,1-\delta])$ as claimed.  Thus we can integrate the equation
$$
\frac{d}{dt}\Delta d_p(\gamma(t)) + |\Hess_{d_p}|^2(\gamma(t))\leq n-1
$$
to get the claim.
\end{proof}

Integrating this lemma and using the Cauchy-Schwarz inequality gives:

\begin{corollary}\label{l:jacobiest}
If $J$ is a Jacobi field on $\gamma$ which vanishes at $p$ and $s,t\in [\delta,1-\delta]$, then
\begin{align}
1-\frac{c(n)}{\sqrt{\delta}}\sqrt{t-s}\leq \frac{|J|(t)}{|J|(s)}\leq 1+\frac{c(n)}{\sqrt{\delta}}\sqrt{t-s}\, .
\end{align}
\end{corollary}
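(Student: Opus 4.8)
The plan is to derive the first-order ODE satisfied by $|J|$ along $\gamma$ and then feed in the $L^2$ Hessian bound of Lemma \ref{l:hessianbound}. Since $\gamma|_{[0,1]}$ is minimizing, every point $\gamma(t)$ with $0<t<1$ lies strictly before the cut point of $p$ along $\gamma$, so $d_p$ is smooth near $\gamma(t)$ and $\gamma$ is an integral curve of $\nabla d_p$ there (this is exactly the setting of Lemma \ref{l:hessianbound}). Consequently, for a normal Jacobi field $J$ along $\gamma$ with $J(0)=0$ — the case entering the Jacobian of $\exp_p$, hence Proposition \ref{p:volratio1}, and which is nonzero on the interior of $\gamma$ since a minimizing geodesic has no interior conjugate points — one has the standard identity $\nabla_{\dot\gamma}J=\Hess_{d_p}(J)$, i.e. $\nabla_{\dot\gamma}J$ is the shape operator of the distance spheres applied to $J$. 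Therefore
\[
\frac{d}{dt}\log|J|(t)=\frac{\langle\nabla_{\dot\gamma}J,J\rangle}{|J|^2}(\gamma(t))=\frac{\Hess_{d_p}(J,J)}{|J|^2}(\gamma(t))\, ,
\]
so that $\bigl|\tfrac{d}{dt}\log|J|(t)\bigr|\le|\Hess_{d_p}|(\gamma(t))$ for every $t\in(0,1)$.

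Next I would integrate this pointwise bound between $s$ and $t$ (say $s<t$; the $\log$-ratio is symmetric in the two endpoints), use that $[s,t]\subseteq[\delta,1-\delta]$, apply Cauchy--Schwarz, and invoke Lemma \ref{l:hessianbound}:
\[
\left|\log\frac{|J|(t)}{|J|(s)}\right|\le\int_s^t|\Hess_{d_p}|(\gamma(u))\,du\le\sqrt{t-s}\left(\int_\delta^{1-\delta}|\Hess_{d_p}|^2\right)^{1/2}\le\frac{c(n)}{\sqrt\delta}\,\sqrt{t-s}\, .
\]
Setting $A=\frac{c(n)}{\sqrt\delta}\sqrt{t-s}$, this gives $e^{-A}\le|J|(t)/|J|(s)\le e^{A}$; using $e^{-A}\ge 1-A$ and, after enlarging $c(n)$, $e^{A}\le 1+c(n)\sqrt{t-s}/\sqrt\delta$ in the range where the estimate is not already vacuous (recall $t-s\le 1$, while $|J|(t)/|J|(s)$ is automatically nonnegative) yields the asserted two-sided inequality.

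The only delicate points are bookkeeping rather than substantive. One must recall, as in the proof of Lemma \ref{l:hessianbound}, that interior points of a minimizing geodesic avoid the cut locus of $p$, so that $d_p$ is genuinely smooth there and the hessian bound applies; and one must keep in mind that the identity $\nabla_{\dot\gamma}J=\Hess_{d_p}(J)$ is precisely the statement that $J$ is transported by the hessian of $d_p$, which is what makes the $L^2$ bound usable (the possible tangential component $at\,\dot\gamma$ of a general Jacobi field vanishing at $p$ does not interact with $\Hess_{d_p}$ and is irrelevant for the volume application). I do not expect any genuine obstacle here: as the sentence preceding the statement indicates, this is simply ``integrate the Jacobi equation and apply Cauchy--Schwarz,'' with all the analytic content already packaged into Lemma \ref{l:hessianbound}.
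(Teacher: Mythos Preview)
Your proof is correct and follows essentially the same approach as the paper: differentiate $\log|J|$ along $\gamma$, bound the derivative by $|\Hess_{d_p}|$ via the identity $\nabla_{\dot\gamma}J=\Hess_{d_p}(J)$, integrate, and apply Cauchy--Schwarz together with Lemma~\ref{l:hessianbound}. You are in fact slightly more careful than the paper in flagging that the identity holds for the normal component of $J$ and in spelling out the passage from the logarithmic bound to the two-sided multiplicative inequality.
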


\begin{proof}
Since $\frac{d}{dt}|J|^2 = \Hess_{d_p}(J,J)$ we get from the lemma that 
\begin{align}
\left|\frac{d}{dt}\log |J|^2\right| \leq |\Hess_{d_p}|\, ,
\end{align}
which implies that
\begin{align}
\left|\log \frac{|J|^2(t)}{|J|^2(s)}\right| \leq \int_s^t|\Hess_{d_p}|\leq \sqrt{t-s}\sqrt{\int_\delta^{1-\delta}|\Hess_{d_p}|^2}\leq \frac{c(n)}{\sqrt{\delta}}\sqrt{t-s}\, .
\end{align}
From this the claim easily follows.
\end{proof}

\subsection{The gradient flow}\label{ss:gradientflow}

This subsection is dedicated to addressing the issue that we are flowing by the gradient flow of a function which is not smooth.  We begin with the next lemma, which in essence tells us that we do not need good estimates on $d_p$ in order to take its gradient flow.  Instead, we only need to know that there exists nearby functions for which we have the required estimates.  A related estimate was shown in \cite{ChC2}, though there is the important but subtle difference that here we are controlling the gradient flow map, while in \cite{ChC2} the map in question was a projection map.  The reasoning behind this difference is that we will need to compare balls over large distances, and a projection map will break down over such distances while the gradient flow will not.

\begin{lemma}\label{l:distseperation}
Let $\sigma_1,\sigma_2$ be two unit speed geodesics in $M$ and let $h:M\rightarrow \RR$ be a smooth function.  Then the following estimate holds:
\begin{align}
\left|\frac{d}{dt}d(\sigma_1(t),\sigma_2(t))\right| \leq |\nabla h-\sigma_1'|(\sigma_1(t))+|\nabla h-\sigma_2'|(\sigma_2(t)) + \inf\int_{\gamma_{\sigma_1(t),\sigma_2(t)}}|\Hess_h|
\end{align}
where $\inf$ is taken with respect to all minimizing geodesics connecting $\sigma_1(t)$ to $\sigma_2(t)$, and the derivative is meant in the sense of forward difference quotients at
non-differentiable points.
\end{lemma}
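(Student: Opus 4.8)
The plan is to estimate the forward difference quotient of $t\mapsto d(\sigma_1(t),\sigma_2(t))$ by comparing the motion of the two endpoints of a minimizing geodesic $\gamma_{\sigma_1(t),\sigma_2(t)}$ with the motion one would get if both curves were flow lines of $\nabla h$. Fix $t$ and let $\tau\equiv\gamma_{\sigma_1(t),\sigma_2(t)}:[0,L]\to M$ be a unit speed minimizing geodesic realizing the infimum in the statement (or, in case of non-uniqueness, pick one for which the integral of $|\Hess_h|$ is nearly minimal and let this near-minimizer tend to the infimum at the end). The first variation formula for arclength gives, for the variation moving the endpoints along $\sigma_1,\sigma_2$,
\begin{align}
\frac{d}{dt}d(\sigma_1(t),\sigma_2(t))
= \langle \tau'(L),\sigma_2'(t)\rangle - \langle \tau'(0),\sigma_1'(t)\rangle\, ,
\end{align}
interpreted as a forward difference quotient where $d$ fails to be differentiable (here one uses that the minimum over minimizing geodesics of the first-variation expression controls the upper forward difference quotient, a standard fact).

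Now I would insert $\nabla h$ as an intermediary. Write
\begin{align}
\langle \tau'(L),\sigma_2'(t)\rangle - \langle \tau'(0),\sigma_1'(t)\rangle
= \big[\langle \tau'(L),\sigma_2'(t)-\nabla h(\sigma_2(t))\rangle - \langle \tau'(0),\sigma_1'(t)-\nabla h(\sigma_1(t))\rangle\big] + \big[\langle \tau'(L),\nabla h(\tau(L))\rangle - \langle \tau'(0),\nabla h(\tau(0))\rangle\big]\, .
\end{align}
Since $|\tau'|=1$, the Cauchy--Schwarz inequality bounds the first bracket in absolute value by $|\nabla h-\sigma_2'|(\sigma_2(t)) + |\nabla h-\sigma_1'|(\sigma_1(t))$, which are the first two terms on the right-hand side of the claim. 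For the second bracket, note that $s\mapsto \langle \tau'(s),\nabla h(\tau(s))\rangle = \frac{d}{ds}(h\circ\tau)(s)$, and its derivative is $\frac{d^2}{ds^2}(h\circ\tau)(s) = \Hess_h(\tau'(s),\tau'(s))$ because $\tau$ is a geodesic, so $\tau''=0$. Hence the second bracket equals $\int_0^L \Hess_h(\tau'(s),\tau'(s))\,ds$, whose absolute value is at most $\int_0^L |\Hess_h|(\tau(s))\,ds = \int_{\gamma_{\sigma_1(t),\sigma_2(t)}}|\Hess_h|$. Taking the infimum over minimizing connecting geodesics (and absolute values throughout) yields the stated inequality.

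The main technical point — really the only subtlety — is the justification of the first-variation identity at parameters $t$ where $d(\sigma_1(\cdot),\sigma_2(\cdot))$ is not differentiable and where the connecting minimizing geodesic is not unique. Here one uses the standard fact that for a family of minimizing geodesics the distance function is locally semiconcave, hence its forward (upper) difference quotient exists and is bounded above by the first-variation expression computed along \emph{any} fixed minimizing geodesic, and in particular by the infimum of that expression over all such geodesics; the reverse bound for the lower difference quotient is analogous, using that we may always keep a minimizer fixed over a small time interval. This gives the two-sided bound on $\left|\frac{d}{dt}d(\sigma_1(t),\sigma_2(t))\right|$ in the sense of forward difference quotients, which is exactly what the statement asserts. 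All remaining steps are the elementary manipulations above.
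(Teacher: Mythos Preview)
Your proof is correct and follows essentially the same idea as the paper's: insert $\nabla h$ into the first-variation expression, bound the ``error'' terms by Cauchy--Schwarz, and recognize the remaining boundary difference as $\int_\tau \Hess_h(\tau',\tau')$ via the fundamental theorem of calculus and the geodesic equation. The only organizational difference is that the paper works with the squared distance $l_s^2$ and integrates the identity over a small time interval $[0,t]$ before dividing by $t$ and letting $t\to 0$; this has the effect of handling the non-differentiability issue directly (since $l_s$ is Lipschitz, the first-variation identity holds a.e.\ and the integral identity $\int_0^t l_s\,\dot l_s = \tfrac{1}{2}(l_t^2-l_0^2)$ is automatic), whereas you invoke the semiconcavity of the distance function to justify the forward-difference-quotient form of the first variation formula at a single time. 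Both routes are valid and the content is the same.
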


\begin{proof}
First note that without loss we can assume we are estimating at $t=0$, and by an approximation argument we can assume that for every $s$ in a small neighborhood of $0$ that the geodesic from $\sigma_1(s)$ to $\sigma_2(s)$ is unique.  We call these geodesics $\tau_s$ and let $l_s\equiv d(\sigma_1(s),\sigma_2(s))$ be their lengths.  Now we have the following computation:
\begin{align}
 \int_0^t\int_0^{l_s}l_s\,\Hess_h(\dot \tau_s,\dot\tau_s)(\tau_s(v))dvds =& \int_0^tl_s\,\left(\langle\nabla h,\dot \tau_s\rangle(\tau_s(l_s))- \langle\nabla h,\dot \tau_s\rangle(\tau_s(0))\right)\notag\\
=& \int_0^tl_s\left(\langle\sigma_2',\dot \tau_s\rangle(\tau_s(l_s))- \langle\sigma_1',\dot \tau_s\rangle(\tau_s(0))\right)\\
&+ \int_0^tl_s\,\left(\langle\nabla h-\sigma_1',\dot \tau_s\rangle(\tau_s(l_s))- \langle\nabla h-\sigma_2',\dot \tau_s\rangle(\tau_s(0))\right)\notag\\
= &\frac{1}{2}\left(l_t^2-l_0^2\right) + \int_0^tl_s\,\left(\langle\nabla h-\sigma_1',\dot \tau_s\rangle(\tau_s(l_s))- \langle\nabla h-\sigma_2',\dot \tau_s\rangle(\tau_s(0))\right)\notag\, .
\end{align}
Rearranging terms and dividing by $t$ gives
\begin{align}
\frac{1}{2t}\left[d^2(\sigma_1(t),\sigma_2(t))-d^2(\sigma_1(0),\sigma_2(0))\right] &\leq \frac{1}{t}\int_0^tl_s\,|\nabla h -\sigma_1'| + \frac{1}{t}\int_0^t l_s\,|\nabla h -\sigma_2'|\\
&+ \frac{1}{t}\int_0^t\int_0^{l_s}l_s\,|\Hess_h|(\tau_s(v))\,dv\,ds\, .\notag
\end{align}
Letting $t$ tend to zero and dividing by $d(\sigma_1(0),\sigma_2(0))$ gives the result.
\end{proof}

The next result is the primary use of the scaled segment inequality of \cite{ChC1} (see theorem 2.11 there).  This Lemma will be combined with the estimates of Theorem \ref{t:mainregthm1} in order to see that Lemma \ref{l:distseperation} can be applied to control the gradient flow map.

\begin{lemma}\label{l:seghessian}
Let $t\in (\delta,1-\delta)$, $0\leq s\leq t-\delta$ and let $c_s^t:B_r(\gamma(t))\times B_r(\gamma(t))$ be the characteristic function of the set $\mathcal{A}_s^t(r)\times\mathcal{A}_s^t(r)$.  Then we have the following:
\begin{align}
 \fint_{B_r(\gamma(t))\times B_r(\gamma(t))}c_s^t(x,y)\,\left(\int_{\gamma_{\psi_s(x),\psi_s(y)}}|\Hess_h|\right)\leq C(n,\delta)\,r\,\left(\frac{\Vol(B_r(\gamma(t-s)))}{\Vol(B_r(\gamma(t)))}\right)^{2}\fint_{B_{5r}(\gamma(t-s))}|\Hess_h|.
\end{align}
\end{lemma}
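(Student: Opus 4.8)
The plan is to transport the whole integral back along the gradient flow $\psi_s$ to a fixed small ball around $\gamma(t-s)$, apply there the scaled segment inequality of \cite{ChC1} (theorem 2.11, cf.\ theorem 2.6 of \cite{ChC4}), and then return to $B_r(\gamma(t))$ by combining Bishop--Gromov volume comparison with a lower bound for the Jacobian of $\psi_s$. Throughout write $\mathcal{F}(a,b)\equiv\inf_\tau\int_\tau|\Hess_h|$, the infimum over minimizing unit speed geodesics $\tau$ from $a$ to $b$; the quantity $\int_{\gamma_{\psi_s(x),\psi_s(y)}}|\Hess_h|$ on the left of the lemma is to be read as $\mathcal{F}(\psi_s(x),\psi_s(y))$, which is the form in which it is used when the lemma is fed into Lemma \ref{l:distseperation}. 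The first observation is that for $x,y\in\mathcal{A}^t_s(r)$ the definition of $\mathcal{A}^t_s(r)$, applied at the terminal time $u=s$, forces $\psi_s(x),\psi_s(y)\in B_{2r}(\gamma(t-s))$; hence any minimizing geodesic $\tau$ from $\psi_s(x)$ to $\psi_s(y)$ has length at most $4r$, and since $\min\{d(w,\psi_s(x)),d(w,\psi_s(y))\}\le 2r$ for every $w\in\tau$ we get $\tau\subseteq B_{4r}(\gamma(t-s))\subseteq B_{5r}(\gamma(t-s))$.

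Next I would apply the scaled segment inequality on the ball $B_{2r}(\gamma(t-s))$ to the nonnegative function $|\Hess_h|$ and to the sets $A_1=A_2=\psi_s(\mathcal{A}^t_s(r))\subseteq B_{2r}(\gamma(t-s))$. Since by the previous paragraph every competing geodesic remains in $B_{5r}(\gamma(t-s))$, this gives
\begin{align}
\int_{\psi_s(\mathcal{A}^t_s(r))\times\psi_s(\mathcal{A}^t_s(r))}\mathcal{F}
&\leq C(n)\,r\,\Vol\big(B_{2r}(\gamma(t-s))\big)\int_{B_{5r}(\gamma(t-s))}|\Hess_h|\notag\\
&\leq C(n)\,r\,\Vol\big(B_r(\gamma(t-s))\big)^2\fint_{B_{5r}(\gamma(t-s))}|\Hess_h|\, ,
\end{align}
where the last line uses $\int_{B_{5r}(\gamma(t-s))}|\Hess_h|=\Vol(B_{5r}(\gamma(t-s)))\fint_{B_{5r}(\gamma(t-s))}|\Hess_h|$ together with the Bishop--Gromov bounds $\Vol(B_{2r}(\gamma(t-s)))$, $\Vol(B_{5r}(\gamma(t-s)))\leq C(n)\Vol(B_r(\gamma(t-s)))$ (valid for $r\leq1$, using $\Ric\geq-(n-1)$).

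It then remains to undo the change of variables. Off a set of measure zero $\psi_s$ is the smooth injective radial map $z\mapsto\exp_p\big((d_p(z)-s)v_z\big)$, where $v_z\in T_pM$ is the initial direction of the (a.e.\ unique) minimizing geodesic from $p$ to $z$, so in geodesic polar coordinates based at $p$ its Jacobian at $z$ is $\mathcal{J}(d_p(z)-s,v_z)\big/\mathcal{J}(d_p(z),v_z)$, with $\mathcal{J}(\cdot,v)$ the density of $dv_g$ along the ray in direction $v$. For $z\in\mathcal{A}^t_s(r)$ and $u\in[0,s]$ one has $d_p(\psi_u(z))=d_p(z)-u$, which is within $r$ of $t-u\in[\delta,1-\delta]$ and hence lies in $[\delta/2,1]$ once $r\leq\delta/2$; by Bishop's monotonicity of $\rho\mapsto\mathcal{J}(\rho,v)/(\sinh\rho)^{n-1}$ along each ray, the Jacobian of $\psi_s$ on $\mathcal{A}^t_s(r)$ is therefore bounded below by $(\sinh(\delta/2))^{n-1}/(\sinh 1)^{n-1}=:c(n,\delta)>0$. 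Hence
\begin{align}
\int_{\mathcal{A}^t_s(r)\times\mathcal{A}^t_s(r)}\mathcal{F}(\psi_s(x),\psi_s(y))
\leq c(n,\delta)^{-2}\int_{\psi_s(\mathcal{A}^t_s(r))\times\psi_s(\mathcal{A}^t_s(r))}\mathcal{F}\, ,
\end{align}
and combining this with the previous display, then dividing by $\Vol(B_r(\gamma(t)))^2$ — legitimate since $c^t_s$ makes the integrand on the left of the lemma vanish off $\mathcal{A}^t_s(r)\times\mathcal{A}^t_s(r)$ — gives exactly the asserted estimate.

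The one genuinely delicate step is the change of variables just performed: one must know that $\psi_s$ is injective off a null set and has Jacobian bounded below by a constant depending only on $n$ and $\delta$. The point of the set $\mathcal{A}^t_s(r)$ is precisely to keep the radial coordinate $d_p$ in the interior range $[\delta,1-\delta]$ (up to $O(r)$) throughout the flow, so that Bishop's inequality delivers this lower bound uniformly; without that constraint one could approach the cut locus of $p$, or the point $p$ itself, where the radial Jacobian degenerates and no such bound is available. Everything else — the segment inequality and the two volume comparisons — is routine bookkeeping.
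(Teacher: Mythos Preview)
Your proof is correct and follows essentially the same route as the paper's: change variables along $\psi_s$ using a lower bound on its Jacobian (what the paper calls ``volume comparison under the gradient flow''), apply the scaled segment inequality of \cite{ChC1} on $\psi_s(\mathcal{A}^t_s(r))\subseteq B_{2r}(\gamma(t-s))$, and finish with Bishop--Gromov to convert between the various ball volumes. Your write-up is in fact more explicit than the paper's about the Jacobian step, spelling out the radial description $z\mapsto\exp_p((d_p(z)-s)v_z)$ and the Bishop density monotonicity that gives the uniform lower bound $c(n,\delta)$; the paper compresses this into one phrase.
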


\begin{proof}
We begin with the computation
\begin{align}
\fint_{B_r(\gamma(t))\times B_r(\gamma(t))}c_s^t(x,y)\left(\int_{\gamma_{\psi_s(x),\psi_s(y)}}|\Hess_h|\right) &= \fint_{\mathcal{A}_s^t(r)\times\mathcal{A}_s^t(r)}\int_{\gamma_{\psi_s(x),\psi_s(y)}}|\Hess_h| \\
&\leq C(n,\delta)\fint_{\psi_s(\mathcal{A}_s^t(r))\times\psi_s(\mathcal{A}_s^t(r))}\int_{\gamma_{x,y}}|\Hess_h|\, ,\notag
\end{align}
where the last inequality follows from the volume comparison under the gradient flow.  Since $\psi_s(\mathcal{A}_s^t(r))\subseteq B_{2r}(\gamma(t-s))$ by definition we may apply the scaled segment inequality to get
\begin{align}
\int_{\psi_s(\mathcal{A}_s^t(r))\times\psi_s(\mathcal{A}_s^t(r))}\int_{\gamma_{x,y}}|\Hess_h| &\leq C(n)\,r\,\Vol(\psi_s(\mathcal{A}_s^t(r)))\int_{B_{5r}(\gamma(t-s))}|\Hess_h|\notag\\
&\leq C(n)\,r\,\Vol(B_{5r}(\gamma(t-s)))\int_{B_{5r}(\gamma(t-s))}|\Hess_h|\\
&\leq C(n)\,r\,\Vol(B_{r}(\gamma(t-s)))^2\fint_{B_{5r}(\gamma(t-s))}|\Hess_h|\, ,\notag
\end{align}
where the last inequalities follow from volume monotonicity.  Finally, by dividing out by $\Vol(B_{r}(\gamma(t)))^2$ and using volume comparison one more time we have our result.
\end{proof}

\subsection{Volume comparison}\label{ss:volcomparison}

We are now in a position to tackle the technical heart of the construction.  The goal of this Section is to prove the following Proposition, which gives at least some base control over the drifting of points under the gradient flow.  In particular, the next Proposition tells us that for {\it most} points $z\in B_r(\gamma(t))$ that the minimizing geodesic $\gamma_{p,z}$ remains near $\gamma$ for a definite amount of time.

\begin{proposition}\label{p:volratio1}
There exists $r_0(n,\delta)$ and $\epsilon(n,\delta)$ such that if $\delta<t'<t<1-\delta$ with $|t-t'|\leq \epsilon$ then $\forall r\leq r_0$ we have
\begin{align}
 \frac{1}{2}\leq \frac{\Vol(\mathcal{A}^{t}_{t'}(r))}{\Vol(B_r(\gamma(t)))} \leq 2\, .
\end{align}
\end{proposition}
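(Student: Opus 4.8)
The upper bound is immediate, since $\mathcal{A}^t_{t'}(r)\subseteq B_r(\gamma(t))$; all the content is in the lower bound. My plan is to write $s:=t-t'\le\epsilon$ for the (short) length of the interval over which trajectories are tracked and $\mathcal{B}:=B_r(\gamma(t))\setminus\mathcal{A}^t_{t'}(r)$ for the bad set, and to show $\Vol(\mathcal{B})\le\tfrac12\Vol(B_r(\gamma(t)))$. After shrinking $\delta$ by a fixed factor I may assume every ball $B_{Cr}(\gamma(v))$ with $|v-t|\le\epsilon$ lies in the region $M_{\delta,2}$ of Section~\ref{s:HessBounds}; I fix $h:=h^-_{\lambda r^2}$, the parabolic approximation of $d^-=d_p$ at scale $r$, with $\lambda\in[\tfrac12,2]$ the good parameter supplied by Theorem~\ref{t:mainregthm1}, so that $h$ enjoys the gradient estimates of Theorem~\ref{t:L1|gradh|}, the $L^2$ Hessian bound \ref{e:emain4}) along $\gamma$, and the integrated estimates of Lemma~\ref{l:geo_gradh}. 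Throughout I would use Bishop--Gromov comparison to interchange the quantities $\Vol(B_r(\gamma(v)))$, $|v-t|\le\epsilon$ (all comparable up to $C(n,\delta)$), and to control the Jacobian of the a.e.-defined gradient flow $\psi_u$.

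For $z\in\mathcal{B}$ let $u_z\in(0,s]$ be the first time at which $d(\psi_{u_z}(z),\gamma(t-u_z))=2r$; for $u<u_z$ the trajectory $u\mapsto\psi_u(z)$, which traces the minimizing geodesic $\gamma_{p,z}$ run back toward $p$, stays in $B_{2r}(\gamma(t-u))$. I would apply Lemma~\ref{l:distseperation} to $\sigma_1(u)=\psi_u(z)$, $\sigma_2(u)=\gamma(t-u)$ (both unit speed, with velocity $-\nabla d_p$) and to the function $-h$; integrating in $u$ and using $d(z,\gamma(t))\le r$ then gives
\[
r\ \le\ \int_0^{u_z}|\nabla h-\nabla d^-|(\psi_u(z))\,du\ +\ \int_0^{s}|\nabla h-\nabla d^-|(\gamma(t-u))\,du\ +\ \int_0^{u_z}\Big(\inf_{\tau}\int_{\tau}|\Hess_h|\Big)du ,
\]
the infimum over minimizing geodesics $\tau$ from $\psi_u(z)$ to $\gamma(t-u)$, each of length $\le2r$. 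Integrating over $z\in\mathcal{B}$, it then suffices to bound the three resulting quantities — call them $T_1$, $\Vol(\mathcal{B})\cdot E$ and $T_3$ — by $\tfrac18 r\Vol(B_r(\gamma(t)))$, $\tfrac18 r$ and $\tfrac18 r\Vol(B_r(\gamma(t)))$ respectively once $\epsilon(n,\delta)$ is small, since that forces $\Vol(\mathcal{B})\le\tfrac27\Vol(B_r(\gamma(t)))<\tfrac12\Vol(B_r(\gamma(t)))$.

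For $E=\int_{t'}^t|\nabla h-\nabla d^-|(\gamma(v))\,dv$, Cauchy--Schwarz in $v$ together with Lemma~\ref{l:geo_gradh}~\ref{e:egeo1})--\ref{e:egeo2}) (using $e\equiv0$ on $\gamma$) gives $E\le\sqrt s\,(C(n,\delta)r^2)^{1/2}\le C(n,\delta)\sqrt\epsilon\,r$. For $T_1$, along $\gamma_{p,z}$ Lemma~\ref{l:geo_gradh}~\ref{e:egeo3}) gives $\int_0^{u_z}|\nabla h-\nabla d^-|(\psi_u(z))\,du\le C(n,\delta)\sqrt s\,(\sqrt{e(z)}+r)$; integrating over $z$ and applying Cauchy--Schwarz with the sharp integral excess bound Theorem~\ref{t:L1_excess} (namely $\fint_{B_r(\gamma(t))}e\le C(n,\delta)r^2$, valid because $e(\gamma(t))=0$) yields $\int_{\mathcal{B}}\sqrt{e}\le C(n,\delta)r\Vol(B_r(\gamma(t)))$, hence $T_1\le C(n,\delta)\sqrt\epsilon\,r\Vol(B_r(\gamma(t)))$. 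This is the one place where the sharpness of Theorem~\ref{t:L1_excess} is essential: the trivial bound $e\le2r$ would only give $\sqrt{e}\lesssim\sqrt r\gg r$. For $T_3$ I would apply the scaled segment inequality Lemma~\ref{l:seghessian} at each $u\le s$ (with $\mathcal{A}^t_u(r)$ playing the role of one factor), bound $\Vol(B_r(\gamma(t-u)))/\Vol(B_r(\gamma(t)))\le C(n,\delta)$, integrate in $u$, and combine a Cauchy--Schwarz in $u$ with the $L^2$ Hessian bound \ref{e:emain4}) to obtain $T_3\le C(n,\delta)\sqrt\epsilon\,r\Vol(B_r(\gamma(t)))$.

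The hard part is $T_3$. Lemma~\ref{l:seghessian} controls averages over $\mathcal{A}^t_u(r)\times\mathcal{A}^t_u(r)$ of line integrals between the images under the flow of \emph{pairs of points of} $B_r(\gamma(t))$, whereas the inequality above compares the drifting point $\psi_u(z)$ to the fixed point $\gamma(t-u)$ on $\gamma$. The fix is to rerun the drift estimate for pairs $z,z'\in B_r(\gamma(t))$, using that $\gamma(t-u)$ lies within $2r$ of $\psi_u(z')$ for a typical good $z'$ (at the cost of enlarging $2r$ to $Cr$ in the relevant sets, which is harmless); one must also deal with the mild circularity that the bound coming from Lemma~\ref{l:seghessian} involves $\Vol(\mathcal{A}^t_u(r))$ itself. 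Both issues I would handle by running the whole estimate as a continuity/bootstrap in $s$: the ratio $\Vol(\mathcal{A}^t_s(r))/\Vol(B_r(\gamma(t)))$ equals $1$ at $s=0$, is non-increasing and, up to null sets, continuous in $s$, and the bounds above show it cannot reach $\tfrac12$ for any $s\le\epsilon$; hence it remains $\ge\tfrac12$ on $[0,\epsilon]$, which is the assertion.
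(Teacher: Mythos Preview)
Your outline for $E$ and $T_1$ is fine, but the handling of $T_3$ contains two genuine gaps, and both are precisely the points where the paper has to work hardest.

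First, you assert that Bishop--Gromov lets you ``interchange the quantities $\Vol(B_r(\gamma(v)))$, $|v-t|\le\epsilon$ (all comparable up to $C(n,\delta)$)''. Bishop--Gromov says nothing of the sort: it compares concentric balls of different radii, not balls at different centers. The comparability you want is exactly the content of the Corollary that the paper derives \emph{from} Proposition~\ref{p:volratio1}. In Lemma~\ref{l:seghessian} the factor $\bigl(\Vol(B_r(\gamma(t-u)))/\Vol(B_r(\gamma(t)))\bigr)^2$ appears on the right, and to feed the output into the Hessian bound~\ref{e:emain4}) of Theorem~\ref{t:mainregthm1} (which controls \emph{averaged} integrals) you must bound this ratio from \emph{above}. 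Your bootstrap on $\Vol(\mathcal{A}^t_u(r))/\Vol(B_r(\gamma(t)))\ge\tfrac12$ only yields a \emph{lower} bound on $\Vol(B_r(\gamma(t-u)))$, via $\Vol(B_{2r}(\gamma(t-u)))\ge\Vol(\psi_u(\mathcal{A}^t_u(r)))$; the upper bound requires the reverse flow by $-\nabla d_q$ from $\gamma(t-u)$, which is an independent hypothesis. This is why the paper's continuity argument is run on the two-sided set $S_t=\{s:\tfrac12<\Vol(B_r(\gamma(s)))/\Vol(B_r(\gamma(t)))<2\ \forall r\le r_0\}$ rather than on $\mathcal{A}^t_s(r)$ itself, and why openness of $S_t$ is obtained from the \emph{smoothness} of $M$ (for $r$ small enough, depending on $M$, all ratios are near $1$).

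Second, your ``fix'' for $T_3$---replacing $\gamma(t-u)$ by $\psi_u(z')$ for a typical good $z'$---does not close. If $z'\in\mathcal{A}^t_s(r)$ you only know $d(\psi_u(z'),\gamma(t-u))<2r$; combined with $d(z,z')<2r$ this gives at best $d(\psi_u(z),\gamma(t-u))<4r+\text{error}$, not $<2r$. What you need is a reference point whose flow stays within $o(1)\cdot r$ of $\gamma$, and the bootstrap hypothesis gives no such point. The paper resolves this by a multi-scale construction: for $x\in T^r_\eta$ it builds a chain $x=x_0,x_1,\dots,x_I$ with $x_{i+1}\in T^{r_i}_\eta(x_i)\cap T^{r_{i+1}}_\eta$ at geometrically shrinking scales $r_i=\mu^i r$, and terminates (at some non-effective $I$) by choosing $x_I$ in the set $H_r$ of points where the infinitesimal Jacobi estimate of Lemma~\ref{l:jacobiest} applies---again using that $M$ is a smooth manifold. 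One then inducts backward on $i$ using Lemma~\ref{l:distseperation} at each scale to show $\psi_s(x_i)\in B_{(1+\mu)r_i}(\gamma(t-s))$, and in particular $\psi_s(x)\in B_{(1+\mu)r}(\gamma(t-s))$. Without this anchoring mechanism (or an equivalent), the averaging argument you propose cannot bridge from ``most pairs $(z,z')$ don't separate'' to ``most $z$ stay near $\gamma$''.
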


We will need an improvement on this in the proof of Theorem \ref{t:mainregthm1}, namely that this volume ratio is behaving in a H\"older fashion, but this alone has at least one useful consequence we will quickly discuss.  Notice that
$$
\Vol(B_r(\gamma(t-t')))\geq C(n)\Vol(\psi_{t-t'}(\mathcal{A}^t_{t'}(r)))\geq C(n)\Vol(B_r(\gamma(t)))\, ,
$$
and that by applying Proposition \ref{p:volratio1} to the geodesic $\bar\gamma(t)\equiv \gamma(d_{p,q}-t)$ we obtain the reverse inequality
$$
\Vol(B_r(\gamma(t)))\geq C(n)\Vol(B_r(\gamma(t-t')))\, ,
$$
for $|t-t'|\leq\epsilon(n,\delta)$.  Iterating this immediately gives us:

\begin{corollary}
There exists $r_0(n,\delta)$ and $C(n,\delta)$ such that for all $s,t\in (\delta,1-\delta)$ and for any $r\leq r_0$ we have that
\begin{align}
C^{-1}\leq \frac{\Vol(B_r(\gamma(s)))}{\Vol(B_r(\gamma(t)))} \leq C\, .
\end{align}
\end{corollary}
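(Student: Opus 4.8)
The plan is to deduce the statement by iterating Proposition~\ref{p:volratio1} a bounded number of times along a chain of parameters in $(\delta,1-\delta)$ with successive gaps at most $\epsilon(n,\delta)$; the two displays immediately preceding the statement already perform one step, and the remaining work is just to make the bookkeeping precise.

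\textbf{One-step estimate.} First I would establish: there are $c=c(n,\delta)\in(0,1)$, $r_0=r_0(n,\delta)>0$, and $\epsilon=\epsilon(n,\delta)>0$, which we may take $\le\delta$, so that if $t,t'\in(\delta,1-\delta)$ with $0<t-t'\le\epsilon$ and $r\le r_0$, then $c\le \Vol(B_r(\gamma(t')))/\Vol(B_r(\gamma(t)))\le c^{-1}$. For the lower bound, Proposition~\ref{p:volratio1} gives $\Vol(\mathcal{A}^t_{t'}(r))\ge\tfrac12\Vol(B_r(\gamma(t)))$. Since $t-t'\le\epsilon\le\delta<t'$, the gradient flow $\psi_{t-t'}$ is defined on $\mathcal{A}^t_{t'}(r)$, it maps this set into $B_{2r}(\gamma(t'))$ by definition of $\mathcal{A}^t_{t'}(r)$, and — this is the volume comparison under the gradient flow already used in the proof of Lemma~\ref{l:seghessian}, legitimate because for $r_0<\delta/4$ all the relevant distances to $p$ lie in the fixed interval $[\delta/2,1]$ — it distorts volume by a factor bounded below by $c(n,\delta)$. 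Hence
\begin{equation}
\Vol(B_{2r}(\gamma(t')))\ \ge\ \Vol(\psi_{t-t'}(\mathcal{A}^t_{t'}(r)))\ \ge\ c(n,\delta)\,\Vol(B_r(\gamma(t)))\, ,
\end{equation}
and combining with the Bishop--Gromov doubling inequality $\Vol(B_{2r}(x))\le c(n)\Vol(B_r(x))$ yields $\Vol(B_r(\gamma(t')))\ge c(n,\delta)\Vol(B_r(\gamma(t)))$. Applying the same argument to the reversed unit-speed minimizing geodesic $\bar\gamma(u)\equiv\gamma(1-u)$, for which $\gamma(t)=\bar\gamma(1-t)$ and $\gamma(t')=\bar\gamma(1-t')$ are again at parameters in $(\delta,1-\delta)$, gives the opposite inequality $\Vol(B_r(\gamma(t)))\ge c(n,\delta)\Vol(B_r(\gamma(t')))$, proving the one-step estimate.

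\textbf{Iteration.} For arbitrary $s,t\in(\delta,1-\delta)$ I would choose $s=t_0,t_1,\dots,t_N=t$ all lying in $(\delta,1-\delta)$ with $|t_{i+1}-t_i|\le\epsilon$ and $N\le N_0(n,\delta):=\lceil 1/\epsilon\rceil$. Writing
\begin{equation}
\frac{\Vol(B_r(\gamma(s)))}{\Vol(B_r(\gamma(t)))}=\prod_{i=0}^{N-1}\frac{\Vol(B_r(\gamma(t_i)))}{\Vol(B_r(\gamma(t_{i+1})))}
\end{equation}
and bounding each of the $N$ factors (which lies in $[c,c^{-1}]$ by the two-sided one-step estimate) gives $c^{N_0}\le \Vol(B_r(\gamma(s)))/\Vol(B_r(\gamma(t)))\le c^{-N_0}$. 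Taking $C(n,\delta):=c(n,\delta)^{-N_0}$ and $r_0(n,\delta)$ as above completes the proof.

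\textbf{Main obstacle.} The only non-routine ingredient is the two-sided bound on the Jacobian of the gradient flow of $-\nabla d_p$ restricted to $\mathcal{A}^t_{t'}(r)$: since $d_p$ is merely Lipschitz and $\psi$ only a measurable flow, one must know that this flow is (a.e.) injective on the set in question and that its volume distortion — governed by the area density of geodesic spheres about $p$ together with Bishop--Gromov monotonicity — is comparable at distance $t$ and at distance $t'$ from $p$ with constants depending only on $n$ and on the range $[\delta/2,1]$ of those distances. This is exactly the input already invoked in Lemma~\ref{l:seghessian}; granting it, everything else is elementary.
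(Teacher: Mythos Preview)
Your argument is correct and follows essentially the same approach as the paper's own proof: one-step volume comparison from Proposition~\ref{p:volratio1} via the gradient flow (plus Bishop--Gromov doubling to pass from $B_{2r}$ to $B_r$), the reverse inequality by applying the same reasoning to the reversed geodesic $\bar\gamma$, and then iteration over $\lceil 1/\epsilon\rceil$ steps. Your bookkeeping (e.g.\ the observation $t-t'\le\epsilon\le\delta<t'$ ensuring the flow time is admissible, and the explicit telescoping product) is in fact slightly more careful than the paper's one-line ``iterating this immediately gives us''.
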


This gives the interesting result that two points in the interior of a limit geodesic are absolutely continuous with respect to the renormalized limit measure relative to one another.  There is, in fact, a stronger version of this we will get to shortly. First we finish the proposition.

\begin{proof}[Proof of Proposition \ref{p:volratio1}]
Let us fix $t\in (\delta,1-\delta)$ and define
\begin{align}
S_t\equiv\{s\in(\delta,1-\delta): \frac{1}{2}< \frac{\Vol(B_r(\gamma(s)))}{\Vol (B_r(\gamma(t)))} < 2\, \forall r\leq r_0\} \, ,
\end{align}
where $r_0\leq \bar\epsilon(n,\delta)$, where $\bar\epsilon(n,\delta)$ is from Theorem \ref{t:mainregthm1}.  We will first claim that there is an $\epsilon(n,\delta)$ such that $[t-\epsilon,t+\epsilon]\subseteq S_t$, which notice is a strictly weaker claim than that of the proposition.

Notice first that since $M$ is a smooth manifold that for all $r$ sufficiently small (depending on $M$) that
$$\frac{\Vol(B_r(\gamma(s)))}{w_nr^n}$$ 
is uniformly close to one for every $s$.  In particular, it is easy to see that $S_t$ is an open set.  We will find $\epsilon(n,\delta)$ such that $[t-\epsilon,t+\epsilon]\cap S_t$ is closed, and then the claim will follow.

To do this we begin by finding the relevant estimates, these will make heavy use of Theorem \ref{t:mainregthm1} and Lemma \ref{l:geo_gradh}.  So let $\epsilon>0$ not yet be specified and $t'\in\bar S_t\cap [t-\epsilon,t+\epsilon]$, with either $|t'-t|= \epsilon$ or with $t'$ being the closest point of $\bar S_t\setminus S_t$ to $t$, where $\bar S_t$ is the closure of $S_t$.  Note that $t'\neq t$ by openness.  We of course wish to show $t'\equiv t-\epsilon$ for $\epsilon$ effectively chosen.  We can assume without loss of generality that $t'<t$ and get that
\begin{align}\label{e:VR}
\frac{1}{2}\leq \frac{\Vol(B_r(\gamma(s)))}{\Vol (B_r(\gamma(t)))} \leq 2\, \forall s\in [t',t] \text{ and } \forall r\leq r_0 \, .
\end{align}

Now recall the excess function $e_{p,q}(x)\equiv d(p,x)+d(x,q)-d(p,q)$ and let
\begin{align}
I^r_s\equiv \fint_{B_r(\gamma(t))\times B_r(\gamma(t))}\int_0^s c_u^t(x,y)\left(\int_{\gamma_{\psi_u(x),\psi_u(y)}}|\Hess_{h_{r^2}}|\right)\,du\,dv_g(x)\,dv_g(y)\, ,
\end{align}
where $h_{r^2}$ is the parabolic approximation function from Subsection \ref{s:parappxfun} and $c^t_u$ is the characteristic function $\mathcal{A}^t_u(r)\times\mathcal{A}^t_u(r)$.  Let us define
\begin{align}
T^r_{\eta}\equiv \left\{x\in B_r(\gamma(t)): e_{p,q}(x)\leq \eta^{-1} r^2 \text{ and } \fint_{\{x\}\times B_r(\gamma(t))}\int_0^{t-t'} c_s^t(x,y)\left(\int_{\gamma_{\psi_s(x),\psi_s(y)}}|\Hess_{h_{r^2}}|\right)\leq \eta^{-1}I^r_{t-t'}\right\}\, ,
\end{align}
and with $x\in T^r_\eta$ let us define
\begin{align}
T^r_\eta(x) \equiv \left\{y\in B_r(\gamma(t)): \int_0^{t-t'} c_s^t(x,y)\left(\int_{\gamma_{\psi_s(x),\psi_s(y)}}|\Hess_{h_{r^2}}|\right)ds\leq \eta^{-2}I^r_{t-t'}\right\}\, .
\end{align}

For the proof of the claim we will end up picking $\eta$ some fixed small constant, though because we will need it later we will be very explicitly about the dependence of $\epsilon$ on the choice of $\eta$.  Note from the integral excess inequality Theorem \ref{t:L1_excess} that
\begin{align}\label{e:volratio2}
\frac{\Vol(T^r_\eta)}{\Vol(B_r(\gamma(t)))}\geq 1-C(n,\delta)\eta\, ,
\end{align}
and hence
\begin{align}
\frac{\Vol(T^r_\eta(x))}{\Vol(B_r(\gamma(t)))}\geq 1-C(n,\delta)\eta\text{ }\forall x\in T^r_\eta\, .
\end{align}
Note also from Lemma \ref{l:seghessian}, Theorem \ref{t:mainregthm1} and (\ref{e:VR}) that
\begin{align}\label{e:I}
I^r_{t-t'}&=\int_0^{t-t'}\fint_{B_r(\gamma(t))\times B_r(\gamma(t))} c_u^t(x,y) \left(\int_{\gamma_{\psi_u(x),\psi_u(y)}} |\Hess_{h_{r^2}}|\right)\, \notag\\
&\leq C(n,\delta)\,r\int_0^{t-t'}\left(\frac{\Vol(B_r(\gamma(t-u)))}{\Vol (B_r(\gamma(t)))}\right)^2 \fint_{B_{5r}(\gamma(t-s))}|\Hess_{h_{r^2}}|\notag\\
&\leq C(n,\delta)\,r \int_0^{t-t'}\fint_{B_{5r}(\gamma(t-s))}|\Hess_{h_{r^2}}|\notag\\
&\leq C(n,\delta)r \sqrt{t-t'}\left(\int_\delta^{1-\delta} \fint_{B_{5r}(\gamma(s))}|\Hess_{h_{r^2}}|^2\right)^{1/2}\notag\\
&\leq C(n,\delta)\,\sqrt{t-t'}\,r\, .
\end{align}
It follows from Lemma \ref{l:geo_gradh} that if $x\in T^r_\eta$ and $y\in T^r_\eta(x)$, then for unit speed minimal geodesics $\sigma_x$ from $p$ to $x$ and $\tau_s$ from $\psi_s(x)$ to $\psi_s(y)$ we have
\begin{equation}\label{e:eta1}
 \int_{t'}^t|\nabla h_{r^2}-\nabla d_p|\leq \eta^{-1/2}\,C(n,\delta)\,\sqrt{t-t'}\,r\, ,
\end{equation}
and
\begin{equation}\label{e:eta2}
\int_{t'}^t\int_{\tau_s}c_s^t(x,y)\,|\Hess_{h_{r^2}}|\leq \eta^{-2}\,C(n,\delta)\,\sqrt{t-t'}\,r\, .
\end{equation}

Now let us give an imprecise outline of how the proof of the claim will proceed.  We wish to estimate the volume of the set of points $z\in B_r(\gamma(t))$ for which $\gamma_{p,z}(u)$ remains near $\gamma(u)$ for all $t'\leq u\leq t$.  Volume monotonicity tells us that if this set is large, relative to $\Vol(B_r(\gamma(t)))$, then the volume of $B_r(\gamma(t'))$ is bounded below by the volume of $B_r(\gamma(t))$.  The argument will be symmetric in $t$ and $t'$, and thus we will be able estimate the points  $z\in B_r(\gamma(t'))$ for which the geodesics $\gamma_{q,z}$ remain near $\gamma$, and hence also bound $\Vol(B_{r}(\gamma(t))$ from below by $\Vol(B_r(\gamma(t')))$.

To simplify matters for our outline, let us assume briefly that $\gamma(t)\in T^r_\eta$.  Then for any $x\in T^r_\eta(\gamma(t))\cap T^r_\eta$ we may use (\ref{e:eta1}) and (\ref{e:eta2}), along with Lemma \ref{l:distseperation}, so that we will be able to conclude that
$$
d(\gamma_{p,x}(t'),\gamma(t'))<C\eta^{-2}\sqrt{t-t'}r\leq C\eta^{-2}\sqrt{\epsilon}r\, .
$$
In particular, the minimizing geodesics between $\phi_{t-u}(x)=\gamma_{p,x}(u)$ and $\gamma(u)$ cannot grow in length too quickly, and by fixing $\eta>0$ and $\epsilon(n,\delta)>0$ correspondingly small we have the desired conclusion of the last paragraph.

The primary issue with this outline is that there is no reason we can assume $\gamma(t)\in T^r_\eta$.  Instead, we will connect the points $x\in T^r_\eta$ to $\gamma(t)$ by a piecewise geodesic whose length is not much larger than $r$.  The vertices of this piecewise geodesic will be denoted by $\{x_i\}$, with $x_0=x$, and will satisfy
\begin{align}
 x_{i+1}\in T^{r_i}_\eta(x_i)\cap T^{r_{i+1}}_\eta\, .
\end{align}
It will turn out that this is enough to show that the piecewise geodesics with vertices defined by $\{\psi_{t-u}(x_i)\}$ will also have length roughly equal to $r$, which in particular shows the desired conclusion that $\psi_{u-t}=\gamma_{p,x}(u)$ does not stray too far from $\gamma(u)$.  Now let us proceed to make this all rigorous.

Let $0<\mu(n,\delta,\eta)<\frac{1}{10}$ be chosen momentarily with $r_i\equiv \mu^{i}r$.  Let 
\begin{align}
x\equiv x_0\in T^r_\eta\, 
\end{align}
be arbitrary and let us define $x_i$ inductively in two steps as follows.  First, given $x_i\in T^{r_i}_\eta$ let 
\begin{align}
x_{i+1}\in T^{r_i}_\eta(x_i)\cap T^{r_{i+1}}_\eta\, .
\end{align}
Note that by a simple volume comparison argument using (\ref{e:volratio2}) that if we choose $\mu\equiv \mu(n,\delta)\eta^{\frac{1}{n}}$, with $\mu(n,\delta)$ sufficiently small, then for all $\eta\leq \eta_0(n,\delta)$ sufficiently small the sets $T^{r_{i+1}}_\eta$ and $T^{r_i}_\eta(x_i)$ will have nonempty intersection by their almost maximal volume properties, and hence such a $x_{i+1}$ will always exist.  Now we wish to end this induction after a finite number of steps with a specially chosen last $x_I$.  The claim is that for all $I$ large enough it automatically holds that we can pick the vertex $x_I$ with the property that
\begin{align}
d(\psi_s(x_I),\gamma(t-s))\leq (1+\frac{\mu}{10})r_{I}\, ,
\end{align}
for all $s\leq \epsilon(n,\delta,\eta)$.  We should note that {\it apriori} we make and need no claims about effective control over how large $I$ has to be chosen, only that there exists such an $I$.  To see that such an $I$ exists is where the Jacobi estimate of Lemma \ref{l:jacobiest} come in.  So let us define
\begin{align}
 H_{r}\equiv \{y\in B_r(\gamma(t)): d(\psi_s(y),\gamma(t-s))
 \leq (1+2C(n,\delta)\sqrt{s})\,r\, \text{  }\,\forall s\leq t-\delta\}\, ,
\end{align}
where $C(n,\delta)$ in the definition is chosen to be twice the constant from Lemma \ref{l:jacobiest}.  Because $\psi_s$ is a smooth map in a neighborhood of $\gamma([\delta,1-\delta])$, and because Jacobi fields satisfy the estimates of Lemma \ref{l:jacobiest}, we see that 
\begin{align}
\lim_{r\to 0}\frac{\Vol(H_r)}{\Vol(B_r(\gamma(t)))}=1\, .
\end{align}
In particular, there exists $\epsilon(n,\delta,\eta)\equiv \epsilon(n,\delta)\eta^{\frac{1}{2n}}$ such that for $I$ sufficiently large we may pick 
$$
x_I\in T^{r_{I-1}}_\eta(x_{I-1})\cap H_r\, ,$$ 
and hence
$$
d(\psi_s(x_I),\gamma(t-s))\leq (1+\frac{\mu}{10})r_{I}\, ,
$$ 
for all $s\leq \epsilon(n,\delta,\eta)$ as claimed.  Note that although $I$ depends on the manifold and geodesic in question the constant $\epsilon(n,\delta,\eta)$ does not.

Now let $\sigma(s)$ be the piecewise geodesic with vertices $\{\psi_s(x_i)\}_0^I$, and let $\sigma_i(s)$ be the segments connecting $\psi_s(x_i)$ to $\psi_s(x_{i+1})$.  Assume $i$ is such that 
\begin{align}
\psi_s(x_{i+1})\in B_{(1+\mu)r_{i+1}}(\gamma(t-s))\, ,
\end{align}
for all $s\leq t-t'$ and let 
\begin{align}
s_i\equiv \min\{t-t',\sup\{u:\psi_s(x_i)\in B_{(1+\mu)r_{i}}(\gamma(t-s)) \forall s\leq u\}\}\, .
\end{align}
So $s_i$ is the maximum $s$, up to $t-t'$, such that $\psi_s(x_i)$ remains in $B_{(1+\mu)r_i}(\gamma(t-s))$.  Now for any such $i$ as in our assumption and all $s\leq s_i$ we have that the characteristic function $c_t^s(x_i,x_{i+1})$ is identically one, and hence by Lemma \ref{l:distseperation} and equations (\ref{e:eta1}),(\ref{e:eta2}) we have that
\begin{align}
||\sigma_i(s_i)|-|\sigma_i(0)||\leq C(n,\delta)\,\eta^{-2}\,\sqrt{t-t'}\,r_i\leq \frac{\mu}{10}\, r_i\, ,
\end{align}
where the last inequality holds so long as $|t-t'|\leq \epsilon(n,\delta,\eta)\equiv\epsilon(n,\delta)\eta^{4+\frac{1}{2n}}$ are chosen sufficiently small.  In particular, we see that 
$$
|\sigma_i(s_i)|<(1+\frac{1}{2}\mu)r_i\, ,
$$
and hence 
$$
\psi_{s_i}(x_i)\in B_{(1+\mu)r_i}(\gamma(t-s_i))\, ,
$$ 
and thus we have that $s_i\equiv t-t'$.  Therefore, we have shown that for all $i$ such that 
$$
\psi_s(x_{i+1})\in B_{(1+\mu)r_{i+1}}(\gamma(t-s))\, \forall s\leq t-t'\, ,$$ 
we have that 
$$
\psi_s(x_{i})\in B_{(1+\mu)r_{i}}(\gamma(t-s))\, \forall s\leq t-t'\, .$$  
In particular, since this holds for $i=I-1$ it holds for all $i$ and hence we have that for all $\eta\leq\eta_0(n,\delta)$ there exists $\mu(n,\delta,\eta)=\mu(n,\delta)\eta^{\frac{1}{n}}$ and $\epsilon(n,\delta,\eta) =\epsilon(n,\delta)\eta^{4+\frac{1}{2n}}$ such that if $x\in T^r_\eta$ then $\psi_s(x)\in B_{(1+\mu)r}(\gamma(t-s))$ for all $s\leq t-t'$.  This, in particular, implies that 
\begin{align}
T^r_\eta\subseteq \mathcal{A}_t^s\,\forall s\leq t-t'\, .
\end{align}
We are nearly done with the claim.  To finish it note that this implies that
\begin{align}
\frac{\Vol(B_{r}(\gamma(t-t')))}{\Vol(B_{r}(\gamma(t)))}&\geq \frac{1}{(1+C(n)\mu)^n}\frac{\Vol(B_{(1+\mu)r}(\gamma(t-t')))}{\Vol(B_{r}(\gamma(t)))}\geq \frac{1}{(1+C(n)\mu)^n}\frac{\Vol(\psi_{t'}(T^r_\eta))}{\Vol(B_{r}(\gamma(t)))}\notag\\
&\geq \frac{1}{(1+C(n)\mu)^n(1+C(n)\epsilon)^n}\frac{\Vol(T^r_\eta)}{\Vol(B_{r}(\gamma(t)))}\geq \frac{1-C\eta}{(1+C(n)\mu)^n(1+C(n)\epsilon)^n}\, .
\end{align}
Hence, for $\eta(n,\delta)$ sufficiently small we have that $$
\frac{\Vol(B_{r}(\gamma(t-t')))}{\Vol(B_{r}(\gamma(t)))}>\frac{1}{2}\, .$$  
To see the reverse inequality we argue in a verbatim manner with respect to the gradient flow by the function $-\nabla d_q$, which shows that $t'\in S_t$ and hence $t'=t-\epsilon(n,\delta)\eta^{4+\frac{1}{2n}}$, which proves the claim.  The proof of the proposition follows immediately because with $t'=t-\epsilon$ we see that $T^r_\eta\subseteq \mathcal{A}_t^s$ $\forall s\leq \epsilon$.
\end{proof}

\subsection{Proof of Theorem \ref{t:holder}}\label{ss:proofmaintheorem}

We can now finish the proof of Theorem \ref{t:holder}:

\begin{proof}[Proof of Theorem \ref{t:holder}]
Let us begin by summing up some of the technical constructions obtained in the proof of Proposition \ref{p:volratio1}.  It was shown that for every $\eta\leq \eta_0(n,\delta)$ and $r\leq r_0(n,\delta)$ that there exists $\mu\equiv \mu(n,\delta)$ and $\epsilon\equiv \epsilon(n,\delta)$ such that if $x\in T^r_\eta$ and $y\in T^r_\eta\cap T^r_\eta(x)$ then the following hold
\begin{align}\label{e:distcompare}
\psi_s(x)\in B_{(1+\mu\eta^{\frac{1}{n}})r}(\gamma(t-s))\,\, \forall s\leq\epsilon\eta^{2\frac{1+2n}{n}}\\
|d(\psi_s(x),\psi_s(y))-d(x,y)|\leq \mu\eta^{\frac{1}{n}}r\,\, \forall s\leq\epsilon\eta^{2\frac{1+2n}{n}}\, ,
\end{align}
with the additional property that
\begin{align}
 \frac{\Vol(T^r_\eta)}{\Vol(B_r(\gamma(t)))},\text{  }\frac{\Vol(T^r_\eta(x))}{\Vol(B_r(\gamma(t)))}\geq 1-C(n,\delta)\,\eta\, .
\end{align}
Given this we see that $T^r_\eta$ is an $C(n,\delta)\eta^{\frac{1}{n}}$ dense subset.  Further for $s\leq\epsilon\eta^{2\frac{1+2n}{n}}$ we see that
\begin{align}
\Vol(B_{r}(\gamma(t-s)))&\geq (1-C(n,\delta)\eta)\Vol(B_{(1+\mu\eta^{\frac{1}{n}})r}(\gamma(t-s))) \geq (1-C(n,\delta)\eta)\Vol(\psi_s(T^\eta_r))\\
&\geq (1-C(n,\delta)\eta)(1-C(n,\delta)\eta^{4n+2})\Vol(T^\eta_r)\geq (1-C\eta)\Vol(B_r(\gamma(t)))
\, ,\notag
\end{align}
while we can get the opposite inequality by considering the flow by $-\nabla d_q$ and hence we get for $s\leq \epsilon\eta^{2\frac{1+2n}{n}}$
\begin{align}\label{e:VolHolder}
1-C\eta\leq\frac{\Vol(B_{r}(\gamma(t)))}{\Vol(B_{r}(\gamma(t-s)))}\leq 1+C\,\eta\, .
\end{align}
It follows from the above that 
$$
\frac{\Vol(\psi_s(T^r_\eta))}{\Vol(B_{(1+\mu\eta^{\frac{1}{n}})r}(\gamma(t-s)))}\geq 1-C\eta\, 
$$ 
and in particular that $\psi_s(T^r_\eta)$ is $C(n,\delta)\eta^{\frac{1}{n}}$ dense in $B_r(\gamma(t-s))$.  Given all this now let $x,y\in T^r_\eta$ be arbitrary points.  The volume constraints on $T^r_\eta$, $T^r_\eta(x)$ and $T^r_\eta(y)$ guarantee that there exists a point 
$$
z\in T^r_\eta\cap T^r_\eta(x)\cap T^r_\eta(y)\cap B_{C(n,\delta)\eta^{\frac{1}{n}}}(x)\, .$$  
It then follows from equation (\ref{e:distcompare}) that for $s\leq \epsilon\eta^{2\frac{1+2n}{n}}$ we have
\begin{align}
|d(\psi_s(x),\psi_s(y))-d(x,y)|&\leq |d(\psi_s(z),\psi_s(y))-d(z,y)|+|d(\psi_s(x),\psi_s(y))-d(\psi_s(z),\psi_s(y))|+|d(x,y)-d(x,z)|\notag\\
&\leq |d(\psi_s(z),\psi_s(y))-d(z,y)|+|d(\psi_s(x),\psi_s(z))|+|d(x,z)|\leq C\,\eta^{\frac{1}{n}}\, .
\end{align}
Rearranging and letting $\eta=\epsilon^{-n}s^{\frac{n}{2(1+2n)}}$ we see that $d_{GH}(B_{r}(\gamma(t)),B_{r}(\gamma(t-s)))\leq C(n,\delta)\,s^{\frac{1}{2(1+2n)}}$, as claimed.
\end{proof}

We quickly note the following corollary of equation (\ref{e:VolHolder}):

\begin{corollary}
We have for $s,t\in [\delta,1-\delta]$ and all $r\leq r_0(n,\delta)$ that
\begin{align}
\left|\frac{\Vol(B_r(\gamma(t)))}{\Vol(B_r(\gamma(t-s)))}-1\right| \leq C(n,\delta)\,|t-s|^{\frac{n}{2(1+2n)}}\, .
\end{align}
\end{corollary}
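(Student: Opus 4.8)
The plan is to read this estimate off equation (\ref{e:VolHolder}), which was proved along the way in the proof of Theorem \ref{t:holder}: for every $\eta\leq\eta_0(n,\delta)$, every $r\leq r_0(n,\delta)$, and every displacement $s$ with $s\leq\epsilon(n,\delta)\,\eta^{2\frac{1+2n}{n}}$ (and with both centers $\gamma(t)$, $\gamma(t-s)$ lying in $\gamma([\delta,1-\delta])$), one has $\bigl|\Vol(B_r(\gamma(t)))/\Vol(B_r(\gamma(t-s)))-1\bigr|\leq C(n,\delta)\,\eta$. So the entire task is to choose the free parameter $\eta$ optimally against the constraint tying it to the gap $\tau\equiv|t-s|$, exactly as is done for the Gromov--Hausdorff bound at the end of the proof of Theorem \ref{t:holder}.

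First I would handle the regime of small $\tau$. The constraint $\tau\leq\epsilon(n,\delta)\,\eta^{2\frac{1+2n}{n}}$ holds precisely when $\eta\geq(\tau/\epsilon(n,\delta))^{\frac{n}{2(1+2n)}}$, so the smallest admissible choice is $\eta\equiv(\tau/\epsilon(n,\delta))^{\frac{n}{2(1+2n)}}$; this satisfies $\eta\leq\eta_0(n,\delta)$ as soon as $\tau\leq\tau_0(n,\delta)\equiv\epsilon(n,\delta)\,\eta_0(n,\delta)^{2\frac{1+2n}{n}}$. Feeding this $\eta$ into (\ref{e:VolHolder}) gives
\begin{align}
\left|\frac{\Vol(B_r(\gamma(t)))}{\Vol(B_r(\gamma(t-s)))}-1\right|\leq C(n,\delta)\,\Bigl(\frac{\tau}{\epsilon(n,\delta)}\Bigr)^{\frac{n}{2(1+2n)}}=C(n,\delta)\,|t-s|^{\frac{n}{2(1+2n)}}\, ,
\end{align}
where the new constant has absorbed the fixed power of $\epsilon(n,\delta)$.

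For $\tau>\tau_0(n,\delta)$ the choice above would force $\eta>\eta_0$, so instead I would invoke the two-sided bound $C^{-1}\leq\Vol(B_r(\gamma(s)))/\Vol(B_r(\gamma(t)))\leq C$ established above, by which the left-hand side is at most a dimensional constant; since $|t-s|^{\frac{n}{2(1+2n)}}\geq\tau_0(n,\delta)^{\frac{n}{2(1+2n)}}>0$ in this range, enlarging the constant in the statement absorbs this case. I do not expect any genuine obstacle here: the analytic substance --- the Hessian bounds of Theorem \ref{t:mainregthm1}, the separation estimate of Lemma \ref{l:distseperation}, and the volume-monotonicity argument of Proposition \ref{p:volratio1} --- has already been carried out, and (\ref{e:VolHolder}) packages exactly what is needed, so the only thing to watch is the bookkeeping with $\eta_0$ and the threshold $\tau_0$, together with the (implicit) requirement that both $\gamma(t)$ and $\gamma(t-s)$ be interior points of $\gamma$, which is the standing hypothesis.
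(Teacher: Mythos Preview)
Your proposal is correct and is exactly the approach the paper takes: the corollary is stated immediately after (\ref{e:VolHolder}) as a direct consequence, and the paper's own proof of Theorem \ref{t:holder} makes precisely the substitution $\eta\approx\epsilon^{-\frac{n}{2(1+2n)}}s^{\frac{n}{2(1+2n)}}$ you describe. Your explicit handling of the large-gap regime via the two-sided volume bound is the natural way to fill in the (unstated) case distinction.
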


\section{Examples}\label{s:ExamplesII}

In this section we construct new examples of limits of Riemannian manifolds $(M^n_i,g_i,p_i)\rightarrow (X,d_X,p)$ that satisfy $\Ric_i\geq 0$ and a non-collapsing assumption $\Vol(B_1(p_i))\geq v>0$.  These examples are specifically meant to show the sharpness of the theorems of this paper and will illustrate what can happen along the interior of a minimizing geodesic in a limit space, in fact a limit minimizing geodesic.  Specifically Example \ref{ss:Example4} will exhibit a limit space $X$ with a limit minimizing geodesic $\gamma$ such that tangent cones from the same sequence of rescalings along $\gamma$ are not constant.  Example \ref{ss:Example5} will push this example further to show that for each $\delta>0$ there is a limit space such that the rate of change of these tangent cones is not $C^{1/2+\delta}$.  The constructions are based on multiply warped products and smoothing.

For Alexandrov spaces tangent cones are unique and Petrunin, \cite{Pn}, proved a conjecture of Yu. Burago  asserting that  tangent cones at any two points in the interior of a geodesic are isometric.  It is far too optimistic to think that such a result should hold for limit spaces with only lower Ricci bounds.  For instance, take a limit space $Y\times\RR$ where the tangent cone at $p\in Y$ is nonunique.  As in \cite{ChC2} one can even assume that this is a non-collapsed limit space.   If we consider the geodesic $\gamma\equiv\{p\}\times\RR$, then clearly the tangent cones at each point along the geodesic are not isometric.  However, what does hold is that tangent cones coming from the same sequence of rescalings are all unique (in fact we even have that for all $r>0$ and any $s<t$ that $B_r(\gamma(s))$ and $B_r(\gamma(t))$ are isometric).  One might conjecture that analogous to the Alexandrov case that tangent cones from the same sequence of rescalings are always unique, however Example \ref{ss:Example4} of this section shows that this is not the case.  Example \ref{ss:Example4} is a non-collapsed limit space $X$ with a minimizing geodesic $\gamma\subseteq X$ such that at each point of $\gamma$ the tangent cone is unique, but for any $s\neq t$ we have that the tangent cones at $\gamma(s)$ and $\gamma(t)$ are not isometric.  In particular, tangent cones from the same sequence of rescalings along $\gamma$ are not isometric, and any form of Burago's conjecture for limits with only lower Ricci bounds must fail.

Theorem \ref{t:tangentholder} gives us that tangent cones along the interior of a geodesic of a limit space change at most at a $C^{\alpha(n)}$ H\"{o}lder rate, and in fact an analysis of the proof shows that \textit{most} points change at a $C^{\frac{1}{2}}$ H\"older rate.  We would now like to see that these estimates are sharp.  In particular, let $X$ be a limit space and $\gamma:[a,b]\rightarrow X$ a unit speed limit minimizing geodesic with $r_i\rightarrow 0$ some fixed sequence such that the respective rescalings $(X,r^{-1}_id_X)$ at each $\gamma(t)$ converge to a limit tangent cone.  This gives us a well defined map $\gamma:[a,b]\rightarrow \mathcal{M}$, where $\mathcal{M}$ is the collection of compact metric spaces, by assigning to each $\gamma(s)$ the closed unit ball $\bar B_{1}(\gamma(s))$ in the tangent cone at $\gamma(s)$.  Theorem \ref{t:tangentholder} implies that when $\mathcal{M}$ is equipped with the Gromov-Hausdorff metric that this is a $C^{\alpha(n)}$ H\"{o}lder continuous map, and that for sets of large measure in each tangent cone there are in fact $C^{\frac{1}{2}}$ H\"older maps.  For each $\delta>0$ we construct in Example \ref{ss:Example5} a non-collapsed limit space $X_{\delta}$ with a limit\index{\footnote{}} minimizing geodesic $\gamma\subseteq X_{\delta}$ so that this induced map is not $C^{1/2+\delta}$.  Thus we will see that Theorem \ref{t:tangentholder} is sharp.

Topologically our examples of limit spaces are of the form of $C(S(M))$, that is the cone over the suspension of a smooth compact manifold $M$.  Generally speaking this will give rise to two singular rays, the cone rays through the suspension points of $S(M)$.  It is on these geodesic rays where we will construct limits with bad geodesic behavior.

\subsection{Example - non-constant tangent cones}\label{ss:Example4}
The purpose of this section is to construct a limit space $X$ with a limit minimizing geodesic $\gamma$ such that tangent cones coming from the same sequence of rescalings along $\gamma$ are not constant.  We begin by letting $M=\Sn^3$ be the three sphere, $g_0$ the round metric of constant curvature $1$ and $V_1,V_2,V_3$ a right invariant orthonormal basis.  For any numbers $\{m_1,m_2,m_3\}\in\RR$ we can consider the right invariant metric $g^{\Sn^3}$ on $\Sn^3$ defined by $\langle V_j,V_k \rangle_{g} = e^{2 m_j}\delta_{jk}$.  If $m_j(r,s)$ are smooth for $r\in (0,\infty)$ and $s\in (0,\pi)$, then we can define a metric on $C(S(\Sn^3))$ by
\begin{align}
g\equiv dr^2+a(r)^2\left(ds^2+b(s)^2g^{\Sn^3}(r,s)\right) \, ,
\end{align}
where $a(r)$ and $b(s)$ are any smooth positive warping factors which will be chosen later.  We will require two constraints on the functions $m_j(r)$.  First we require that
\begin{align}
\sum m_j(r,s) =const\, ,
\end{align}
be independent of $r$ and $s$.  This has the effect of fixing the volume element for each of the right invariant metrics on $\Sn^3$.  We also require that
\begin{align}
 \langle g',\dot g\rangle = 4\sum m'_j(r,s)\, \dot m_j(r,s) = 0\, .
\end{align}
This turns out to be a cross term in the Ricci curvature on $C(S(\Sn^3))$, which we want to vanish to show positivity.  Given these two conditions we have the following computation:

\begin{lemma}\label{l:RicciComputation}
 Let $(C(S(\Sn^3)),g)$ be a metric as above, then at any smooth point of $C(S(\Sn^3))$ we have the following:
\begin{enumerate}
 \item $\Ric_{rr} = -4\frac{a''}{a}-\sum(m'_k)^2$.
 \item $\Ric_{ss} = -3\frac{b^{\cdot\cdot}}{b}-\sum(\dot m_k)^2 + a^2\left[-\frac{a''}{a}-3\left(\frac{a'}{a}\right)^2\right]$.
 \item $\Ric_{jj} = \Ric^{\Sn^3}_{jj}+e^{2m_j}\left[b^2\left(-\frac{b^{\cdot\cdot}}{b}-2\left(\frac{\dot b}{b}\right)^2-3\frac{\dot b}{b}\dot m_j -m^{\cdot\cdot}_j\right)+a^2b^2\left(-\frac{a''}{a}-3\left(\frac{a'}{a}\right)^2-4\frac{a'}{a}m'_j-m''_j\right)\right]$.
 \item $\Ric_{rs}=\Ric_{rj}=\Ric_{sj}=\Ric_{jk}=0$.
\end{enumerate}
Here $\Ric^{\Sn^3}$ is the Ricci curvature on the three sphere with the induced right invariant metric $g^{\Sn^3}(r,s)$.
\end{lemma}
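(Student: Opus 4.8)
The plan is to compute the curvature tensor by the method of moving frames, using an orthonormal coframe adapted to the multiply-warped structure, and then to trace.  Fix a right-invariant coframe $\{\sigma^1,\sigma^2,\sigma^3\}$ on $\Sn^3$ dual to $\{V_1,V_2,V_3\}$, so that $g_0=\sum_j(\sigma^j)^2$ and $d\sigma^i=\lambda\,\sigma^j\wedge\sigma^k$ for $(i,j,k)$ cyclic, where $\lambda$ is the constant forced by $g_0$ having curvature $1$ (its value is immaterial below).  On the smooth locus of $C(S(\Sn^3))$ introduce the orthonormal coframe
\[
e^0=dr,\qquad e^4=a\,ds,\qquad e^j=a\,b\,e^{m_j}\,\sigma^j\quad(j=1,2,3),
\]
with dual frame $X_0=\partial_r$, $X_4=a^{-1}\partial_s$, $X_j=(abe^{m_j})^{-1}V_j$.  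Since $\Ric_{rr}=\Ric(X_0,X_0)$, $\Ric_{ss}=a^2\,\Ric(X_4,X_4)$, $\Ric_{jj}=a^2b^2e^{2m_j}\,\Ric(X_j,X_j)$, and likewise for the mixed entries, it suffices to compute $\Ric$ in this frame.  (Throughout, $'$ and $\dot{}$ denote $\partial_r$ and $\partial_s$.)

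First I would compute the exterior derivatives, which have the form $de^0=0$, $de^4=\tfrac{a'}{a}\,e^0\wedge e^4$, and
\[
de^j = A_j\,e^0\wedge e^j + B_j\,e^4\wedge e^j + c_j\,e^k\wedge e^l\qquad\bigl((j,k,l)\ \text{cyclic}\bigr),
\]
with $A_j=\tfrac{a'}{a}+m_j'$, $B_j=a^{-1}\bigl(\tfrac{\dot b}{b}+\dot m_j\bigr)$, $c_j=\lambda\,e^{2m_j-c}/(ab)$, and $c=\sum_i m_i$; here $c_j$ is the structure constant of the right-invariant metric $g^{\Sn^3}(r,s)$ in the orthonormal frame.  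Solving the first structure equations $de^\alpha=-\omega^\alpha{}_\beta\wedge e^\beta$, $\omega^\alpha{}_\beta=-\omega^\beta{}_\alpha$, yields connection forms in which $\omega^0{}_4,\omega^0{}_j,\omega^4{}_j$ are the expected warped-product expressions (linear in $A_j$, $B_j$, $\tfrac{a'}{a}$), while $\omega^i{}_j$ for $i,j\in\{1,2,3\}$ is the connection of $g^{\Sn^3}$ in the frame $\{X_j\}$ plus the standard correction terms linear in the $c$'s, as for any left-invariant metric on a $3$-dimensional unimodular Lie group.

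Next I would form the curvature $2$-forms $\Omega^\alpha{}_\beta=d\omega^\alpha{}_\beta+\omega^\alpha{}_\gamma\wedge\omega^\gamma{}_\beta$, expand them in the basis $\{e^\mu\wedge e^\nu\}$, and contract to read off $\Ric$.  For the diagonal entries, the hypothesis $\sum_i m_i\equiv c$ (so $\sum_i m_i'=\sum_i m_i''=\sum_i\dot m_i=\sum_i\ddot m_i=0$) collapses the relevant sums and leaves exactly the coefficients in $(1)$--$(3)$; in particular the part of $\Omega^i{}_j$ built from the intrinsic $g^{\Sn^3}$-connection reassembles, by definition and since the Ricci tensor is unchanged when the fibre metric is multiplied by the positive constant $a^2b^2$ over a fixed base point, into the term $\Ric^{\Sn^3}_{jj}$, and everything else is a warping correction.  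For the off-diagonal entries, $\Ric(X_0,X_4)$ collects the term $\sum_i m_i'\dot m_i$, which vanishes by the second hypothesis $\langle g',\dot g\rangle=4\sum_i m_i'\dot m_i=0$, together with first or mixed second derivatives of $\sum_i m_i$, which vanish since $\sum_i m_i\equiv c$; while $\Ric(X_0,X_j)$, $\Ric(X_4,X_j)$ and $\Ric(X_i,X_j)$ for $i\ne j$ vanish because the only $2$-forms that could contribute come multiplied by the $c_k$, and for a diagonal right-invariant metric on $SU(2)$ these organize into a diagonal tensor --- equivalently $\Ric^{\Sn^3}$ is already diagonal in the frame $\{V_j\}$, a classical fact --- the warping functions $a,b,e^{m_j}$ introducing no off-diagonal coupling.

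I expect the main obstacle to be the bookkeeping, made heavier by the fact that this is \emph{not} a metric cone over a fixed base: the fibre metric $g^{\Sn^3}(r,s)$ depends on both $r$ and $s$, so the usual warped-product/cone formulas cannot simply be quoted.  One must propagate the $\partial_r$- and $\partial_s$-derivatives of the $e^{m_j}$ through $\omega^0{}_j$, $\omega^4{}_j$, $\omega^i{}_j$ and hence through the curvature forms --- this is the source of the terms $-4\tfrac{a'}{a}m_j'-m_j''$ and $-3\tfrac{\dot b}{b}\dot m_j-m^{\cdot\cdot}_j$ in $(3)$ --- and the two hypotheses are tailored precisely so that all cross terms die and the diagonal coefficients come out clean (the second hypothesis is exactly what kills the would-be $\Ric_{rs}$ term, which is why $\langle g',\dot g\rangle$ was normalized to $0$).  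A somewhat more economical route is to first compute $\Ric$ for the abstract metric $dr^2+a^2(ds^2+b^2h)$ with $h=h(r,s)$ an arbitrary family of metrics on a fixed $3$-manifold $N$, keeping $\Ric^h$ unexpanded, using the standard second fundamental form and Riccati identities for the level sets of $r$ and of $s$, and only at the end specialize $h$ to the diagonal right-invariant family; I would nonetheless carry out the self-contained moving-frame computation above.
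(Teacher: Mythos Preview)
The paper does not give a proof of this lemma: it is stated as a computational fact (``Given these two conditions we have the following computation:'') and the argument moves on immediately to choosing $a(r)$ and $b(s)$.  So there is nothing to compare against at the level of detail.

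Your moving-frames outline is a correct and standard way to carry out the omitted computation.  The key structural points you identify are right: the metric is a doubly-warped product with fibre metric $g^{\Sn^3}(r,s)$ depending on \emph{both} base variables, so one cannot quote the usual single warped product formulas; the condition $\sum m_j=\text{const}$ kills the traces that would otherwise contaminate the diagonal entries; and the condition $\sum m_j'\dot m_j=0$ is exactly what forces $\Ric_{rs}=0$.  The observation that $\Ric^{\Sn^3}$ is diagonal in the $\{V_j\}$ frame (a standard fact for diagonal left-invariant metrics on $SU(2)$) is what handles the remaining off-diagonal vanishing.  Your alternative suggestion --- computing first for an abstract family $h(r,s)$ using second fundamental forms of the $r$- and $s$-level sets and then specializing --- is also viable and perhaps cleaner, since it separates the warping analysis from the Lie-group algebra.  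Either route is acceptable; the paper evidently regarded the computation as routine enough to omit.
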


Now to make appropriate choices of the functions $a(r),b(s)$ we consider the following:
\begin{align}\label{e:a}
a(r) = \left\{ \begin{array}{rl}
  a_0r &\mbox{ for } r\leq t_0/2\\
  a_0r\left(1-\frac{a_1}{\log(-\log(r_0 r))}\right)&\mbox{ on } r\in [t_0,1]\\
  a_0r/2 &\mbox{ for } r\geq 2\\
  |a'|\leq 2a_0,\, a''<0 &\mbox{ on }r\in [t_0/2,2]\\
       \end{array} \right. \,,
\end{align}
\begin{align}\label{e:b}
b(s) = \left\{ \begin{array}{rl}
  \sin(s)&\mbox{ on } s\not\in [t_0/4,\pi-t_0/4]\\
  \sin(s)\left(1-\frac{b_1}{\log(-\log(s_0 \sin(s)))}\right)+b_0&\mbox{ on } s\in [t_0/2,\pi-t_0/2]\\
  |\dot b|\leq 2 ,\, b^{\cdot\cdot}\leq-b/2 &\mbox{ on }s\in [t_0/4,\pi-t_0/4]\\
  \end{array} \right. ,
\end{align}
where $a_0<1$ and $0<a_1,b_1,r_0,s_0,t_0$ are appropriately small constants that will be fixed and $b_0=o(t_0)$ is chosen below. To see the existence of such functions let us briefly consider $b(s)$, the construction is similar for $a(r)$.  In this case if we let $b_0\equiv \sin(\frac{t_0}{3})\left(\frac{b_1}{\log(-\log(s_0 \sin(\frac{t_0}{3})))}\right)$, then we can define
$$
\bar b(s)\equiv \min\{\sin(s), \sin(s)\left(1-\frac{b_1}{\log(-\log(s_0 \sin(s)))}\right)+b_0\}\, .
$$
We see that $\bar b$ satisfies all the requirements of $b$ away from $\frac{t_0}{3}$ and it satisfies the requirements globally in a distributional sense.  Hence, we can smoothen $\bar b$ near $\frac{t_0}{3}$ to construct the desired function $b$.  From these functions we have the following:

\begin{lemma}\label{l:ExampleConstr1}
There exist constants $0<a_0,a_1,b_1,r_0,s_0, m_0$, and $0<a_2, b_2$ such that for all $t_0$ sufficiently small if the $m_j(r,s)$ additionally satisfy
\begin{enumerate}
 \item $\Ric^{\Sn^3}(r,s)\geq 1$.
 \item $m_j\leq -m_0$.
 \item $|m_j'|\leq \frac{a_2\left(\frac{1}{r}\right)}{\log(-\log r_0 r)(-\log r_0 r)}$, $|m_j''|\leq a_2 r^{-2}$
 \item $|\dot m_j|\leq \frac{b_2\left(\frac{\cos s}{\sin s}\right)}{\log(-\log(s_0 \sin s))(-\log(s_0 \sin s))}$, $|m_j^{\cdot\cdot}|\leq b_2\sin^{-2} s$.
 \item $m_j' \equiv 0$ for $r\not\in [t_0,1]$ and $\dot m_j,m'_j \equiv 0$ for $s\not\in [t_0,\pi-t_0]$.
\end{enumerate}
Then the induced metric space $\left(C(S(\Sn^3)), dr^2+a(r)^2(ds^2+b(s)^2g_{\Sn^3}(r,s))\right)$ has nonnegative Ricci curvature at each smooth point.  Further, for $s\not\in [t_0,\pi-t_0]$ it is isometric to $dr^2+a^2(r)(S(\Sn^3,g_e))$, where $a$ is concave and $S(\Sn^3,g_e)$ represents the suspension over a small ellipse.
\end{lemma}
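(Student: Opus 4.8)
The plan is to read off the Ricci tensor from Lemma~\ref{l:RicciComputation} and check that every entry is nonnegative at smooth points. Because the metric has the stated form and obeys the two standing constraints $\sum m_j = \mathrm{const}$ and $\langle g',\dot g\rangle = 0$, part~(4) of Lemma~\ref{l:RicciComputation} already gives that $\Ric$ is diagonal in the frame $\{\partial_r,\partial_s,V_1,V_2,V_3\}$, so it suffices to show $\Ric_{rr}\ge 0$, $\Ric_{ss}\ge 0$, and $\Ric_{jj}\ge 0$. In each of these the only terms that can be negative are the ones built from derivatives of the $m_j$ --- namely $\sum_k(m_k')^2$ in $\Ric_{rr}$, $\sum_k(\dot m_k)^2$ in $\Ric_{ss}$, and the mixed expressions inside $\Ric_{jj}$ --- and the whole design of the hypotheses (\ref{e:a})--(\ref{e:b}) together with (3)--(4) is that these are dominated by the \emph{positive} curvature coming from the concavity of $a$ (the term $-4a''/a\ge 0$), the concavity of $b$ (the term $-3b^{\cdot\cdot}/b$), and the positivity $\Ric^{\Sn^3}\ge 1$ in hypothesis~(1).

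For $\Ric_{rr} = -4a''/a - \sum_k(m_k')^2$: off the interval $r\in[t_0,1]$ all $m_k'$ vanish by~(5) and $a''\le 0$ by (\ref{e:a}), so $\Ric_{rr}\ge 0$ trivially. On $[t_0,1]$, set $u=-\log(r_0 r)$; differentiating the explicit formula for $a$ twice shows $-a''/a$ is a \emph{positive} quantity whose dominant term is of size $\sim a_1/(r^2 u\log^2 u)$, while~(3) gives $\sum_k(m_k')^2\le 3a_2^2/(r^2 u^2\log^2 u)$. Their ratio is $O(a_2^2/(a_1 u))$ with $u\ge -\log r_0$, hence $<1/2$ provided $a_2$ is small relative to $a_1$ and $r_0$. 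The estimate for $\Ric_{ss} = -3b^{\cdot\cdot}/b - \sum_k(\dot m_k)^2 + a^2(-a''/a - 3(a'/a)^2)$ is of the same kind: $a^2(-a''/a - 3(a'/a)^2)\ge -3(a')^2\ge -12 a_0^2$ by (\ref{e:a}), negligible for $a_0$ small; $-3b^{\cdot\cdot}/b\ge 3/2$ from $b^{\cdot\cdot}\le -b/2$ on $[t_0/4,\pi-t_0/4]$ (and $=3$ outside, where $b=\sin s$); and $\sum_k(\dot m_k)^2$, nonzero only for $s\in[t_0,\pi-t_0]$, is controlled through~(4) exactly as above, now with $v=-\log(s_0\sin s)$ in place of $u$, using that $s_0\sin s\le s_0\sin t_0$ is small once $t_0$ is small, so $v$ is large.

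For $\Ric_{jj} = \Ric^{\Sn^3}_{jj} + e^{2m_j}\big(b^2(\cdots) + a^2 b^2(\cdots)\big)$ the point is that each of the products $bb^{\cdot\cdot}$, $\dot b^2$, $b\dot b\,\dot m_j$, $b^2 m_j^{\cdot\cdot}$, $aa''$, $(a')^2$, $aa'm_j'$, $a^2 m_j''$ is bounded by a constant depending only on the already-chosen $a_0,a_1,a_2,b_1,b_2$: this follows from (2)--(4), the explicit forms (\ref{e:a})--(\ref{e:b}), and the choice $b_0=o(t_0)$, which is exactly what keeps the $b_0/\sin s$ contributions harmless near $s=t_0$. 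Hence the bracket is bounded in absolute value by some $C(a_0,a_1,a_2,b_1,b_2)$, and since $m_j\le -m_0$ gives $e^{2m_j}\le e^{-2m_0}$, we get $\Ric_{jj}\ge \Ric^{\Sn^3}_{jj} - e^{-2m_0}C \ge 1 - e^{-2m_0}C\ge 0$ for $m_0$ large, using~(1). One chooses the constants in the order $a_0,a_1,b_1,r_0,s_0,m_0$ and only then $a_2,b_2$, and only afterwards takes $t_0$ small, so that all the "bounded by a constant" claims are uniform in $t_0$ and the arguments above are consistent.

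Finally, for $s\notin[t_0,\pi-t_0]$ hypothesis~(5) forces all $r$- and $s$-derivatives of the $m_j$ to vanish, so the right-invariant metric $g^{\Sn^3}(r,s)$ is there a fixed metric $g_e$ (with all $m_j\le -m_0$, a small squashed sphere), and the metric reduces to the warped product $dr^2 + a(r)^2\big(ds^2 + b(s)^2 g_e\big) = dr^2 + a(r)^2\,S(\Sn^3,g_e)$; moreover $a$ is concave, since $a''=0$ on the two linear pieces $r\le t_0/2$ and $r\ge 2$, $a''<0$ on $[t_0/2,2]$ by (\ref{e:a}), and $a$ is $C^1$ at the two junctions. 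The main obstacle is the sharp estimate near the singular loci $r=0$ and $s=0,\pi$: one must verify that the nested-logarithm correction in $a$ (resp.\ $b$) produces a curvature gain carrying exactly one more power of $u=-\log(r_0 r)$ (resp.\ $v=-\log(s_0\sin s)$) in the denominator than $\sum_k(m_k')^2$ (resp.\ $\sum_k(\dot m_k)^2$) costs, since that single extra $1/u$ (resp.\ $1/v$) is the entire margin; this requires careful differentiation of the iterated logarithms, careful identification of the dominant terms, and attention to the uniformity in $t_0$ throughout.
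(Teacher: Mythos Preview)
Your approach is essentially the same as the paper's: compute the explicit bounds on $a''/a$ and $b^{\cdot\cdot}/b$ coming from the iterated-logarithm corrections, then check each diagonal Ricci entry against the hypotheses on the $m_j$. The paper's proof is terser but follows the identical outline, including using $|a^2 m_j''|\le a_2$, $|b^2 m_j^{\cdot\cdot}|\le b_2$, $|a'|\le 2a_0$, $|\dot b|\le 2$ together with $\Ric^{\Sn^3}\ge 1$ (and implicitly $e^{2m_j}\le e^{-2m_0}$) for $\Ric_{jj}$.

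One small slip in your $\Ric_{ss}$ paragraph: the inequality ``$s_0\sin s\le s_0\sin t_0$'' is reversed (for $s\in[t_0,\pi/2]$ one has $\sin s\ge\sin t_0$), and it is not $t_0$ that makes $v=-\log(s_0\sin s)$ large but rather $s_0$: one has $v\ge -\log s_0$ uniformly in $t_0$. The correct comparison on $[t_0,\pi-t_0]$ is the full one you allude to with ``exactly as above'': the paper records $-b^{\cdot\cdot}/b\ge \tfrac{1}{2}+(\cos s/\sin s)^2\,b_1/(v\log^2 v)$ on $[t_0/2,\pi-t_0/2]$, and it is this second term (not just the constant $3/2$) that absorbs $\sum_k(\dot m_k)^2\le 3b_2^2(\cos s/\sin s)^2/(v^2\log^2 v)$ once $b_2$ is small relative to $b_1$ and $s_0$. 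With that adjustment your argument is complete and matches the paper.
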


\begin{remark}
The key use of the above is that the conditions on $m_j'$ and $\dot m_j$ are nonintegrable.  This is crucial, in particular, for smoothing out possible limit spaces to actual smooth manifolds.  Because $a(r)$ is concave we can modify the metric $dr^2+a^2(r)(S(\Sn^3,g_e))$ into a smooth metric near the singular lines.
\end{remark}

\begin{proof}
We first observe the following computations for all $b_1,s_0,r_0,t_0$ sufficiently small:
\begin{align}
\frac{a''(r)}{a(r)} \leq \left\{ \begin{array}{rl}
  -\frac{a_1}{r^2\log(-\log(r_o r))^2(-\log(r_0 r))}&\mbox{ on } r\in [t_0,1]\\
  0 &\mbox{ on }r\not\in [t_0,1]\\
       \end{array} \right. ,
\end{align}
\begin{align}
\frac{b^{\cdot\cdot}(s)}{b(s)} \leq \left\{ \begin{array}{rl}
  -1 &\mbox{ on }s\not\in [t_0/4,\pi-t_0/4]\\
  -\frac{1}{2}-\left(\frac{\cos s}{\sin s}\right)^2\frac{b_1}{\log(-\log(s_0 \sin s))^2(-\log(s_0 \sin s))}&\mbox{ on } s\in [t_0/2,\pi-t_0/2]\\
  -\frac{1}{2} &\mbox{ on }s\in [t_0/4,\pi-t_0/4]\\
       \end{array} \right. .
\end{align}

The positivity of $\Ric_{rr}$ and $\Ric_{ss}$ is thus easy to check from Lemma \ref{l:RicciComputation}, the equations above and the conditions on $m_j$ with $a_2$ and $b_2$ sufficiently small relative to $a_1$ and $b_1$, respectively.  To check positivity of the $\Ric_{jj}$ term note the inequalities $|a^2 m_j''|\leq a_2$ and $|b^2 m_j^{\cdot\cdot}|\leq b_2$ as well as $|a'|\leq 2a_0$ and $|b^\cdot|\leq 2$.  Combining these with the first condition gives positivity for the Ricci curvature in the $\Sn^3$ directions.
\end{proof}

As an immediate consequence of the above we want to construct a non-collapsed limit space $(X,d)$ with a minimizing geodesic whose tangent cones coming from the same sequence of rescalings are not constant.  We pick our metric functions by the formula
$$
m_j(r,s)\equiv \psi_{t_0}(s)m_j(r) -\bar m_j\, ,$$
where $\psi_{t_0}(s)$ is a cutoff function which is $1$ in $[\frac{t_0}{2},\pi-\frac{t_0}{2}]$ with support in $[t_0,\pi-t_0]$, $m_j(r)$ are smooth functions of $r$ with support in $[\frac{1}{2},1]$ and the $\bar m_j$'s are constants.  Now recall the conditions
\begin{align}
\sum m_j(r,s)= \text{const}\, , \,  \sum m_j'(r,s)\dot m_j(r,s)=0\, ,
\end{align}
must be satisfied in order to apply Lemma \ref{l:RicciComputation}.  The first condition is equivalent to
\begin{align}
 \sum m_j(r)=0\, ,
\end{align}
for each $r$ and the second is equivalent to
\begin{align}
\sum m^2_j(r)\equiv const\, ,
\end{align}
being independent of $r$.  Hence, we apriori have that the $m_j(r)$'s may take values in a circle of possible values.

Now a quick computation tells us that $|m_j'(r,s)|\leq \sum |m'_j(r)|$ and $|\dot m_j(r,s)| \leq c|\dot\psi|$.  Let $m_j(r)$ be fixed and non-constant with support in $[\frac{1}{2},1]$ and let $\bar m_j\equiv 2m_0$ fixed with $m_0$ sufficiently large as in Lemma \ref{l:ExampleConstr1} $2)$, and $m_j(r)$ satisfying the estimates of Lemma \ref{l:ExampleConstr1} $3)$.  We can thus pick $a(r)$ and $b(s)$ as in Lemma \ref{l:ExampleConstr1} such that for all $t_0$ sufficiently small there is a cutoff function $\psi_{t_0}(s)$ so that the conditions of Lemma \ref{l:ExampleConstr1} are satisfied for the defined $m_j(r,s)\equiv \psi_{t_0}(s)m_j(r) - 2m_0$.  Note that the existence of such a $\psi_{t_0}$ follows because the condition on $|\dot m_j|$ is nonintegrable.  Thus for each $t_0>0$ sufficiently small we have a metric space
$$(C(S(\Sn^3)),dr^2+a_{t_0}(r)^2(ds^2+ b_{t_0}^2(s)\,g^{\Sn^3}(r,s)))\, ,$$
which has nonnegative Ricci curvature at each smooth point.  Note that $a_{t_0}(r)$ and $b_{t_0}(s)$ here actually depend on $t_0$, though only in a small neighborhood of the singular rays and in the term $b_0$, which decays faster than linearly in $t_0$.  Near the singular rays the metric space has a standard structure from Lemma \ref{l:ExampleConstr1} and because $a(r)$ is concave the metric can be smoothed out to even have positive sectional curvature near the singular ray.  In particular, we get a smooth Riemannian manifold with nonnegative Ricci curvature which is homeomorphic to $\RR^5$.

For each $i$ sufficiently large take $t_0\leq i^{-1}$ to produce a smooth space $(\RR^5,g_i)$ which is isometric to $(C(S(\Sn^3)),dr^2+a_i(r)^2(ds^2+ b_i^2(s)\,g^{\Sn^3}(r)))$ outside increasingly small neighborhoods of the singular rays, where $g(r)$ is the family of metrics on $\Sn^3$ defined by the metric functions $m_j(r,s)\equiv m_j(r)-2m_0$ and $t_0\equiv 0$.  That is, $g(r)$ represents the induced metric on $\Sn^3$ when $\psi(s)\equiv 1$ is taken to be identically one and $t_0$ is taken to be zero in equations (\ref{e:a}) and (\ref{e:b}) .  As $i\to\infty$ and hence $t_0\rightarrow 0$ we get that
$$(\RR^5,g_i)\stackrel{GH}{\rightarrow}(C(S(\Sn^3)),dr^2+a(r)^2(ds^2+ b^2(s)\,g^{\Sn^3}(r)))\, .$$
If $\gamma(r)$ is thus one of the singular rays in $C(S(\Sn^3))$, then for each $r$ we see that the tangent cone of the limiting metric space at $\gamma(r)$ is
$$(\RR\times C(\Sn^3),dt^2+ds^2+s^2\,g(r))\, .$$
In particular, we see that tangent cones from the same sequence of rescalings are changing along the geodesic, as claimed.

\subsection{Example - H\"older $\frac{1}{2}$ is sharp}\label{ss:Example5}

The purpose of the next example is to refine the previous construction so that the metrics $g^{\Sn^3}(r,s)$ are sufficiently irregular as to show that the H\"{o}lder continuity of Theorem \ref{t:tangentholder} is sharp.  In fact, for each $\delta>0$ we will construct a limit space such that along the interior of a minimizing geodesic there are tangent cones from the same sequence of rescalings which change at a $C^{\frac{1}{2}}$ H\"{o}lder rate, but not at a $C^{\frac{1}{2}+\delta}$ H\"{o}lder rate.  Another consequence of this example is that the hessian estimates from Theorem \ref{t:mainregthm1} are sharp.  More precisely the estimate $\int_{\gamma}\fint_{B_\epsilon(\gamma(r))}|\Hess_h|^2\leq C$ from Theorem \ref{t:mainregthm1} cannot be replaced with $\fint_{B_\epsilon(\gamma(r))}|\Hess_h|^2\leq C$ for each point $\gamma(r)$.

In the example we are interested only in the rate of change of the tangent cones along the interior of a limit geodesic.  Hence, we will only worry about constructing $g(r)$ in a neighborhood of $r=1$.  The rest of the space is much better behaved and it is not difficult to see how to smoothen out the construction on the rest of the space as in the previous example by cutting up the $a(r)$ function.  Now as in the previous subsection the example will be homeomorphic to $C(S(\Sn^3))$ equipped with a metric of the form
$$g\equiv dr^2+a(r)^2\left(ds^2+b(s)^2g^{\Sn^3}(r,s)\right)\, ,$$
where $g^{\Sn^3}(r,s)$ is a smooth two parameter family of metrics on $\Sn^3$ all defined by the relations $\langle V_j,V_k \rangle_{g} = e^{2 m_j}\delta_{jk}$ for a fixed right invariant basis $\{V_j\}$ which is orthonormal with respect to the standard metric.  The metric functions $m_j(r,s)$ are again assumed to satisfy the conditions that
\begin{align}
\sum m_j(r,s)= \text{const}\, , \,  \sum m_j'(r,s)\dot m_j(r,s)=0\, ,
\end{align}
so that Lemma \ref{l:RicciComputation} still holds.  As in the last example we define the function $b(s)$ by equation (\ref{e:b}), however we will define the function $a(r)$ by $a(r)\equiv 2-|r-1|^{1+\delta}$, at least in the neighborhood $[1-\delta,1+\delta]$.  Note that the simple estimates
\begin{align}\label{e:a2}
a<2,\,\, \frac{|a'|}{a}\leq |r-1|^{\delta},\,\, \frac{a''}{a}<-\frac{\delta}{|r-1|^{1-\delta}}
\end{align}
hold in $r\in (1-\delta,1)\cup (1,1+\delta)$ for all small $\delta>0$.  Now $a(r)$ is not a smooth function, which initially prohibits us from using it in the warped product construction, but the following observations take care of that.  We see that the estimate for $\frac{a'}{a}$ holds on all $(1-\delta,1+\delta)$ because $a$ is $C^1$, and the estimate on $\frac{a''}{a}$ holds distributionally on all of $(1-\delta,1+\delta)$.  Hence, we can smoothen $a(r)$ slightly to smooth functions which satisfy estimates (\ref{e:a2}) to as close of a degree as we like.  Using these functions in place of $a(r)$ in the below construction we can limit these smoothings and simply assume $a(r) = 2-|r-1|^{1+\delta}$.  Now by using Lemma \ref{l:RicciComputation} we have the following version of Lemma \ref{l:ExampleConstr1}:

\begin{lemma}\label{l:ExampleConstr2}
There exist constants $0<b_1,s_0, m_0$ and $0<a_2(\delta),b_2(\delta)$ such that for all $t_0$ and $\delta$ sufficiently small if the $m_j(r,s)$ additionally satisfy
\begin{enumerate}
 \item $m_j\leq -m_0$.
 \item $|m_j'|\leq a_2|r-1|^{-\frac{1-\delta}{2}}$, $|m_j''|\leq a_2 \sin^{-2}s$.
 \item $|\dot m_j|\leq \frac{b_2\left(\frac{\cos s}{\sin s}\right)}{\log(-\log(s_0 \sin s))(-\log(s_0 \sin s))}$, $|m_j^{\cdot\cdot}|\leq b_2\sin^{-2}s$.
 \item $\dot m_j, m'_j \equiv 0$ for $s\not\in [t_0,\pi-t_0]$.
\end{enumerate}
Then the induced metric space $(C(S(\Sn^3)), dr^2+a(r)^2\left(ds^2+b(s)^2g_{\Sn^3}(r,s)\right))$ has nonnegative Ricci curvature at each smooth point with $r\in[1-\delta,1+\delta]$.  Further for $s\not\in [t_0,\pi-t_0]$ it is isometric to $dr^2+a^2(r)(S(\Sn^3,g_e))$, where $a$ is concave and $S(\Sn^3,g_e)$ represents the suspension over a small ellipse.
\end{lemma}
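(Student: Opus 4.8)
The plan is to run the proof of Lemma~\ref{l:ExampleConstr1} essentially verbatim, the only structural change being that the radial warping factor is now $a(r)=2-|r-1|^{1+\delta}$ in place of the concave function of (\ref{e:a}). Because the two algebraic constraints $\sum_j m_j(r,s)=\text{const}$ and $\sum_j m_j'\,\dot m_j=0$ are still imposed, Lemma~\ref{l:RicciComputation} applies at every smooth point, and in particular all off-diagonal curvatures vanish automatically; so the whole content is to verify $\Ric_{rr},\Ric_{ss},\Ric_{jj}\ge 0$ on the slab $r\in[1-\delta,1+\delta]$ by substituting the estimates (\ref{e:a2}) for $a$, the estimates for $b$ built into (\ref{e:b}), and hypotheses (1)--(4) on the $m_j$, with $a_2(\delta),b_2(\delta)$ small and $m_0$ large.

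First I would dispose of $\Ric_{rr}=-4\tfrac{a''}{a}-\sum_k(m_k')^2$: by (\ref{e:a2}) one has $-\tfrac{a''}{a}\gtrsim\delta\,|r-1|^{\delta-1}$, while hypothesis (2) gives $\sum_k(m_k')^2\le 3a_2^2\,|r-1|^{\delta-1}$ --- the \emph{same} non-integrable power of $|r-1|$ --- so positivity holds as soon as $3a_2^2$ is a fixed small fraction of $\delta$, which is exactly what forces $a_2=a_2(\delta)$. Next, for $\Ric_{ss}=-3\tfrac{b^{\cdot\cdot}}{b}-\sum_k(\dot m_k)^2+a^2\big[-\tfrac{a''}{a}-3(\tfrac{a'}{a})^2\big]$, I would use that $b^{\cdot\cdot}\le -b/2$ forces $-3\tfrac{b^{\cdot\cdot}}{b}\ge\tfrac32$, that $\sum_k(\dot m_k)^2$ is negligible once $b_2$ is small, and that the bracket equals $-a a''-3(a')^2$, which is monotone decreasing in $|r-1|$, blows up to $+\infty$ as $r\to1$, and at the endpoint $|r-1|=\delta$ stays bounded below with value tending to $2-3=-1$ as $\delta\to0$; hence for $\delta$ small the whole $a$-contribution exceeds $-\tfrac32$ and $\Ric_{ss}>0$. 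Finally, for $\Ric_{jj}$ I would argue as in Lemma~\ref{l:ExampleConstr1}: hypotheses (2)--(4) pin every $m_j(r,s)$ to an arbitrarily small neighborhood of a constant $\le -m_0$ on the relevant region (e.g.\ $|m_j(r,s)-m_j(1,s)|\lesssim a_2|r-1|^{(1+\delta)/2}$), so the intrinsic Ricci $\Ric^{\Sn^3}$ of the right-invariant metric is bounded below by a large positive constant comparable to $e^{2m_0}$, whereas the two brackets of Lemma~\ref{l:RicciComputation}(3), multiplied by $e^{2m_j}\le e^{-2m_0}$, $a^2$ and $b^2$, are bounded by a constant independent of $m_0$ --- here the conditions $|m_j''|\le a_2\sin^{-2}s$ and $|m_j^{\cdot\cdot}|\le b_2\sin^{-2}s$ are calibrated precisely so that $a^2b^2m_j''$ and $b^2m_j^{\cdot\cdot}$ are bounded (since $b\sim\sin s$), and the singularity of $-a''/a$ at $r=1$ dominates $4|\tfrac{a'}{a}m_j'|$ there; taking $m_0$ large then closes the estimate.

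For the last assertion, hypothesis (4) gives $\dot m_j\equiv m_j'\equiv 0$ for $s\notin[t_0,\pi-t_0]$, so there $g^{\Sn^3}(r,s)$ is a fixed right-invariant metric $g_e$ --- a small ellipsoid since $m_j\le-m_0$ --- and the metric becomes $dr^2+a(r)^2\big(ds^2+b(s)^2g_e\big)$, i.e.\ a warped product with warping $a$ over the suspension $S(\Sn^3,g_e)$, with $a$ concave by $a''<0$. I expect the main obstacle to be purely bookkeeping: arranging the hierarchy of constants so that $a_2(\delta),b_2(\delta)$ are small enough relative to $\delta$ and to the fixed $b_1,s_0$, and $m_0$ large enough, all simultaneously --- in particular keeping the degeneration of $-3a^2(a'/a)^2$ toward $-1$ as $\delta\to0$ strictly beaten by the $\tfrac32$ coming from $b$, and confirming that the non-integrable singularity of $a''/a$ at $r=1$ \emph{matches}, rather than merely dominates up to a constant, the worst allowed blow-up of $\sum_k(m_k')^2$, which is exactly what determines the $\delta$-dependence of $a_2$.
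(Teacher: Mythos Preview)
Your proposal is correct and follows essentially the same approach as the paper: invoke Lemma~\ref{l:RicciComputation} and verify $\Ric_{rr},\Ric_{ss},\Ric_{jj}\ge0$ term by term using (\ref{e:a2}) and the hypotheses on $m_j$. The paper's proof is terser, packaging the $\Ric_{ss}$ check and most of the $\Ric_{jj}$ check into the single inequality $-a''/a-3(a'/a)^2-4|a'/a||m_j'|>0$ and then noting that the residual $m_j''$-term in $\Ric_{jj}$ is exactly what the hypothesis $|m_j''|\le a_2\sin^{-2}s$ is calibrated to absorb (so that $a^2b^2|m_j''|$ stays bounded); your route via $-3b^{\cdot\cdot}/b\ge\tfrac32$ for $\Ric_{ss}$ and large $\Ric^{\Sn^3}$ for $\Ric_{jj}$ is an equally valid arrangement of the same ingredients, and in fact more careful near the endpoints $|r-1|=\delta$ where the $a$-bracket alone can be slightly negative.

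One small caveat: deducing $\Ric^{\Sn^3}\gtrsim e^{2m_0}$ from $m_j\le -m_0$ alone is not quite right, since right-invariant metrics on $\Sn^3$ with very unequal $m_j$ can have small or negative Ricci regardless of scale. What makes this work in the application is the additional constraint $\sum_j m_j^2=c$ with $c$ small, which forces the $m_j$ close to one another; Lemma~\ref{l:ExampleConstr1} carried this as the explicit hypothesis $\Ric^{\Sn^3}\ge1$, and its omission from the present statement appears to be a minor oversight that is harmless in context.
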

\begin{remark}
 It is important in condition $2)$ that the bound on $m''$ be in terms of $s$ since the $a''$ term is not positive enough to control a $|r-1|^{-2}$ term, which is unlike the previous example and causes some complications in the construction.  Again we have that because $a(r)$ is concave we can modify the metric $dr^2+a^2(r)(S(\Sn^3,g_e))$ into a smooth metric near the singular lines.
\end{remark}

\begin{proof}
The proof is much the same as the computations for Lemma \ref{l:ExampleConstr1}, we point out the main observations required in the computation.  Note that using Lemma \ref{l:RicciComputation} we see that the requirement on $m_j'$ is precisely what is needed to guarentee that $\Ric_{rr}$ is positive.  We also see by using equation (\ref{e:a2}) that
\begin{align}
-\frac{a''}{a}-3\left(\frac{a'}{a}\right)^2-4|\frac{a'}{a}||m_j'|>0 \, ,
\end{align}
for $\delta$ small, so that $\Ric_{ss}$ is positive and most of the terms in $\Ric_{jj}$ are controlled.  The remaining obstacle is the $m_j''$ term from Lemma \ref{l:RicciComputation}, and this is precisely controlled by the assumption $|m_j''|\leq a_2 \sin^{-2}s$.  Hence the Ricci curvature is positive.
\end{proof}

To define the metric functions $m_j(r,s)$ let us begin by fixing $m_j(r):[1-\delta,1+\delta]\rightarrow \RR^3$ such that
\begin{align}
 \sum_j m_j(r)=0\, ,\, \sum_j m^2_j(r) = c\, ,
\end{align}
for some fixed constant $c$ and such that the map $m_j(r)$ is $C^{\frac{1+\delta}{2}}$ H\"{o}lder with
$$|m'_j(r)|\leq a_2|r-1|^{-\frac{1-\delta}{2}}\, ,$$
as in Lemma \ref{l:ExampleConstr2}.  The construction of the example will be slightly more complicated than Example \ref{ss:Example4}.  There we used one cutoff function in the definition of the metric functions $m_j(r,s)$, and its primary purpose was to make the metric one that we could be sure could be smoothed off.  However because $|m''_j(r)|\approx |r-1|^{-\frac{3}{2}}$ there is no hope to force positivity of the Ricci tensor if we continued in the manner of the last example.  To this end we define for each $i\in\NN$ the function $m_{ij}(r):[1-\delta:1+\delta]\rightarrow\RR^3$, which is a smooth approximation of $m_{j}(r)$.  We can easily construct such smoothings so that $m_{ij}(r)=m_j(r)$ outside smaller and smaller neighborhoods of $r=1$ with $\sum_j m_{ij}(r)=0$, $\sum_j m^2_{ij}(r)=c$ and such that $|m'_j(r)|\leq a_2|r-1|^{-\frac{1-\delta}{2}}$ with $|m_{ij}''(r)|\leq a_2 i^2$.  With that in place let $t_i\rightarrow 0$ be a decreasing sequence of positive numbers such that for each $i$ we can define the cutoff functions
$$
\psi_i(s) \equiv \left\{ \begin{array}{rl}
  1&\mbox{ for } s\in [(3t_i+t_{i+1})/4,(t_{i-1}+3t_i)/4] t_i\\
  0 &\mbox{ for } s\not\in [(t_i+t_{i+1})/2,(t_{i-1}+t_i)/2]\\
       \end{array} \right. ,
$$
and with the properties that
\begin{align}
|\dot \psi_i|\leq \frac{b_2\left(\frac{\cos s}{\sin s}\right)}{\log(-\log(s_0 \sin s))(-\log(s_0 \sin s))} \, ,
\end{align}
and
\begin{align}
|\psi_i^{\cdot\cdot}|\leq b_2\sin^{-2}s \, ,
\end{align}
as in Lemma \ref{l:ExampleConstr2}.  Note that such a condition is possible because the right hand sides of each inequality are nonintegrable, though this forces the ratios $\frac{t_{i+1}}{t_i}$ to be tending to zero.  Now let us consider the following metric functions:
\begin{align}
m_{Nj}(r,s)\equiv \sum^{N}_{i=1}\psi_i(s)m_{ij}(r)-\bar m\, ,
\end{align}
where $\bar m$ is a constant.  Note that the $m_{Nj}$ do satisfy the requirements of a metric function and that by construction we have for $c$ sufficiently small and $\bar m$ sufficiently large that the conditions of Lemma \ref{l:ExampleConstr2} are satisfied for each $N$.  Hence for each $N$ we now have a metric space $(C(S(\Sn^3)), g_N)$ which has positive Ricci curvature at each smooth point and can be smoothed near the singular lines to obtain a sequence of smooth manifolds $(\RR^5_N,\tilde g_N)$ with the property that
$$(\RR^5,\tilde g_N)\rightarrow (C(S(\Sn^3)), g_\infty)\, .$$
If $\gamma(r)$ is a singular ray in this limit space, then at the point $\gamma(r)$ if the sequence $t_i\rightarrow 0$, then the resulting tangent cone is isometric to
$$(\RR\times C(\Sn^3),dt^2+ds^2+s^2g(r)))\, .$$
Here $g(r)$ is the metric on $\Sn^3$ induced by the metric functions $m_j(r)$.  Hence in a neighborhood of $r=1$ we have the metric cones are changing at a $C^{\frac{1+\delta}{2}}$ H\"{o}lder rate and not at a $C^{\frac{1}{2}+\delta}$ rate.  This constructs the desired limit space.

\appendix

\section{Extending geodesics}\label{s:extendgeodesic}

This section is dedicated to proving a technical lemma.  Recall that on a smooth Riemannian manifold $(M,g,p)$ that $a.e.$ pair of points $(x,y)\in M\times M$ lie in the \textit{interior} of a minimizing geodesic.  We wish to show that on a Ricci limit space $M_\infty$ that similarly $\nu\times\nu$ a.e. pair $(x,y)\in M_\infty$ lie in the interior of a limit minimizing geodesic $\gamma$.  We will in fact prove a more effective version of this.  This result can be thought of as a higher degree of freedom analogue that the cut locus set $\Cl_x$ of a point $x\in M_\infty$ has zero measure, a result proven in \cite{H}.  The key point for both results is to identify this critical point set in terms of excess functions, which are themselves much easier to estimate and control than geodesics when passing to limits.

We begin with a few definitions.  Recall for $x$, $y\in M_\infty$ we define the excess function
\begin{equation}
e_{x,y}(z) \equiv d(x,z)+d(z,y)-d(x,y)\, .
\end{equation}
Thus the excess is how much the triangle inequality fails being an equality.  Note that $e_{x,y}(z)=0$ iff $z$ lies on the interior of a minimizing geodesic connecting $x$ and $y$.  Similarly, for $(x,y)\in M_\infty\times M_\infty$ we define the following diagonal excess function by
\begin{equation}
e_{(z,w)}(x,y)\equiv \frac{1}{\sqrt{2}}d_{M_\infty}(x,y) + d_{M\times M}((x,y),(z,w)) - \frac{1}{\sqrt{2}}d_{M_\infty}(z,w)\, .
\end{equation}
Note that $\frac{1}{\sqrt{2}}d_{M_\infty}(x,y)$ is the distance of the point $(x,y)$ from the diagonal $M_\infty\subseteq M_\infty\times M_\infty$, and hence the reference to this as a diagonal excess function.  We similarly have that $e_{(z,w)}(x,y)=0$ iff $(x,y)$ lies on the interior of a minimizing geodesic from $(z,w)$ to the diagonal $M_\infty$.  We define the following cutlocus and effective cutlocus sets:

\begin{align}
 &\Cl(M_\infty) \equiv \{(x,y)\in M_\infty\times M_\infty:\, e_{(z,w)}(x,y)>0\, \forall\, (z,w)\neq (x,y)\}.\\
 &\Cl(M_\infty,r) \equiv \{(x,y)\in M_\infty\times M_\infty:\, e_{(z,w)}(x,y)>0\, \forall\, (z,w)\not\in B^{M\times M}_{\sqrt{2}r}(x,y)\}.\\
 &\Cl(M_\infty,r,\epsilon) \equiv \{(x,y)\in M_\infty\times M_\infty:\, e_{(z,w)}(x,y)
 \geq \epsilon^2\, \forall\, (z,w)\not\in B^{M\times M}_{\sqrt{2}r}(x,y)\}\, .
\end{align}

We see that a point $(x,y)$ is not in the set $\Cl(M_\infty)$ iff there is a point $(z,w)$ and a minimizing geodesic from $D$ to $(z,w)$ which contains $(x,y)$ as an interior point.  We will see from the below lemma that this implies that there is a geodesic in $M_\infty$ which contains both the points $x$ and $y$ as interior points.  On the other hand $(x,y)$ is not in $\Cl(M_\infty,r)$ iff there is a geodesic in $M_\infty$ containing $x$ and $y$ such that these points are at least distance $r$ from the boundary of the geodesic, a point which also follows from the below lemma.  Finally, the point $(x,y)$ is not in $\Cl(M_\infty,r,\epsilon)$ iff there is an $\epsilon$-geodesic with likewise properties.  We will see by the end of this section that geodesic can be replaced by limit geodesic for each of these statements.  Important for us is that the sets $\Cl(M_\infty,r,\epsilon)$ are compact.

\begin{lemma}\label{l:diaggeo}
Let $(x,y)\in M_\infty\times M_\infty$ and $(z,z)$ be a point of the diagonal closest to $(x,y)$.  Then if $(\gamma_1(t),\gamma_2(t))$ is a minimizing geodesic in $M_\infty\times M_\infty$ connecting $(x,y)$ to $(z,z)$ then the join curve $\gamma\equiv \gamma_1\cup\gamma_2$ in $M_\infty$ is a minimizing geodesic connecting $x$ to $y$.  Further we have that $z$ is the midpoint of this geodesic.
\end{lemma}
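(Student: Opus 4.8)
The plan is to reduce the statement to two elementary facts about the $\ell^2$-product metric on $M_\infty\times M_\infty$ and the length functional, and then to conclude by a squeeze argument. First I would record the description of the distance to the diagonal $D=\{(z,z)\}$: for any $(a,b)$ and any $z$,
\[
d_{M\times M}\bigl((a,b),(z,z)\bigr)=\sqrt{d(a,z)^2+d(b,z)^2}\ \ge\ \tfrac{1}{\sqrt2}\bigl(d(a,z)+d(b,z)\bigr)\ \ge\ \tfrac{1}{\sqrt2}\,d(a,b),
\]
where the first inequality is equality iff $d(a,z)=d(b,z)$ and the second is equality iff $z$ lies on a minimizing geodesic from $a$ to $b$ (using that $M_\infty$ is a length space in which minimizing geodesics between points exist). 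Consequently $\operatorname{dist}\bigl((a,b),D\bigr)=\tfrac{1}{\sqrt2}d(a,b)$, and if $(z,z)$ is a closest point of $D$ to $(x,y)$ then $z$ lies on a minimizing geodesic from $x$ to $y$ with $d(x,z)=d(y,z)=\tfrac12 d(x,y)$ and $d_{M\times M}\bigl((x,y),(z,z)\bigr)=\tfrac{1}{\sqrt2}d(x,y)$.

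Second I would prove the length comparison: for any curve $\sigma=(\gamma_1,\gamma_2)$ in $M_\infty\times M_\infty$,
\[
\operatorname{length}(\sigma)\ \ge\ \tfrac{1}{\sqrt2}\bigl(\operatorname{length}(\gamma_1)+\operatorname{length}(\gamma_2)\bigr).
\]
For any partition $\{t_i\}$ of the parameter interval, $\sum_i d_{M\times M}(\sigma(t_{i-1}),\sigma(t_i))=\sum_i\sqrt{d(\gamma_1(t_{i-1}),\gamma_1(t_i))^2+d(\gamma_2(t_{i-1}),\gamma_2(t_i))^2}\ge \tfrac{1}{\sqrt2}\sum_i\bigl(d(\gamma_1(t_{i-1}),\gamma_1(t_i))+d(\gamma_2(t_{i-1}),\gamma_2(t_i))\bigr)$; taking the supremum over partitions, and using that a common refinement makes the two sums on the right simultaneously close to $\operatorname{length}(\gamma_1)$ and $\operatorname{length}(\gamma_2)$, gives the claim. (All lengths here are finite since $\sigma$, being a minimizing geodesic, is rectifiable and $\operatorname{length}(\gamma_i)\le\operatorname{length}(\sigma)$.)

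With these in hand the Lemma is immediate. Let $\sigma=(\gamma_1,\gamma_2)$ be the given minimizing geodesic from $(x,y)$ to $(z,z)$. Since $\sigma$ is minimizing and $(z,z)$ is a closest diagonal point, $\operatorname{length}(\sigma)=d_{M\times M}\bigl((x,y),(z,z)\bigr)=\tfrac{1}{\sqrt2}d(x,y)$, whence
\[
\tfrac{1}{\sqrt2}d(x,y)=\operatorname{length}(\sigma)\ \ge\ \tfrac{1}{\sqrt2}\bigl(\operatorname{length}(\gamma_1)+\operatorname{length}(\gamma_2)\bigr)\ \ge\ \tfrac{1}{\sqrt2}\bigl(d(x,z)+d(z,y)\bigr)\ \ge\ \tfrac{1}{\sqrt2}d(x,y),
\]
so equality holds everywhere. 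Hence $\operatorname{length}(\gamma_1)=d(x,z)$ and $\operatorname{length}(\gamma_2)=d(z,y)$, so each $\gamma_i$ is a minimizing path between its endpoints, and $\operatorname{length}(\gamma_1\cup\gamma_2)=d(x,z)+d(z,y)=d(x,y)$, so the join $\gamma=\gamma_1\cup\gamma_2$, parametrized by arclength, is a minimizing geodesic from $x$ to $y$; and by the first fact $z=\gamma\bigl(\tfrac12 d(x,y)\bigr)$ is its midpoint.

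The only genuinely delicate point is the supremum/refinement step establishing $\operatorname{length}(\sigma)\ge\tfrac{1}{\sqrt2}(\operatorname{length}(\gamma_1)+\operatorname{length}(\gamma_2))$, where one must take a single sequence of refining partitions realizing both factor lengths in the limit; everything else is the pointwise inequality $\sqrt{a^2+b^2}\ge\tfrac{1}{\sqrt2}(a+b)$ together with the triangle inequality, and no structure theory of geodesics in product spaces is needed beyond the fact that a curve in $M_\infty\times M_\infty$ is, by definition, a pair of curves in $M_\infty$. I would also make explicit at the outset that "minimizing geodesic" means an arclength-parametrized, length-realizing curve, so that the terms "join curve" and "midpoint" are unambiguous.
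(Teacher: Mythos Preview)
Your proof is correct and in fact cleaner than the paper's. The paper argues in two separate steps: a contradiction for the first claim (if the join were not minimizing, feed a shorter $x$--$y$ curve $\sigma$ diagonally back into the product as $(\sigma(t),\sigma(\overline{x,y}-t))$ to beat the distance to $D$), and then, for the midpoint claim, uses that a product minimizing geodesic projects to minimizing geodesics in each factor and minimizes the explicit expression $l(s)^2=(s^2+(1-s)^2)\overline{x,y}^2$ over the position $s$ of $z$ along $\gamma$.

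Your route is different: you first compute $\operatorname{dist}((x,y),D)=\tfrac{1}{\sqrt2}d(x,y)$ via the pointwise inequality $\sqrt{a^2+b^2}\ge\tfrac{1}{\sqrt2}(a+b)$ and the triangle inequality, then prove the length comparison $\operatorname{length}(\gamma_1,\gamma_2)\ge\tfrac{1}{\sqrt2}(\operatorname{length}(\gamma_1)+\operatorname{length}(\gamma_2))$, and finally chain everything into a squeeze so that both conclusions (the join is minimizing, and $z$ is the midpoint) drop out simultaneously from the equality cases. This avoids invoking the structure theory of geodesics in products altogether; the only nontrivial ingredient is the common--refinement step in the length comparison, which you correctly identify and handle. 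The paper's argument is a bit more geometric, but yours is more self--contained and gives the midpoint for free rather than via a separate computation.
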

\begin{proof}
Assume this is not the case, then there is a curve $\sigma:[0,\overline{x,y}]\rightarrow M_\infty$ connecting $x$ and $y$ satisfying $|\sigma|<|\gamma|$.  Then if we consider the curve $(\sigma(t),\sigma(\overline{x,y}-t)):[0,\frac{1}{2}\overline{x,y}]\rightarrow M_\infty\times M_\infty$ then this curve connects $(x,y)$ to the diagonal and has length strictly less than that of $(\gamma_1,\gamma_2)$, which is a contradiction.

To see that $z$ is the midpoint we can just check the possibilities.  So let $(x,y)\in M_\infty\times M_\infty$ and let $\gamma:[0,\overline{x,y}]\rightarrow M_\infty$ be any minimizing geodesic between $x$ and $y$ with $z\in\gamma$ such that the point $(z,z)$ is a point on the diagonal closest to $(x,y)$.  Hence for some $s\in [0,1]$ we have that $z=\gamma(s\,\overline{x,y})$.  Since a minimizing geodesic in $M_\infty\times M_\infty$ projects to minimizing geodesics in each factor we must have that the minimizing geodesic from $(x,y)$ to $(z,z)$ is of the form $(\gamma(st),\gamma(\overline{x,y}-t(1-s)))$.  Hence if we compute the length as a function of $s$ we get $l(s)^2=(s^2+(1-s)^2)\overline{x,y}^2$.  It is easy to check this is minimized only for $s=\frac{1}{2}$.
\end{proof}

Now we begin with the following estimate, which should be seen as a generalization of certain estimates on exponential maps obtained in \cite{ChC2}.  We begin by proving the estimate on smooth manifolds $(M^n,g,p)$ with $\Ric\geq -(n-1)$, we will then subsequently see that the estimates hold on limit spaces.  In the below lemma we are using for a subset $S\subseteq M$ the partial annulus
$$A_{\delta,\delta^{-1}}(S)\equiv \{(x,y)\in M\times M:\, p_{D}(x,y)\in S,\, \delta\leq \frac{1}{\sqrt{2}}d(x,y)\leq\delta^{-1}\}\, ,$$
where $p_{D}$ is the projection map to the diagonal and we are identifying $S$ with its image in the diagonal.

\begin{lemma}
For each $0<\delta<1$, $R>0$ and any $\epsilon\geq 0$ there exists $C(n,\delta,R)$ such that for any $S\in M$ with $S\subseteq B_R(p)$ we have that $\Vol(\Cl(M,r,\epsilon)\cap A_{\delta,\delta^{-1}}(S))\leq C\, r\, \Vol(B_1(p))$.
\end{lemma}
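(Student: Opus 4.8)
The plan is to realize $\Cl(M,r,\epsilon)\cap A_{\delta,\delta^{-1}}(S)$, up to a null set, as a subset of the image of the outer ``$r$--shell'' of the normal exponential map of the diagonal $D\subset M\times M$, and to bound the Jacobian of that map by volume comparison. Observe first that $M\times M$ satisfies $\Ric\geq-(n-1)$ (for a unit vector $(v_1,v_2)$ one has $\Ric((v_1,v_2),(v_1,v_2))=\Ric_M(v_1,v_1)+\Ric_M(v_2,v_2)\geq-(n-1)|v_1|^2-(n-1)|v_2|^2=-(n-1)$), and that $D$ is \emph{totally geodesic}, being the fixed--point set of the isometry $(x,y)\mapsto(y,x)$; in particular its second fundamental form vanishes. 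For $u\in M$ and a unit vector $\theta\in T_uM$ write $\sigma_{u,\theta}(t)=\exp_u(t\theta)$ and set
\[
\rho(u,\theta)\equiv\sup\{T>0:\ d(\sigma_{u,\theta}(-T),\sigma_{u,\theta}(T))=2T\}\in(0,\infty],
\]
the largest radius on which $\sigma_{u,\theta}$ is minimizing symmetrically about $u$. A short continuity argument shows $\rho$ is lower semicontinuous, so $G\equiv\{(u,\theta,\ell):0<\ell<\rho(u,\theta)\}$ is an open subset of $UM\times\RR_{>0}$, and one defines the smooth map
\[
F:G\to M\times M,\qquad F(u,\theta,\ell)=\bigl(\exp_u(\ell\theta),\ \exp_u(-\ell\theta)\bigr),
\]
which is precisely the normal exponential map of $D$ in the coordinates (base point $u\in D$)$\times$(unit normal direction $\theta$)$\times$(normal distance $\ell$); note $\dim G=n+(n-1)+1=2n=\dim(M\times M)$.

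The first step is the geometric identification. If $(u,\theta,\ell)\in G$ and $\ell<\rho(u,\theta)-r$, pick $\eta>0$ with $\rho(u,\theta)-\eta-\ell\geq r$ and set $(z,w)=\bigl(\sigma_{u,\theta}(\rho(u,\theta)-\eta),\ \sigma_{u,\theta}(-(\rho(u,\theta)-\eta))\bigr)$. Then $t\mapsto(\sigma_{u,\theta}(t),\sigma_{u,\theta}(-t))$ is, for $t\in[0,\rho(u,\theta)-\eta]$, a minimizing geodesic in $M\times M$ from the diagonal point $(u,u)$ to $(z,w)$ through $(x,y)\equiv F(u,\theta,\ell)$, so $e_{(z,w)}(x,y)=0$; and $d_{M\times M}((x,y),(z,w))=\sqrt2\,(\rho(u,\theta)-\eta-\ell)\geq\sqrt2\,r$ shows $(z,w)\notin B^{M\times M}_{\sqrt2 r}(x,y)$, whence $(x,y)\notin\Cl(M,r)\supseteq\Cl(M,r,\epsilon)$ for every $\epsilon>0$. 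Conversely, on a smooth manifold $\Cl_y$ has measure zero for each $y$, so for a.e. $(x,y)$ with $x\neq y$ the minimizing geodesic from $x$ to $y$ is unique and extends minimizingly past both endpoints; taking $u$ to be its midpoint between $x$ and $y$, $\ell=\tfrac12 d(x,y)$ and $\theta$ the direction at $u$ towards $x$ gives $(x,y)=F(u,\theta,\ell)$ with $\ell<\rho(u,\theta)$, and then Lemma~\ref{l:diaggeo} shows $p_D(x,y)=(u,u)$. Thus $F(G)$ has full measure, and any preimage of a point of $\Cl(M,r,\epsilon)\cap A_{\delta,\delta^{-1}}(S)$ must lie in
\[
E_r\equiv\Bigl\{(u,\theta,\ell)\in G:\ u\in S,\ \tfrac{\delta}{\sqrt2}\leq\ell\leq\tfrac{1}{\sqrt2\,\delta},\ \ell\geq\rho(u,\theta)-r\Bigr\},
\]
so that $\Vol\bigl(\Cl(M,r,\epsilon)\cap A_{\delta,\delta^{-1}}(S)\bigr)\leq\Vol\bigl(F(E_r)\bigr)$.

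The second step is the Jacobian bound. Since $D$ is totally geodesic, along each normal geodesic $t\mapsto(\sigma_{u,\theta}(t),\sigma_{u,\theta}(-t))$ the Jacobi fields generating $DF$ have either vanishing initial value (the unit--normal--sphere directions) or vanishing initial derivative (the base--point directions), and, as $\ell<\rho(u,\theta)$, the segment contains no focal point of $D$; the pointwise Bishop/Heintze--Karcher comparison, which uses only $\Ric_{M\times M}\geq-(n-1)$ together with the vanishing of the second fundamental form of $D$, then bounds $|\det DF|(u,\theta,\ell)$ above by the corresponding quantity in the curvature $-1$ model, namely $(\cosh\ell)^n(\sinh\ell)^{n-1}$, which for $\ell\leq\tfrac1{\sqrt2\,\delta}$ is at most a constant $C(n,\delta)$. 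The area formula applied to $F$ now gives
\[
\Vol\bigl(F(E_r)\bigr)\ \leq\ \int_{E_r}|\det DF|\ \leq\ C(n,\delta)\,\Vol_G(E_r)\ \leq\ C(n,\delta)\,\omega_{n-1}\,r\,\Vol(S),
\]
where $\Vol_G$ is the measure $dV_M(u)\,d\theta\,d\ell$ on $G$ and the last step uses that for each fixed $(u,\theta)$ the fibre of $E_r$ is contained in $\{0<\ell<\rho(u,\theta):\ \ell\geq\rho(u,\theta)-r\}$, a set of measure at most $r$, while $\int_{U_uM}d\theta=\omega_{n-1}$. Finally $\Vol(S)\leq\Vol(B_R(p))\leq C(n,R)\,\Vol(B_1(p))$ by Bishop--Gromov, which yields $\Vol(\Cl(M,r,\epsilon)\cap A_{\delta,\delta^{-1}}(S))\leq C(n,\delta,R)\,r\,\Vol(B_1(p))$; the borderline value $\epsilon=0$ is read through the open condition $\Cl(M,r)$, bounded by the same argument.

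The main obstacle is the pointwise Jacobian estimate $|\det DF|\leq C(n,\delta)$: a lower Ricci bound by itself does not control the size of Jacobi fields, and the argument works only because $F$ is the normal exponential map of the \emph{totally geodesic} diagonal, so that no uncontrolled second--fundamental--form term enters the comparison and the Bishop/Heintze--Karcher inequality applies under $\Ric_{M\times M}\geq-(n-1)$ alone. The remaining points needing care are the geometric identification of $\Cl(M,r,\epsilon)$ with the outer $r$--shell $F(E_r)$ via Lemma~\ref{l:diaggeo} (a minimizing geodesic, centered at its midpoint between $x$ and $y$, corresponds exactly to a normal geodesic of $D$, and the diagonal point realizing $p_D$ is that midpoint), the lower semicontinuity of $\rho$, and the reduction to a full--measure set via the nullity of point cut loci on a smooth manifold.
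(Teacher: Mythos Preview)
Your argument is essentially the paper's proof recast in the language of the normal exponential map of the diagonal: the paper uses the Laplacian comparison $\Delta d_D\leq (n-1)/d_D$ (equivalently, the mean curvature bound on $\partial T_s(S)$) together with the coarea formula and the observation that $\Cl(M,r)$ meets each minimal normal geodesic from $D$ in a set of length at most $O(r)$, while you use the Heintze--Karcher Jacobian bound for the normal exponential of a totally geodesic submanifold together with the area formula. These are dual formulations of the same estimate, and the key geometric input---that $\Cl(M,r)$ lives in the outer $r$-shell along each normal ray from $D$---is identical.

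One small point deserves care: your justification that $F(G)$ has full measure via point cut loci is not quite complete. From ``$x\notin\Cl_y$ and $y\notin\Cl_x$'' you get minimizing extensions $\sigma|_{[-\ell,\ell+\epsilon]}$ and $\sigma|_{[-\ell-\epsilon',\ell]}$, but this does not immediately yield a \emph{symmetric} minimizing extension $\sigma|_{[-\ell-\eta,\ell+\eta]}$, which is what $\ell<\rho(u,\theta)$ requires. The clean statement is rather that the complement of $F(G)$ is the normal cut locus of the totally geodesic submanifold $D\subset M\times M$, which has measure zero by the standard argument (it meets each normal geodesic from $D$ in at most one point). This is precisely what the paper's coarea computation uses implicitly.
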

\begin{proof}
First it is enough to prove the claim for $\Cl(M,r)$ since the constant involved is independent of $\epsilon$.  Note again that the distance function on $M\times M$ to the diagonal can be written $d_{D}(x,y)=\frac{1}{\sqrt{2}}d(x,y)$.  In particular the laplacian of this distance function on $M\times M$ satisfies
$$
\Delta d_{D}(x,y)\leq \frac{n-1}{d_{D}(x,y)}\, .$$
Let us define the tube
$$T_s(S)\equiv \{(x,y)\in M\times M:\, p_{D}(x,y)\in S,\, d_D(x,y)\leq s\}\, ,$$
and note then that $A_{s_0,s_1}(S) = T_{s_1}\setminus T_{s_0}$.  The estimate on the laplacian of $d_D$ then tells us that at any smooth point of $\partial T_s(S)$ that the mean curvature is uniformly bounded from above in terms of $s$.  To finish the proof we simply observe, as is in the case for the standard cut locus of a point, that the effective cutlocus $\Cl(M,r)$ intersects each minimal geodesic leaving the diagonal $D$ on a set of measure at most $r$. Thus if $\chi_{\Cl(M,r)}$ is the characteristic function of $\Cl(M,r)$ we have by a coarea formula and the mean curvature estimate that
\begin{align}
\Vol(\Cl(M,r)\cap A_{\delta,\delta^{-1}}(S)) &= \int^{\delta^{-1}}_{\delta}\left(\int_{\partial T_{s}(S)}\chi_{\Cl(M,r)}\right)ds\\
&\leq c(n,\delta)\, r\, \Vol(\partial T_{\delta}(S))\leq C(n,\delta)\, r\, \Vol(B_1(S))\notag\,\notag\\
&\leq C(n,\delta,R)r\Vol(B_1(p)) \notag,
\end{align}
as claimed.
\end{proof}

Now let us point out the following two stability properties of $\Cl(M,r,\epsilon)$.  To begin with if
\begin{align}
(M_i,g_i,p_i)\rightarrow (M_\infty,d_\infty,p_\infty) \, ,
\end{align}
then we can define
\begin{align}
 \Cl(M_i,r,\epsilon)\stackrel{GH}{\rightarrow} \Cl_\infty(r,\epsilon)\, .
\end{align}
Note first that for each $\eta>0$ that we have
\begin{align}
 \Cl(M_\infty,r-\eta,\epsilon+\eta)\subseteq \Cl_\infty(r,\epsilon)\subseteq \Cl(M_\infty,r,\epsilon)\, ,
\end{align}
this follows from the stability of the excess function under Gromov-Hausdorff limits.  We secondly have that
$$B_\eta(\Cl(M,r,\epsilon))\subseteq \Cl(M,r+\eta,\epsilon)\, ,$$
for each $\eta>0$.  Thus by using these observations and a covering argument we may limit in precisely the manner of \cite{H} to obtain the corresponding result in the limit space:

\begin{proposition}
For each $0<\delta<1$, $R>0$ and any $\epsilon>0$ there exists $C(n,\delta,R)$ such that for any $S\in M_\infty$ with $S\subseteq B_R(p_\infty)$ we have that
\begin{equation}
\nu(\Cl(M_\infty,r,\epsilon)\cap A_{\delta,\delta^{-1}}(S))\leq C\, r\, .
\end{equation}
\end{proposition}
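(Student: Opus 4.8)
The plan is to deduce the proposition from the smooth case established in the previous lemma by a limiting argument, using the two stability properties of the effective cut locus recorded above together with measured Gromov--Hausdorff convergence. Choose a sequence of complete smooth $n$-manifolds $(M_i^n,g_i,p_i)$ with $\Ric_{M_i}\geq -(n-1)$ and renormalized measures $\underline{V}_i$ such that $(M_i,g_i,p_i,\underline{V}_i)\to(M_\infty,d_\infty,p_\infty,\nu)$ in the measured Gromov--Hausdorff sense. Since the partial annulus $A_{\delta,\delta^{-1}}(S)$ only grows with $S$ and the constant is permitted to depend on $R$, we may assume $S=B_R(p_\infty)$ and work with $S_i=B_R(p_i)$.

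First I would push the problem down to a slightly enlarged effective cut locus on $M_\infty$. Fix $0<\eta<\epsilon/2$ and set $\delta'=\delta/2$. Applying the sandwich $\Cl(M_\infty,\rho-\eta,\sigma+\eta)\subseteq\Cl_\infty(\rho,\sigma)\subseteq\Cl(M_\infty,\rho,\sigma)$ with $\rho=r+\eta$, $\sigma=\epsilon-\eta$ gives $\Cl(M_\infty,r,\epsilon)\subseteq\Cl_\infty(r+\eta,\epsilon-\eta)$, where $\Cl_\infty(r+\eta,\epsilon-\eta)$ is by definition the Gromov--Hausdorff limit of the compact sets $\Cl(M_i,r+\eta,\epsilon-\eta)$. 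Since those sets converge in the Hausdorff sense, since $A_{\delta,\delta^{-1}}(S)\subseteq A_{\delta',(\delta')^{-1}}(S)$ absorbs the metric distortion, and since the partial annuli are cut out by the continuous functions $\tfrac1{\sqrt2}d(x,y)$ and the diagonal projection, a finite covering argument exactly as in \cite{H} yields, for the fixed $\eta$,
\[
\nu\big(\Cl(M_\infty,r,\epsilon)\cap A_{\delta,\delta^{-1}}(S)\big)\leq \liminf_i\,\underline{V}_i\big(B_\eta(\Cl(M_i,r+\eta,\epsilon-\eta))\cap A_{\delta',(\delta')^{-1}}(S_i)\big)\, .
\]

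Next I would use the second stability inclusion $B_\eta(\Cl(M,\rho,\sigma))\subseteq\Cl(M,\rho+\eta,\sigma)$ to trade the $\eta$-neighborhood for a shift of the radius parameter, so the right-hand side is at most $\liminf_i\,\underline{V}_i\big(\Cl(M_i,r+2\eta,\epsilon-\eta)\cap A_{\delta',(\delta')^{-1}}(S_i)\big)$. Applying the previous lemma to each $M_i$ with parameters $\delta'$, $R$, radius $r+2\eta$ and excess threshold $\epsilon-\eta$ bounds each term by $C(n,\delta',R)(r+2\eta)$, with $C(n,\delta',R)=C(n,\delta/2,R)$ independent of $i$. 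Letting $\eta\to0$ and renaming $C(n,\delta,R)\equiv C(n,\delta/2,R)$ gives $\nu(\Cl(M_\infty,r,\epsilon)\cap A_{\delta,\delta^{-1}}(S))\leq C(n,\delta,R)\,r$, as claimed.

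The step I expect to be the main obstacle is the semicontinuity inequality above: bounding the limit measure $\nu$ of a compact set by $\underline{V}_i$-measures of slightly enlarged approximating sets cannot be done by weak convergence alone, since under collapse the $\underline{V}_i$ may concentrate. This is precisely where one repeats the covering argument of \cite{H}: cover $\Cl(M_\infty,r,\epsilon)\cap A_{\delta,\delta^{-1}}(S)$ by finitely many small balls centered at points of the set, pull these balls back along the Gromov--Hausdorff approximations, use convergence of ball volumes to estimate $\nu$ of the cover from above, and then invoke the inclusion $B_\eta(\Cl(M_i,\rho,\sigma))\subseteq\Cl(M_i,\rho+\eta,\sigma)$ to see that the pulled-back cover sits inside $\Cl(M_i,r+2\eta,\epsilon-\eta)\cap A_{\delta',(\delta')^{-1}}(S_i)$. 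The other ingredients --- Hausdorff stability of $\Cl_\infty$, which follows from the stability of excess functions under Gromov--Hausdorff limits, and stability of the partial annuli --- are routine, as everything in sight is a continuous function of the metric and of the projection to the diagonal.
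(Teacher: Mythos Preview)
Your proposal is correct and follows essentially the same approach as the paper: the paper's own argument is simply the one-line remark that ``by using these observations [the two stability inclusions] and a covering argument we may limit in precisely the manner of \cite{H},'' and what you have written is exactly a fleshed-out version of this---use the sandwich $\Cl(M_\infty,r,\epsilon)\subseteq\Cl_\infty(r+\eta,\epsilon-\eta)$, pass to the limit via the Honda-type covering argument, absorb the $\eta$-neighborhood via $B_\eta(\Cl)\subseteq\Cl(\cdot,\rho+\eta,\cdot)$, invoke the smooth lemma, and let $\eta\to 0$. Your identification of the covering/semicontinuity step as the only nonroutine point is also exactly right.
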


Now if $(x,y)\in \Cl_\infty(r,\epsilon)$ then there exists a \textit{limit} minimizing geodesic $\gamma$ with $x$ and $y$ as interior points which are at least a distance $r$ from the boundary of $\gamma$.  Thus as a consequence of the results of this section and the previous stability properties of $\Cl(M_\infty,r,\epsilon)$ we can let $\epsilon\rightarrow 0$ to have the following.

\begin{corollary}  \label{c:extending}
The following statements hold for each $S\subseteq M_\infty$, $0<\delta<1$, $R>0$ and $r>0$:
\begin{enumerate}
 \item If $(x,y)\in \Cl(M_\infty,r)$ then there exists a limit minimizing geodesic $\gamma$ with $x$ and $y$ as interior points which are at least a distance $r$ from the boundary of $\gamma$.
 \item If $S\subseteq B_R(p_\infty)$ then $\nu(\Cl(M_\infty,r)\cap A_{\delta,\delta^{-1}}(S))\leq C(n,\delta,R)\, r\,$
 \item $\nu\times\nu$ $a.e.$ pair of points $(x,y)$ lie in the interior of some limit minimizing geodesic.
\end{enumerate}
\end{corollary}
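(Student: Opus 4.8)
The plan is to establish the three assertions in the order (2), (1), (3). The measure estimate (2) is obtained by letting $\epsilon\downarrow 0$ in the Proposition just proved; (1) is the geometric input, produced by recasting non-membership in $\Cl(M_\infty,r)$ through the diagonal excess functions and passing to limit geodesics along the approximating sequence; and (3) then follows by exhausting $M_\infty\times M_\infty$ and sending $r\to 0$. Two elementary facts are used throughout: if $S\subseteq B_R(p_\infty)$ then $A_{\delta,\delta^{-1}}(S)$ is a bounded subset of $M_\infty\times M_\infty$, since by Lemma \ref{l:diaggeo} the diagonal projection $p_{D}(x,y)$ is the midpoint of an $x$--$y$ geodesic and hence $d(x,p_\infty),d(y,p_\infty)\le R+\tfrac{1}{\sqrt2}\delta^{-1}$; and the diagonal of $M_\infty\times M_\infty$ is $\nu\times\nu$-null, since $\nu$ is non-atomic.

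For (2): the Proposition bounds $\nu\bigl(\Cl(M_\infty,r,\epsilon)\cap A_{\delta,\delta^{-1}}(S)\bigr)$ by $C(n,\delta,R)\,r$ uniformly in $\epsilon>0$, and the sets $\Cl(M_\infty,r,\epsilon)$ increase to $\bigcup_{\epsilon>0}\Cl(M_\infty,r,\epsilon)$ as $\epsilon\downarrow 0$, so it suffices to show this union is all of $\Cl(M_\infty,r)$. I would argue by contradiction: if $(x,y)\in\Cl(M_\infty,r)$ but $\inf\{e_{(z,w)}(x,y):(z,w)\notin B_{\sqrt2 r}(x,y)\}=0$, choose far points $(z_i,w_i)$ realizing $e_{(z_i,w_i)}(x,y)\to 0$, restrict the almost minimizing bent path that runs from the diagonal straight to $(x,y)$ and then straight to $(z_i,w_i)$ to its diagonal-side piece (of length $\tfrac{1}{\sqrt2}d(x,y)\in[\delta,\delta^{-1}]$) and to the initial segment of length $\sqrt2\,r$ of its far-side piece — both lying in a fixed bounded neighbourhood of $(x,y)$ — and pass to an Arzela--Ascoli limit. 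The limit is a minimizing segment from the diagonal through $(x,y)$ to a point $(z,w)$ with $d((x,y),(z,w))=\sqrt2\,r$, hence with $(z,w)\notin B_{\sqrt2 r}(x,y)$ and $e_{(z,w)}(x,y)=0$, contradicting $(x,y)\in\Cl(M_\infty,r)$. Consequently $\nu\bigl(\Cl(M_\infty,r)\cap A_{\delta,\delta^{-1}}(S)\bigr)=\lim_{\epsilon\to 0}\nu\bigl(\Cl(M_\infty,r,\epsilon)\cap A_{\delta,\delta^{-1}}(S)\bigr)\le C(n,\delta,R)\,r$.

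For (1): recall the characterization recorded above, that $(x,y)\notin\Cl(M_\infty,r)$ precisely when $x$ and $y$ are interior points, at distance $\ge r$ from the endpoints, of some minimizing geodesic of $M_\infty$; the work is to realize such a geodesic as a \emph{limit} geodesic. The plan is to use Lemma \ref{l:diaggeo} to translate the statement into one about the diagonal excess $e_{(z,w)}$ (which is continuous and stable under Gromov--Hausdorff convergence) and to mimic the limiting argument of \cite{H} on the product spaces $M_i\times M_i$ and their diagonals. Concretely, $(x,y)\notin\Cl(M_\infty,r)$ forces $(x,y)\notin\Cl_\infty(r'',\epsilon)$ for every $r''<r$ and $\epsilon>0$ (using $\Cl_\infty(r'',\epsilon)\subseteq\Cl(M_\infty,r'',\epsilon)\subseteq\Cl(M_\infty,r)$ together with $\Cl(M_i,r'',\epsilon)\stackrel{GH}{\rightarrow}\Cl_\infty(r'',\epsilon)$); hence, taking $\epsilon=\epsilon_i\downarrow 0$ along the sequence, the corresponding points $(x_i,y_i)\in M_i$ are, up to errors tending to $0$, interior points of minimizing geodesics $\sigma_i$ of $M_i$ that extend past $x_i$ and $y_i$ by roughly $r''$, where $\sigma_i$ is in addition $C^0$-close (by a standard compactness property of almost minimizing curves, with modulus diagonalized against the sequence) to an honest minimizing geodesic $\gamma_i$ of $M_i$; extracting an arclength Arzela--Ascoli limit of the $\gamma_i$ yields a limit minimizing geodesic of $M_\infty$ through $x$ and $y$ with both at distance $\ge r''$ from its endpoints, and a diagonal argument as $r''\uparrow r$ gives distance $\ge r$. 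The compactness of $\Cl(M_\infty,r,\epsilon)$ and the sandwich $\Cl(M_\infty,r-\eta,\epsilon+\eta)\subseteq\Cl_\infty(r,\epsilon)\subseteq\Cl(M_\infty,r,\epsilon)$ are exactly what make this process go through.

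Finally, for (3): by (1), a pair that lies in the interior of no limit minimizing geodesic belongs to $\Cl(M_\infty,r)$ for every $r>0$, i.e.\ to $\bigcap_{r>0}\Cl(M_\infty,r)=\Cl(M_\infty)$; by (2) this set meets each $A_{\delta,\delta^{-1}}(B_R(p_\infty))$ in a set of $\nu\times\nu$-measure $\le\inf_{r>0}C(n,\delta,R)\,r=0$, and letting $\delta\to 0$, $R\to\infty$ exhausts $M_\infty\times M_\infty$ off its diagonal, so $\nu\times\nu(\Cl(M_\infty))=0$, which is the assertion. I expect the main obstacle to be (1): ensuring that the geodesic extracted in the limit is genuinely a limit geodesic of $M_\infty$, rather than merely a minimizing geodesic of $M_\infty$ — which is why one must descend to the sequence $M_i$, approximate the almost minimizing curves there by honest minimizing geodesics, and thread the parameters $r,\epsilon,\eta$ through the Gromov--Hausdorff stability of the cut-locus sets while keeping the Arzela--Ascoli limits non-degenerate. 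The $\epsilon\downarrow 0$ step in (2) — ruling out that the diagonal-excess infimum is approached only along sequences escaping to infinity — is a secondary technical point.
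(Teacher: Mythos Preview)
Your approach is the paper's own: deduce all three parts from the stability inclusions $\Cl(M_\infty,r-\eta,\epsilon+\eta)\subseteq\Cl_\infty(r,\epsilon)\subseteq\Cl(M_\infty,r,\epsilon)$ together with the Proposition, and let $\epsilon\to 0$. Your treatments of (2) and (3) are correct and in fact more explicit than the paper's one--line justification; in particular the Arzela--Ascoli step you supply for (2), showing that the infimum of $e_{(z,w)}(x,y)$ over the closed complement of $B_{\sqrt 2\,r}(x,y)$ is attained, is precisely what is needed to identify $\Cl(M_\infty,r)$ with $\bigcup_{\epsilon>0}\Cl(M_\infty,r,\epsilon)$, and the paper does not write it out.

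For (1) the step you yourself flag as the main obstacle is unresolved as written. From $(x_i,y_i)\notin\Cl(M_i,r'',\epsilon_i)$ with $\epsilon_i\to 0$ you obtain $(z_i,w_i)$ at product distance $\ge\sqrt 2\,r''$ with diagonal excess $<\epsilon_i^2$, hence a broken geodesic $z_i$--$x_i$--$y_i$--$w_i$ in $M_i$ that is $o(1)$--minimizing. But the assertion that this is ``$C^0$-close, by a standard compactness property of almost minimizing curves, to an honest minimizing geodesic $\gamma_i$'' is \emph{not} a standard fact: an almost--minimizing curve can sit far from every minimizing geodesic when the endpoints admit several minimizers, so nothing forces a minimizing $\gamma_i$ from $z_i$ to $w_i$ to pass near $x_i$ or $y_i$, and hence nothing forces its Arzela--Ascoli limit to pass through $x$ and $y$. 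The paper is exactly as terse here (its entire argument is ``use the stability properties and let $\epsilon\to 0$,'' deferring the limiting procedure to \cite{H}), so you are not diverging from the intended route --- but you should recognize that at this one place what you have written is a hope rather than an argument.
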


\section{Reifenberg property for collapsed limits}\label{s:ReifenbergCollapsed}
For non-collapsed limits a key regularity of a neighborhood of the regular set come from a Reifenberg type property, see appendix 1 of \cite{ChC2}.  This property roughly say that on all scales the space is Gromov-Hausdorff close to Euclidean space.  It is shown in \cite{ChC2} that this implies that a neighborhood of the regular set for non-collapsed limits is a $C^{\alpha}$ manifold.

In the general, not necessary non-collapsed case, we have the following (uniform) Reifenberg property for
geodesics contained in the regular set:

\begin{theorem}  \label{t:unireif}
Suppose that $\gamma: [0,\ell]\to M_{\infty}$ is a limit geodesic whose interior consists of $k$-regular points.  Given
$\epsilon>0$, and $\ell>s_2>s_1>0$, there exists $r_0>0$ such that for all $r_0>r>0$ and all $s_2\geq s\geq s_1$
\begin{equation}
d_{GH}(B_r(\gamma (s)),B_r^{\RR^k}(0))<\epsilon\,r\, .
\end{equation}
\end{theorem}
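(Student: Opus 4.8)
The plan is to deduce Theorem~\ref{t:unireif} from the H\"older continuity of the geometry of small balls along $\gamma$ (Theorem~\ref{t:holderlimit}) together with a compactness argument on the compact parameter interval $[s_1,s_2]$. First I would fix $\delta>0$ with $[s_1,s_2]\subseteq[\delta\ell,(1-\delta)\ell]$ and let $\alpha=\alpha(n)$, $C=C(n)$ and $r_0(n)>0$ be the constants furnished by Theorem~\ref{t:holderlimit} applied to the limit geodesic $\gamma$. Writing
\begin{equation}
f(s,r)\equiv \frac{1}{r}\,d_{GH}\big(B_r(\gamma(s)),B^{\RR^k}_r(0)\big)\, ,
\end{equation}
the triangle inequality for $d_{GH}$ combined with Theorem~\ref{t:holderlimit} gives the bound $|f(s,r)-f(t,r)|\leq \frac{C}{\delta\ell}\,|s-t|^{\alpha}$, valid for all $s,t\in[\delta\ell,(1-\delta)\ell]$ and all $0<r<r_0(n)\delta\ell$; the key feature is that this H\"older constant is independent of $r$.

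Next I would record the pointwise input coming from $k$-regularity. Each interior point $\gamma(t)$ has every tangent cone isometric to $\RR^k$, and the rescalings $(M_\infty,r^{-1}d_\infty,\gamma(t))$ are precompact in the Gromov-Hausdorff topology because $M_\infty$ satisfies a Bishop-Gromov (hence doubling) inequality, see Section~1 of \cite{ChC2}. A standard subsequence argument then shows that for each fixed $t\in[s_1,s_2]$ one has $f(t,r)\to 0$ as $r\to 0$.

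Finally, given $\epsilon>0$ I would set $\rho\equiv\big(\tfrac{\epsilon\,\delta\ell}{2C}\big)^{1/\alpha}$, choose finitely many points $t_1,\dots,t_N\in[s_1,s_2]$ so that every $s\in[s_1,s_2]$ lies within $\rho$ of some $t_j$, and then pick $r_0>0$ with $r_0\leq r_0(n)\delta\ell$ such that $f(t_j,r)<\epsilon/2$ for every $j$ and every $0<r<r_0$; finiteness of the net is exactly what makes such a uniform $r_0$ exist. For arbitrary $s\in[s_1,s_2]$ and $0<r<r_0$, choosing $j$ with $|s-t_j|<\rho$ yields
\begin{equation}
f(s,r)\leq \frac{C}{\delta\ell}\,|s-t_j|^{\alpha}+f(t_j,r)<\frac{C}{\delta\ell}\,\rho^{\alpha}+\frac{\epsilon}{2}=\epsilon\, ,
\end{equation}
which is precisely $d_{GH}(B_r(\gamma(s)),B^{\RR^k}_r(0))<\epsilon\,r$.

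The step that requires the most care is ensuring that the H\"older estimate of Theorem~\ref{t:holderlimit} is applied with an $r$-independent constant: it is this uniformity in $r$ that lets one pass from the merely pointwise (and a priori $s$-dependent rate of) convergence $f(t,r)\to 0$ to a single $r_0$ that works simultaneously for all $s\in[s_1,s_2]$. The remaining ingredients --- the triangle inequality, the precompactness of blow-ups, and the choice of a finite net --- are routine.
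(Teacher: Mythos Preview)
Your proof is correct and follows essentially the same approach as the paper's own argument: both combine the $r$-independent H\"older estimate of Theorem~\ref{t:holderlimit} with the pointwise $k$-regularity along $\gamma$ and a compactness argument on $[s_1,s_2]$. Your presentation is in fact more explicit---you spell out the equicontinuity of $f(\cdot,r)$ and the finite-net step---whereas the paper condenses this into a single sentence, but the underlying mechanism is identical.
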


\begin{proof}
By compactness of the closed interval $[s_1,s_2]$, the theorem would follow if we knew that for each $s\in [s_1,s_2]$, there exists a $\delta=\delta (s)>0$ and a $r_0=r_0 (s)>0$ such that for all $t\in (s-\delta,s+\delta)$ and all $r_0>r>0$
\begin{equation}
d_{GH}(B_r(\gamma (t)),B_r^{\RR^k}(0))<\epsilon\,r\, .
\end{equation}
However, this follow easily using that $\gamma (s)$ is a $k$-regular point combined with Theorem \ref{t:holder}.
\end{proof}

In the non-collapsed case when combined with the volume convergence theorem of \cite{C3} (cf. also theorem 5.9 of \cite{ChC2} and section 3 of \cite{ChC3}) it follows that if $\gamma$ is as in Theorem \ref{t:unireif}, then an entire neighborhood of $\gamma |[s_1,s_2]$ consists of almost regular points.  Or, to be precise, an entire neighborhood
consists of $(\epsilon,k)$-regular points in the sense of definition 0.6 of \cite{ChC2}).

A key difference between the collapsed and non-collapsed case is the following:
\begin{itemize}
\item By \cite{C3} (see also \cite{C1}, \cite{C2}), then in the non-collapsed case closeness in the Gromov-Hausdorff sense to $n$-dimensional Euclidean space is equivalent to that the volume is almost maximal.  By the Bishop-Gromov volume comparison theorem once the volume is almost maximal on one scale, then it is also almost maximal on all smaller scales and hence by \cite{C3} also Gromov-Hausdorff close to $n$-dimensional Euclidean space on all smaller scales.  This property in the non-collapsed case is where the Reifenberg property naturally occur; see appendix 1 in \cite{ChC2}.
\item Even though there is no such monotonicity in the collapsed case, then a key point in the collapsed case is that the
H\"older continuity of tangent cones can at some level replace this monotonicity as is illustrated in Theorem \ref{t:unireif}.
\end{itemize}

Theorem \ref{t:unireif} generalizes immediately to the situation where the tangent cones are unique and constant along the interior of a geodesic segment, as it played no role in the proof of this theorem that each point on the geodesic was $k$-regular.  The only thing that mattered was that the tangent cone is unique at each interior point and independent of the particular point.    Thus we have the following:

\begin{theorem}  \label{t:unireifY}
Suppose that $\gamma: [0,\ell]\to M_{\infty}$ is a limit geodesic and that at each interior point the tangent cone is unique and equal to a fixed pointed metric space $(Y,0)$ ($0$ is the `cone' tip).  Given
$\epsilon>0$, and $\ell>s_2>s_1>0$, there exists $r_0>0$ such that for all $r_0>r>0$ and all $s_2\geq s\geq s_1$
\begin{equation}
d_{GH}(B_r(\gamma (s)),B_r^{Y}(0))<\epsilon\,r\, .
\end{equation}
\end{theorem}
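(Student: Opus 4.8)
The plan is to observe that the proof of Theorem~\ref{t:unireif} uses nothing about $\RR^k$ beyond the fact that the tangent cone along the interior of $\gamma$ is unique and independent of the chosen interior point; so I would simply rerun that argument with $\RR^k$ replaced by the fixed space $(Y,0)$. Fix $\delta>0$ with $[s_1,s_2]\subseteq[\delta\ell,(1-\delta)\ell]$ and let $\alpha(n),C(n),r_0(n)$ be the constants of Theorem~\ref{t:holderlimit} (that is, Theorem~\ref{t:holder} for limit geodesics). By compactness of $[s_1,s_2]$ it is enough to produce, for each $s\in[s_1,s_2]$, numbers $\delta(s)>0$ and $\rho(s)>0$ so that $d_{GH}(B_r(\gamma(t)),B_r^Y(0))<\epsilon\,r$ whenever $|t-s|<\delta(s)$ and $0<r<\rho(s)$; one then takes a finite subcover of $[s_1,s_2]$ by such intervals and lets $r_0$ be the minimum of the resulting $\rho(s)$'s.

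For the local step, fix $s\in[s_1,s_2]$. Since the tangent cone at $\gamma(s)$ is unique and equal to $(Y,0)$, there is $\rho_1(s)>0$ with $d_{GH}(B_r(\gamma(s)),B_r^Y(0))<\tfrac{\epsilon}{2}\,r$ for all $0<r<\rho_1(s)$. Next choose $\delta(s)>0$ so small that $\frac{C(n)}{\delta\ell}\,\delta(s)^{\alpha(n)}<\tfrac{\epsilon}{2}$; then by Theorem~\ref{t:holderlimit}, for all $t$ with $|t-s|<\delta(s)$ and all $r$ with $0<r<r_0(n)\delta\ell$,
\begin{equation}
d_{GH}(B_r(\gamma(s)),B_r(\gamma(t)))<\frac{C(n)}{\delta\ell}\,r\,|s-t|^{\alpha(n)}<\frac{\epsilon}{2}\,r\, .
\end{equation}
Setting $\rho(s)\equiv\min\{\rho_1(s),\,r_0(n)\delta\ell\}$ and applying the triangle inequality for the Gromov--Hausdorff distance at the fixed scale $r$ gives $d_{GH}(B_r(\gamma(t)),B_r^Y(0))<\epsilon\,r$ for all such $t$ and $r$, which is the desired local claim.

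The argument is essentially a formality once Theorem~\ref{t:holderlimit} is granted; the one point I would take care to record is that the radius threshold $r_0(n)\delta\ell$ in Theorem~\ref{t:holderlimit} depends only on $n$, $\delta$ and $\ell$ and not on the base point, so the finite subcover genuinely produces a single uniform $r_0$. The only mild subtlety in the triangle-inequality step is that all three balls involved have the same radius $r$ and all the error bounds are of the form $(\mathrm{const})\cdot r$, so no $r$-dependence is lost when the constants add. I do not expect any real obstacle here: the substance of the theorem is entirely carried by the H\"older continuity estimate proved earlier, and this statement merely repackages it with an arbitrary constant tangent cone $(Y,0)$ in place of Euclidean space.
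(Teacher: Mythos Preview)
Your proposal is correct and follows essentially the same approach as the paper. The paper's own argument for Theorem~\ref{t:unireif} is the compactness-plus-local-step argument you describe (uniqueness of the tangent cone at a fixed $s$ gives a radius threshold there, and Theorem~\ref{t:holder} propagates the estimate to nearby $t$), and the paper explicitly notes that the passage to Theorem~\ref{t:unireifY} is immediate since nothing about $\RR^k$ was used beyond its being the common unique tangent cone; you have simply written this out in full detail.
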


\vskip2mm
Note that for a $k$-regular point $y$ there is no specific requirement
on the rate of convergence as $r_i\to 0$ of the family of rescaled spaces
$(M_{\infty},y,r^{-1}_id_{\infty})$ to the tangent cone $\RR^k$. Equivalently, prior to rescaling, the convergence to $\RR^k$ takes place
at the rate $o(r)$. For $\alpha > 0$ a point $y$ is called $(k,\alpha)$ -regular\footnote{The $(k,\alpha)$-regular points were introduced in section 3 of \cite{ChC4} and should not be confused with the $(\epsilon,k)$-regular points mentioned earlier in this paper that was introduced in definition 0.6 in \cite{ChC2}.}, if on
sufficiently small balls $B_r(y)$ the convergence to $\RR^k$ takes place at the
rate $0(r^{1+\alpha})$. The set of $(k,\alpha)$-regular points is denoted $\cR_{k,\alpha}$.
In section 3 of \cite{ChC4} it was shown that $\nu (\cR_k\setminus \cR_{k,\alpha})=0$ for some $\alpha (n) > 0$ and that $\cR_{k,\alpha}$ is a countable union of sets, each of which is bi-Lipschitz to a subset of $\RR^k$.
Finally, in section 4 of \cite{ChC4} it was shown for limit spaces that on the
set $\cR_{k,\alpha}$ any of the renormalized limit measures and the Hausdorff
measure are mutually absolutely continuous. It follows that the
collection of all renormalized limit measures determines a unique
measure class.


\begin{thebibliography}{A}

\bibitem[AbGl]{AbGl}
U. Abresch and D. Gromoll,  On complete manifolds with nonnegative Ricci curvature,
JAMS, Vol. 3, No. 2 (1990) 355--374.

\bibitem[A1]{A1}
M.T. Anderson,
Convergence and rigidity of metrics under Ricci
curvature bounds. Invent. Math. 102, 429--445 (1990).

\bibitem[A2]{A2}
M.T. Anderson,
Einstein metrics and metrics with bounds on Ricci
curvature, Proceedings of ICM,
Vols. 1, 2 (Z\"urich, 1994), Basel-Boston: Birkh\"auser, 1995, 443--452.

\bibitem[BGP]{BGP}
Yu. Burago, M. Gromov, and G. Perelman,
A. D. Aleksandrov spaces with curvatures bounded below. Russian Math. Surveys 47 (1992), no. 2, 1--58.

\bibitem[BKN]{BKN}
S. Bando, A. Kasue, and H. Nakajima
On a construction of coordinates at infinity on manifolds with fast curvature decay and maximal volume growth.
Invent. Math, 97 (1989), 313-349.

\bibitem[Ca]{Ca} 
E. Calabi, On Ricci curvature and geodesics. Duke Math. J. 34 (1967) 667--676.

\bibitem[Ch]{Ch}
J. Cheeger,
Degeneration of Riemannian metrics under Ricci curvature bounds. Lezioni
Fermiane. [Fermi Lectures]. Scuola Normale Superiore, Pisa, 2001.

\bibitem[ChC1]{ChC1}
J. Cheeger and T.H. Colding,
Lower bounds on Ricci curvature and the
almost rigidity of warped products.  Ann. of Math. (2)  144  (1996),
no. 1, 189--237.

\bibitem[ChC2]{ChC2}
J. Cheeger and T.H. Colding,
On the structure of spaces with Ricci curvature
bounded below. I.  J. Differential Geom.  46  (1997),  no. 3, 406--480.

\bibitem[ChC3]{ChC3}
J. Cheeger and T.H. Colding,
On the structure of spaces with Ricci
curvature bounded below. II.  J. Differential Geom.  54  (2000),  no. 1, 13--35.

\bibitem[ChC4]{ChC4}
J. Cheeger and T.H. Colding,
On the structure of spaces with
Ricci curvature bounded below. III.  J. Differential Geom.  54  (2000),
no. 1, 37--74.

\bibitem[C1]{C1}
T.H. Colding,
Shape of manifolds with positive Ricci curvature,
Invent. Math. 124 Fasc. 1--3 (1996), 175--191.

\bibitem[C2]{C2}
T.H. Colding,
Large manifolds with positive Ricci curvature,
Invent. Math. 124 Fasc. 1--3 (1996), 193--214.

\bibitem[C3]{C3}
T.H. Colding,
Ricci curvature and volume convergence.
Ann. of Math. (2)  145  (1997),  no. 3, 477--501.

\bibitem[C4]{C4}
T.H. Colding,
Spaces with Ricci curvature bounds. Proceedings of the ICM, Vol. II (Berlin, 1998).  Doc. Math.  1998,
Extra Vol. II, 299--308.

\bibitem[CN1]{CN1}
T.H. Colding and A. Naber, Characterization of tangent cones of noncollapsed limits with lower Ricci bounds and applications, 
preprint, http://arxiv.org/abs/1108.3244.

\bibitem[CN2]{CN2}
T.H. Colding and A. Naber, Lower Ricci curvature, branching and the bi-Lipschitz structure of uniform Reifenberg spaces, in preparation.

\bibitem[EH]{EH}
T. Eguchi and A. Hanson, Asymptotically flat self dual solutions to Euclidean gravity, Phys.
Lett. B 74 (1978) 249--251.

\bibitem[F1]{F1}
K. Fukaya,
Collapsing of Riemannian manifolds and eigenvalues of Laplace operator.
Invent. Math. 87 (1987), no. 3, 517--547.

\bibitem[F2]{F2}
K. Fukaya,
Metric Riemannian geometry,
Handbook of differential geometry. Vol. II, 189--313, Elsevier/North-Holland, Amsterdam, 2006.

\bibitem[FY]{FY}
K. Fukaya and T. Yamaguchi, Isometry groups of singular spaces. Math. Z. 216 (1994), no. 1, 31--44.

\bibitem[Ga]{Ga}
S. Gallot,
Volumes, courbure de Ricci et convergence des varietes (d'apres T. Colding et Cheeger-Colding): Seminaire Bourbaki 1997-98, Asterisque, 252, 7--32.

\bibitem[GiTr]{GiTr}
D. Gilbarg and N.S. Tr\"udinger,
Elliptic Partial Differential Equations of
Second Order, 2nd ed. Berlin: Springer-Verlag, 1983.

\bibitem[G]{G}
M. Gromov,
Metric structures for Riemannian and non-Riemannian spaces.
With appendices by M. Katz, P. Pansu and S. Semmes.  Birkh\"auser Boston, Inc., Boston, MA, 2007.

\bibitem[GLP]{GLP}
M. Gromov, J. Lafontaine, and P. Pansu,
Structures metriques pour les varieties riemanniennces.
Paris: Cedid/Fernand Nathan, 1981.

\bibitem[GvPe]{GvPe}
K. Grove and P. Petersen V,
Manifolds near the boundary of existence
J. Differential Geom. Volume 33, Number 2 (1991), 379--394.

\bibitem[H]{H}
S. Honda,
Bishop-Gromov Type Inequality on Ricci Limit Spaces, J. Math. Soc. Japan, to appear.


\bibitem[LY]{LY}
 P. Li and S.T. Yau,
 On the parabolic kernel of the Schr\"odinger operator,
 Acta Math. 156 (1986), no. 3-4, 153--201.

\bibitem[M1]{M1}
X. Menguy,
Noncollapsing examples with positive Ricci curvature and infinite
topological type.
GAFA 10 (2000), no. 3, 600--627.

\bibitem[M2]{M2}
X. Menguy,
Examples of strictly weakly regular points.
GAFA
11 (2001), no. 1, 124--131.

\bibitem[M3]{M3}
X. Menguy,
Examples of manifolds and spaces with positive Ricci curvature,
Ph.D. thesis, Courant Institute, New York University 2000.

\bibitem[M4]{M4}
X. Menguy,
Examples of nonpolar limit spaces,
Amer. J. Math. 122 (2000), no. 5, 927--937.

\bibitem[P1]{P1}
G. Perelman,
Construction of manifolds of positive Ricci curvature with big volume and
large Betti numbers. (English summary) Comparison geometry (Berkeley, CA,
1993--94), 157--163,
Math. Sci. Res. Inst. Publ., 30, Cambridge Univ. Press, Cambridge, 1997.

\bibitem[P2]{P2}
G. Perelman,
A complete Riemannian manifold of positive
Ricci curvature with Euclidean volume growth and nonunique asymptotic
cone.  Comparison geometry (Berkeley, CA, 1993--94),  165--166, Math.
Sci. Res. Inst. Publ., 30, Cambridge Univ. Press, Cambridge, 1997.


\bibitem[Pn]{Pn}
A. Petrunin,
Parallel transportation for Alexandrov space with curvature bounded below.
Geom. Funct. Anal. 8 (1998), no. 1, 123--148.

\bibitem[Ti]{Ti}
G. Tian,
On Calabi's conjecture for complex surfaces with positive first Chern class.
Invent. Math. 101 (1990), no. 1, 101–172.

\bibitem[W]{W}
G. Wei, Manifolds with a lower Ricci curvature bound, Surveys in differential geometry, Vol. XI 203--227, Surv. Diff. Geom., 11, Int. Press, Somerville, MA, 2007.

\end{thebibliography}
\end{document}